\newcommand{\ignore}[1]{ }
\newtheorem{Theorem}{Theorem}[section]
\newtheorem{Corollary}[Theorem]{Corollary}
\newtheorem{Lemma}[Theorem]{Lemma}
\newtheorem{Proposition}[Theorem]{Proposition}
\newtheorem{subcor}{Corollary}[Theorem]
\numberwithin{equation}{section}
\theoremstyle{definition}
\newtheorem{Definition}[Theorem]{Definition}
\theoremstyle{remark}
\newtheorem{Remark}[Theorem]{Remark}
\newtheorem{Example}[Theorem]{Example}
\def\CC{{\mathbb C}}
\def\ZZ{{\mathbb Z}}
\def\NN{{\mathbb N}}
\def\B{\mathfrak{B}}
\def\id{\mathrm{id}}
\def\K{\mathfrak{K}}
\def\G{\mathcal{G}}
\def\Gz{\mathcal{G}^{(0)}}
\def\d{\mathrm{d}}
\def\supp{\mathrm{supp}}
\numberwithin{equation}{section}
\author{Kyle ~ Austin}
\thanks{The first author was supported by the following Israeli Science Foundation grants: ISF-Moked grant 2095/15 and grant No. 522/14.}
\address{Weizmann Institute of Science and Technology}
\email{ksaustin88@gmail.com}
\author{Jiawen  ~ Zhang}
\thanks{The second author is supported by the Sino-British Trust Fellowship by Royal Society, International Exchanges 2017 Cost Share (China) grant EC$\backslash$NSFC$\backslash$170341, and NSFC11871342.}
\address{University of Southampton}
\email{jiawen.zhang@soton.ac.uk }
\title[Limit operator theory for groupoids]%
{Limit operator theory for groupoids}
\date{\today}
\keywords{}
\subjclass[2000]{Primary 47A53; Secondary 22A22, 46L55}
\begin{document}
	
	\maketitle
\begin{abstract}
	We extend the symbol calculus and study the limit operator theory for $\sigma$-compact, \'{e}tale and amenable groupoids, in the Hilbert space case. This approach not only unifies various existing results which include the cases of exact groups and discrete metric spaces with Property A, but also establish new limit operator theories for group/groupoid actions and uniform Roe algebras of groupoids. In the process, we extend a monumental result by Exel, Nistor and Prudhon, showing that the invertibility of an element in the groupoid $C^*$-algebra of a $\sigma$-compact amenable groupoid with a Haar system is equivalent to the invertibility of its images under regular representations.
\end{abstract}

\section{Introduction}

Recall that Atkinson's Theorem says that a bounded operator $T$ on $\ell^p(\ZZ^n)$ is \emph{Fredholm} if and only if it is invertible modulo the compact operators. Thus, no finite portion of its matrix coefficients, thinking of $T$ as a $\ZZ^n$-by-$\ZZ^n$ matrix $(T_{x,y})_{x,y \in \ZZ^n}$, has any impact on the Fredholmness of $T$. This suggests that the Fredholmness of an operator only depends on the asymptotic behaviour of its matrix coefficients. To carry out this idea, we mainly focus on band-dominated operators. Recall that an operator $T$ on $\ell^p(\ZZ^n)$ is a \emph{band operator} if all non-zero entries in its matrix sit within a fixed distance from the diagonal, and that $T$ is a \emph{band-dominated operator} if it is a norm-limit of band operators. To study the asymptotic behaviour of a band-dominated operator $T$, we associate a bounded operator $T_{\omega}$ on $\ell^p(\ZZ^n)$ to each limit point $\omega$ in the Stone-\v{C}ech boundary $\beta \ZZ^n\setminus \ZZ^n$, called the \emph{limit operator of $T$ at $\omega$}. More precisely, given a net $\{g_\alpha\}$ in $\ZZ^n$ converging to $\omega$, the operator $T_{\omega}$ has $(x,y)$-coefficient equal to $\lim_{\alpha \to \omega} T_{(g_\alpha+x,g_\alpha+y)}$, i.e., $T_{\omega}$ is the WOT-limit of the operators $U_{g_\alpha}TU_{g_\alpha^{-1}}$ ``in the $\omega$-direction'', where $U_{g_\alpha}$ denotes the canonical shift operator on $\ell^p(\ZZ^n)$ by $g_\alpha$.

The above discussion leads to the central problem in limit operator theory: to study the Fredholmness of a band-dominated operator on the discrete domain $\ZZ^n$ in terms of its limit operators. 
So far there are many significant contributions in this field by Lange, Rabinovich, Roch, Roe and Silbermann \cite{lange1985noether, rabinovich1998fredholm, rabinovich2012limit, rabinovich2004fredholm} with several recent results by Chandler-Wilde, Lindner, Seidel and others \cite{lindner2006infinite, chandler2011limit, Lindner--Seidel, seidel2014fredholm}. The following fundamental theorem was proved by Rabinovich, Roch, Silbermann in the case of $p\in (1,\infty)$ and later by Lindner filling the gaps for $p\in \{0,1,\infty\}$.

\begin{Theorem}[\cite{rabinovich2012limit, lindner2006infinite}]\label{oldmaintheorem}
	Let $T$ be a band-dominated operator on $\ell^p(\ZZ^n)$, where $p \in \{0\} \cup [1,\infty]$. Then $T$ is Fredholm \emph{if and only if} all the associated limit operators $T_\omega$ for $\omega \in \beta \ZZ^n\setminus \ZZ^n$ are invertible and their inverses are uniformly bounded in norm.
\end{Theorem}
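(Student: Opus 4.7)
My plan is to handle the two directions separately, with the forward implication being essentially formal and the converse carrying the real technical input. I focus on $p=2$ below, where one has the full $C^*$-algebraic structure; the arguments adapt to general $p$ via their Banach-algebra analogues.

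\textbf{Forward direction.} For each $\omega\in\beta\ZZ^n\setminus\ZZ^n$, the assignment $T\mapsto T_\omega$ extends to a contractive $\ast$-homomorphism from the algebra $\mathcal{A}^p$ of band-dominated operators into $\mathcal{B}(\ell^p(\ZZ^n))$. The three facts one needs are multiplicativity on band operators, via $U_g(AB)U_{g^{-1}}=(U_gAU_{g^{-1}})(U_gBU_{g^{-1}})$ extended by density; norm-contractivity, from the fact that each $U_g$ is an isometry; and $K_\omega=0$ for every compact $K$, since $U_{g_\alpha}KU_{g_\alpha^{-1}}\to 0$ strongly whenever $g_\alpha\to\infty$. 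Granted that $\mathcal{A}^p$ is inverse-closed modulo compacts (a classical input), Fredholmness of $T$ produces a band-dominated parametrix $S$ with $ST-I,\,TS-I$ compact; applying the homomorphism at each $\omega$ then yields $S_\omega T_\omega=T_\omega S_\omega=I$ with $\|T_\omega^{-1}\|\leq\|S\|$, giving the uniform bound.

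\textbf{Converse direction.} Assume every $T_\omega$ is invertible with $\sup_\omega\|T_\omega^{-1}\|<\infty$. Following the groupoid viewpoint of the paper, I would identify $\mathcal{A}^2$ with $C^*_r(\G)$, where $\G:=\beta\ZZ^n\rtimes\ZZ^n$ is the transformation groupoid of the Stone--\v{C}ech compactification (which is \'etale and amenable). Under this identification, the compact operators correspond to the ideal associated to the open invariant subset $\ZZ^n\subset\beta\ZZ^n$, and each limit operator $T_\omega$ is the image of $T$ under the regular representation of $\G$ at the unit $\omega$. Hence Fredholmness of $T$ is equivalent to invertibility of the image of $T$ in the quotient $C^*_r(\G|_{\beta\ZZ^n\setminus\ZZ^n})$, and the remaining content is to invoke an Exel--Nistor--Prudhon-type invertibility criterion for amenable groupoid $C^*$-algebras --- precisely the class of statements this paper extends --- to convert the uniform family $\{T_\omega^{-1}\}$ into a norm-bounded parametrix in the quotient.

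\textbf{Main obstacle.} The decisive step is that final promotion of uniform pointwise invertibility to a genuine bounded inverse in the quotient algebra. Amenability of $\ZZ^n$ is essential: it guarantees that reduced and full norms coincide on $\G$, so fiberwise invertibility is not obstructed by a $C^*_r$-vs-$C^*$ spectral gap. The technical core amounts either to a Property A / partition-of-unity argument that glues the local inverses $T_\omega^{-1}$ into a global parametrix with controlled norm, or to an abstract groupoid-theoretic dichotomy; both approaches use amenability (or at least exactness) in a crucial way. This is precisely the step the paper's symbol calculus for $\sigma$-compact, \'etale, amenable groupoids is engineered to automate.
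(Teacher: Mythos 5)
You should first note that the paper does not prove this theorem at all: it is quoted as background from Rabinovich--Roch--Silbermann and Lindner, and the authors only ever \emph{recover} its Hilbert-space case ($p=2$) as a corollary of their groupoid machinery (Theorem \ref{main theorem} applied to $\beta G\rtimes G$ with $G=\ZZ^n$, worked out in Section \ref{group case}). Measured against that, your $p=2$ sketch is essentially the paper's route: the identification of the band-dominated algebra with $C^*_r(\beta\ZZ^n\rtimes\ZZ^n)$, of the compacts with $C^*_r(\ZZ^n\rtimes\ZZ^n)$, and of $T_\omega$ with the regular representation $\lambda_\omega$ is exactly the content of the isomorphism $\Theta$ and Lemma \ref{lem3 grp case}. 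One local correction: with the uniform bound $\sup_\omega\|T_\omega^{-1}\|<\infty$ \emph{assumed}, you do not need any Exel--Nistor--Prudhon-type input. What you need is (i) exactness of $0\to C^*_r(\G(X))\to C^*_r(\G)\to C^*_r(\G(\partial X))\to 0$, which for amenable $\ZZ^n$ follows from the max-vs-reduced comparison, and (ii) the fact that $\bigoplus_\omega\lambda_\omega$ is a faithful (hence inverse-closed) representation of $C^*_r(\G(\partial X))$. The Exel--Nistor--Prudhon extension is the engine for the \emph{other} equivalence in the paper, namely dropping the uniform-boundedness hypothesis, which your target statement does not ask for.

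The genuine gap is the clause ``the arguments adapt to general $p$ via their Banach-algebra analogues.'' They do not, at least not by the route you describe. Every decisive step in your converse direction is $C^*$-specific: spectral permanence for $C^*$-subalgebras, faithfulness of the direct sum of regular representations as a definition of the reduced norm, and amenability forcing the maximal and reduced completions to coincide so that the middle of the sequence is exact. None of these has an off-the-shelf analogue on $\ell^p$ for $p\neq 2$ (there is no adjoint, no automatic inverse-closedness of closed subalgebras, and no groupoid $L^p$-algebra theory to quote), which is precisely why the paper explicitly restricts itself to the Hilbert-space case and why the original proofs for $p\in\{0\}\cup[1,\infty]$ proceed by entirely different means ($\mathcal{P}$-convergence, collective compactness, and for the extreme exponents Lindner's separate predual/dual arguments). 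Your forward direction also leans on inverse-closedness of the band-dominated algebra modulo compacts, which is automatic only for $p=2$; for general $p$ it is itself one of the nontrivial classical theorems in this subject. So as written the proposal establishes (modulo the correction above) only the $p=2$ case of the statement, and the promotion to all $p$ is asserted rather than proved.
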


It had been a longstanding question whether the uniform boundedness condition in the above theorem can be removed and, recently, Lindner and Seidel \cite{Lindner--Seidel} provided an affirmative answer. Also notice that in some literature (for example \cite{rabinovich2012limit, chandler2011limit}), Theorem \ref{oldmaintheorem} is stated in a Banach space valued version, making it possible to handle continuous underlying spaces as well.

Roe realised that it is essentially the coarse geometry of $\ZZ^n$ that plays a key role in the limit operator theory. Note that for coarse geometers, the algebra of band-dominated operators is called the \emph{uniform Roe algebra}. In \cite{roe-band-dominated}, he extended the symbol calculus and generalised limit operators in the Hilbert space case to all discrete groups, and proved Theorem \ref{oldmaintheorem} for exact discrete groups (also see \cite{willett2009band}). Following Roe's philosophy, Georgescu \cite{georgescu2011structure} generalised limit operator theory to metric spaces with Property A in the Hilbert space case (also see \cite{georgescu2002crossed, georgescu2006localizations}). Recall that the notion of Property A was introduced by Yu \cite{yu2000coarse}, and is equivalent to exactness in the case of groups. Later on, \v{S}pakula and Willett \cite{Spakula-Willett--limitoperators} studied the general $\ell^p$-case for discrete metric spaces with Property A, and established the limit operator theory for $p \in (1,\infty)$. The remaining extreme cases of $p \in \{0,1,\infty\}$ in the discrete metric setting was recently filled up by the second author \cite{ZhangExtreme}. Notice that in \v{S}pakula, Willett and Zhang's work \cite{Spakula-Willett--limitoperators, ZhangExtreme}, they also showed that the condition of uniform boundedness can be removed, extending Lindler and Seidel's result \cite{Lindner--Seidel} mentioned above.

In this paper, we use the formalism of groupoids to unify the above results in the Hilbert space case. Meanwhile, we are able to derive new limit operator theories including those for the cases of group actions and uniform Roe algebras of groupoids.  In the process, we extend the symbol calculus  using the unit space of the underlying groupoid as a ``spectrum.'' To achieve that, it is natural to consider limit operators collectively, which suggests the language of bundles. Inspired by the work of Roe \cite{roe-band-dominated}, we construct suitable operator fiber spaces whose underlying structures reveal the intrinsic nature of coarse geometry in the theory of limit operators. The following is our main result:

\begin{Theorem}[Theorem \ref{main theorem}]\label{thm: main thm}
	Let $\G$ be a locally compact, $\sigma$-compact and \'{e}tale groupoid with compact unit space $\Gz$, $X$ be an invariant open dense subset in $\Gz$ and $\partial X=\Gz \setminus X$. Suppose the reduction groupoid $\G(\partial X)$ is topologically amenable. Then for any element $T$ in the reduced groupoid $C^*$-algebra $C^*_r(\G)$, the following conditions are equivalent:
	\begin{enumerate}
		\item $T$ is invertible modulo $C^*_r(\G(X))$.
		\item The image of $T$ under the symbol morphism is invertible.
		\item All limit operators $\lambda_\omega(T)$ for $\omega \in \partial X$ are invertible and their inverses are uniformly bounded in norm.
		\item All limit operators $\lambda_\omega(T)$ for $\omega \in \partial X$ are invertible.
	\end{enumerate}
\end{Theorem}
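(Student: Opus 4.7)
The plan is to run the equivalences as the chain $(1)\Leftrightarrow(2)\Rightarrow(3)\Rightarrow(4)\Rightarrow(3)$. The implication $(3)\Rightarrow(4)$ is immediate, so the proof really consists of three steps: a packaging step $(1)\Leftrightarrow(2)$, an Exel--Nistor--Prudhon step $(2)\Rightarrow(3)$, and the hard direction $(4)\Rightarrow(3)$ which upgrades pointwise invertibility to uniform invertibility.

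For $(1)\Leftrightarrow(2)$ I would appeal directly to the construction of the symbol morphism. Since $X$ is invariant and open in $\Gz$, the standard groupoid $C^*$-algebra theory makes $C^*_r(\G(X))$ a closed two-sided ideal in $C^*_r(\G)$, and the symbol morphism $\sigma$ is (by design) the associated quotient homomorphism, identified with a section algebra over $\partial X$ built from the operator fiber bundle constructed in the body of the paper; so (1) and (2) match tautologically. For $(2)\Rightarrow(3)$ I would invoke the extended Exel--Nistor--Prudhon theorem announced in the abstract: amenability of $\G(\partial X)$ collapses reduced to full on the boundary, so an element of $C^*_r(\G(\partial X))$ is invertible if and only if every regular representation at $\omega\in\partial X$ sends it to an invertible operator. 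Transplanted through the identification of the quotient with the section algebra, this gives invertibility of every $\lambda_\omega(T)$; the uniform bound on $\|\lambda_\omega(T)^{-1}\|$ drops out automatically because the quotient $C^*$-norm dominates $\sup_{\omega\in\partial X}\|\lambda_\omega(\,\cdot\,)\|$.

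The technical heart is $(4)\Rightarrow(3)$. My plan is a contradiction argument in the spirit of Lindner--Seidel and \v{S}pakula--Willett--Zhang: suppose every $\lambda_\omega(T)$ is invertible but $\sup_{\omega\in\partial X}\|\lambda_\omega(T)^{-1}\|=\infty$, and fix a net $\omega_\alpha$ realising the supremum. Using $\sigma$-compactness of $\G$ and the étale structure, I would pass to a subnet along which the operator fiber bundle converges at some accumulation point $\omega_\infty\in\partial X$, producing a ``limit of limits'' for which $\lambda_{\omega_\infty}(T)$ inherits a singular approximate kernel from the divergence of $\|\lambda_{\omega_\alpha}(T)^{-1}\|$; lower semi-continuity of the spectrum then forces $0\in\mathrm{spec}\,\lambda_{\omega_\infty}(T)$, contradicting~(4). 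The main obstacle will be carrying out this subnet extraction and spectral-continuity argument without the global translation action that is available in the group case. I expect to have to transport matrix coefficients along bisections via étaleness, and to lean on amenability of $\G(\partial X)$ to ensure that the fibers assemble into a continuous field compact enough for the extraction to succeed. Setting up this fiberwise transport and its compatibility with the operator bundle should be the most delicate part of the proof.
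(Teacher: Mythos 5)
Your outline of the two ``packaging'' equivalences skips the point where amenability actually enters. You treat $(1)\Leftrightarrow(2)$ as tautological, but the reduced sequence $0 \to C^*_r(\G(X)) \to C^*_r(\G) \to C^*_r(\G(\partial X)) \to 0$ is in general \emph{not} exact at the middle term (this failure is precisely the mechanism behind the Higson--Lafforgue--Skandalis counterexamples cited in the paper), so the kernel of restriction to the boundary can strictly contain $C^*_r(\G(X))$. The paper's proof of $(1)\Leftrightarrow(2)$ consists exactly of establishing this exactness, by comparing with the always-exact maximal sequence and using that amenability of $\G(\partial X)$ makes $C^*_{\max}(\G(\partial X)) \to C^*_r(\G(\partial X))$ an isomorphism; without this your identification of the symbol algebra with the quotient has no justification. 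Separately, your appeal to Exel--Nistor--Prudhon in $(2)\Rightarrow(3)$ is pointed the wrong way: passing from invertibility of the boundary symbol to invertibility of each $\lambda_\omega(T)$ is the trivial direction (homomorphisms preserve invertibles), and the paper obtains $(2)\Leftrightarrow(3)$ simply by embedding $\Gamma_b(E_u^\partial)^{\G(\partial X)}$ into $\prod_{\omega}\B(\ell^2(\G_\omega))$ and using that $C^*$-subalgebras are inverse-closed.

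The genuine gap is in $(4)\Rightarrow(3)$. Your plan is a Lindner--Seidel-style compactness/contradiction argument, which is precisely the route the paper deliberately avoids. The step you yourself flag as delicate --- extracting a subnet along which singularity of the $\lambda_{\omega_\alpha}(T)$ propagates to an accumulation fibre --- is where it breaks: the topology on the operator fibre space is a pointwise matrix-coefficient (WOT-type) topology, under which $\|\lambda_\omega(T)^{-1}\|$ is not upper semicontinuous and approximate kernel vectors over $\omega_\alpha$ cannot in general be transported to the fibre over $\omega_\infty$ with norm control. In the metric-space setting this is exactly where Property A and operator-norm-localisation machinery is invoked, and you supply no groupoid substitute. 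The paper instead proves $(4)\Rightarrow(1)$ by applying its generalisation of Exel and Nistor--Prudhon (Theorem \ref{thm:extention of Exel}) to the $\sigma$-compact amenable groupoid $\G(\partial X)$: pointwise invertibility of all $\lambda_\omega(T)$ yields invertibility of $q(T)$ in $C^*_r(\G(\partial X))$, and exactness of the reduced sequence then gives $(1)$. The real content of the theorem is therefore the proof of Theorem \ref{thm:extention of Exel}, which rests on approximating the (generally non-second-countable) groupoid by second countable amenable quotients in a Haar-system- and mean-preserving way; your proposal does not engage with this machinery, and the alternative you sketch does not close.
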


At this point, some readers might wonder why we regard the above as the limit operator theory for groupoids. Let us explain here briefly in the group case, and more details will be provided in Section \ref{application section}. Let $G$ be a discrete group. It is well known that the uniform Roe algebra is isomorphic to the reduced groupoid $C^*$-algebra of the transformation groupoid $\beta G \rtimes G$, whose unit space has a natural decomposition: $\beta G = G \sqcup \partial G$. And it is a classic result that $G$ is exact if and only if $\beta G \rtimes G$ is amenable. Furthermore from definition, the reduced groupoid $C^*$-algebra of $G \rtimes G$ is exactly the algebra of compact operators on $\ell^2(G)$. Hence condition (1) in Theorem \ref{thm: main thm} is nothing but $T$ being Fredholm, and it is not hard to show that limit operators coincide with the ones defined by Roe \cite{roe-band-dominated}. Therefore, Theorem \ref{thm: main thm} recovers the classic limit operator theory for exact groups. Similar explanation for the case of discrete metric spaces with Property A was also provided in \cite[Appendix C]{Spakula-Willett--limitoperators}.

We would also like to mention several related recent works on Fredholmness and groupoids, but with different focus from ours. In \cite{carvalho2017fredholm, CVQ18},  Carvalho, Nistor and Qiao introduced Fredholm Lie groupoids to study the Fredholmness of pseudo-differential operators on open manifolds. And in \cite{Mantoui--Nistor--NumericalRanges, Mantoui--Nistor--Decompositions},  M\u{a}ntoiu and Nistor studied the (essential) spectral theory by general twisted groupoid methods, which is closely related to \cite{beckus--lenz--lindler} and \cite{beckusNittis18} in the case of tail equivalence groupoids of dynamical systems over finitely generated groups. We apologise for the many missing references, but guide the interested readers to the listed literature and the references therein.

There are several novelties about our approach that do not exist in previous literatures. For one, we construct the receptacle for the generalised symbol morphism using the language of operator fiber spaces established in Section \ref{op fibre sp}. Recall that in the case of groups, all limit operators live in the same space (i.e., the uniform Roe algebra of the group), and can be combined into the symbol morphim. Unfortunately, in the groupoid setting of Theorem \ref{thm: main thm}, limit operators may not live in the same space any more, which already occurred in the case of discrete metric spaces \cite{Spakula-Willett--limitoperators}. To overcome, we introduce  operator fibre spaces whose fibres are the uniform Roe algebras coming from uniformly chosen coarse structures (Definition \ref{limit morphism defn}), and construct the symbol as an element in the algebra of sections of that bundle. In a future work, we aim to look at the index formulas for K-theory and KK-theory as investigated in \cite{willett2009band}, especially in the case of a groupoid acting on its Higson fiberwise compacticification (see Example \ref{Higsonfiberwiseexample}).

Another novelty is the omission of the uniform boundedness condition in Theorem \ref{thm: main thm}, generalising Lindler, Seidel's result for $\ZZ^n$, and \v{S}pakula, Willett and the second author's result for discrete metric spaces in the Hilbert space case. Their approaches are more coarse geometric, depending heavily on Property A of the underlying spaces, while ours is quite different and more analytic, depending on an improvement of results of Exel \cite{exel-invertibiles} and  Nistor-Prudhon \cite{Nistor--Prudhon}. Briefly speaking, they   proved that for a second countable and amenable groupoid with a Haar system of measures, an element in the groupoid $C^*$-algebra is invertible if and only if its images under regular representations are invertible. Applying their results directly, we may omit the uniform boundedness condition in the case of second countability. Unfortunately, as we already pointed out,  even the groupoid $\beta \ZZ^n \rtimes \ZZ^n$ associated to $\ZZ^n$, is \emph{not} second countable. Notice, however, that these groupoids are usually $\sigma$-compact (they always are when the group or the metric space is countable), so we strengthen the approximation results (see Theorem \ref{sexyapproximationtheorem}) by the first author and Georgescu \cite{austin-georgescu} and prove the following:

\begin{Theorem}[Theorem \ref{thm:extention of Exel}]\label{thm:Exel's generalisation}
	Let $\G$ be a locally compact, $\sigma$-compact and amenable groupoid with a Haar system, then the element $1+a \in C_r^*(\G)^+$ is invertible if and only if $1+\lambda_x(a)$ is invertible for every $x\in \Gz$, where $\lambda_x$ is the regular representation at $x$.
\end{Theorem}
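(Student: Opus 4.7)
The ``only if'' direction is immediate: each regular representation $\lambda_x$ is a $*$-homomorphism from $C^*_r(\G)$ to $\mathcal{B}(\ell^2(\G_x))$, and so sends invertibles to invertibles. The content lies in the converse, and my plan is to reduce it to the second countable case, where the original theorem of Exel and Nistor--Prudhon applies directly.

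The first step is to invoke the strengthened version of the Austin--Georgescu approximation theorem (Theorem \ref{sexyapproximationtheorem}) to write $\G$ as a countable increasing union $\bigcup_n \mathcal{H}_n$ of open, second countable, amenable subgroupoids, each equipped with a Haar system compatible with that of $\G$. The $\sigma$-compactness of $\G$ is exactly what allows such a countable exhaustion, and amenability descends from $\G$ to the open subgroupoids $\mathcal{H}_n$. Since each $\mathcal{H}_n$ is open, $C^*_r(\mathcal{H}_n)$ embeds isometrically into $C^*_r(\G)$, and $\bigcup_n C^*_r(\mathcal{H}_n)$ is norm dense in $C^*_r(\G)$. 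I would therefore approximate $a$ in norm by a sequence $a_n \in C^*_r(\mathcal{H}_n)$.

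Applying the Exel--Nistor--Prudhon theorem to each second countable amenable $\mathcal{H}_n$, the element $1 + a_n$ is invertible in $C^*_r(\mathcal{H}_n)$ if and only if $1 + \lambda^{\mathcal{H}_n}_y(a_n)$ is invertible for every $y \in \mathcal{H}_n^{(0)}$. To verify this hypothesis from our assumption on $\G$, I would compare the $\mathcal{H}_n$-level regular representations with the $\G$-level ones: for $y \in \mathcal{H}_n^{(0)}$, $\lambda^{\mathcal{H}_n}_y$ is the restriction of $\lambda_y$ to the $\mathcal{H}_n$-invariant subspace $\ell^2(\mathcal{H}_{n,y}) \subseteq \ell^2(\G_y)$, and the inclusion $C^*_r(\mathcal{H}_n) \hookrightarrow C^*_r(\G)$ intertwines the two representations on the relevant domain. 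A standard perturbation argument then promotes invertibility of $1 + \lambda_y(a)$ to that of $1 + \lambda^{\mathcal{H}_n}_y(a_n)$ for large $n$, provided one has a uniform norm bound on the inverses $(1 + \lambda_y(a))^{-1}$.

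Securing this uniform bound, rather than only pointwise invertibility, is the main obstacle I foresee. The plan is to argue by contradiction: a failure of uniform boundedness would produce a sequence of base points $x_k \in \Gz$ with $\|(1 + \lambda_{x_k}(a))^{-1}\| \to \infty$, and then, via a limiting or ultrafilter construction exploiting $\sigma$-compactness of $\Gz$ together with lower semicontinuity of $x \mapsto \|(1 + \lambda_x(a))^{-1}\|$, one extracts a ``bad'' point violating the hypothesis. Once this uniform control is in hand, invertibility of each $1 + a_n$ in $C^*_r(\mathcal{H}_n)$ with uniform norm bounds on $(1 + a_n)^{-1}$ passes through the isometric inclusions $C^*_r(\mathcal{H}_n) \hookrightarrow C^*_r(\G)$ and through the norm limit $a_n \to a$, yielding invertibility of $1 + a$ in $C^*_r(\G)$.
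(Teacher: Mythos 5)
There are two genuine gaps here, and both stem from a misreading of how the reduction to the second countable case actually works. First, Theorem \ref{sexyapproximationtheorem} does \emph{not} exhibit $\G$ as an increasing union of open, second countable subgroupoids $\mathcal{H}_n$; it exhibits $\G$ as an \emph{inverse limit} of second countable \emph{quotients} $q_\alpha : \G \to \G_\alpha$, with $C^*_{\max}(\G) = \bigcup_\alpha (q_\alpha)^* C^*_{\max}(\G_\alpha)$ as an honest union of pulled-back subalgebras. The direct-union picture you propose cannot work: a compact or open subgroupoid of a $\sigma$-compact, non-second-countable groupoid is in general still not second countable (e.g.\ any open subgroupoid of $\beta\ZZ \rtimes \ZZ$ whose unit space is all of $\beta\ZZ$), so Exel/Nistor--Prudhon would not apply to your pieces $\mathcal{H}_n$. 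The ``strong'' approximation is designed precisely so that the given element $a$ lies \emph{exactly} in some $(q_\alpha)^*\bigl(C^*_{\max}(\G_\alpha)\bigr)$ --- no approximating sequence $a_n$ and no passage to a norm limit is needed. One then checks that the isometry $U_x : L^2((\G_\alpha)_{q_\alpha(x)};\mu_{q_\alpha(x)}) \to L^2(\G_x;\mu_x)$ induced by the measure-preserving quotient has image a \emph{reducing} subspace for $\lambda_x(a)$, on which $\lambda_x(a)$ is unitarily equivalent to $\lambda_{q_\alpha(x)}(\tilde a)$; hence pointwise invertibility of $1+\lambda_x(a)$ over $\Gz$ transfers to pointwise invertibility of $1+\lambda_{\bar x}(\tilde a)$ over $\G_\alpha^{(0)}$, Nistor--Prudhon gives invertibility of $1+\tilde a$ in $C^*_{\max}(\G_\alpha)^+$, and the $*$-homomorphism $(q_\alpha)^*$ carries this to $1+a$. (Note your compressions to $\ell^2(\mathcal{H}_{n,y})$ would not be reducing subspaces for $\lambda_y(a)$ in general, so invertibility would not restrict.)

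Second, and more seriously, your treatment of the uniform boundedness of the inverses $(1+\lambda_x(a))^{-1}$ is not a proof but a restatement of the hard problem. The contradiction argument you sketch needs a cluster point of a sequence $x_k$ with $\|(1+\lambda_{x_k}(a))^{-1}\|\to\infty$ at which invertibility fails; but $\Gz$ is only $\sigma$-compact, so the $x_k$ may escape to infinity, and even when $x_k \to x$, lower semicontinuity of $x \mapsto \|(1+\lambda_x(a))^{-1}\|$ points the wrong way --- you would need an upper bound near $x$, which amounts to continuity of the field of representations and is exactly what fails in general. Removing the uniform boundedness hypothesis is the central difficulty that Exel, Nistor--Prudhon, and Lindner--Seidel address; the paper's route deliberately avoids ever needing such a bound, by reducing everything to a single second countable quotient where the cited results do the work. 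As written, your argument has a circular core: the uniform bound you need in order to run the perturbation step is equivalent in strength to the conclusion you are trying to prove.
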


The above theorem is more than enough for Theorem \ref{thm: main thm}, and we choose to prove in its full generality since this result might be of independent interest. In fact, the property stated in Theorem \ref{thm:Exel's generalisation} was first studied by Roch \cite{roch2003algebras} where he introduced the notion of invertibility sufficient family and strictly norming family. These are also closely related to the notion of exhausting family introduced in \cite{Nistor--Prudhon} and the generalized Effros-Hahn property \cite{ionescu2009generalized}. Interested readers can refer to their literatures and the references therein.

We would also like to highlight the new limit operator theory for uniform Roe algebras of groupoids which we obtain as a consequence of Theorem \ref{thm: main thm} (see Section \ref{groupoid case ex}). Recall that the notion of uniform Roe algebras of groupoids was introduced by Anantharaman-Delaroche \cite{Ananth-Delaroche--ExactGroupoids} in her study of the extactness for groupoids. Also notice that there is another closed related but quite different notion called the Roe $C^*$-algebras of groupoids recently introduced by Tang, Willett and Yao \cite{tang2018roe} in their study of analytic indices of elliptic differential operators on Lie groupoids. Here we only focus on the former notion, and analogous to the classic case, we show in Corollary \ref{cor: groupoid case} that the Fredholmness of an operator in the sense of Hilbert module is equivalent to the invertibilities of all the associated limit operators. The technical part here is to provide a detailed and practical description for the limit operators (Lemma \ref{lem: groupoid limit op}), rather than just regarding them as the images under regular representations. Although unintuitive phenomena might occur (\cite[Example 2.3]{anantharaman2014fibrewise}), we bypass these technical difficulties via the universal property of the Stone-\v{C}ech fibrewise compactification together with  Lemma \ref{lemma: first countable implies fibrewise compact}. 

Finally, let us mention some other potential generalisations which readers might wish to continue. Firstly, in this paper, we only consider the case of $p=2$ while limit operator theories are usually establish for general $p \in \{0\} \cup [1,\infty]$, as well as for other classes of Banach space coefficients. The reason we discard the general case is that groupoid $C^*$-algebras have been developed mostly for the Hilbert space case. However, there should be alternative ways available in the general $p$-case, especially for the uniform Roe algebras of groupoids (our work already suggests what the limit operator theory might be).
Another facet of generalisation is that one could look at other operator algebras which are naturally filtered by groupoids, rather than just the groupoid $C^*$-algebras. These algebras might include $C^*$-dynamics, twisted groupoid $C^*$-algebras, or even algebras of sections of a Fell bundle over a groupoid or a semigroupoid. The motivation comes from the fact that not every $C^*$-algebra is a groupoid $C^*$-algebra \cite{Buss--Sims--Opposite}, however, every Elliott classifiable $C^*$-algebra is the twisted $C^*$-algebra of an amenable groupoid \cite{Li}. In other words, groupoid $C^*$-algebras do encompass a nice class of $C^*$-algebras, but there is still much room for generalisation.

\textbf{Outline.} The paper is organised as follows: In Section \ref{sec: preliminaries}, we recall basic notions related to groupoids that we will use throughout the paper. In Section \ref{op fibre sp}, we introduce the notion of operator fibre spaces which will be used as an engine for the symbol map, and study their topologies and other related properties. Then in Section \ref{main theorem section}, we construct the symbol morphism (Definition \ref{limit morphism defn}), state our limit operator theory for groupoids (Theorem \ref{thm: main thm}), and prove the relatively easier part. Section \ref{sec: extension of Exel} is mainly devoted to the rest of the proof that uniform boundedness condition is unnecessary. We do so by generalising the results of Exel and Nistor-Prudhon into full generality (Theorem \ref{thm:Exel's generalisation}). Finally in Section \ref{application section}, we apply Theorem \ref{thm: main thm} to recover the limit operator theories in the Hilbert space case for exact groups and discrete metric spaces with Property A (Section \ref{group case}-\ref{metric space ex}), and also establish new theories for group actions (Section \ref{group action ex}) and uniform Roe algebras of groupoids (Section \ref{groupoid case ex}).

\emph{Acknowledgments.} We would like to thank Jan \v{S}pakula for suggesting this topic and a lot of illuminating discussions and comments on an early draft of the paper. The first author would like to thank Uri Bader for all of his encouragement and hospitality for the duration of this project. The second author would also like to thank Baojie Jiang for several interesting discussions, and Graham Niblo and Nick Wright for their continuous supports.

\section{Preliminaries}\label{sec: preliminaries}

\subsection{Basic notions}
Let us start by recalling some basic notions and terminology on groupoids. The readers are supposed to be familiar with basic definitions about groupoids, and details can be found in \cite{renault}, or \cite{sims--notesongroupoids} in the \'etale case.

Recall that a \emph{groupoid} is a small category, in which every morphism is invertible. Roughly speaking, a groupoid consists of a set $\G$, a subset $\Gz$ called the \emph{unit space}, two maps $s,r: \G \to \Gz$ called the \emph{source} and \emph{range} maps respectively, a \emph{composition law}
$$ \G^{(2)}:=\{(\gamma_1,\gamma_2) \in \G \times \G: s(\gamma_1)=r(\gamma_2)\}\ni(\gamma_1,\gamma_2) \mapsto \gamma_1\gamma_2 \in \G,$$	and an \emph{inverse} map $\gamma \mapsto \gamma^{-1}$. These operations satisfy a couple of axioms, including associativity law and the fact that elements in $\Gz$ act as units. For $x \in \Gz$, denote $\G^x:=r^{-1}(x)$ and $\G_x:=s^{-1}(x)$. A \emph{groupoid morphism} is a functor.

A \emph{locally compact} groupoid is a groupoid equipped with a locally compact topology such that the structure maps (composition and inverse) are continuous with respect to the induced topologies, and the range map is open (which is equivalent to the source map being open). Note that the latter is a necessary condition for the existence of a Haar system of measures (see Definition \ref{Haarysituation} below).


Let $Y$ be a subset in the unit space $\Gz$ of a groupoid $\G$, and we set $\G(Y):=r^{-1}(Y) \cap s^{-1}(Y)$. Note that $\G(Y)$ is a subgroupoid of $\G$ (in the sense that it is stable under product and inverse), called the \emph{reduction of $\G$ by $Y$}. $Y$ is said to be \emph{invariant} if $r^{-1}(Y)=s^{-1}(Y)$. When $Y$ is an invariant locally compact subset of $\Gz$, it is obvious that the reduction $\G(Y)$ is itself a locally compact groupoid.

From now on, \emph{we always assume locally compact spaces to be Hausdorff}. Given a locally compact space $Z$, denote $C_b(Z)$ the $C^*$-algebra of bounded continuous complex-valued functions on $Z$, $C_0(Z)$ the $C^*$-subalgebra of functions vanishing at infinity, and $C_c(Z)$ those with compact support. The support of a function $f$ is denoted by $\supp(f)$.

\begin{Definition}\label{Haarysituation}
	Let $\G$ be a locally compact groupoid. A \emph{(right) Haar system of measures on $\G$} is a collection $\{\mu_x:x\in \Gz\}$ of positive regular Radon\footnote{By this, we mean that the measure is locally finite and is inner and outer regular with respect to subsets of finite measure.} measures on $\G$ such that:
	\begin{enumerate}
		\item The support of $\mu_x$ is exactly $\G_x$, for all $x \in \Gz$.
		\item For all $f\in C_c(\G)$, the function $x\to \int_{\G}f(\gamma)\d\mu_x(\gamma)$ is continuous on $\Gz$.
		\item For all $\gamma\in \G$ and $f\in C_c(\G)$, the following equality holds:
		$$\int_{\G_{r(\gamma)}}f(\gamma'\gamma)\d\mu_{r(\gamma)}(\gamma') = \int_{\G_{s(\gamma)}}f(\gamma')\d\mu_{s(\gamma)}(\gamma').$$
	\end{enumerate}
\end{Definition}

\begin{Definition}
	Let $\G, \mathcal{H}$ be locally compact groupoids with Haar systems of measures $\{\mu_x: x \in \Gz \}$ and $\{\nu_y: y \in \mathcal{H}^{(0)}\}$, respectively. A groupoid morphism $q: \G \to \mathcal{H}$ is said to be \emph{Haar system preserving} if $q$ is proper, continuous, and satisfies $q_{\ast}\mu_x=\nu_z$ for any $z \in \mathcal{H}^{(0)}$ and $x\in q^{-1}(z)$. Recall that $q_{\ast}\mu_x$ is the \emph{pushforward} of $\mu_x$, i.e., $(q_{\ast}\mu_x)(A):=\mu_x(q^{-1}(A))$ for any measurable set $A$.
\end{Definition}

A locally compact groupoid is called \emph{\'{e}tale} (also called \emph{$r$-discrete}) if the range (hence the source) map is a local homeomorphism. Clearly in this case, each fibre $\G^x$ (and $\G_x$) is discrete with the induced topology, and $\Gz$ is clopen in $\G$. The notion of \'{e}taleness for a groupoid can be regarded as an analogue of discreteness in the group case. Note that for a locally compact \'{e}tale groupoid, we may always choose the counting measure on each fibre to form a canonical Haar system of measures.

\subsection{Fibre spaces and fibrewise compactifications}\label{sec:fibre spaces}

Later on, we also need to consider more general notions than groupoids, called fibre spaces. Here we provide a brief introduction.

\begin{Definition}\label{fibre space defn}
	Let $X$ be a locally compact space. A \emph{fibre space over $X$} is a pair $(Y,p)$, where $Y$ is a locally compact space and $p$ is a continuous surjective map from $Y$ onto $X$. For each point $x\in X$, denote $Y^x$ the \emph{fibre} $p^{-1}(x)$. We say $(Y,p)$ is \emph{fibrewise compact} if $p$ is proper, i.e., $p^{-1}(K)$ is compact for any compact $K\subseteq X$.
\end{Definition}

\begin{Definition}
	A \emph{morphism} between two fibre spaces $(Y_1,p_1)$ and $(Y_2,p_2)$ over $X$ is a continuous map $\varphi: Y_1 \to Y_2$ such that $p_1=p_2\circ \varphi$.
\end{Definition}

\begin{Definition}
	A \emph{fibrewise compactification} of a fibre space $(Y,p)$ over $X$ is a fibrewise compact fibre space $(Z,q)$ over $X$ together with a morphism $\iota: Y \rightarrow Z$ such that the image of $Y$ is an open dense subset in $Z$ and $\iota$ is a homeomorphism onto its image. Usually we regard $Y$ as a subset of $Z$ in this case, and the morphism $\iota$ is just the inclusion.
\end{Definition}

To understand fibrewise compactifications in a more precise way, we need to refer to Gelfand spectra. Let $(Y,p)$ be a fibre space over $X$. Denote
$$p^*C_0(X):=\{f\circ p: f\in C_0(X)\},$$
and $C_0(Y,p)$ to the closure of
\begin{equation}\label{CcYp}
C_c(Y,p):= \{g \in C_b(Y): \exists \mbox{~compact~}K \subseteq X \mbox{~such~that~}g(x)=0 \mbox{~for~}x\notin p^{-1}(K)\}.
\end{equation}
Equivalently, a function $g \in C_b(Y)$ belongs to $C_0(Y,p)$ \emph{if and only if} for any $\varepsilon>0$, there exists a compact $K \subseteq X$ such that $|g(x)|<\varepsilon$ for any $x \notin p^{-1}(K)$. And we have the following characterisation:

\begin{Proposition}[Proposition 1.2, \cite{anantharaman2014fibrewise}]\label{fw cmpt}
	For a fibre space $(Y,p)$ over $X$, its fibrewise compactifications are one-to-one correspondence with Gelfand spectra of $C^*$-subalgebras in $C_0(Y,p)$ containing $p^*C_0(X)+C_0(Y)$.
\end{Proposition}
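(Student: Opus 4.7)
The proposition is essentially Gelfand duality adapted to the bifibred setting, and the plan is to exhibit explicit mutually inverse constructions in each direction, using that properness of $q$ and openness of $\iota(Y)$ are detected algebraically by the two generating subalgebras $p^*C_0(X)$ and $C_0(Y)$.

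\emph{From compactifications to subalgebras.} Given a fibrewise compactification $\iota \colon Y \hookrightarrow (Z,q)$, note first that because $q$ is proper, every function in $C_b(Z)$ with ``$(Z,q)$-vanishing at infinity'' in fact has compact support in $Z$, so $C_0(Z,q) = C_0(Z)$. Since $Y$ is dense in $Z$, the restriction map $\iota^{*} \colon C_0(Z) \to C_b(Y)$ is an injective $*$-homomorphism. I would check that its image lies in $C_0(Y,p)$ by a direct $\varepsilon/K$ argument using properness of $q$; that it contains $p^{*}C_0(X)$ because $p = q \circ \iota$ and $q^{*}C_0(X)\subseteq C_0(Z)$ by properness of $q$; and that it contains $C_0(Y)$ because $Y$ is open in $Z$ so every $f \in C_0(Y)$ extends by zero to a function in $C_0(Z)$ (the extension is $(Z,q)$-vanishing by taking $K = p(L)$ where $L$ is a compact support set for $f$ up to $\varepsilon$). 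Let $A(Z,q) \subseteq C_0(Y,p)$ denote this image.

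\emph{From subalgebras to compactifications.} Conversely, let $A$ be a $C^{*}$-subalgebra of $C_0(Y,p)$ with $p^{*}C_0(X) + C_0(Y) \subseteq A$, and set $Z := \mathrm{Spec}(A)$, so Gelfand duality provides $A \cong C_0(Z)$. I would verify in order: (i) the inclusion $p^{*}C_0(X) \hookrightarrow A$ is non-degenerate, using an approximate unit of $p^{*}C_0(X)$ that is eventually $1$ on any fixed compact $K \subseteq X$; this induces a continuous map $q \colon Z \to X$, and properness of $q$ is equivalent to $q^{*}C_0(X) \subseteq C_0(Z)$, which is immediate since $q^{*}f$ corresponds to $p^{*}f \in A$. (ii) The evaluation map $\iota \colon Y \to Z$, $y \mapsto \mathrm{ev}_y$, is a well-defined continuous injection (use $C_0(Y)\subseteq A$ to separate points and to see $\mathrm{ev}_y \neq 0$), is a homeomorphism onto its image (the topology generated by $C_0(Y)$ is the original topology of $Y$), and has dense image (any $a \in A$ vanishing on $\iota(Y)$ vanishes on $Y$, hence is zero as an element of $C_b(Y)$). (iii) $p = q \circ \iota$ tautologically.

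\emph{Openness of $\iota(Y)$, the main point.} The crucial step, which is also where the hypothesis $C_0(Y) \subseteq A$ is really used, is showing $\iota(Y)$ is open in $Z$. The plan is to observe that $C_0(Y)$ is a closed two-sided ideal of $A$ (since $A \subseteq C_b(Y)$ and $C_0(Y)$ is an ideal in $C_b(Y)$). By Gelfand, closed ideals in $C_0(Z)$ correspond to open subsets of $Z$; let $U$ be the open set corresponding to $C_0(Y)$. The inclusion $\iota(Y) \subseteq U$ is immediate. For the reverse, any $\chi \in U$ restricts to a nonzero character on $C_0(Y)$, hence equals $\mathrm{ev}_y$ on $C_0(Y)$ for a unique $y \in Y$; then for any $a \in A$ and any $g \in C_0(Y)$ with $g(y) = 1$, the identity $\chi(a)\chi(g) = \chi(ag) = (ag)(y) = a(y)$ and $\chi(g) = g(y) = 1$ force $\chi(a) = a(y)$, so $\chi = \iota(y)$ and $U = \iota(Y)$.

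\emph{Inversion.} Finally I would check that $A(Z,q)$ recovers $A$ when $Z = \mathrm{Spec}(A)$ (tautology via Gelfand) and, for the other composition, that if $(Z,q)$ is a fibrewise compactification and $A = A(Z,q)$, then $\mathrm{Spec}(A) \cong Z$ as fibre spaces over $X$ in a way compatible with $\iota$ (the universal property of the Gelfand spectrum, together with the facts established above, identifies this spectrum with $Z$ canonically). The expected principal obstacle is the openness of $\iota(Y)$ discussed above; everything else reduces to standard Gelfand-duality bookkeeping together with the explicit description of $C_0(Y,p)$.
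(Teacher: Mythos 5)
The paper does not prove this statement: it is quoted verbatim as Proposition 1.2 of \cite{anantharaman2014fibrewise}, so there is no internal proof to compare against. Your argument is correct and is the standard Gelfand-duality proof that the cited reference uses: the two constructions $ (Z,q)\mapsto \iota^*(C_0(Z))$ and $A\mapsto \mathrm{Spec}(A)$ are mutually inverse, and you correctly isolate the one genuinely non-formal point, namely that openness of $\iota(Y)$ is exactly the statement that the ideal $C_0(Y)\trianglelefteq A$ has spectrum $\iota(Y)$, with the character computation $\chi(a)=\chi(ag)=a(y)$ closing the gap. The only slip is cosmetic: in the first direction a function in $C_0(Z,q)$ merely \emph{vanishes at infinity} in $Z$ rather than having compact support (compact support holds for the dense subalgebra $C_c(Z,q)$, which suffices for $C_0(Z,q)=C_0(Z)$); this does not affect the argument.
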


From the above proposition, there are two extreme cases $p^*C_0(X)+C_0(Y)$ and $C_0(Y,p)$. They corresponds to the \emph{fibrewise Alexandroff compactification}, denoted by $(Y^+,p^+)$, and the \emph{fibrewise Stone-\v{C}ech compactification}, denoted by $(\beta_p Y,p_\beta)$. As the classic Stone-\v{C}ech compactifications, we have the following universal property:

\begin{Proposition}[Proposition 1.4, \cite{anantharaman2014fibrewise}]\label{prop: universal property}
	Let $(Y,p)$ and $(Y_1,p_1)$ be two fibre spaces over the same space $X$, and $(Y_1,p_1)$ is fibrewise compact. Then for any morphism $\varphi:Y \to Y_1$, there exists a unique morphism $\Phi: (\beta_p Y,p_\beta) \to (Y_1,p_1)$ extending $\varphi$.
\end{Proposition}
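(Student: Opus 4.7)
The plan is to leverage the Gelfand-duality setup provided by Proposition \ref{fw cmpt}: the fibrewise Stone-\v{C}ech compactification $(\beta_p Y, p_\beta)$ corresponds to the algebra $C_0(Y,p)$, while the fibrewise compact space $(Y_1,p_1)$ coincides with its own fibrewise compactification and so corresponds to $C_0(Y_1,p_1)$. A natural candidate for $\Phi$ is then produced by pulling back functions along $\varphi$ and dualising.

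Concretely, I would first define $\varphi^\ast\colon C_b(Y_1)\to C_b(Y)$ by $g\mapsto g\circ\varphi$ and verify that it sends $C_0(Y_1,p_1)$ into $C_0(Y,p)$; this is the key compatibility and uses only $p_1\circ\varphi=p$. Indeed, given $g\in C_0(Y_1,p_1)$ and $\varepsilon>0$, choosing a compact $K\subseteq X$ with $|g|<\varepsilon$ off $p_1^{-1}(K)$, any $y\in Y$ with $p(y)\notin K$ satisfies $p_1(\varphi(y))=p(y)\notin K$, so $|\varphi^\ast(g)(y)|<\varepsilon$. Identifying $\beta_p Y$ with the spectrum of $C_0(Y,p)$, I then define $\Phi(\omega)$, for a character $\omega$ of $C_0(Y,p)$, to be the character $g\mapsto\omega(\varphi^\ast(g))$ of $C_0(Y_1,p_1)$. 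The delicate point is that this composite character must be non-zero in order to correspond to an actual point of $Y_1$, and here I would evaluate on $p_1^\ast C_0(X)\subseteq C_0(Y_1,p_1)$: the identity $\omega(\varphi^\ast(p_1^\ast f))=\omega(p^\ast f)=f(p_\beta(\omega))$ yields non-vanishing after choosing $f\in C_0(X)$ with $f(p_\beta(\omega))\neq 0$, which is possible because $p_\beta$ extends the surjection $p$ and is therefore itself surjective onto $X$. The very same identity shows $p_1\circ\Phi=p_\beta$, so $\Phi$ is a fibre-space morphism, and weak-$\ast$ continuity of composition with a fixed $\ast$-homomorphism delivers the continuity of $\Phi$. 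Evaluating at $y\in Y\subseteq\beta_p Y$ gives $\Phi(y)=\varphi(y)$, so $\Phi$ extends $\varphi$.

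Uniqueness is then automatic: $Y$ is dense in $\beta_p Y$ and $Y_1$ is Hausdorff, so two continuous extensions of $\varphi$ must coincide on all of $\beta_p Y$. The principal technical obstacle I anticipate is the book-keeping around the non-unital Gelfand correspondence, and in particular ensuring that $\Phi(\omega)$ never collapses to the zero character, which would force $\Phi$ to take a ``value at infinity'' in $Y_1$; the fact that $p_1^\ast C_0(X)\subseteq C_0(Y_1,p_1)$ is sent by $\varphi^\ast$ into $p^\ast C_0(X)\subseteq C_0(Y,p)$ is precisely what rules this out.
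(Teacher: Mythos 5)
The paper does not prove this statement; it is quoted verbatim from Anantharaman--Delaroche (Proposition 1.4 of \cite{anantharaman2014fibrewise}), so there is no in-paper argument to compare against. Your Gelfand-duality proof is correct: properness of $p_1$ gives $C_0(Y_1,p_1)=C_0(Y_1)$ with spectrum $Y_1$, the pullback $\varphi^\ast$ lands in $C_0(Y,p)$ exactly as you check, evaluating on $p_1^\ast C_0(X)$ both rules out the zero character and yields $p_1\circ\Phi=p_\beta$, and density of $Y$ in $\beta_p Y$ together with Hausdorffness of $Y_1$ gives uniqueness. This is essentially the standard argument (and the one in the cited source), so nothing further is needed.
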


Please be aware of the unintuitive phenomenon noticed by Anantharaman-Delaroche \cite[Example 2.3]{anantharaman2014fibrewise}. Let $(Y,p)$ be a fibre space over $X$, and $(\beta_p Y,p_\beta)$ be its fibrewise Stone-\v{C}ech compactification. Given an $x \in X$, it might occur that the fibre $Y^x$ is \emph{not} dense in the fibre $p_\beta^{-1}(x)$ of $\beta_p Y$.


\subsection{Groupoid actions}\label{groupoid action preliminary subsection}
Now we discuss groupoid actions, and several related notions. We start with the case of groups. Let $G$ be a discrete group acting on some locally compact space $X$ by homeomorphisms. The \emph{transformation groupoid} $X \rtimes G$ is set-theoretically $X \times G$. The groupoid structure is given by $s(x,\gamma)=(\gamma^{-1}x,e)$, $r(x,\gamma)=(x,e)$ and $(x,\gamma)(\gamma^{-1}x,\gamma')=(x,\gamma\gamma')$. The topology on $X \rtimes G$ is nothing but the product topology, and it is clear that the groupoid $X \rtimes G$ is \'{e}tale since the group $G$ is discrete.

Now we introduce groupoid actions and discuss the associated semi-direct products.
\begin{Definition}
	For two fibre spaces $(Y_i,p_i)_{i=1,2}$ over $X$, their \emph{fibred product} $Y_1 \tensor*[_{p_1}]{\ast}{_{p_2}} Y_2$ is defined to be
	$$\{(y_1,y_2) \in Y_1 \times Y_2: p_1(y_1)=p_2(y_2)\}$$
	with the induce topology by the product topology.
\end{Definition}

\begin{Definition}
	Let $\G$ be a locally compact groupoid. A \emph{(left) $\G$-space} is a fibre space $(Y,p)$ over $X=\Gz$, together with a continuous map $(\gamma,y) \mapsto \gamma y$ from $\G \tensor*[_s]{\ast}{_p} Y$ to $Y$, satisfying the following conditions:
	\begin{enumerate}
		\item $p(\gamma y)=r(\gamma)$ for $(\gamma, y) \in \G \tensor*[_s]{\ast}{_p} Y$, and $p(y)y=y$ for $y \in Y$;
		\item $(\gamma_2\gamma_1) y=\gamma_2(\gamma_1 y)$ for $(\gamma_1,y) \in \G \tensor*[_s]{\ast}{_p} Y$ and $s(\gamma_2)=r(\gamma_1)$.
	\end{enumerate}
\end{Definition}

For a $\G$-space $(Y,p)$, now we define the \emph{semi-direct product groupoid} $Y \rtimes \G$. As a topological space, it is $Y \tensor*[_p]{\ast}{_r} \G$. The range of $(y,\gamma)$ is $\mathbf{r}(y,\gamma)=(y,r(\gamma))=(y,p(y))$, and its source is $\mathbf{s}(y,\gamma)=(\gamma^{-1}y, s(\gamma))$. The product is given by
$$(y, \gamma) (\gamma^{-1}y, \gamma_1)=(y, \gamma\gamma_1)$$
and the inverse is given by
$$(y,\gamma)^{-1}=(\gamma^{-1}y, \gamma^{-1}).$$
Clearly, the unit space $(Y \rtimes \G)^{(0)}$ can be identified with $Y$ via the homeomorphism sending $(y,p(y))$ to $y$. Hence from now on, we may regard $Y$ as the unit space of the groupoid $Y \rtimes \G$.

Note that when $\G$ is a group, the semi-direct product groupoid $Y \rtimes \G$ is nothing but the transformation groupoid mentioned above. As shown in \cite[Proposition 1.4, 1.5]{Ananth-Delaroche--ExactGroupoids}, the range map $\mathbf{r}: Y \rtimes \G \to Y$ is always open, and $Y \rtimes \G$ is \'{e}tale when the groupoid $\G$ itself is.

As a trivial example, for a groupoid $\G$, $(\Gz,\id)$ is a fibre space over $\Gz$ itself. And it is obvious that the associated semi-direct product $\Gz \rtimes \G$ is isomorphic to the original groupoid $\G$. Hence there is no difference to study semi-direct product groupoids and general groupoids.

Now we discuss the notion of equivariant fibrewise compactifications associated to a groupoid action. Briefly speaking, it is a special class of fibrewise compactifications which are compatible with groupoid actions. To be more precise, we start with the following notion.

\begin{Definition}
	A morphism $\varphi: (Y,p) \to (Z,q)$ between two $\G$-spaces is said to be \emph{$\G$-equivariant} if $\varphi(\gamma y) = \gamma \varphi(y)$ for any $(\gamma, y) \in \G \tensor*[_s]{\ast}{_p} Y$.
\end{Definition}

\begin{Definition}
	A \emph{$\G$-equivariant fibrewise compactification} of a $\G$-space $(Y,p)$ is a fibrewise compactification $(Z,q)$ of $(Y,p)$ such that the associated morphism $\iota: Y \hookrightarrow Z$ is $\G$-equivariant.
\end{Definition}

As before, we would also like to provide a characterisation in terms of Gelfand spectra. Recall from Proposition \ref{fw cmpt} that fibrewise compactifications of a fibre space $(Y,p)$ can be characterised in terms of certain $C^*$-subalgebras in $C_b(Y)$. In the equivariant case, they can be characterised by certain invariant subalgebras in the following sense.

To simplify the discussion, we only focus on the \'{e}tale case. Let $(Y,p)$ be a $\G$-space where $\G$ is a locally compact and \'{e}tale groupoid. Given any $f \in C_c(\G)$ and $g \in C_b(Y)$, define the \emph{convolution product} $f \ast g$ by:
$$f\ast g(x) = \sum_{\gamma\in r^{-1}(p(x))} f(\gamma)g(\gamma^{-1}x).$$
As shown in \cite{anantharaman2014fibrewise}, $f \ast g \in C_c(Y,p)$ defined in (\ref{CcYp}). A $C^*$-subalgebra $\mathcal{A}$ in $C_b(Y)$ is said to be \emph{stable under convolution of $C_c(\G)$}, if $f \ast g \in \mathcal{A}$ for any $f\in C_c(\G)$ and $g \in \mathcal{A}$. Now we have the following:

\begin{Proposition}[Corollary 2.6, \cite{anantharaman2014fibrewise}]\label{prop:equiv fibrewise cmptf}
	Let $(Y,p)$ be a $\G$-space, where $\G$ is a locally compact and \'{e}tale groupoid. Then there is a one-to-one correspondence between $\G$-equivariant fiberwise compactifications of $Y$ and those subalgebras of $C_0(Y,p)$ that contain $p^*C_0(X)+C_0(Y)$ and are stable under convolution of $C_c(\G)$.
\end{Proposition}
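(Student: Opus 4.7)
The plan is to exploit the bijective correspondence of Proposition \ref{fw cmpt} between fibrewise compactifications of $(Y,p)$ and $C^*$-subalgebras $\mathcal{A}$ of $C_0(Y,p)$ containing $p^*C_0(X) + C_0(Y)$. Under this correspondence, a compactification $(Z,q)$ yields the subalgebra $\mathcal{A}_Z$ obtained by pulling back $C_0(Z,q)$ along the dense open inclusion $Y \hookrightarrow Z$. The task then reduces to showing that, for such a $(Z,q)$, the $\G$-action on $Y$ extends to a continuous $\G$-action on $Z$ (making the inclusion $\G$-equivariant) \emph{if and only if} $\mathcal{A}_Z$ is stable under convolution by $C_c(\G)$.

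For the forward implication, suppose $(Z,q)$ is a $\G$-equivariant fibrewise compactification. Given $f \in C_c(\G)$ and $g \in \mathcal{A}_Z$, lift $g$ to some $\tilde g \in C_0(Z,q)$ and define $f \ast \tilde g$ on $Z$ by the same formula as on $Y$, using the extended action. Since $\G$ is \'etale and $f$ has compact support, the sum defining $(f \ast \tilde g)(z) = \sum_{\gamma \in r^{-1}(q(z))} f(\gamma)\tilde g(\gamma^{-1}z)$ is locally finite and the result is continuous; moreover, it vanishes outside $q^{-1}(r(\supp f))$, so $f \ast \tilde g \in C_c(Z,q) \subseteq C_0(Z,q)$. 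Restriction to $Y$, combined with equivariance of the inclusion, gives $(f \ast \tilde g)|_Y = f \ast g$, whence $f \ast g \in \mathcal{A}_Z$.

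For the reverse implication, assume $\mathcal{A}$ is convolution-stable, and let $(Z,q)$ be the associated fibrewise compactification. I would build the extended action by patching local bisections: every open bisection $U \subseteq \G$ yields homeomorphisms $r_U\colon U \to r(U)$ and $s_U\colon U \to s(U)$, and hence a partial homeomorphism $\alpha_U$ of $Y$ carrying $p^{-1}(s(U))$ to $p^{-1}(r(U))$. For $f \in C_c(U)$ and $g \in \mathcal{A}$, the convolution $f \ast g$ localises as $f \ast g = (f\circ r_U^{-1}) \cdot (g \circ \alpha_U^{-1})$ on $p^{-1}(r(U))$. Varying $f$ through bump functions approximating indicators of compact subsets of $U$ (and using that $p^*C_0(X) \subseteq \mathcal{A}$), convolution-stability forces $g \circ \alpha_U^{-1}$ to lie in the localisation $\mathcal{A}|_{p^{-1}(r(U))}$. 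Dually, $\alpha_U$ extends to a partial homeomorphism of $Z$. Since \'etaleness lets open bisections cover $\G$ and the partial extensions agree on overlaps by construction (on the dense subset $Y$ they coincide with $\alpha_U$), they glue to a continuous action map $\G \tensor*[_s]{\ast}{_q} Z \to Z$; the groupoid-action axioms propagate from $Y$ by continuity and density.

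The main obstacle is the reverse implication, precisely the step of promoting convolution-with-bumps to a bona fide pointwise partial action on the Gelfand spectrum $Z$. Care is needed in showing that the induced $*$-homomorphism $\alpha_U^*\colon \mathcal{A}|_{p^{-1}(r(U))} \to \mathcal{A}|_{p^{-1}(s(U))}$ is independent of the approximating bump and continuous, and that the resulting partial homeomorphisms of $Z$ are compatible as $U$ varies (equivalently, that non-Hausdorff or overlap phenomena in $\G$ cause no mismatch). The \'etale hypothesis is essential here: only then does $\G$ admit a covering by open bisections, allowing convolution operators to be decomposed into honest partial translations that can be dualised to the spectrum.
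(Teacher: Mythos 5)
The paper offers no proof of this statement: it is imported verbatim, with citation, as Corollary 2.6 of Anantharaman-Delaroche's work on fibrewise compactifications, so there is no in-paper argument to compare against. Your strategy --- reduce to the non-equivariant correspondence of Proposition \ref{fw cmpt} and then show that extendability of the $\G$-action to the spectrum is equivalent to convolution-stability, localising the convolution along open bisections and dualising the resulting partial translations --- is essentially the route taken in the cited source, and both directions as you sketch them are viable; the forward direction in particular is complete as written.

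One point to handle carefully in the reverse direction's gluing step: the paper itself warns (after Proposition \ref{prop: universal property}) that a fibre $Y^x$ need not be dense in the corresponding fibre of a fibrewise Stone--\v{C}ech-type compactification, so the action axioms and the compatibility of the extensions cannot be ``propagated by density'' fibre by fibre. They can, however, be propagated over the open sets $q^{-1}(s(U))$ and $q^{-1}(s(U\cap V))$ of $Z$, where $Y$ genuinely is dense simply because it is open and dense in $Z$; since on $p^{-1}(s(U\cap V))$ the partial homeomorphisms $\alpha_U$, $\alpha_V$ both coincide with $\alpha_{U\cap V}$, their continuous extensions agree on $q^{-1}(s(U\cap V))$ by Hausdorffness, and associativity is checked by comparing $\tilde\alpha_{U_2}\circ\tilde\alpha_{U_1}$ with $\tilde\alpha_{U_2U_1}$ over open subsets of the base. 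Your bisection-based construction accommodates this, but the phrase ``propagate from $Y$ by continuity and density'' should be implemented at the level of bisections rather than individual fibres; as stated it would be circular in exactly the situation of the cited Example 2.3.
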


By the above proposition, it is easy to check that for a $\G$-space $(Y,p)$ where $\G$ is a locally compact and \'{e}tale groupoid, the fibrewise Alexandroff compactification $(Y^+,p^+)$ and the fibrewise Stone-\v{C}ech compactification $(\beta_p Y,p_\beta)$ are $\G$-equivariant. As before we still have the following universal property:

\begin{Proposition}[Proposition 2.8, \cite{anantharaman2014fibrewise}]
	Let $\G$ be a locally compact and \'{e}tale groupoid, $(Y,p)$ and $(Y_1,p_1)$ be two $\G$-spaces. Suppose $(Y_1,p_1)$ is fibrewise compact, and $\varphi:Y \to Y_1$ is a $\G$-equivariant morphism. Then the unique morphism $\Phi: (\beta_p Y,p_\beta) \to (Y_1,p_1)$ extending $\varphi$ is also $\G$-equivariant.
\end{Proposition}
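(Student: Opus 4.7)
The plan is to reduce the $\G$-equivariance of $\Phi$ to a standard density argument: I will exhibit two continuous maps from the fibred product $\G \tensor*[_s]{\ast}{_{p_\beta}} \beta_p Y$ to $Y_1$ which visibly agree on the dense subset $\G \tensor*[_s]{\ast}{_p} Y$, and conclude that they are equal everywhere by Hausdorffness of $Y_1$. The existence and uniqueness of $\Phi$ extending $\varphi$ are already given by Proposition \ref{prop: universal property}; only equivariance remains to be checked.

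Concretely, I would define
$$\Phi_1(\gamma,y) := \Phi(\gamma y) \quad\text{and}\quad \Phi_2(\gamma,y) := \gamma \Phi(y)$$
on the fibred product $\G \tensor*[_s]{\ast}{_{p_\beta}} \beta_p Y$. Both are well-defined and continuous: $\Phi_1$ because $\beta_p Y$ is itself a $\G$-space (as noted in the paragraph preceding the statement), so the action $\G \tensor*[_s]{\ast}{_{p_\beta}} \beta_p Y \to \beta_p Y$ is continuous, and $\Phi$ is continuous; $\Phi_2$ because $p_1\circ\Phi=p_\beta$ (since $\Phi$ is a fibre-space morphism) ensures that $(\gamma,\Phi(y))$ lies in $\G\tensor*[_s]{\ast}{_{p_1}} Y_1$, and composition with the continuous $\G$-action on $Y_1$ is again continuous. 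Restricted to the subset $\G \tensor*[_s]{\ast}{_p} Y$, one has $\Phi|_Y=\varphi$, which is $\G$-equivariant, so $\Phi_1$ and $\Phi_2$ coincide there.

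It remains to show that $\G \tensor*[_s]{\ast}{_p} Y$ is dense in $\G \tensor*[_s]{\ast}{_{p_\beta}} \beta_p Y$; this is the main step, and the only place where \'{e}taleness enters decisively. Given $(\gamma_0,y_0)$ in the latter and a basic open neighborhood $U \times V$ of it, I shrink $U$ (using the local homeomorphism property of $s$) so that $s|_U : U \to s(U)$ is a homeomorphism onto an open subset of $\Gz$, then shrink $V$ (using continuity of $p_\beta$) so that $p_\beta(V) \subseteq s(U)$. Density of $Y$ in $\beta_p Y$ yields $y \in V \cap Y$; then $p(y)=p_\beta(y)\in s(U)$, so there is a unique $\gamma \in U$ with $s(\gamma)=p(y)$, producing $(\gamma,y) \in (U\times V)\cap (\G\tensor*[_s]{\ast}{_p} Y)$.

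Finally, since $Y_1$ is Hausdorff, the equalizer $\{(\gamma,y) : \Phi_1(\gamma,y)=\Phi_2(\gamma,y)\}$ is closed in the fibred product, contains the dense subset $\G\tensor*[_s]{\ast}{_p} Y$, and hence is the whole space. This gives $\Phi(\gamma y)=\gamma\Phi(y)$ on all of $\G\tensor*[_s]{\ast}{_{p_\beta}}\beta_p Y$, which is precisely the $\G$-equivariance of $\Phi$. The main obstacle is the density step: naively one might worry about the pathology flagged just before the statement (the fibre $Y^x$ need not be dense in $p_\beta^{-1}(x)$), but this fibrewise difficulty is sidestepped in the fibred product because \'{e}taleness allows us to freely adjust the groupoid coordinate $\gamma$ to match any unit close to $p_\beta(y_0)=s(\gamma_0)$.
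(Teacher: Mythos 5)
Your argument is correct. Note that the paper only \emph{cites} this proposition from \cite{anantharaman2014fibrewise} and does not reproduce a proof, so there is no in-paper argument to compare against; judged on its own, your proof is complete and self-contained. The two maps $\Phi_1$ and $\Phi_2$ are well defined and continuous for exactly the reasons you give: $(\beta_p Y,p_\beta)$ carries a continuous $\G$-action extending that on $Y$ (this is the equivariance of the Stone--\v{C}ech fibrewise compactification recorded after Proposition \ref{prop:equiv fibrewise cmptf}), and $p_1\circ\Phi=p_\beta$ places $(\gamma,\Phi(y))$ in $\G \tensor*[_s]{\ast}{_{p_1}} Y_1$. They agree on $\G \tensor*[_s]{\ast}{_p} Y$ by the equivariance of $\varphi$ together with $\Phi|_Y=\varphi$. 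The density step is the genuine content, and you execute it correctly: shrinking $U$ to a bisection via \'{e}taleness and then lifting any $y\in V\cap Y$ through $(s|_U)^{-1}$ produces the required point, and this sidesteps the pathology of \cite[Example 2.3]{anantharaman2014fibrewise} precisely because the unit $p(y)$ is free to move within $s(U)$ rather than being pinned to the single fibre over $s(\gamma_0)$. The closed-equalizer conclusion then uses only that $Y_1$ is Hausdorff, which is the paper's standing convention for locally compact spaces.
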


As in Proposition \ref{prop:equiv fibrewise cmptf}, the Stone-\v Cech and the Alexandroff fiberwise compactifications are the ``largest'' and the ``smallest'' equivariant fiberwise compactification of a fiber space being acted upon by an \'{e}tale groupoid. Here is an example of yet another natural equivariant fiberwise compactification.

\begin{Example}\label{Higsonfiberwiseexample}
	Let $\G$ be a locally compact and \'{e}tale groupoid. Let $C_h(\G)$ denote the set of bounded continuous functions $f$ on $\G$ such that for every $\epsilon >0$ and every compact subset $K\subset \G$, there exists a compact subset $\hat{K}\subset \G$ such that if $g,h\in \G\setminus \hat{K}$ with $r(g) = r(g)$ and $gh^{-1} \in K$, then we have $|f(g)-f(h)|\leq \epsilon$. Elements in $C_h(\G)$ are called the \emph{Higson functions on $\G$}. The readers can verify that $C_h(\G)$ is a closed subalgebra of $C_0(\G,r)$ that contains $r^*C_0(\Gz)+C_0(\G)$, and is stable under convolution. Hence it determines a $\G$-equivariant fiberwise compactification of $\G$, denote by $h\G$ and called the \emph{Higson fiberwise compactification} of $\G$. It is our intention of further investigating the index theory of a groupoid acting on its Higson fiberwise compactifications.
\end{Example}

\subsection{Groupoid $C^*$-algebras}\label{groupoid C algebra prlm}
Here we recall some $C^*$-algebras associated to a groupoid $\G$ with a (right) Haar system $\{\mu_x: x \in \Gz\}$. First note that the space $C_c(\G)$ is a $\ast$-involutive algebra with respect to the following operations: for $f,g \in C_c(\G)$,
\begin{eqnarray*}
	(f \ast g)(\gamma) &=& \int_{\alpha \in \G_{s(\gamma)}} f(\gamma \alpha^{-1}) g(\alpha) \d \mu_{s(\gamma)}(\alpha), \\
	f^*(\gamma) &=& \overline{f(\gamma^{-1})}.
\end{eqnarray*}
In the special case that $\G$ is \'{e}tale with the Haar system consisting of counting measures on each fibre, the above convolution can be simplified as follows:
\begin{equation*}
(f \ast g)(\gamma) = \sum_{\alpha \in \G_{s(\gamma)}} f(\gamma \alpha^{-1}) g(\alpha).
\end{equation*}
Consider the following algebraic norm on $C_c(\G)$ defined by:
$$\|f\|_I:=\max\Big\{\sup_{x \in \Gz} \int |f| \d \mu_x, \sup_{x \in \Gz} \int |f^*| \d \mu_x\Big\}.$$
The completion of $C_c(\G)$ with respect to the norm $\|\cdot\|_I$ is denoted by $L^1(\G)$.

The \emph{maximal (full) groupoid $C^*$-algebra} $C^*_{\max}(\G)$ is defined to be the completion of $C_c(\G)$ with respect to the norm
$$\|f\|_{\max}:=\sup \|\pi(f)\|,$$
where the supremum is taken over all contractive $\ast$-representations $\pi$ of $L^1(\G)$.

In order to define the reduced counterpart, we recall that for each $x \in \Gz$ the \emph{regular representation at $x$}, denoted by $\lambda_x: C_c(\G) \to \B(L^2(\G_x; \mu_x))$, is defined as follows:
\begin{equation*}
\big(\lambda_x(f)\xi\big)(\gamma):=\int_{\alpha \in \G_x} f(\gamma \alpha^{-1})\xi(\alpha) \d \mu_x(\alpha), \quad \mbox{where}~ f \in C_c(\G)\mbox{~and~}\xi \in L^2(\G_x; \mu_x).
\end{equation*}
Again in the special case that $\G$ is \'{e}tale with the Haar system consisting of counting measures on each fibre, the regular representation $\lambda_x: C_c(\G) \to \B(\ell^2(\G_x))$ can be simplified as follows:
\begin{equation}\label{reduced algebra defn}
\big(\lambda_x(f)\xi\big)(\gamma):=\sum_{\alpha \in \G_x} f(\gamma \alpha^{-1})\xi(\alpha), \quad \mbox{where}~ f \in C_c(\G)\mbox{~and~}\xi \in \ell^2(\G_x).
\end{equation}
It is routine work to check that $\lambda_x$ is a well-defined $\ast$-homomorphism. The \emph{reduced norm} on $C_c(\G)$ is
$$\|f\|_r:=\sup_{x \in \Gz} \|\lambda_x(f)\|,$$
and the \emph{reduced groupoid $C^*$-algebra} $C^*_r(\G)$ is defined to be the completion of the $\ast$-algebra $C_c(\G)$ with respect to this norm. Clearly, each regular representation $\lambda_x$ can be extended to a homomorphism $\lambda_x: C^*_r(\G) \to \B(L^2(\G_x;\mu_x))$ automatically. It is also a routine work to check that there is a canonical surjective homomorphism from $C^*_{\max}(\G)$ to $C^*_r(\G)$.

Now we discuss an alternative way to define the reduced groupoid $C^*$-algebra $C^*_r(\G)$ in terms of the Hilbert module language as follows. (For those who are not familiar with Hilbert modules, we refer to \cite{lance1995hilbert}.) Let $L^2(\G)$ be the Hilbert module over $C_0(\Gz)$ obtained by taking completion of $C_c(\G)$ with respect to the $C_0(\Gz)$-valued inner product
$$\langle \xi,\eta \rangle(x):=\int_{\gamma\in \G_x} \overline{\xi(\gamma)} \eta(\gamma) \d\mu_x(\gamma),$$
and the right $C_0(\Gz)$-module structure is given by
$$(\xi f)(\gamma):=\xi(\gamma) f(s(\gamma)).$$
Denote $\B(L^2(\G))$ the $C^*$-algebra of all adjointable operators on the Hilbert module $L^2(\G)$.

Note that all the regular representations $\lambda_x$ defined in (\ref{reduced algebra defn}) can be put together to a single representation $\Lambda: C_c(\G) \to \B(L^2(\G))$ by the formula:
$$((\Lambda f)\xi)(\gamma):=\int_{\alpha \in \G_{s(\gamma)}} f(\gamma \alpha^{-1})\xi(\alpha) \d\mu_{s(\gamma)}(\alpha)=((\lambda_{s(\gamma)}f)\xi|_{\G_{s(\gamma)}})(\gamma).$$
And it is easy to check that for $f \in C_c(\G)$, we have $\|f\|_r=\|\Lambda(f)\|_{\B(L^2(\G))}$. Therefore, $\Lambda$ can be extended to a faithful representation $\Lambda: C^*_r(\G) \to \B(L^2(\G))$, which is called the \emph{regular representation}.

\subsection{Amenability}
Amenable groupoids comprise a large class of groupoids with relatively nice properties, and they are the central objects of our paper. Literally, they are the analogue of amenable groups in the world of groupoids. However unlike the case of groups, there are different versions of amenability (for example measurewise, topological and Borel amenabilities) which might \emph{not} be equivalent for general groupoids. Fortunately, they behave quite well under the restriction of the \'{e}taleness. Here we mainly focus on topological amenability. A standard reference is \cite{ananth-delaroche-renault} and another reference for just \'{e}tale groupoids is \cite[Chapter 5.6]{brown-ozawa}.

\begin{Definition}\label{amenable}
	Recall that a locally compact groupoid $\G$ is \emph{topologically amenable} if there exists a \emph{topological approximate invariant mean}, i.e. a sequence $\{m^{(n)}\}_{n \in \mathbb{N}}$ of families of positive and finite Radon measures $m^{(n)} = \{m^{(n)}_x: x\in \Gz\}$ satisfying
	\begin{enumerate}
		\item $\|m^{(n)}_x\|_1\leq 1$ and $m^{(n)}_x(\G\setminus s^{-1}(x)) = 0$, for all $x\in \Gz$ and $n \in \mathbb{N}$;
		\item for all $n \in \mathbb{N}$ and $f\in C_c(\G)$, the function $x \mapsto \int f \d m^{(n)}_x$ is continuous on $\Gz$;
		\item $\|m^{(n)}_x\|_1\to 1$ as $n \to \infty$, uniformly on any compact subset of $\Gz$;
		\item $\|m^{(n)}_{s(\gamma)} - \gamma m^{(n)}_{r(\gamma)} \|_1 \to 0$ as $n \to \infty$, uniformly on any compact subset of $\G$.
	\end{enumerate}
\end{Definition}

Note that for a locally compact groupoid equipped with a Haar system of measures, topological amenability can also be characterised in terms of topological approximate invariant densities. We omit the details, and guide the readers to \cite[Proposition 2.2.13]{ananth-delaroche-renault}.

\begin{Proposition}[Corollary 5.6.17, Theorem 5.6.18, \cite{brown-ozawa}]\label{etale amen}
	Let $\G$ be a locally compact and \'{e}tale groupoid. Then $\G$ is topologically amenable \emph{if and only if} the reduced groupoid $C^*$-algebra $C^*_r(\G)$ is nuclear. In this case, the natural quotient $C^*_{\max}(\G) \to C^*_r(\G)$ is an isomorphism.
\end{Proposition}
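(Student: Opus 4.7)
The statement to be proved is the equivalence, for a locally compact étale groupoid $\G$, between topological amenability and nuclearity of $C_r^*(\G)$, together with the collapse $C^*_{\max}(\G)\cong C^*_r(\G)$ when either (equivalently both) holds. The plan is to treat the forward implication and the collapse of the norms together via explicit c.p. approximations built from an approximate invariant mean, and then handle the reverse implication by extracting a mean from a completely positive approximation.

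For the direction \emph{amenable $\Rightarrow$ nuclear} together with the maximal/reduced coincidence, I would start from a topological approximate invariant mean $\{m^{(n)}_x\}$ as in Definition \ref{amenable}. Using étaleness, each $m^{(n)}_x$ is an $\ell^1$-function supported on the discrete fiber $\G_x$, so one can replace it by the pointwise square root to obtain a family of elements $\xi_n \in C_c(\G)$ with $\xi_n \geq 0$, uniformly bounded $\ell^2$-norm on each fiber, and $\|\xi_n^*\xi_n - (\gamma^{-1})\cdot\xi_n^*\xi_n\|\to 0$ on compacta. For each $n$ pick a finite open cover $\{U_i\}$ of $\supp(\xi_n)$ by bisections on which $\xi_n$ restricts to continuous functions on $\Gz$, truncate by a partition of unity, and use these pieces to build completely positive contractive maps
\[
\phi_n:C_r^*(\G)\longrightarrow M_{k_n}(\CC),\qquad \psi_n:M_{k_n}(\CC)\longrightarrow C_r^*(\G),
\]
in the spirit of the Stinespring-type construction for crossed products. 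The near-invariance of $\xi_n$ translates directly into $\|\psi_n\circ\phi_n(a)-a\|\to 0$ on generators $f\in C_c(\G)$; density yields the CPAP on $C_r^*(\G)$, hence nuclearity. The same $\xi_n$ yield a net of states $\omega_n(\cdot) = \langle \Lambda(\cdot)\xi_n,\xi_n\rangle$ on $C_r^*(\G)$ that are asymptotically tracial on $C_c(\G)$ and lift to states on $C^*_{\max}(\G)$ agreeing on $C_c(\G)$ with the corresponding states built at the maximal level; a standard Fell-absorption argument then shows the quotient map $C^*_{\max}(\G)\to C^*_r(\G)$ is isometric on $C_c(\G)$, and hence an isomorphism.

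For the harder direction, \emph{nuclearity $\Rightarrow$ amenability}, I would exploit the regular representation $\Lambda:C_r^*(\G)\to\mathcal{B}(L^2(\G))$ and the $C_0(\Gz)$-module structure recalled in Section \ref{groupoid C algebra prlm}. Nuclearity supplies a net of c.p. factorizations through matrix algebras that converge to the identity in point-norm. By averaging these factorizations against an approximate unit of $C_c(\Gz)$ regarded as multipliers, and using Stinespring dilation of each $\phi_n$, one produces finite rank positive elements $k_n\in C_c(\G)\otimes C_c(\G)$ whose diagonal restrictions $d_n(\gamma)=k_n(\gamma,\gamma)$ define a family of compactly supported densities $m^{(n)}$ on each fiber $\G_x$. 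The c.p. approximation condition, evaluated on positive functions in $C_c(\G)$ supported near $\Gz$, forces $\|m^{(n)}_x\|_1\to 1$ uniformly on compacta; evaluated on functions twisted by a fixed $\gamma$, it forces the invariance $\|m^{(n)}_{s(\gamma)}-\gamma m^{(n)}_{r(\gamma)}\|_1\to 0$ uniformly on compact subsets of $\G$. This is the genuinely difficult step — extracting positive kernels with the precise continuity and invariance demanded by Definition \ref{amenable} — and the main technical obstacle will be arranging the continuity condition (ii), which I would handle by smoothing the diagonals with a continuous partition of unity subordinate to a cover of $\Gz$ by relatively compact open bisections.

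Finally, once both implications are in hand, the isomorphism $C^*_{\max}(\G)\cong C^*_r(\G)$ under amenability is already delivered by the Fell absorption step of the forward direction. The main conceptual obstacle is the second implication above; the first and the max/red collapse are essentially parallel applications of the same approximate mean, and I expect the étale hypothesis — which reduces all fiberwise integrals to sums and makes the support of $\xi_n$ a genuine finite subset of each $\G_x$ — to carry the bookkeeping through without the measurability complications one sees in the general Haar-system case.
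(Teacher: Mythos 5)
The paper does not prove this statement: it is quoted verbatim from Brown--Ozawa (Corollary 5.6.17 and Theorem 5.6.18 of \cite{brown-ozawa}) and used as a black box, so there is no in-paper argument to compare yours against. Judged on its own terms, your outline of the forward direction is broadly the standard one: \'{e}taleness lets you replace the mean $m^{(n)}$ by fiberwise $\ell^2$-vectors $\xi_n$, and the compactly supported positive-type functions $h_n(\gamma)=\langle \xi_n,\gamma\xi_n\rangle$ do the work. However, the mechanism you propose for $C^*_{\max}(\G)\cong C^*_r(\G)$ --- asymptotically tracial states plus ``Fell absorption'' --- is not how this goes and would not suffice: traciality plays no role here, and agreement of a net of states on $C_c(\G)$ does not give isometry of the quotient map. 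What one actually uses is that each $h_n$ induces a completely positive Schur-type multiplier on $C^*_{\max}(\G)$ which factors through $C^*_r(\G)$ (because $h_n$ is a coefficient of the regular representation) and converges pointwise in norm to the identity; injectivity of the quotient follows.

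The genuine gap is in the converse, which you yourself flag as the hard step. Producing ``finite rank positive elements $k_n$'' from a nuclear factorization and taking diagonal restrictions does not, as stated, yield condition (4) of Definition \ref{amenable}: the approximation $\psi_n\circ\phi_n\to\mathrm{id}$ controls $\|\psi_n\phi_n(f)-f\|$ for $f\in C_c(\G)$, and converting that into the $\ell^1$-almost-invariance $\|m^{(n)}_{s(\gamma)}-\gamma m^{(n)}_{r(\gamma)}\|_1\to 0$ uniformly on compacta requires, first, passing through the canonical conditional expectation $E:C^*_r(\G)\to C_0(\Gz)$ and a Stinespring dilation to extract genuine continuous positive-type functions supported in tubes, and second, a convexity argument (Day's trick) to upgrade the resulting weak-type invariance to norm invariance uniform on compact sets. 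Neither appears in your sketch, and the sentence ``evaluated on functions twisted by a fixed $\gamma$, it forces the invariance'' is precisely the assertion that needs proof rather than a consequence of what precedes it. Since this is a textbook theorem that the paper deliberately imports, the efficient course is to cite it as the authors do rather than reprove it.
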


\begin{Example}\label{group ex prlm}
	A discrete group $G$ can be regarded as a groupoid with the unit space consisting of a single point. In this case, topological amenability is nothing but the classic notion of amenability for a group.
	
	Now suppose a discrete group $G$ acts on a compact space $X$ by homeomorphisms. As discussed at the beginning of Section \ref{groupoid action preliminary subsection}, we consider the transformation groupoid $X \rtimes G$. It is not hard to check directly by definition that $X \rtimes G$ is topologically amenable \emph{if and only if} the action is amenable (see for example \cite{brown-ozawa}). And it also follows directly by definition that a group $G$ is amenable \emph{if and only if} the action induced by the left multiplication on its Alexandroff one point compactification is amenable. Finally recall from \cite[Theorem 5.1.7]{brown-ozawa} that for a discrete group $G$, the following are equivalent:
	\begin{itemize}
		\item $G$ is exact;
		\item The action induced by the left multiplication of $G$ on its Stone-\v{C}ech compactification $\beta G$ is amenable;
		\item $G$ acts amenably on some compact Hausdorff topological space.
	\end{itemize}
\end{Example}

\begin{Example}
	Given a discrete metric space $X$ with bounded geometry, as introduced in \cite{Skandalis-Tu-Yu}, we may associate the coarse groupoid $G(X)$ (see Section \ref{metric space ex} for more details). It is shown in \cite[Theorem 10.29]{RoeLectures} that the reduced groupoid $C^*$-algebra $C^*(G(X))$ is isomorphic to the uniform Roe algebra $C^*_u(X)$. Therefore from Proposition \ref{etale amen} and \cite[Theorem 5.5.7]{brown-ozawa}, amenability of the coarse groupoid $G(X)$ is equivalent to the space $X$ having Property A.
\end{Example}

Finally, we discuss briefly on the amenability of groupoid actions and the notion of exactness for groupoids. A full detailed discussion is provided in \cite{Ananth-Delaroche--ExactGroupoids}.

Given a locally compact groupoid $\G$ and a $\G$-space $(Y,p)$. Following \cite[Definition 2.5]{Ananth-Delaroche--ExactGroupoids}, we say that the action is \emph{amenable}, or $Y$ is an \emph{amenable} $\G$-space, if the associated semi-direct product $Y \rtimes \G$ is topologically amenable. By \cite[Corollary 2.2.10]{ananth-delaroche-renault}, if $\G$ is an amenable locally compact groupoid, then for every $\G$-space the action is amenable. As in the group case, an \'{e}tale groupoid $\G$ is amenable \emph{if and only if} the action of $\G$ on its fibrewise Alexandroff compactification $\G_r^+$ is amenable (see \cite[Proposition 3.3]{Ananth-Delaroche--ExactGroupoids}).

In the group case as we see in Example \ref{group ex prlm}, amenable actions of groups on compact spaces have close relation with the exactness of the given group. Unfortunately, things become complicated for general groupoids. We start with the following definition.
\begin{Definition}[\cite{Ananth-Delaroche--ExactGroupoids}]\label{strongly amenable at infinity}
	We say that a locally compact groupoid $\G$ is \emph{amenable at infinity} if there is an amenable $\G$-space $(Y,p)$ such that $Y$ is fibrewise compact. If the space $(Y,p)$ can be chosen to be the fibrewise Stone-\v{C}ech compactification $(\beta_r \G, r_\beta)$ and the action is induced by the left multiplication, then we say the groupoid $\G$ is \emph{strongly amenable at infinity}.
\end{Definition}

\begin{Definition}[\cite{Ananth-Delaroche--ExactGroupoids}]\label{defn:C*-exact}
	A locally compact groupoid $\G$ with a Haar system of measures is called \emph{$C^*$-exact} if the reduced groupoid $C^*$-algebra $C^*_r(\G)$ is exact.
\end{Definition}

\begin{Proposition}[\cite{Ananth-Delaroche--ExactGroupoids}, Corollary 6.4]\label{exact prop 1}
	Let $\G$ be a locally compact, second countable and \'{e}tale groupoid. We consider the following:
	\begin{enumerate}
		\item $\G$ is strongly amenable at infinity;
		\item $\G$ is amenable at infinity;
		\item $\beta_r \G \rtimes \G$ is topologically amenable;
		\item $\G$ is $C^*$-exact.
	\end{enumerate}
	Then we have (1) $\Rightarrow$ (2) $\Rightarrow$ (4), and (1) $\Rightarrow$ (3) $\Rightarrow$ (4).
\end{Proposition}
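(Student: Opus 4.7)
The plan is to reduce everything to a single isometric $*$-embedding: whenever $(Y,p)$ is a fibrewise compact $\G$-space, I will construct $\iota: C^*_r(\G) \hookrightarrow C^*_r(Y \rtimes \G)$. With such an embedding in hand, the proposition falls out quickly, since $(1) \Rightarrow (2)$ and $(1) \Rightarrow (3)$ are essentially tautologies from Definition \ref{strongly amenable at infinity}, while $(2) \Rightarrow (4)$ and $(3) \Rightarrow (4)$ reduce to the same argument, the latter being the special case $Y = \beta_r \G$.

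For the easy implications: the fibrewise Stone-\v{C}ech compactification $(\beta_r\G, r_\beta)$ is fibrewise compact by construction, which gives $(1) \Rightarrow (2)$; and $(1) \Rightarrow (3)$ is the definition of strong amenability at infinity rephrased through Definition \ref{amenable} (the action on $\beta_r\G$ being amenable is precisely topological amenability of $\beta_r\G \rtimes \G$). So the substance lies in producing the embedding and using it.

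For the embedding, I would define $\iota: C_c(\G) \to C_c(Y \rtimes \G)$ by $\iota(f)(y, \gamma) := f(\gamma)$. Fibrewise compactness of $(Y,p)$, i.e. properness of $p$, ensures $\iota(f)$ has compact support whenever $f$ does. A direct manipulation of the \'{e}tale convolution formulas shows $\iota$ is a $*$-homomorphism. The isometric property comes from unwinding the regular representation at $y \in Y$: identifying $(Y \rtimes \G)_y$ with $\G_{p(y)}$ via $(\gamma y, \gamma) \leftrightarrow \gamma$, one checks that $\lambda_y \circ \iota = \lambda_{p(y)}$ as representations. Taking suprema and using surjectivity of $p$ gives
$$\|\iota(f)\|_r = \sup_{y \in Y} \|\lambda_{p(y)}(f)\| = \sup_{x \in \Gz} \|\lambda_x(f)\| = \|f\|_r,$$
so $\iota$ extends to an isometric $*$-embedding on the reduced $C^*$-algebras.

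Once that is in place, for $(2) \Rightarrow (4)$ the hypothesis gives an amenable fibrewise compact $\G$-space $(Y,p)$, so $Y \rtimes \G$ is a topologically amenable \'{e}tale groupoid, and Proposition \ref{etale amen} makes $C^*_r(Y \rtimes \G)$ nuclear and hence exact; since exactness passes to $C^*$-subalgebras, $C^*_r(\G)$ is exact, which is $(4)$. The main obstacle I anticipate is the verification that $\lambda_y \circ \iota = \lambda_{p(y)}$ under the identification above: the calculation itself is short, but it requires careful bookkeeping with the source and range conventions of $Y \rtimes \G$, in particular the identification of the unit space with $Y$ and the formula $\mathbf{s}(y,\gamma) = \gamma^{-1}y$. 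Second countability plays no visible role in this plan, so if it is used at all, it should be hidden inside the cited facts about \'{e}tale amenability and nuclearity.
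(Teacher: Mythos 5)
Your proposal is correct, but note that the paper does not prove this statement at all: it is imported verbatim as Corollary~6.4 of the cited work of Anantharaman-Delaroche, so there is no in-paper argument to compare against. What you have written is essentially a reconstruction of the proof from that reference. The two ``easy'' implications are indeed definitional under this paper's conventions: strong amenability at infinity is, by Definition~\ref{strongly amenable at infinity} together with the convention that an action is amenable iff the semi-direct product is topologically amenable, literally the statement that $\beta_r\G\rtimes\G$ is topologically amenable, which gives (1)~$\Rightarrow$~(3), and $(\beta_r\G,r_\beta)$ is fibrewise compact, which gives (1)~$\Rightarrow$~(2). Your key lemma --- the isometric $*$-embedding $\iota\colon C^*_r(\G)\hookrightarrow C^*_r(Y\rtimes\G)$ via $\iota(f)(y,\gamma)=f(\gamma)$ --- checks out: properness of $p$ gives compact supports, the identification $(Y\rtimes\G)_y\cong\G_{p(y)}$ via $(\alpha y,\alpha)\mapsto\alpha$ intertwines $\lambda_y\circ\iota$ with $\lambda_{p(y)}$ (the computation $(y,\gamma)(\alpha y,\alpha)^{-1}=(y,\gamma\alpha^{-1})$ is the only place the source/range conventions matter, and it works), and surjectivity of $p$ then yields $\|\iota(f)\|_r=\|f\|_r$. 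Combined with the fact, already recorded in the paper, that $Y\rtimes\G$ is \'etale when $\G$ is, Proposition~\ref{etale amen} gives nuclearity of $C^*_r(Y\rtimes\G)$, and exactness passes to $C^*$-subalgebras, which closes (2)~$\Rightarrow$~(4) and (3)~$\Rightarrow$~(4) simultaneously. Your closing remark about second countability is also apt: it is not used in this direction of the argument (and indeed $\beta_r\G\rtimes\G$ is typically not second countable, consistent with the paper's own unrestricted use of Proposition~\ref{etale amen}).
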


The converse direction of the above proposition is also discussed in \cite{Ananth-Delaroche--ExactGroupoids}. In order to state their result, we need an extra notion as follows:
\begin{Definition}[\cite{Ananth-Delaroche--ExactGroupoids}, Definition 4.2]\label{defn:weakly inner amenability}
	A locally compact groupoid $\G$ is \emph{weakly inner amenable} if for any compact subset $K \subseteq \G$ and any $\varepsilon>0$, there exists a continuous bounded positive definite function $f$ on the product groupoid $\G \times \G$, properly supported, such that $|f(\gamma,\gamma)-1|<\varepsilon$ for all $\gamma \in K$.
\end{Definition}

\begin{Proposition}[\cite{Ananth-Delaroche--ExactGroupoids}, Theorem 7.6]\label{exact prop 2}
	Let $\G$ be a second countable weakly inner amenable \'{e}tale groupoid. Then conditions (1) to (4) in Proposition \ref{exact prop 1} are all equivalent.
\end{Proposition}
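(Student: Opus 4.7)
In view of Proposition \ref{exact prop 1}, we already have the implications (1) $\Rightarrow$ (2) $\Rightarrow$ (4) and (1) $\Rightarrow$ (3) $\Rightarrow$ (4) without invoking weak inner amenability. The plan is therefore to establish (4) $\Rightarrow$ (1) under the weak inner amenability hypothesis; this will close the cycle and yield equivalence of all four conditions at once.

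The main idea is to extract, from $C^*$-exactness of $C^*_r(\G)$, a net of positive type functions approximating the constant function $1$, and then to promote these functions to \emph{properly supported} positive type functions using weak inner amenability. Concretely, I would first apply the standard characterisation of exactness for reduced groupoid $C^*$-algebras via completely positive approximations of the identity: exactness yields, for every compact $K \subseteq \G$ and every $\varepsilon > 0$, a compactly supported continuous positive type function $\psi$ on $\G$ with $|\psi(\gamma) - 1| < \varepsilon$ uniformly for $\gamma \in K$. The obstruction to passing directly from this to an amenable action on $\beta_r \G$ is that these $\psi$ need not be properly supported with respect to the range/source fibration, and without proper support one cannot produce an approximate invariant mean on a fibrewise compactification.

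This is where Definition \ref{defn:weakly inner amenability} enters: weak inner amenability provides, for any compact $K \subseteq \G$ and $\varepsilon > 0$, a properly supported continuous bounded positive-definite function $f$ on $\G \times \G$ with $|f(\gamma,\gamma) - 1| < \varepsilon$ on $K$. The key technical step is to combine $f$ with the exactness-provided $\psi$ — for instance, by a suitable Schur-type product along the diagonal of $\G \times \G$ followed by restriction — to obtain a new continuous positive type function $\tilde\psi$ on $\G$ which is properly supported (inheriting this from $f$) and still satisfies $|\tilde\psi(\gamma) - 1| < 2\varepsilon$ on $K$. Second countability allows a countable exhaustion of $\G$ by compact sets and a diagonal argument to assemble these into a sequence rather than a net.

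The main obstacle, and the technical heart of the argument, is verifying that such a combination actually produces proper support while preserving the approximation of $1$ uniformly on compacta; this requires careful bookkeeping of supports under Schur products and is where second countability of $\G$ is genuinely needed. Once this is achieved, the resulting properly supported approximately invariant positive type functions on $\G$ translate, via the standard equivalence between positive type approximations and topological approximate invariant densities (cf.\ Proposition 2.2.13 of \cite{ananth-delaroche-renault}), into a topological approximate invariant mean for the semi-direct product $\beta_r \G \rtimes \G$ in the sense of Definition \ref{amenable}. This realises the action of $\G$ on its fibrewise Stone-\v{C}ech compactification as amenable, yielding strong amenability at infinity and closing the loop.
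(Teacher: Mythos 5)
First, a remark on scope: the paper does not prove this proposition at all --- it is quoted verbatim from \cite{Ananth-Delaroche--ExactGroupoids} (Theorem 7.6), so there is no in-paper argument to compare yours against. I will therefore assess your sketch on its own terms, against what the cited proof actually has to accomplish.

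Your overall architecture (close the cycle by proving (4) $\Rightarrow$ (1), extract approximation data from $C^*$-exactness, and use weak inner amenability to repair proper supportedness via a Schur-type product) is the right shape and is in the spirit of the cited argument. However, there is a genuine error at the key intermediate step. You claim that exactness of $C^*_r(\G)$ yields, for every compact $K \subseteq \G$ and $\varepsilon > 0$, a compactly supported continuous positive type function $\psi$ on $\G$ with $|\psi(\gamma) - 1| < \varepsilon$ on $K$. That statement is precisely the standard characterisation of topological amenability of $\G$ itself; if exactness implied it, every $C^*$-exact groupoid would be amenable, which is false already for the free group $\mathbb{F}_2$ (exact, not amenable). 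What nuclear embeddability of $C^*_r(\G)$ actually provides is completely positive approximations of the inclusion $C^*_r(\G) \subseteq \B(H)$ factoring through matrix algebras, and the positive type data these produce are \emph{two-variable} kernels on $\G \ast_s \G$ supported in tubes (equivalently, positive type functions on the semi-direct product $\beta_r \G \rtimes \G$), not functions on $\G$ alone. The whole content of ``amenability at infinity'' as opposed to amenability is exactly that the relevant positive type functions live on this larger object.

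Consequently the repair step must also be relocated: the properly supported positive definite function $f$ on $\G \times \G$ furnished by weak inner amenability has to be Schur-multiplied against the tube-valued kernels on $\G \ast_s \G$ coming from the c.p.\ approximations (this is where the hypothesis genuinely earns its keep --- in the discrete group case the characteristic function of the diagonal plays this role for free, which is why no such hypothesis appears there), and only then does one obtain tube-supported positive type kernels converging to $1$ uniformly on tubes, which characterise topological amenability of $\beta_r \G \rtimes \G$ and hence strong amenability at infinity. Your final translation step and the use of second countability for a diagonal/exhaustion argument are fine once the objects are corrected, but as written the sketch proves too much at the first step and therefore does not constitute a valid outline.
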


\section{Operator Fibre Spaces}\label{op fibre sp}

Recall that in Section \ref{groupoid C algebra prlm}, for an \'{e}tale groupoid $\G$ and any point $x\in \Gz$,  we define the regular representation $\lambda_x: C^*_r(\G) \to \B(\ell^2(\G_x))$. Fix an element $T\in C^*_r(\G)$ we obtain a map $x \mapsto \lambda_x(T)$, which we will show can be regarded as a section of a  bundle of operators on the unit space $\Gz$. In this section, we will formalise this observation and introduce the notion of operator fibre spaces, which serves as the receptacle for the symbol map which we will establish later. Again in this section we only consider the \'{e}tale case.

Fix a locally compact and \'{e}tale groupoid $\G$ with unit space $\Gz$. Consider the space
\begin{equation}\label{fibre space of operators defn}
E:= \bigsqcup_{x \in \Gz}\B(\ell^2(\G_x)).
\end{equation}
For each element $T$ in $\B(\ell^2(\G_x))\subseteq E$, we write $T_x$ to indicate the fibre it lives in. We define the projection map
$$p:E \longrightarrow \Gz \quad\mbox{by}\quad T_x \mapsto x.$$
Now we endow a topology on $E$ as follows: a net $\{T_{x_i}\}_{i \in I}$ converges to $T_x$ \emph{if and only if} $x_i \to x$, and for any $\gamma'_i \to \gamma'$, $\gamma''_i \to \gamma''$ with $s(\gamma'_i)=x_i=s(\gamma''_i)$ (which implies that $s(\gamma')=x=s(\gamma'')$), we have
$$\langle T_{x_i}\delta_{\gamma'_i}, \delta_{\gamma''_i} \rangle \to \langle T_x\delta_{\gamma'}, \delta_{\gamma''} \rangle.$$

\begin{Definition}
Given a locally compact and \'{e}tale groupoid $\G$, the space $E$ defined in (\ref{fibre space of operators defn}) equipped with the above topology is called the \emph{operator fibre space associated to $\G$}.
\end{Definition}

Now we would like to provide a description for a local basis of a given point $T_x \in E$. First let us fix some notations. For a topological space $X$ and a point $x\in X$, denote $\mathcal{N}_x$ the set of all the neighbourhoods of $x$. Since $\G$ is \'{e}tale, for any $\gamma', \gamma'' \in \G_x$, there exist neighbourhoods $V_{\gamma'}\in \mathcal{N}_x$ and $V_{\gamma''}\in \mathcal{N}_x$ such that the restrictions of $s$ on $V_{\gamma'}$ and $V_{\gamma''}$ are homeomorphisms with open images $s(V_{\gamma'})$ and $s(V_{\gamma''})$. Take $U_{\gamma', \gamma''}=s(V_{\gamma'})\cap s(V_{\gamma''})$ and
$$\zeta_{\gamma'}:=(s|_{V_{\gamma'} \cap s^{-1}(U_{\gamma',\gamma''})})^{-1}: U_{\gamma',\gamma''} \longrightarrow V_{\gamma'} \cap s^{-1}(U_{\gamma',\gamma''}),$$
$$\zeta_{\gamma''}:=(s|_{V_{\gamma''} \cap s^{-1}(U_{\gamma',\gamma''})})^{-1}: U_{\gamma',\gamma''} \longrightarrow V_{\gamma''} \cap s^{-1}(U_{\gamma',\gamma''}).$$
For any $\varepsilon>0$ and $U\in \mathcal{N}_x$ with $U \subseteq U_{\gamma',\gamma''}$, we define
$$W_{T_x}(\varepsilon; \gamma', \gamma'', U):=\{T_y \in E: y \in U \mbox{~and~} |\langle T_y \delta_{\zeta_{\gamma'}(y)}, \delta_{\zeta_{\gamma''}(y)} \rangle  -  \langle T_x \delta_{\gamma'}, \delta_{\gamma''} \rangle| < \varepsilon\}.$$

\begin{Lemma}\label{nbhd basis}
Let $\G$ be a locally compact and \'{e}tale groupoid and $E$ be the associated operator fibre space. Then
$$\{W_{T_x}(\varepsilon; \gamma', \gamma'', U): \varepsilon>0, \gamma',\gamma'' \in \G_x, \mbox{and~open~} U \subseteq U_{\gamma',\gamma''}\}$$
is a local basis of $T_x$.
\end{Lemma}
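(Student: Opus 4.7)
The plan is to verify two things: that each $W_{T_x}(\varepsilon;\gamma',\gamma'',U)$ is an open neighbourhood of $T_x$ in the convergence topology described just above the lemma, and that every open neighbourhood of $T_x$ is refined by a finite intersection of such sets (reading ``local basis'' in the standard sense that the family generates the neighbourhood filter). Both halves hinge on the \'{e}tale hypothesis, which lets us reduce arbitrary convergent nets in $\G$ to the canonical local parametrisations via the inverses $\zeta_{\gamma'}$, $\zeta_{\gamma''}$.

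That $T_x$ lies in each $W_{T_x}(\varepsilon;\gamma',\gamma'',U)$ is immediate since the inner-product discrepancy evaluates to zero. For openness, I would take a net $T_{y_i}\to T_y$ with $T_y\in W_{T_x}(\varepsilon;\gamma',\gamma'',U)$ and show it is eventually in this set. Since $y_i\to y$ and $U$ is open, eventually $y_i\in U\subseteq U_{\gamma',\gamma''}$, so $\zeta_{\gamma'}(y_i)$ and $\zeta_{\gamma''}(y_i)$ make sense. Continuity of the local inverses yields $\zeta_{\gamma'}(y_i)\to\zeta_{\gamma'}(y)$ and $\zeta_{\gamma''}(y_i)\to\zeta_{\gamma''}(y)$, both with source $y_i$. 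Applying the convergence criterion to $T_{y_i}\to T_y$ along these two nets gives $\langle T_{y_i}\delta_{\zeta_{\gamma'}(y_i)},\delta_{\zeta_{\gamma''}(y_i)}\rangle\to\langle T_y\delta_{\zeta_{\gamma'}(y)},\delta_{\zeta_{\gamma''}(y)}\rangle$; since this limit is strictly within $\varepsilon$ of $\langle T_x\delta_{\gamma'},\delta_{\gamma''}\rangle$, eventually $T_{y_i}\in W_{T_x}(\varepsilon;\gamma',\gamma'',U)$.

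For the basis property, I would argue by contrapositive through net convergence. Suppose $\mathcal{O}$ is an open neighbourhood of $T_x$ such that no finite intersection of sets from the family is contained in $\mathcal{O}$. Direct such finite intersections by reverse inclusion and pick $T_{y_\alpha}$ in each intersection but outside $\mathcal{O}$; then $T_{y_\alpha}\to T_x$ would contradict openness of $\mathcal{O}$. To verify convergence, $y_\alpha\to x$ follows from the $U$-components. For any $\gamma',\gamma''\in\G_x$ with common source $x$ and any nets $\gamma'_i\to\gamma'$, $\gamma''_i\to\gamma''$ with $s(\gamma'_i)=y_\alpha=s(\gamma''_i)$ (after reindexing), \'{e}taleness forces $\gamma'_i$ to lie eventually in the bisection $V_{\gamma'}$ on which $s$ is a homeomorphism, so $\gamma'_i=\zeta_{\gamma'}(y_\alpha)$ eventually, and similarly for $\gamma''_i$. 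Convergence of the matrix coefficients therefore reduces to convergence of $\langle T_{y_\alpha}\delta_{\zeta_{\gamma'}(y_\alpha)},\delta_{\zeta_{\gamma''}(y_\alpha)}\rangle$ to $\langle T_x\delta_{\gamma'},\delta_{\gamma''}\rangle$, which is guaranteed by the choice of $T_{y_\alpha}$.

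The main obstacle is the final reduction: the convergence criterion a priori permits arbitrary nets of pairs $(\gamma'_i,\gamma''_i)$, whereas the sets $W_{T_x}$ only test the canonical local parametrisations via $\zeta_{\gamma'}$, $\zeta_{\gamma''}$. The \'{e}tale property, via the existence of bisections on which $s$ is a homeomorphism, is exactly what bridges this gap by identifying $\gamma'_i$ with $\zeta_{\gamma'}(s(\gamma'_i))$ for large $i$. Without \'{e}taleness this identification would fail and the family of basic sets would need to be enlarged.
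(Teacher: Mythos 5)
Your argument is correct and follows essentially the same route as the paper: both hinge on the \'{e}tale reduction identifying any net $\gamma'_i\to\gamma'$ with $s(\gamma'_i)=x_i$ with the canonical section $\zeta_{\gamma'}(x_i)$ for large $i$, so that the convergence criterion collapses to eventual membership in the sets $W_{T_x}(\varepsilon;\gamma',\gamma'',U)$. The paper presents this as a direct rewriting of the definition of net convergence rather than splitting the claim into openness of each $W_{T_x}$ plus the refinement property, but the content is the same.
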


\begin{proof}
By definition, a net $\{T_{x_i}\}_{i\in I}$ converges to $T_x$ in $E$ if and only if for any $\varepsilon>0$, $\gamma', \gamma'' \in \G_x$ and $U \in \mathcal{N}_x$, there exists $i_0 \in I$, $V' \in \mathcal{N}_x$ and $V'' \in \mathcal{N}_x$, such that for any $i > i_0$, we have $x_i \in U$ and for any $\overline{\gamma}' \in V' \cap \G_{x_i}$ and $\overline{\gamma}'' \in V'' \cap \G_{x_i}$, we have
$$|\langle T_{x_i} \delta_{\overline{\gamma}'}, \delta_{\overline{\gamma}''} \rangle  -  \langle T_x \delta_{\gamma'}, \delta_{\gamma''} \rangle| < \varepsilon.$$
Since $\G$ is \'{e}tale, we can shrink $V', V''$ if necessary to ensure that $V' \subseteq V_{\gamma'}$ and $V'' \subseteq V_{\gamma''}$, which implies that
$$V' \cap \G_{x_i} = \{\zeta_{\gamma'}(x_i)\} \quad \mbox{~and~} \quad V'' \cap \G_{x_i} = \{\zeta_{\gamma''}(x_i)\}.$$
Hence, $T_{x_i} \to T_x$ if and only if for any $\varepsilon>0$, $\gamma', \gamma'' \in \G_x$ and $U \in \mathcal{N}_x$ with $U \subseteq U_{\gamma',\gamma''}$, there exists $i_0 \in I$ such that for any $i > i_0$, we have $x_i \in U$ and
$$|\langle T_{x_i} \delta_{\zeta_{\gamma'}(x_i)}, \delta_{\zeta_{\gamma''}(x_i)} \rangle  -  \langle T_x \delta_{\gamma'}, \delta_{\gamma''} \rangle| < \varepsilon.$$
So we finish the proof.
\end{proof}

\begin{Lemma}
The projection map $p: E \to \Gz$ is an open and continuous surjection.
\end{Lemma}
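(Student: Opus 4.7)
Surjectivity of $p$ is immediate since the zero operator sits in every fibre $\B(\ell^2(\G_x))$.

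For continuity, the plan is to argue directly from the convergence criterion that defines the topology on $E$. If $U \subseteq \Gz$ is open and $T_x \in p^{-1}(U)$, then any net $T_{x_i} \to T_x$ in $E$ forces $x_i \to x$ by the very definition of convergence, so eventually $x_i \in U$, i.e.\ $T_{x_i} \in p^{-1}(U)$. This shows $p^{-1}(U)$ is open in $E$ and hence $p$ is continuous.

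For openness, I would invoke the local basis produced in Lemma \ref{nbhd basis}. Since every open set of $E$ is a union of basic neighbourhoods $W_{T_x}(\varepsilon;\gamma',\gamma'',U)$, it suffices to prove the equality $p\bigl(W_{T_x}(\varepsilon;\gamma',\gamma'',U)\bigr) = U$. The inclusion $\subseteq$ is built into the definition of $W_{T_x}(\varepsilon;\gamma',\gamma'',U)$. For the reverse inclusion, given an arbitrary $y \in U$ one must exhibit some $T_y \in \B(\ell^2(\G_y))$ lying in $W_{T_x}(\varepsilon;\gamma',\gamma'',U)$. Since $U \subseteq U_{\gamma',\gamma''}$, the elements $\zeta_{\gamma'}(y), \zeta_{\gamma''}(y) \in \G_y$ are well-defined, and we take the rank-one operator
\[
T_y \xi := \langle T_x \delta_{\gamma'}, \delta_{\gamma''}\rangle \cdot \langle \xi, \delta_{\zeta_{\gamma'}(y)}\rangle\, \delta_{\zeta_{\gamma''}(y)}, \qquad \xi \in \ell^2(\G_y).
\]
A one-line computation gives $\langle T_y \delta_{\zeta_{\gamma'}(y)}, \delta_{\zeta_{\gamma''}(y)}\rangle = \langle T_x \delta_{\gamma'}, \delta_{\gamma''}\rangle$, so $T_y$ satisfies the defining inequality with slack $0 < \varepsilon$; hence $y = p(T_y) \in p\bigl(W_{T_x}(\varepsilon;\gamma',\gamma'',U)\bigr)$.

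Most of the real content is packaged inside Lemma \ref{nbhd basis}; once the local basis is in hand, the only non-formal step is selecting the witness $T_y$, and the rank-one construction above handles this cleanly. Consequently I do not anticipate any genuine obstacle in the proof.
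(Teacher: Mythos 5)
Your proof is correct and follows essentially the same route as the paper: the paper's own argument is the one-line observation that $p(W_{T_x}(\varepsilon;\gamma',\gamma'',U))=U$, and your rank-one operator simply makes explicit the witness showing the inclusion $U\subseteq p(W_{T_x}(\varepsilon;\gamma',\gamma'',U))$ that the paper leaves implicit.
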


\begin{proof}
Clearly, $p$ is continuous and surjective. To see $p$ is open, note that $p(W_{T_x}(\varepsilon; \gamma', \gamma'', U)) = U$ for any $\varepsilon>0$, $\gamma', \gamma'' \in \G_x$ and $U \in \mathcal{N}_x$ with $U \subseteq U_{\gamma', \gamma''}$.
\end{proof}

\begin{Remark}\label{fibre rmk}
Although $E$ is called an operator fibre space, it might not be a genuine fibre space in the sense of Definition \ref{fibre space defn}. From the above, the only obstruction is that we don't know whether the topology is locally compact in general. However in some special case, we do know certain subspace of $E$ is locally compact or even fibrewise compact (see Lemma \ref{op fibre space fw cpt}).
\end{Remark}

Now denote $\Gamma(E)$ the set of all continuous sections of $E$. Note that for a general element $\varphi$ in $\Gamma(E)$, the norms $\{\|\varphi(x)\|: x \in \Gz\}$ may not have an upper bound. To overcome, we introduce the following:
\begin{Definition}
Let $\G$ be a locally compact and \'{e}tale groupoid. For each $k \in \mathbb{N}$, we define the \emph{$k$-bounded operator fibre space} to be
$$E_k:= \bigsqcup_{x \in \Gz}\B(\ell^2(\G_x))_k \quad(\subseteq E),$$
where $\B(\ell^2(\G_x))_k$ is the $k$-ball at the origin. Denote $\Gamma(E_k)$ the set of all continuous sections of $E_k$, and
$$\Gamma_b(E):=\bigcup_{k \in \mathbb{N}} \Gamma(E_k).$$
Endow a norm on $\Gamma_b(E)$ by $\|\varphi\|:=\sup_{x \in \Gz} \|\varphi(x)\|$ for $\varphi \in \Gamma_b(E)$. It is routine to check that equipped with the norm and the point-wise addition, multiplication and adjoint, $\Gamma_b(E)$ is a $C^*$-algebra, which is called the \emph{$C^*$-algebra of bounded sections of $E$}.
\end{Definition}

Now we go back to the reduced groupoid $C^*$-algebra $C^*_r(\G)$. Given an element $T \in C^*_r(\G)$ with norm $\|T\|\leq k$ for some $k \in \mathbb{N}$ and any $x \in \Gz$, we have an operator $\lambda_x(T) \in \B(\ell^2(\G_x))_k$ defined in (\ref{reduced algebra defn}). In other words, we obtain a section of $E_k$ by $x \mapsto \lambda_x(T)$.

\begin{Lemma}
The section $x \mapsto \lambda_x(T)$ defined above is continuous.
\end{Lemma}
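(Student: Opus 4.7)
The plan is to prove continuity by first establishing it for $T \in C_c(\G)$ via a direct computation, and then upgrading to arbitrary $T \in C^*_r(\G)$ by a standard density-and-uniform-approximation argument. The fact that $\|\lambda_x\| \leq \|T\|_r$ uniformly in $x$ will be the key feature that allows the approximation to pass through.

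For the $C_c$ case, let $f \in C_c(\G)$. Using formula (\ref{reduced algebra defn}) and the fact that $\G$ is \'etale (so each fibre is discrete), one computes that for any $\gamma', \gamma'' \in \G_x$,
\begin{equation*}
\langle \lambda_x(f)\delta_{\gamma'}, \delta_{\gamma''} \rangle \;=\; (\lambda_x(f)\delta_{\gamma'})(\gamma'') \;=\; f\bigl(\gamma''(\gamma')^{-1}\bigr).
\end{equation*}
Now if $x_i \to x$ in $\Gz$ and $\gamma'_i \to \gamma'$, $\gamma''_i \to \gamma''$ in $\G$ with $s(\gamma'_i) = x_i = s(\gamma''_i)$, then by continuity of the inverse and composition in $\G$ we have $\gamma''_i(\gamma'_i)^{-1} \to \gamma''(\gamma')^{-1}$, and continuity of $f$ yields
\begin{equation*}
\langle \lambda_{x_i}(f)\delta_{\gamma'_i}, \delta_{\gamma''_i} \rangle \;=\; f\bigl(\gamma''_i(\gamma'_i)^{-1}\bigr) \;\longrightarrow\; f\bigl(\gamma''(\gamma')^{-1}\bigr) \;=\; \langle \lambda_x(f)\delta_{\gamma'}, \delta_{\gamma''} \rangle.
\end{equation*}
This matches the description of the topology on $E$ from the paragraph preceding Lemma \ref{nbhd basis}, so $x \mapsto \lambda_x(f)$ is a continuous section (into $E_{\lceil \|f\|_r \rceil}$, say).

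For the general case, fix $T \in C^*_r(\G)$ and choose a sequence $f_n \in C_c(\G)$ with $\|f_n - T\|_r \to 0$; by definition of $\|\cdot\|_r$ this gives $\sup_{y \in \Gz}\|\lambda_y(f_n) - \lambda_y(T)\| \to 0$. Given $x \in \Gz$, $\varepsilon > 0$, and $\gamma', \gamma'' \in \G_x$, the local basis description in Lemma \ref{nbhd basis} reduces the continuity of $\sigma_T$ at $x$ to an $\varepsilon/3$-argument: pick $n$ so large that $\|\lambda_y(T) - \lambda_y(f_n)\| < \varepsilon/3$ for every $y \in \Gz$; then use continuity of $\sigma_{f_n}$ (already proved) to control the middle term $|\langle \lambda_{x_i}(f_n)\delta_{\zeta_{\gamma'}(x_i)}, \delta_{\zeta_{\gamma''}(x_i)}\rangle - \langle \lambda_x(f_n)\delta_{\gamma'}, \delta_{\gamma''}\rangle|$; and use the Cauchy--Schwarz bound $|\langle S\xi, \eta\rangle| \leq \|S\|\|\xi\|\|\eta\|$ together with $\|\delta_\gamma\| = 1$ to absorb the two operator-difference terms.

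The main subtlety I expect is purely notational rather than mathematical: matching the local-basis description $W_{T_x}(\varepsilon;\gamma',\gamma'',U)$ (built from the local homeomorphism sections $\zeta_{\gamma'}, \zeta_{\gamma''}$) with the net-convergence formulation used in the $C_c(\G)$ computation. Lemma \ref{nbhd basis} already did the work of showing these are equivalent, so one simply uses whichever form is convenient at each step. No genuine obstacle arises because (i) the matrix entries $f(\gamma''(\gamma')^{-1})$ for $f \in C_c(\G)$ are manifestly continuous functions of the arguments, and (ii) the reduced norm controls fibrewise operator norms uniformly, which is exactly what the $\varepsilon/3$-trick needs.
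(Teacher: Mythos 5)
Your proof is correct and follows essentially the same route as the paper: the paper reduces to $f \in C_c(\G)$ (leaving the density/uniform-approximation step implicit) and then verifies net convergence of the matrix entries $f(\gamma''_i(\gamma'_i)^{-1}) \to f(\gamma''(\gamma')^{-1})$ exactly as you do. Your explicit $\varepsilon/3$ argument, using $\sup_y\|\lambda_y(f_n)-\lambda_y(T)\| \le \|f_n - T\|_r$ and Cauchy--Schwarz on unit vectors, is a valid filling-in of the reduction the paper merely asserts.
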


\begin{proof}
It suffices to prove the lemma for any element $f \in C_c(\G)$. Suppose the net $\{x_i\}_{i \in I}$ converges to $x$ in $\Gz$, and we need to show that $\lambda_{x_i}(f) \to \lambda_x(f)$ in $E_k$. For any $\gamma'_i \to \gamma'$, $\gamma''_i \to \gamma''$ with $s(\gamma'_i)=x_i=s(\gamma''_i)$, we have
$$\langle \lambda_{x_i}(f)\delta_{\gamma'_i}, \delta_{\gamma''_i} \rangle = f(\gamma_i''\cdot \gamma_i'^{-1}) \to f(\gamma''\cdot \gamma'^{-1})  = \langle \lambda_x(f)\delta_{\gamma'}, \delta_{\gamma''} \rangle.$$
So the lemma holds.
\end{proof}

Therefore, we obtain a $C^*$-homomorphism $\lambda: C^*_r(\G) \to \Gamma_b(E)$ defined by $\lambda(T)(x):=\lambda_x(T)$. Now we would like to provide a more detailed picture for the image $\mathrm{Im} \lambda$.
\begin{Definition}\label{defn: equivariant section}
A section $\varphi$ in $\Gamma(E)$ is called \emph{$\G$-equivariant}, if for any $\gamma \in \G$ we have
$$\varphi(r(\gamma))=R_\gamma^* \varphi(s(\gamma)) R_\gamma,$$
where the operator $R_\gamma: \ell^2(\G_{r(\gamma)}) \to \ell^2(\G_{s(\gamma)})$ is defined by $\delta_\alpha \mapsto \delta_{\alpha \gamma}$ for any $\alpha \in \G_{r(\gamma)}$. Denote $\Gamma(E)^{\G}$, $\Gamma(E_k)^{\G}$ and $\Gamma_b(E)^{\G}$ the subset of $\Gamma(E)$, $\Gamma(E_k)$ and $\Gamma_b(E)$ consisting of $\G$-equivariant sections, respectively. Clearly, $\Gamma_b(E)^{\G}$ is a $C^*$-subalgebra of $\Gamma_b(E)$, called the \emph{$C^*$-algebra of $\G$-equivariant bounded sections of $E$}.
\end{Definition}

\begin{Lemma}
Notations as above. We have $\lambda(C^*_r(\G)) \subseteq \Gamma_b(E)^{\G}$.
\end{Lemma}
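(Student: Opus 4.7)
The plan is to verify the equivariance identity $\lambda_{r(\gamma)}(T)=R_\gamma^*\lambda_{s(\gamma)}(T)R_\gamma$ first on the dense $*$-subalgebra $C_c(\G)\subseteq C^*_r(\G)$, then extend by continuity. Fix $\gamma\in\G$. Both maps $T\mapsto\lambda_{r(\gamma)}(T)$ and $T\mapsto R_\gamma^*\lambda_{s(\gamma)}(T)R_\gamma$ are bounded linear maps $C^*_r(\G)\to\B(\ell^2(\G_{r(\gamma)}))$; indeed each $\lambda_x$ is norm-decreasing by the very definition of $\|\cdot\|_r$ in Section~\ref{groupoid C algebra prlm}, while $R_\gamma$ is a unitary since $\alpha\mapsto\alpha\gamma$ is a bijection $\G_{r(\gamma)}\to\G_{s(\gamma)}$. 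Hence if the two maps coincide on $C_c(\G)$, they coincide on $C^*_r(\G)$ by density, and so $\lambda(T)\in\Gamma_b(E)^{\G}$.

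Now I would do the bare-hands matrix-coefficient computation for $f\in C_c(\G)$. From the definition of $\lambda_x$ in (\ref{reduced algebra defn}) one reads off that, for $\alpha,\alpha'\in\G_x$,
\[
\langle\lambda_x(f)\delta_\alpha,\delta_{\alpha'}\rangle=f(\alpha'\alpha^{-1}),
\]
which is precisely the identity used in the proof of the preceding lemma. Fix $\alpha,\alpha'\in\G_{r(\gamma)}$. Since $R_\gamma$ is unitary with $R_\gamma\delta_\alpha=\delta_{\alpha\gamma}$ and $\alpha\gamma,\alpha'\gamma\in\G_{s(\gamma)}$, I get
\[
\langle R_\gamma^*\lambda_{s(\gamma)}(f)R_\gamma\delta_\alpha,\delta_{\alpha'}\rangle
=\langle\lambda_{s(\gamma)}(f)\delta_{\alpha\gamma},\delta_{\alpha'\gamma}\rangle
=f\bigl((\alpha'\gamma)(\alpha\gamma)^{-1}\bigr)
=f(\alpha'\alpha^{-1}),
\]
which matches $\langle\lambda_{r(\gamma)}(f)\delta_\alpha,\delta_{\alpha'}\rangle$. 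As $\{\delta_\alpha\}_{\alpha\in\G_{r(\gamma)}}$ is an orthonormal basis, this establishes the equivariance identity for $f$.

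There is no real obstacle here: the only thing to watch is bookkeeping of the source/range conditions so that every product $\alpha\gamma$, $(\alpha'\gamma)(\alpha\gamma)^{-1}$, etc., is actually composable, and the fact that the formula $\langle\lambda_x(f)\delta_\alpha,\delta_{\alpha'}\rangle=f(\alpha'\alpha^{-1})$ agrees with the inner-product convention used elsewhere in the excerpt. Combining the two paragraphs above yields $\lambda(T)\in\Gamma_b(E)^{\G}$ for all $T\in C^*_r(\G)$, which is the claim.
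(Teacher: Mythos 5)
Your proof is correct and follows essentially the same route as the paper: both reduce to checking the equivariance identity on $C_c(\G)$ via the matrix-coefficient formula $\langle\lambda_x(f)\delta_{\alpha},\delta_{\alpha'}\rangle=f(\alpha'\alpha^{-1})$ and the cancellation $(\alpha'\gamma)(\alpha\gamma)^{-1}=\alpha'\alpha^{-1}$. The only difference is that you spell out the density/boundedness argument for passing from $C_c(\G)$ to $C^*_r(\G)$, which the paper leaves implicit.
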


\begin{proof}
It suffices to show that for any $f \in C_c(\G)$ and $\gamma$, we have $\lambda_{r(\gamma)}(f)=R_\gamma^* \lambda_{s(\gamma)}(f) R_\gamma$ in $\B(\ell^2(\G_{r(\gamma)}))$. For any $\alpha', \alpha'' \in \G_{r(\gamma)}$, we have
\begin{eqnarray*}
  \langle R_\gamma^* \lambda_{s(\gamma)}(f) R_\gamma \delta_{\alpha'}, \delta_{\alpha''} \rangle &=& \langle \lambda_{s(\gamma)}(f) \delta_{\alpha'\gamma}, \delta_{\alpha''\gamma} \rangle \\
  &=& f(\alpha''\gamma(\alpha'\gamma)^{-1}) = f(\alpha'' \alpha'^{-1})\\
  &=& \langle \lambda_{r(\gamma)}(f) \delta_{\alpha'}, \delta_{\alpha''} \rangle.
\end{eqnarray*}
So the lemma holds.
\end{proof}

\begin{Definition}\label{defn: uniform roe algebra fibre}
For $x \in \Gz$, we define the following $\ast$-algebra
$$\mathbb{C}[\G_x]:=\{T=(T_{\gamma', \gamma''})_{\gamma', \gamma'' \in \G_x} \in \B(\ell^2(\G_x)): \exists \mbox{~compact~}K \subseteq \G, \mbox{~such~that~}T_{\gamma', \gamma''} \neq 0 \mbox{~implies~} \gamma'' \gamma'^{-1} \in K\}.$$
The \emph{uniform Roe algebra at $x$} is defined to be $C^*_u(\G_x):=\overline{\mathbb{C}[\G_x]}^{\|\cdot\|_{\B(\ell^2(\G_x))}}$.
\end{Definition}

It is obvious that for each $T \in C^*_r(\G)$ and $x \in \Gz$, the operator $\lambda_x(T)$ belongs to the uniform Roe algebra $C^*_u(\G_x)$. This suggests us to consider the following:
\begin{Definition}
Let $\G$ be an \'{e}tale groupoid. The \emph{uniform Roe fibre space} is defined to be
$$E_u:=\bigsqcup_{x \in \Gz} C^*_u(\G_x) \quad (\subseteq E).$$
Similarly, for each $k \in \mathbb{N}$, we define the \emph{$k$-bounded uniform Roe fibre space} $E_{u,k} \subseteq E_k$ to be
$$E_{u,k}:=\bigsqcup_{x \in \Gz} C^*_u(\G_x)_k \quad (\subseteq E).$$
Note that $E_u$ may not be closed in $E$, neither is $E_{u,k}$. Denote all continuous sections of $E_u, E_{u,k}$ by $\Gamma(E_u), \Gamma(E_{u,k})$, and set
$$\Gamma_b(E_u):=\bigcup_{k \in \mathbb{N}} \Gamma(E_{u,k}).$$
Note that $\Gamma_b(E_u)$ is a $C^*$-subalgebra in $\Gamma_b(E)$, called the \emph{$C^*$-algebra of bounded uniform Roe sections of $E$}. And the intersection
\begin{equation}\label{equv bdd roe sec}
\Gamma_b(E_u)^\G:=\Gamma_b(E_u) \cap \Gamma_b(E)^{\G}
\end{equation}
is also a $C^*$-subalgebra, called the \emph{$C^*$-algebra of $\G$-equivariant bounded uniform Roe sections of $E$}.
\end{Definition}

Consequently, we obtain the following:
\begin{Proposition}\label{op fibre space fw cpt}
Let $\G$ be an \'{e}tale groupoid. Then the regular representations $x \mapsto \lambda_x$ induce a $C^*$-monomorphism $\lambda: C^*_r(\G) \longrightarrow \Gamma_b(E_u)^\G$.
\end{Proposition}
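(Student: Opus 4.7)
The plan is to assemble the proposition from the machinery already built. The preceding two lemmas establish that $\lambda: C^*_r(\G) \to \Gamma_b(E)$ is a well-defined $*$-homomorphism whose image lands in $\Gamma_b(E)^\G$. So the task reduces to two things: (i) upgrading the target to $\Gamma_b(E_u)^\G$, i.e.\ showing $\lambda_x(T) \in C^*_u(\G_x)$ for every $x \in \Gz$ and every $T \in C^*_r(\G)$; and (ii) showing injectivity.

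For (i), I would first verify the claim for $f \in C_c(\G)$. By the computation in the proof of the preceding lemma, the matrix coefficients are $\langle \lambda_x(f)\delta_{\gamma'}, \delta_{\gamma''} \rangle = f(\gamma'' \gamma'^{-1})$, which vanish unless $\gamma''\gamma'^{-1} \in \supp(f)$, a compact subset of $\G$. Hence $\lambda_x(f) \in \mathbb{C}[\G_x]$ in the sense of Definition \ref{defn: uniform roe algebra fibre}. For a general $T \in C^*_r(\G)$, pick a sequence $f_n \in C_c(\G)$ with $\|T - f_n\|_r \to 0$. Since each regular representation is norm-contractive (by definition of the reduced norm), $\|\lambda_x(T) - \lambda_x(f_n)\|_{\B(\ell^2(\G_x))} \leq \|T-f_n\|_r \to 0$, so $\lambda_x(T)$ lies in the operator-norm closure of $\mathbb{C}[\G_x]$, which is exactly $C^*_u(\G_x)$. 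This shows $\lambda(T)$ is a section of the subspace $E_u$ of $E$. Since $E_{u,k}$ carries the subspace topology inherited from $E_k$, the continuity of $x \mapsto \lambda_x(T)$ proved earlier as a section of $E_k$ (with $k = \lceil \|T\| \rceil$) immediately gives continuity as a section of $E_{u,k}$. Therefore $\lambda(T) \in \Gamma(E_{u,k}) \subseteq \Gamma_b(E_u)$, and combined with the $\G$-equivariance already verified, $\lambda(T) \in \Gamma_b(E_u)^\G$.

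For (ii), I would use that the reduced norm on $C_c(\G)$ is defined as $\|T\|_r = \sup_{x \in \Gz} \|\lambda_x(T)\|$ (Section \ref{groupoid C algebra prlm}), and that this identity extends by density and continuity to all of $C^*_r(\G)$. Hence
$$\|\lambda(T)\|_{\Gamma_b(E)} = \sup_{x \in \Gz} \|\lambda_x(T)\|_{\B(\ell^2(\G_x))} = \|T\|_r,$$
so $\lambda$ is an isometric $*$-homomorphism into $\Gamma_b(E_u)^\G$, and therefore a $C^*$-monomorphism.

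I do not anticipate a genuine obstacle here; the statement is essentially a bookkeeping consequence of the Haar-less \'etale formulae and the definition of the reduced norm. The only point that requires a little care is the approximation argument in step (i), which ensures that the uniform Roe containment persists under completion in the reduced norm rather than only for compactly supported convolution kernels.
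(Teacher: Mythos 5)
Your proposal is correct and follows exactly the route the paper intends: the paper states this proposition without proof as a direct consequence of the two preceding lemmas (continuity and $\G$-equivariance of $x\mapsto\lambda_x(T)$), the remark that $\lambda_x(T)\in C^*_u(\G_x)$, and the definition of the reduced norm $\|T\|_r=\sup_{x}\|\lambda_x(T)\|$, which gives isometry and hence injectivity. Your write-up simply supplies the routine approximation and norm computations that the authors leave implicit, and both steps are carried out correctly.
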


Finally, we study the following nice property for bounded operator fibre spaces when the underlying unit space is first countable. This result plays an important role when we study the uniform Roe algebra of a groupoid later in Section \ref{groupoid case ex}. Note that restricted on each fibre, the topology is nothing but the weak operator topology, hence compact by Banach-Alaoglu Theorem.

\begin{Lemma}\label{lemma: first countable implies fibrewise compact}
Let $\G$ be a locally compact, $\sigma$-countable and \'{e}tale groupoid with first countable unit space $\Gz$. Then for each $k \in \mathbb{N}$, the $k$-bounded operator fibre space $E_k$ is fibrewise compact.
\end{Lemma}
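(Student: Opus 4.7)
The plan is to show, for each compact $K\subseteq \Gz$, that the subspace $p^{-1}(K)=\bigsqcup_{x\in K}\B(\ell^2(\G_x))_k$ of $E_k$ is compact, working via universal nets. Before anything else, I would record a key countability fact: \'{e}taleness forces each source fibre $\G_x$ to be a closed discrete subspace of $\G$, so any compact subset of $\G$ meets $\G_x$ in a finite set; combined with $\sigma$-compactness of $\G$, this gives $\G_x$ countable, hence $\ell^2(\G_x)$ separable and $\B(\ell^2(\G_x))_k$ weak-operator compact by Banach--Alaoglu.

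Now let $\{T_{x_\alpha}\}_\alpha$ be a universal net in $p^{-1}(K)$. Its image $\{x_\alpha\}$ is a universal net in $K$ and therefore converges to some $x\in K$. For each pair $(\gamma',\gamma'')\in \G_x\times \G_x$, choose open bisections around $\gamma',\gamma''$ and the corresponding local sections $\zeta_{\gamma'}, \zeta_{\gamma''}$ defined on a neighbourhood $U_{\gamma',\gamma''}\ni x$ as in Lemma \ref{nbhd basis}. Since $x_\alpha\to x$, the complex number
$$a_\alpha^{(\gamma',\gamma'')} := \langle T_{x_\alpha}\,\delta_{\zeta_{\gamma'}(x_\alpha)},\, \delta_{\zeta_{\gamma''}(x_\alpha)}\rangle$$
is eventually defined and lies in the closed disc of radius $k$. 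Pushing the universal net through this map into a compact disc yields convergence to some $a^{(\gamma',\gamma'')}\in\CC$ with $|a^{(\gamma',\gamma'')}|\leq k$.

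Next I would assemble these limits into a candidate operator. Define a sesquilinear form $B$ on the (dense) subspace of finitely supported functions in $\ell^2(\G_x)$ by $B(\delta_{\gamma'},\delta_{\gamma''})=a^{(\gamma',\gamma'')}$. For $\xi=\sum_{i=1}^n c_i\delta_{\gamma'_i}$ and $\eta=\sum_{j=1}^m d_j\delta_{\gamma''_j}$, I can shrink the bisections around $\gamma'_1,\ldots,\gamma'_n$ (and similarly around $\gamma''_1,\ldots,\gamma''_m$) to be pairwise disjoint, whereby the pullbacks $\tilde\xi_\alpha:=\sum_i c_i\,\delta_{\zeta_{\gamma'_i}(x_\alpha)}\in \ell^2(\G_{x_\alpha})$ and $\tilde\eta_\alpha$ satisfy $\|\tilde\xi_\alpha\|=\|\xi\|$ and $\|\tilde\eta_\alpha\|=\|\eta\|$ for large $\alpha$. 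Since $B(\xi,\eta)=\lim_\alpha\langle T_{x_\alpha}\tilde\xi_\alpha,\tilde\eta_\alpha\rangle$, the uniform bound $\|T_{x_\alpha}\|\leq k$ yields $|B(\xi,\eta)|\leq k\|\xi\|\|\eta\|$. Thus $B$ extends to a bounded sesquilinear form on $\ell^2(\G_x)$ of norm at most $k$, defining $T_x\in \B(\ell^2(\G_x))_k$, and by construction the matrix coefficient convergence is exactly the assertion $T_{x_\alpha}\to T_x$ in $E_k$ according to the description in Lemma \ref{nbhd basis}.

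The main delicacy is the pullback step: one must arrange the open bisections around the finite set of points $\gamma'_1,\ldots,\gamma'_n\in \G_x$ to be pairwise disjoint (possible because $\G_x$ is discrete) so that their local sections send $x_\alpha$ to $n$ pairwise distinct elements of $\G_{x_\alpha}$, which in turn preserves $\ell^2$-norms of finitely supported vectors. Apart from that, all estimates are routine. First countability of $\Gz$ is not strictly used in this universal-net argument, but is convenient in that it combined with countability of $\G_x$ gives a countable neighbourhood base at every point of $E_k$, allowing a sequential reformulation if preferred.
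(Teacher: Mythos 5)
Your proof is correct, and it takes a genuinely different route from the paper's. The paper argues sequentially: it uses first countability of $\Gz$ together with $\sigma$-compactness of $\G$ to deduce (via Lemma \ref{nbhd basis}) that $E_k$ is first countable, extracts a subsequence with $x_n\to x$, and then — because \'{e}taleness plus $\sigma$-compactness make $\G_x\times\G_x$ countable — runs a Cantor diagonal argument to find one subsequence along which all matrix coefficients converge, before assembling the same bounded sesquilinear form you construct. Your universal-net argument replaces the diagonalization entirely: a single universal net pushes forward to a convergent net in every compact disc simultaneously, so no subnet extraction is needed and neither first countability of $\Gz$ nor countability of $\G_x$ is actually used, which means your argument proves the conclusion under strictly weaker hypotheses (any locally compact \'{e}tale groupoid). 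What the paper's approach buys in exchange is concreteness — sequences are what get used downstream, and first countability of $E_k$ is in any case relevant to how the lemma is applied in Section \ref{groupoid case ex}. One further point in your favour: your explicit arrangement of pairwise disjoint bisections around $\gamma'_1,\dots,\gamma'_n$, so that the pulled-back vectors $\tilde\xi_\alpha$ have $\|\tilde\xi_\alpha\|=\|\xi\|$, fills in a step that the paper's bound $\sup|F(\xi',\xi'')|\leq k$ leaves implicit; without that disjointness the norm of the pullback could a priori differ from $\|\xi\|$ and the estimate would not follow.
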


\begin{proof}
By definition, we need show that for any compact $K \subseteq \Gz$, $p^{-1}(K)$ is compact in $E_k$. Since $\Gz$ is first countable and $\G$ is $\sigma$-compact, we know that $E_k$ is also first countable by Lemma \ref{nbhd basis}. So it suffices to prove that for any sequence $\{T_n\}_{n \in \mathbb{N}}$ in $p^{-1}(K)$, we can find a subsequence $\{T_{n_k}\}_{k \in \mathbb{N}}$ such that it converges in $p^{-1}(K)$.

Consider the sequence $\{x_n:=p(T_n)\}_{n \in \mathbb{N}}\subseteq K$. Since $K$ is compact and first countable, there exists a subsequence which converges to some point $x\in K$. After taking the subsequence, we may assume that $x_n \to x$. For any $\gamma', \gamma'' \in \G_x$, take $U_{\gamma', \gamma''}, \zeta_{\gamma'}$ and $\zeta_{\gamma''}$ as in the paragraph before Lemma \ref{nbhd basis}. Then there exists $n_{\gamma', \gamma''} \in \mathbb{N}$ such that
$$\{x_n: n \geq n_{\gamma', \gamma''}\} \subseteq U_{\gamma', \gamma''}.$$
Now consider the sequence:
$$\{\langle T_{x_n} \delta_{\zeta_{\gamma'}(x_n)}, \delta_{\zeta_{\gamma''}(x_n)} \rangle\}_{n \geq n_{\gamma', \gamma''}},$$
which is contained in the compact set $\{z \in \mathbb{C}: |z| \leq k\}$. Hence there exists a subsequence $I_{\gamma',\gamma''} \subseteq \mathbb{N}$ such that $\{\langle T_{x_n} \delta_{\zeta_{\gamma'}(x_n)}, \delta_{\zeta_{\gamma''}(x_n)} \rangle : n \in I_{\gamma',\gamma''}\}$ converges to a complex number $\kappa_{\gamma', \gamma''}$ with $|\kappa_{\gamma', \gamma''}| \leq k$. Since the set $\G_x \times \G_x$ is countable, using a Cantor diagonal argument, we may find a subsequence $\{n_k\}_{k \in \mathbb{N}}$ in $\mathbb{N}$ such that for any $(\gamma', \gamma'') \in \G_x \times \G_x$, we have
$$\lim_{k \to \infty} \langle T_{x_{n_k}} \delta_{\zeta_{\gamma'}(x_{n_k})}, \delta_{\zeta_{\gamma''}(x_{n_k})} \rangle = \kappa_{\gamma', \gamma''}.$$

We claim: there exists $T_x\in \B(\ell(\G_x))_k$ such that $\langle T_x \delta_{\gamma'}, \delta_{\gamma''} \rangle = \kappa_{\gamma', \gamma''}$. In fact, consider the following sesquilinear map $F: C_c(\G_x) \times C_c(\G_x) \longrightarrow \mathbb{C}$ defined by
\begin{eqnarray*}
F(\sum_{i=1}^{l} c'_i\delta_{\gamma'_i}, \sum_{j=1}^{l} c''_j\delta_{\gamma''_j}) &:=& \sum_{i,j=1}^l c'_i \overline{c''_j} \kappa_{\gamma', \gamma''} = \sum_{i,j=1}^l c'_i \overline{c''_j} \lim_{k \to \infty} \langle T_{x_{n_k}} \delta_{\zeta_{\gamma'_i}(x_{n_k})}, \delta_{\zeta_{\gamma''_j}(x_{n_k})} \rangle\\
&=& \lim_{k \to \infty} \langle T_{x_{n_k}} (\sum_{i=1}^{l} c'_i\delta_{\zeta_{\gamma'_i}(x_{n_k})}), \sum_{j=1}^{l} c''_j \delta_{\zeta_{\gamma''_j}(x_{n_k})} \rangle.
\end{eqnarray*}
Since all $T_{x_{n_k}}$'s have norm at most $k$, we obtain that
$$\sup \{|F(\xi',\xi'')|: \xi', \xi'' \in C_c(\G_x) \mbox{~with~}\|\xi'\|_2 \leq 1, \|\xi''\|_2 \leq 1\}$$
does not exceed $k$. Hence $F$ can be extended to a bounded sesquilinear map on $\ell^2(\G_x) \times \ell^2(\G_x)$ with norm at most $k$, which implies there exists $T_x\in \B(\ell(\G_x))_k$ such that
$$\langle T_x \xi',\xi'' \rangle = F(\xi',\xi'')$$
for any $\xi',\xi'' \in \ell(\G_x)$. Hence the claim holds, and we finish the proof.
\end{proof}

\section{Main Theorem}\label{main theorem section}

Having introduced sufficient background tools in previous sections, now we are in the position to establish the limit operator theory for groupoids. This follows the same philosophy as the existing theories in the Hilbert space case of groups and metric spaces. We start with the following setting.

Let $\G$ be a locally compact and \'{e}tale groupoid with compact unit space $\Gz$. Suppose that there is a dense invariant open subset $X$ in $\Gz$, then its complement $\partial X:=\Gz \setminus X$ is an invariant closed subset in $\Gz$. Concerning the associated groupoid reductions, we have the following decomposition:
\begin{equation}\label{dec for groupoid}
\G=\G(X) \sqcup \G(\partial X).
\end{equation}
Note that $\G(X)$ is open in $\G$, while $\G(\partial X)$ is closed. Clearly, we have the following short exact sequence induced by (\ref{dec for groupoid}):
\begin{equation}\label{exact seq of Cc}
0 \longrightarrow C_c(\G(X)) \longrightarrow C_c(\G) \longrightarrow  C_c(\G(\partial X)) \longrightarrow 0,
\end{equation}
where the map $C_c(\G(X)) \rightarrow C_c(\G)$ is the inclusion, and $C_c(\G) \rightarrow  C_c(\G(\partial X))$ is the restriction.

We may complete this sequence with respect to the reduced norms and obtain the following sequence:
\begin{equation}\label{short seq of red}
0 \longrightarrow C^*_r(\G(X)) \stackrel{i}{\longrightarrow} C^*_r(\G) \stackrel{q}{\longrightarrow} C^*_r(\G(\partial X)) \longrightarrow 0.
\end{equation}
By constructions, $i$ is injective and $q$ is surjective, so we may regard $C^*_r(\G(X))$ as a ideal of $C^*_r(\G)$. However, in general (\ref{short seq of red}) fails to be exact at the middle item, which is crucial in \cite{HLS2002} to study the counterexample to the Baum-Connes conjecture. We may also complete the sequence (\ref{exact seq of Cc}) with respect to the maximal norms and obtain the following sequence:
\begin{equation}\label{short seq of max}
0 \longrightarrow C^*_{\max}(\G(X)) \stackrel{i}{\longrightarrow} C^*_{\max}(\G) \stackrel{q}{\longrightarrow} C^*_{\max}(\G(\partial X)) \longrightarrow 0,
\end{equation}
which is easy to check by definition to be exact automatically (see for example \cite[Lemma 2.10]{cts-trace-gpoid-III}).

\begin{Definition}\label{defn: limit op}
	Let $\G,X,\partial X$ be as above and $T \in C^*_r(\G)$. For each $\omega \in \partial X$, we define the \emph{limit operator} of $T$ at $\omega$ to be $\lambda_\omega(T) \in \B(\ell^2(\G_\omega))$, where $\lambda_\omega: C^*_r(\G) \to \B(\ell^2(\G_\omega))$ is the regular representation at $\omega$.
\end{Definition}

Note that for $\omega \in \partial X$, we have $\G_\omega = \G(\partial X)_\omega$. Hence from the definition we have
$$\|q(T)\|=\sup_{\omega \in \partial X} \|\lambda_\omega(T)\|$$
for any $T\in C^*_r(\G)$, where $q: C^*_r(\G) \longrightarrow C^*_r(\G(\partial X))$ is the quotient homomorphism from (\ref{short seq of red}).

As Roe did in \cite{roe-band-dominated}, we would like to amalgamate all limit operators into a single homomorphism. Unfortunately, limit operators $\lambda_\omega$'s do \emph{not} live in the same space generally, hence we have to appeal to the language of operator fibre spaces established in Section \ref{op fibre sp}.

For the given locally compact \'{e}tale groupoid $\G$, let $\lambda: C^*_r(\G) \to \Gamma_b(E_u)^{\G}$ be the $C^*$-monomorphism established in Proposition \ref{op fibre space fw cpt}. Denote the operator fibre space associated to the reduction $\G(\partial X)$ by
$$E^\partial:= \bigsqcup_{x \in \partial X}\B(\ell^2(\G_x)).$$
Similarly, denote $E^\partial_k$ and $E^\partial_u$ the $k$-bounded version and the uniform Roe version respectively, and their intersection by $E^\partial_{u,k}$, as we did in Section \ref{op fibre sp}. Recall that the $C^*$-algebra of $\G(\partial X)$-equivariant bounded uniform Roe sections of $E^\partial$ as defined in (\ref{equv bdd roe sec}) is:
$$\Gamma_b(E_u^\partial)^{\G(\partial X)} = \bigcup_{k \in \mathbb{N}} \Gamma(E^\partial_{u,k})^{\G(\partial X)}.$$
Now the restriction of $E$ on $E^\partial$ induces a $C^*$-homomorphism between sections
\begin{equation}\label{res map}
\mathrm{Res}: \Gamma_b(E_u)^{\G} \longrightarrow \Gamma_b(E_u^\partial)^{\G(\partial X)}.
\end{equation}

\begin{Definition}\label{limit morphism defn}
	Let $\G,X,\partial X$ be as above and $\lambda, \mathrm{Res}$ defined in Proposition \ref{op fibre space fw cpt} and (\ref{res map}). We define the \emph{symbol morphism} to be the composition
	$$\varsigma=\mathrm{Res} \circ \lambda: C^*_r(\G) \longrightarrow \Gamma_b(E_u^\partial)^{\G(\partial X)}.$$
\end{Definition}

It is easy to check that for each $\omega \in \partial X$, the regular representation $\lambda_\omega: C^*_r(\G) \to \B(\ell^2(\G_\omega))$ factors through $C^*_r(\G(\partial X))$. More precisely, we have the following commutative diagram:
\begin{displaymath}
\xymatrix{
	C^*_{r}(\G) \ar[r]^-{\textstyle \lambda_\omega} \ar[d]_-{\textstyle q} & \B(\ell^2(\G_\omega)) \ar@{=}[d] \\
	C^*_{r}(\G(\partial X)) \ar[r]^-{\textstyle \lambda_\omega^\partial} & \B(\ell^2(\G(\partial X)_\omega))}
\end{displaymath}
where the bottom map $\lambda_\omega^\partial$ is the regular representation for the reduction $\G(\partial X)$ at $\omega$. Hence we have
$$\lambda_\omega(T) = \lambda_\omega^\partial(q(T))$$
for $T\in C^*_r(\G)$. Consequently, we obtain the following lemma.

\begin{Lemma}\label{commt diag}
	Notations as above. The following diagram commutes:
	\begin{displaymath}
	\xymatrix{
		C^*_{r}(\G) \ar[r]^-{\textstyle \varsigma} \ar[d]_-{\textstyle q} & \Gamma_b(E_u^\partial)^{\G(\partial X)} \ar@{=}[d] \\
		C^*_{r}(\G(\partial X)) \ar[r]^-{\textstyle \lambda^\partial} & \Gamma_b(E_u^\partial)^{\G(\partial X)}}
	\end{displaymath}
	where $\lambda^\partial$ is the monomorphism defined as in Proposition \ref{op fibre space fw cpt} for the reduction groupoid $\G(\partial X)$.
\end{Lemma}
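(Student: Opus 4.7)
The plan is to prove commutativity by checking equality of sections fiber by fiber, since both $\varsigma(T)$ and $\lambda^\partial(q(T))$ live in $\Gamma_b(E_u^\partial)^{\G(\partial X)}$, and sections there are determined by their values at each point of $\partial X$. Concretely, I need to show that for every $T \in C^*_r(\G)$ and every $\omega \in \partial X$,
\[
\varsigma(T)(\omega) = \lambda_\omega(T) = \lambda_\omega^\partial(q(T)) = \lambda^\partial(q(T))(\omega),
\]
where the middle equality is exactly the relation recalled in the paragraph preceding the lemma.

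First I would justify the middle equality directly on the dense $*$-subalgebra $C_c(\G)$. Since $\partial X$ is invariant in $\Gz$, for any $\omega \in \partial X$ we have $\G_\omega = \G(\partial X)_\omega$: if $s(\gamma) = \omega \in \partial X$, then invariance forces $r(\gamma) \in \partial X$ too, so $\gamma \in \G(\partial X)$. Consequently, for $f \in C_c(\G)$, $\xi \in \ell^2(\G_\omega)$ and $\gamma \in \G_\omega$, the defining formula
\[
(\lambda_\omega(f)\xi)(\gamma) = \sum_{\alpha \in \G_\omega} f(\gamma\alpha^{-1})\xi(\alpha)
\]
involves only products $\gamma\alpha^{-1}$ with $r(\gamma\alpha^{-1}), s(\gamma\alpha^{-1}) \in \partial X$, hence only values of $f$ on $\G(\partial X)$. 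Since $q(f)$ is precisely the restriction $f|_{\G(\partial X)}$, this sum coincides with the defining formula for $\lambda_\omega^\partial(q(f))\xi$ applied to $\gamma$. Thus $\lambda_\omega(f) = \lambda_\omega^\partial(q(f))$ on $C_c(\G)$.

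Next, I would extend this identity to all of $C^*_r(\G)$ by continuity: both $\lambda_\omega$ and $\lambda_\omega^\partial \circ q$ are $*$-homomorphisms between $C^*$-algebras and hence norm continuous, and they agree on the dense subalgebra $C_c(\G)$, so they agree everywhere. Having established this pointwise equality, I read off the commutativity of the diagram as follows: for each $T \in C^*_r(\G)$, the section $\mathrm{Res}(\lambda(T)) \in \Gamma_b(E_u^\partial)^{\G(\partial X)}$ has, by definition of $\lambda$ and $\mathrm{Res}$, value $\lambda_\omega(T)$ at $\omega \in \partial X$, while $\lambda^\partial(q(T))$ has value $\lambda_\omega^\partial(q(T))$; these coincide by the previous step, so $\varsigma(T) = \lambda^\partial \circ q(T)$.

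There is no real obstacle here; the lemma is essentially a bookkeeping statement that packages the pointwise factorisation $\lambda_\omega = \lambda_\omega^\partial \circ q$ into one identity between $C^*$-algebra homomorphisms with values in $\Gamma_b(E_u^\partial)^{\G(\partial X)}$. The only thing to verify with any care is that the restriction map $\mathrm{Res}$ indeed sends a $\G$-equivariant bounded uniform Roe section over $\Gz$ to a $\G(\partial X)$-equivariant one over $\partial X$, but this is immediate from Definition \ref{defn: equivariant section} since $\G(\partial X) \subseteq \G$ and equivariance is a fiberwise condition indexed by arrows.
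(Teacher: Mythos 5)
Your proposal is correct and follows essentially the same route as the paper: the paper likewise reduces the lemma to the pointwise factorisation $\lambda_\omega = \lambda_\omega^\partial \circ q$ (asserted there as ``easy to check'', using $\G_\omega = \G(\partial X)_\omega$ for $\omega \in \partial X$) and then reads off the commutativity of the section-valued diagram fiberwise. Your write-up simply supplies the details of that factorisation on $C_c(\G)$ and the continuity extension, which is exactly the intended argument.
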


Now we are in the position to state our main result:

\begin{Theorem}\label{main theorem}
	Let $\G$ be a locally compact, $\sigma$-compact and \'{e}tale groupoid with compact unit space $\Gz$, $X$ be an invariant open dense subset in $\Gz$ and $\partial X=\Gz \setminus X$. Suppose the reduction groupoid $\G(\partial X)$ is topologically amenable. Then for any element $T$ in the reduced groupoid $C^*$-algebra $C^*_r(\G)$, the following conditions are equivalent:
	\begin{enumerate}
		\item $T$ is invertible modulo $C^*_r(\G(X))$.
		\item The image of $T$ under the symbol morphism, $\varsigma(T)$, is invertible in $\Gamma_b(E_u^\partial)^{\G(\partial X)}$.
		\item For each $\omega \in \partial X$, the limit operator $\lambda_\omega(T)$ is invertible, and
		$$\sup_{\omega \in \partial X} \|\lambda_\omega(T)^{-1}\| < \infty.$$
		\item For each $\omega \in \partial X$, the limit operator $\lambda_\omega(T)$ is invertible.
	\end{enumerate}
\end{Theorem}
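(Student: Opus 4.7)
The plan is to prove the cycle $(1)\Leftrightarrow(2)\Rightarrow(3)\Rightarrow(4)\Rightarrow(1)$, with $(4)\Rightarrow(1)$ as the decisive step. To set the stage, I would first leverage the amenability of $\G(\partial X)$: since it is \'etale and topologically amenable, $C^*_r(\G(\partial X))=C^*_{\max}(\G(\partial X))$ is nuclear by Proposition \ref{etale amen}, and a standard comparison with the always-exact maximal sequence (\ref{short seq of max}) upgrades (\ref{short seq of red}) to a genuine short exact sequence. Consequently $q$ descends to an isomorphism $C^*_r(\G)/C^*_r(\G(X))\cong C^*_r(\G(\partial X))$, and (1) is equivalent to invertibility of $q(T)$ in $C^*_r(\G(\partial X))$.

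For $(1)\Leftrightarrow(2)$, I would use Lemma \ref{commt diag}, which factors $\varsigma=\lambda^\partial\circ q$ through the injective $*$-homomorphism $\lambda^\partial$ from Proposition \ref{op fibre space fw cpt}; by spectral permanence for unital $C^*$-subalgebras (after unitising if necessary), $q(T)$ is invertible in $C^*_r(\G(\partial X))$ if and only if $\varsigma(T)=\lambda^\partial(q(T))$ is invertible in the ambient algebra $\Gamma_b(E_u^\partial)^{\G(\partial X)}$. For $(2)\Rightarrow(3)$, given a two-sided inverse $\sigma\in\Gamma_b(E_u^\partial)^{\G(\partial X)}$ of $\varsigma(T)$, membership in $\Gamma_b$ directly yields $\sup_{\omega\in\partial X}\|\sigma(\omega)\|<\infty$, while the fibrewise identity $\sigma(\omega)\lambda_\omega(T)=\lambda_\omega(T)\sigma(\omega)=I$ forces $\sigma(\omega)=\lambda_\omega(T)^{-1}$. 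The implication $(3)\Rightarrow(4)$ is immediate.

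The hard direction is $(4)\Rightarrow(1)$, where no uniform norm bound on $\|\lambda_\omega(T)^{-1}\|$ is available a priori. Here I would apply Theorem \ref{thm:Exel's generalisation} to the groupoid $\G(\partial X)$, which inherits local compactness, $\sigma$-compactness, and \'etaleness from $\G$, is amenable by hypothesis, and carries the canonical Haar system of counting measures. After a routine reduction to elements of the form $1+a$ in a unitisation of $C^*_r(\G(\partial X))$, the theorem together with the commutative diagram of Lemma \ref{commt diag} converts hypothesis (4) directly into invertibility of $q(T)$ in $C^*_r(\G(\partial X))$; combined with the first paragraph this yields (1) and closes the cycle. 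The principal obstacle is precisely this appeal to Theorem \ref{thm:Exel's generalisation}: in the absence of second countability the Exel--Nistor--Prudhon theorem is not available off-the-shelf, and the strengthening that powers this step is the technical heart of Section \ref{sec: extension of Exel} and the whole reason why uniform boundedness can be dropped from condition (3).
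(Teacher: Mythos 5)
Your proposal is correct and follows essentially the same route as the paper: exactness of the reduced sequence via the maximal/reduced diagram and nuclearity of $C^*_r(\G(\partial X))$, the factorisation $\varsigma=\lambda^\partial\circ q$ from Lemma \ref{commt diag} together with inverse-closedness to handle (1)$\Leftrightarrow$(2)$\Rightarrow$(3), and Theorem \ref{thm:extention of Exel} applied to $\G(\partial X)$ (unital since $\partial X$ is compact) for (4)$\Rightarrow$(1). The only difference is cosmetic: you arrange the implications as a cycle, whereas the paper proves (2)$\Leftrightarrow$(3) directly via the embedding of $\Gamma_b(E_u^\partial)^{\G(\partial X)}$ into $\prod_{\omega}\B(\ell^2(\G_\omega))$.
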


Here we only prove the equivalence among conditions (1), (2) and (3), while the equivalence between (3) and (4) is left to the next section after some other technical tools are developed.
\begin{proof}[Proof of Theorem \ref{main theorem}, ``(1) $\Leftrightarrow$ (2) $\Leftrightarrow$ (3)'':]
	First we show that the sequence (\ref{short seq of red}) is exact. Notice that we have the following commutative diagram:
	\begin{displaymath}
	\xymatrix{
		0 \ar[r] & C^*_{\max}(\G(X)) \ar[r] \ar[d] & C^*_{\max}(\G) \ar[r] \ar[d] & C^*_{\max}(\G(\partial X)) \ar[r] \ar[d] & 0\\
		0 \ar[r] & C^*_{r}(\G(X)) \ar[r] & C^*_{r}(\G) \ar[r]^-{\textstyle q} & C^*_{r}(\G(\partial X)) \ar[r] & 0}
	\end{displaymath}
	As explained before, the top row is exact and the bottom row is exact at the second and the fourth items. Since $\G(\partial X)$ is amenable, the third vertical map is an isomorphism by Proposition \ref{etale amen}. Via an elementary diagram chasing argument, the bottom row is exact at the third item as well.
	
Now consider the following commutative diagram coming from Lemma \ref{commt diag}:
\begin{displaymath}
	\xymatrix{
		0 \ar[r] & C^*_r(\G(X)) \ar[r] \ar@{=}[d] & C^*_{r}(\G) \ar[r]^-{\textstyle \varsigma} \ar@{=}[d] & \Gamma_b(E_u^\partial)^{\G(\partial X)}\\
		0 \ar[r] & C^*_r(\G(X)) \ar[r] & C^*_{r}(\G) \ar[r]^-{\textstyle q} & C^*_{r}(\G(\partial X)) \ar[r] \ar[u]^{\lambda^\partial} & 0.}
\end{displaymath}
	Note that the lower horizontal sequence is exact and $\lambda^\partial$ is injective, hence the upper horizontal sequence is also exact. Therefore for $T \in C^*_{r}(\G)$, we have that $T$ is invertible modulo $C^*_r(\G(X))$ if and only if $\varsigma(T)$ is invertible, which proves ``(1) $\Leftrightarrow$ (2)''.
	
	Now we move on to ``(2) $\Leftrightarrow$ (3)''. Consider the following $C^*$-homomorphism
	$$\iota: \Gamma_b(E_u^\partial)^{\G(\partial X)} \longrightarrow \prod_{\omega \in \partial X}\B(\ell^2(\G_\omega))$$
	defined by $\iota(\xi)=(\xi(x))_{x \in \partial X}$, which is injective. Hence for any $T \in C^*_r(\G)$, $\varsigma(T)$ is invertible if and only if $\iota\circ \varsigma (T)$ is invertible in $\prod_{\omega \in \partial X}\B(\ell^2(\G_\omega))$ since $C^*$-algebras are inverse-closed. Note that
	$$\iota\circ \varsigma (T)=(\lambda_\omega(T))_{\omega \in \partial X},$$
	hence the above is also equivalent to the condition that each $\lambda_\omega(T)$ is invertible and their inverses have uniform bounded norms.
\end{proof}

\begin{Remark}
	Note that the hypothesis of the unit space $\Gz$ being compact ensures that the reduced groupoid $C^*$-algebra $C^*_r(\G)$ is unital. Although the main theorem can be modified to hold in the general case as well, here we only focus on the compact case to simplify the arguments. This also due to the fact that all the examples we study in Section \ref{application section} have compact unit spaces.
\end{Remark}

\begin{Remark}\label{rk:main theorem}
	It is clear from Definition \ref{amenable} that if $\G$ is amenable, then both of the reductions $\G(X)$ and $\G(\partial X)$ are amenable. Conversely when $\G(X)$ is amenable, $\G$ is amenable if and only if $\G(\partial X)$ is amenable from the Five Lemma. In Section \ref{application section}, most of the examples satisfy the condition that $\G(X)$ is amenable, so there is no difference between the hypothesis that $\G$ is amenable and $\G(\partial X)$ is amenable.
\end{Remark}

\begin{Remark}\label{main theorem remark}
	From the above proof of ``(1) $\Leftrightarrow$ (2) $\Leftrightarrow$ (3)", we know that the assumption of topological amenability is only used to show that the short sequence (\ref{short seq of red}) is exact. Hence we may weaken the amenability hypothesis in Theorem \ref{main theorem} to the exactness of (\ref{short seq of red}), and the result that ``(1) $\Leftrightarrow$ (2) $\Leftrightarrow$ (3)" still holds. This observation will be used later in Section \ref{grp cptf ex}.
\end{Remark}

\section{An Extension of Work by  Exel and Nistor-Prudhon}\label{sec: extension of Exel}

This section is mainly devoted to the proof of ``(3) $\Leftrightarrow$ (4)'' in Theorem \ref{main theorem}, based on our generalisation of a result of Exel and Nistor-Prudhon. Let us explain the idea first. Recall Exel originally proved the following:

\begin{Proposition}[\cite{exel-invertibiles}]\label{Prop: Exel's result}\label{thm: Exel's original thm}
Let $\G$ be a locally compact, \emph{second countable, \'{e}tale} and amenable groupoid \emph{with compact object space}, then an element $a\in C_r^*(\G)$ is invertible if and only if $\lambda_x(a)$ is invertible for every $x\in \Gz$, where $\lambda_x$ is the regular representation at $x$.
\end{Proposition}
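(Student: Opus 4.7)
My plan for proving this proposition is as follows. The forward implication is immediate: each regular representation $\lambda_x \colon C^*_r(\G) \to \B(\ell^2(\G_x))$ is a unital $\ast$-homomorphism, so $a \in C^*_r(\G)$ invertible immediately gives $\lambda_x(a^{-1})$ as an inverse of $\lambda_x(a)$. For the nontrivial converse I would first reduce to the case where $a$ is positive: if every $\lambda_x(a)$ is invertible, then every $\lambda_x(a^*a) = \lambda_x(a)^* \lambda_x(a)$ is invertible, and once invertibility of $a^*a$ in $C^*_r(\G)$ is established the element $(a^*a)^{-1}a^*$ is a left inverse of $a$; a symmetric argument using $aa^*$ produces a right inverse, and the two must coincide. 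So it suffices to prove the following: if $b \in C^*_r(\G)$ is positive and $\lambda_x(b)$ is invertible for every $x \in \Gz$, then $b$ is invertible.

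The structural input for this will be the faithful full regular representation $\Lambda \colon C^*_r(\G) \to \B(L^2(\G))$ described in Section \ref{groupoid C algebra prlm}. By spectral permanence for unital $C^*$-subalgebras, $b$ is invertible in $C^*_r(\G)$ if and only if $\Lambda(b)$ is invertible in $\B(L^2(\G))$. Using second countability and amenability, I plan to realise $\Lambda$ as a direct integral of the pointwise representations: for a suitable Radon measure $\mu$ on $\Gz$ there is an identification $L^2(\G) \cong \int^{\oplus}_{\Gz} \ell^2(\G_x)\, d\mu(x)$ under which $\Lambda(b) = \int^{\oplus}_{\Gz} \lambda_x(b)\, d\mu(x)$. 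Invertibility of such a direct integral is equivalent to $\mu$-a.e.\ invertibility of the fibres together with essential boundedness of $x \mapsto \|\lambda_x(b)^{-1}\|$. The pointwise hypothesis trivially supplies $\mu$-a.e.\ invertibility, so the remaining task is the essential bound.

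The main obstacle, and the real heart of the argument, will be upgrading pointwise invertibility into the required uniform bound on $\|\lambda_x(b)^{-1}\|$. The WOT-type topology on the operator fibre space of Section \ref{op fibre sp} is too weak for this on its own, since invertibility is not a WOT-open condition and inversion is not WOT-continuous. I would exploit amenability through Proposition \ref{etale amen}, which yields $C^*_r(\G) = C^*_{\max}(\G)$ and therefore makes the full disintegration theory for representations of the maximal algebra available along $\Gz$. Combined with compactness of $\Gz$ and second countability (so that $\Gz$ is Polish and the standard Borel machinery applies), the plan is to consider the nested measurable sets $E_n := \{x \in \Gz : \|\lambda_x(b)^{-1}\| \leq n\}$, observe that they cover $\Gz$ by hypothesis, and use a Baire category / inner regularity argument in the disintegration to locate some $N$ for which $E_N$ has full $\mu$-measure. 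This would produce the essential bound, whence $\Lambda(b)$ is invertible in $\B(L^2(\G))$ and hence so is $b$ in $C^*_r(\G)$, completing the proof.
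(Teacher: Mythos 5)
There is a genuine gap, and it sits exactly where the theorem is hard. (Note that the paper itself does not prove this Proposition: it is quoted from Exel's work and then generalised in Theorem \ref{thm:extention of Exel}, so the comparison below is with the strategy of the cited proofs.) Your forward direction, the reduction to a positive element $b$, and the spectral-permanence step are all fine, and the direct-integral realisation of $\Lambda$ over a fully supported measure $\mu$ can be made to work (lower semicontinuity of $x\mapsto\|\lambda_x(b)\|$ gives $\operatorname{ess\,sup}=\sup$ on $C_c(\G)$ and hence on $C^*_r(\G)$). The problem is the final step. The sets $E_n=\{x:\|\lambda_x(b)^{-1}\|\le n\}$ form an increasing cover of $\Gz$, but neither Baire category nor inner regularity forces any $E_N$ to have full $\mu$-measure: even if each $E_n$ is closed, Baire only yields that some $E_N$ has nonempty interior, which bounds $\|\lambda_x(b)^{-1}\|$ on an open set and says nothing off it; and an increasing measurable cover of a compact space can perfectly well have $\mu(E_n)\uparrow 1$ with no term of full measure. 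The assertion $\sup_x\|\lambda_x(b)^{-1}\|<\infty$ given only pointwise invertibility is precisely the ``uniform boundedness can be dropped'' phenomenon that the implication (4) $\Rightarrow$ (3) of Theorem \ref{main theorem} encodes, and which the whole of Section \ref{sec: extension of Exel} exists to establish; so your argument reduces the statement to itself. A symptom of the gap is that amenability is never used in a load-bearing way: the disintegration of $\Lambda$ over $\Gz$ exists for any \'{e}tale groupoid, yet the proposition is false without amenability, so any correct proof must invoke it at the decisive moment.

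The proofs in the cited literature avoid the uniform bound entirely. The key point is that for an amenable (hence $C^*_r=C^*_{\max}$) second countable groupoid the family $\{\lambda_x\}_{x\in\Gz}$ is \emph{exhausting} (equivalently, strictly norming): every irreducible representation $\pi$ of $C^*(\G)$ is weakly contained in some $\lambda_x$, which rests on the generalised Effros--Hahn property for amenable groupoids (this is where amenability and second countability earn their keep). Granting that, the argument is purely spectral: if $b\ge 0$ is not invertible then $0\in\sigma(b)$, so there is an irreducible $\pi$ with $0\in\sigma(\pi(b))$; weak containment gives $\ker\lambda_x\subseteq\ker\pi$ for some $x$, hence $\sigma(\pi(b))\subseteq\sigma(\lambda_x(b))$, hence $\lambda_x(b)$ is not invertible --- contradicting the hypothesis. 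If you want to salvage your outline, the piece you must supply is exactly this weak-containment statement for the family $\{\lambda_x\}$; once you have it, the direct integral and the essential bound become unnecessary.
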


Consequently when the groupoid $\G$ in Theorem \ref{main theorem} is additionally assumed to be second countable, we may apply Exel's result to obtain the equivalence between (3) and (4) directly. Unfortunately as we will see in Section \ref{application section}, many groupoids coming from interesting examples, even including those from the classic limit operator theory for groups (Section \ref{group case}), are \emph{not} second countable. However all of them are $\sigma$-compact, which from results of Austin-Georgescu in \cite{austin-georgescu} are approximable by second countable groupoids,  so we would like to study Exel-type result for $\sigma$-compact groupiods.

Our generalisation is more than enough to prove Theorem \ref{main theorem} and, as far as we know, it is the most general among these results. We choose to prove the most general version since it might have its own interest. Recall that Nistor and Prudhon extended Exel's result to the following:
\begin{Proposition}[\cite{Nistor--Prudhon}]\label{thm:NP extension of Exel}
Let $\G$ be a locally compact, \emph{second countable} and amenable groupoid \emph{with a Haar system}, then an element $1+a \in C_r^*(\G)^+:=C^*_r(\G) \oplus \CC$ is invertible if and only if $1+\lambda_x(a)$ is invertible for every $x\in \Gz$, where $\lambda_x$ is the regular representation at $x$.
\end{Proposition}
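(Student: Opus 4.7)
The plan is to treat the two implications separately, with all the substance concentrated in the converse.

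The forward implication is routine: the regular representation $\lambda_x \colon C^*_r(\G) \to \B(L^2(\G_x,\mu_x))$ extends uniquely to a unital $*$-homomorphism $\lambda_x^+$ on the unitisation $C^*_r(\G)^+$, and unital $*$-homomorphisms preserve invertibility, so any inverse of $1+a$ is mapped to an inverse of $1+\lambda_x(a)$ for every $x \in \Gz$.

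For the converse I would follow the Nistor--Prudhon strategy of first replacing the reduced with the maximal algebra, then invoking disintegration, and finally upgrading pointwise invertibility to a uniform bound. More precisely: amenability of $\G$ (in the Haar-system setting, by Anantharaman-Delaroche--Renault) ensures that the canonical surjection $C^*_{\max}(\G) \to C^*_r(\G)$ is an isomorphism, so it suffices to invert $1+a$ in $C^*_{\max}(\G)^+$, and this can be tested against any nondegenerate $*$-representation. By Renault's disintegration theorem for second countable groupoids with Haar systems, every such representation decomposes as a direct integral of regular representations $\lambda_x$ over a quasi-invariant probability measure on $\Gz$. Thus if the fibres $1+\lambda_x(a)$ were invertible with uniformly bounded inverses, the assembled direct integral $\int^\oplus (1+\lambda_x(a))^{-1}\, d\mu(x)$ would provide an inverse in every representation, and hence an inverse of $1+a$ in $C^*_{\max}(\G)^+$ itself.

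The only substantive remaining task is therefore to promote the pointwise invertibility hypothesis to a uniform bound $\sup_{x \in \Gz}\|(1+\lambda_x(a))^{-1}\| < \infty$, without assuming it a priori. The expected argument is of Baire category plus lower-semicontinuity type: the sets $S_n := \{x \in \Gz : \|(1+\lambda_x(a))^{-1}\| \leq n\}$ are closed in $\Gz$ by lower semicontinuity of the inverse norm along the continuous field $x \mapsto 1+\lambda_x(a)$, and they cover $\Gz$ by hypothesis; Baire's theorem applied to the locally compact space $\Gz$ then yields some $S_n$ with nonempty interior, and groupoid-invariance of the norm (via the equivariance relation $R_\gamma^*\lambda_{s(\gamma)}(\cdot)R_\gamma = \lambda_{r(\gamma)}(\cdot)$) together with density of orbits propagates this local bound into a global uniform one. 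This Baire-plus-propagation step is the main obstacle, and it is precisely where second countability and amenability are both used in an essential way; it is also exactly the place at which the $\sigma$-compact extension pursued later in the paper (Theorem \ref{thm:Exel's generalisation}) must find a substitute, since second countability will no longer be available.
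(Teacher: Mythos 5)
First, a point of comparison: the paper does not prove this statement at all --- it is quoted from \cite{Nistor--Prudhon} and used as a black box in the proof of Theorem \ref{thm:extention of Exel} --- so your proposal has to be judged on its own merits rather than against an in-paper argument. Your forward implication is fine, and you correctly isolate the real content of the converse, namely removing the uniform bound on $\sup_{x}\|(1+\lambda_x(a))^{-1}\|$ (amenability plus inverse-closedness of the faithful image of $C^*_r(\G)$ under $\bigoplus_x\lambda_x$ already gives everything else). However, two steps are genuinely broken. The appeal to Renault's disintegration theorem is misstated: disintegration expresses a representation as the integrated form of a unitary representation of $\G$ on a measurable Hilbert bundle over $\Gz$, whose fibre representations are in general \emph{not} the regular representations $\lambda_x$ (already for an amenable group, the trivial representation is one-dimensional and is not a direct integral of copies of the regular representation). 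What amenability actually buys is weak containment of every representation in the family $\{\lambda_x\}$ \emph{collectively}, which is exactly equivalent to the statement with the uniform bound; it does not by itself place each irreducible representation inside a single $\lambda_x$, so the disintegration detour does not reduce the problem.

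Second, the Baire-plus-propagation step fails. The sets $S_n$ are indeed closed (granting lower semicontinuity of $x\mapsto\|\lambda_x(b)\|$, which does hold here), and Baire yields some $S_n$ with nonempty interior; but ``density of orbits'' is not a hypothesis: a general second countable amenable groupoid need not be minimal or even topologically transitive. Equivariance only propagates the bound from an open set to its saturation, which may be a small invariant subset of $\Gz$. In the extreme case of the trivial groupoid $\G=\Gz$ one has $C^*_r(\G)=C_0(\Gz)$, orbits are singletons and nothing propagates, yet the proposition still holds for entirely different reasons (continuity of $a$ and vanishing at infinity force the uniform bound). The actual mechanism in \cite{Nistor--Prudhon}, building on \cite{exel-invertibiles}, is to show that $\{\lambda_x\}_{x\in\Gz}$ is an \emph{exhausting} family, i.e.\ every irreducible representation of $C^*_r(\G)$ is weakly contained in a \emph{single} $\lambda_x$; this is where second countability (separability of the algebra and the structure of its primitive ideal space) enters in an essential way, and once it is established, non-invertibility of $1+a$ yields an irreducible representation witnessing $0$ in the spectrum of $(1+a)^*(1+a)$ and hence a single $x$ with $1+\lambda_x(a)$ non-invertible. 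Your sketch does not contain a substitute for this step.
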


And the following is our generalisation. 

\begin{Theorem}\label{thm:extention of Exel}
Let $\G$ be a locally compact, \emph{$\sigma$-compact} and amenable groupoid \emph{with a Haar system}, then an element $1+a \in C_r^*(\G)^+$ is invertible if and only if $1+\lambda_x(a)$ is invertible for every $x\in \Gz$.
\end{Theorem}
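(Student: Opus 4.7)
\emph{Plan.} The forward direction is immediate: each regular representation $\lambda_x$ extends to a unital $*$-homomorphism $C^*_r(\G)^+ \to \B(L^2(\G_x;\mu_x))$, and therefore preserves invertibility. The content of the theorem lies in the converse, which I would prove by reducing to the second countable case of Nistor-Prudhon (Proposition \ref{thm:NP extension of Exel}). The reduction mechanism is the strengthened Austin-Georgescu approximation theorem (Theorem \ref{sexyapproximationtheorem}), used to realise the $\sigma$-compact amenable groupoid $\G$ as a directed limit of open second countable amenable subgroupoids $\G_n$, each carrying a Haar system compatible with that of $\G$. The strengthening needs to supply inclusions $C^*_r(\G_n) \hookrightarrow C^*_r(\G)$ with dense union and, crucially, enough compatibility between the regular representations of $\G_n$ and those of $\G$ at corresponding units that the fibrewise invertibility hypothesis on $a$ transfers to the approximating subalgebras.

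Granted such an approximating system, I would choose norm approximants $a_n \in C^*_r(\G_n)$ of $a$. A perturbation estimate together with the compatibility of regular representations should ensure that $1 + \lambda_y^{\G_n}(a_n)$ is invertible for every $y$ in the unit space of $\G_n$ once $n$ is sufficiently large. Applying Proposition \ref{thm:NP extension of Exel} to each second countable amenable $\G_n$ then produces inverses $1 + b_n = (1+a_n)^{-1} \in C^*_r(\G_n)^+$, which pull back to $C^*_r(\G)^+$ via the inclusions. Assuming a uniform norm bound on $\|1+b_n\|$, the standard resolvent identity
\[
(1+b_n) - (1+b_m) \;=\; (1+b_n)(a_m - a_n)(1+b_m)
\]
combined with $a_n \to a$ in norm shows the sequence is Cauchy; its limit $b$ then satisfies $(1+a)(1+b) = (1+b)(1+a) = 1$ in $C^*_r(\G)^+$, completing the argument.

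The heart of the matter, and the main obstacle, is obtaining the uniform bound on $\|1+b_n\| = \|(1+a_n)^{-1}\|$. The hypothesis of the theorem asserts only pointwise invertibility of $1 + \lambda_x(a)$ at each $x \in \Gz$, with no a priori uniform control on $\|(1+\lambda_x(a))^{-1}\|$ as $x$ varies; since $\Gz$ need not be compact, there is no free compactness argument to promote pointwise boundedness to uniform boundedness. Extracting this uniform control from the approximating system is precisely what the strengthening of Austin-Georgescu's results is designed to deliver: the strengthening must be engineered so that invertibility in $C^*_r(\G)^+$ is detected at the level of the $C^*_r(\G_n)^+$ in a norm-uniform way, something the unaided second countable case of Nistor-Prudhon does not provide. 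Without this uniform input the limiting argument sketched above collapses, so this is the genuinely new analytic content in passing from second countability to $\sigma$-compactness.
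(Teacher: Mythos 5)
There is a genuine gap, and in fact the reduction mechanism you propose points in the wrong direction. The paper does not realise $\G$ as a \emph{direct} limit of open second countable subgroupoids; it realises $\G$ as an \emph{inverse} limit of second countable metrisable \emph{quotients} $q_\alpha\colon \G\to\G_\alpha$ (Theorem \ref{sexyapproximationtheorem}, built on Austin--Georgescu). The distinction matters: for the motivating examples such as $\beta\ZZ^n\rtimes\ZZ^n$ the unit space is compact but not second countable, so it cannot be exhausted by open second countable subsets at all (an open cover of a compact space by second countable pieces would force second countability), and hence no system of open second countable subgroupoids with inclusions $C^*_r(\G_n)\hookrightarrow C^*_r(\G)$ of dense union exists. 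The correct maps are the pullbacks $(q_\alpha)^*\colon C^*_{\max}(\G_\alpha)\to C^*_{\max}(\G)$, which are isometric embeddings, and the decisive output of the strengthened approximation theorem is that $C^*_{\max}(\G)=\bigcup_\alpha(q_\alpha)^*C^*_{\max}(\G_\alpha)$ as an \emph{honest union}, not merely a dense one: any Cauchy sequence $a_n\in C_c(\G)$ factors through a single quotient $q_\alpha$, so the limit $a$ itself equals $(q_\alpha)^*(\tilde a)$ for one fixed $\alpha$ and one $\tilde a\in C^*_{\max}(\G_\alpha)$.

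This is precisely what dissolves the obstacle you correctly identify but do not resolve. In your scheme you need $1+\lambda^{\G_n}_y(a_n)$ invertible for all $y$ at some finite stage, and then a uniform bound on $\|(1+a_n)^{-1}\|$ to make the resolvent-identity sequence Cauchy. The first step already fails without uniform control: perturbing $1+\lambda_x(a)$ to $1+\lambda_x(a_n)$ preserves invertibility only when $\|a-a_n\|<\|(1+\lambda_x(a))^{-1}\|^{-1}$, and since $\sup_x\|(1+\lambda_x(a))^{-1}\|$ is exactly what the theorem refuses to assume, no single $n$ works for all $x$. The paper never faces this: since $a=(q_\alpha)^*(\tilde a)$ exactly, one shows that each $\mathrm{Im}\,U_x\subseteq L^2(\G_x;\mu_x)$ is a \emph{reducing} subspace for $\lambda_x(a)$ on which $\lambda_x(a)$ is unitarily equivalent to $\lambda_{q_\alpha(x)}(\tilde a)$; block-diagonal invertibility of $1+\lambda_x(a)$ then yields invertibility of $1+\lambda_{\bar x}(\tilde a)$ for every $\bar x\in\G_\alpha^{(0)}$, Nistor--Prudhon is applied \emph{once} to the single second countable amenable $\G_\alpha$, and the resulting inverse of $1+\tilde a$ is pushed forward by $(q_\alpha)^*$. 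No sequence of inverses, and no uniform bound, ever enters. (Note also that the reducing-subspace step is essential: invertibility does not pass to compressions to merely invariant subspaces, which is a second reason the ``compatibility of regular representations'' you invoke has to be set up on the quotient side rather than the subgroupoid side.)
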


Our approach relies on the result from \cite{austin-georgescu} that the topology of a $\sigma$-compact groupoid may be approximated in a very controlled way by second countable topologies.  The reader can compare our techniques here with \cite[Proposition 3.8]{Ananth-Delaroche--ExactGroupoids}. Before we get into the proof of Theorem \ref{thm:extention of Exel}, let us show how to use it to finish the proof of Theorem \ref{main theorem}:

\begin{proof}[Proof of Theorem \ref{main theorem}, ``(3) $\Leftrightarrow$ (4)'':] 

It suffices to show that ``(4) $\Rightarrow$ (1)''.
Note that $\G(\partial X)$ satisfies all the assumptions of Theorem \ref{thm:extention of Exel} with the addition that the object space is compact, which implies that  $C^*_r(\G(\partial X))$ is unital. Given $T \in C^*_r(\G)$ and applying Theorem \ref{thm:extention of Exel} to the groupoid $\G(\partial X)$, condition (4) implies that $q(T)$ is invertible in $C^*_r(\G(\partial X))$. As shown in the proof of ``(1) $\Leftrightarrow$ (2)'' in Theorem \ref{main theorem}, we know that the short sequence (\ref{short seq of red}) is exact. So condition (1) holds, and we finish the proof.
\end{proof}

The rest of this section is devoted to the proof of Theorem \ref{thm:extention of Exel}, and divided into three parts. As we need to strengthen the approximations given in \cite{austin-georgescu} for our purpose, we go over the notions and basic techniques in the first two subsections, where the theory of uniform spaces is recalled first as the major tool to construct the approximations. Then we state and prove our more controlled approximation result and finish the proof of Theorem \ref{thm:extention of Exel}. 

\subsection{Review of uniform spaces}\label{subsec:uniform spaces}
Here we recall briefly the theory of uniform spaces, mainly on the fact that they can be presented as inverse limits of metrisable spaces. This is the fundamental building block for the approximation results in \cite{austin-georgescu}. We suggest \cite{isbell} as a standard reference on uniform spaces.

Let $X$ be a set and $\mathcal{U}, \mathcal{V}$ be covers of $X$. $\mathcal{U}$ is said to \emph{refine} $\mathcal{V}$ (equivalently, $\mathcal{V}$ \emph{coarsens} $\mathcal{U}$), written $\mathcal{U} \prec \mathcal{V}$, if each element in $\mathcal{U}$ is contained in some element of $\mathcal{V}$. For a subset $A\subset X$, we define the \emph{star of $A$ against $\mathcal{U}$}, denoted by $st(A,\mathcal{U})$, to be the set $\bigcup\{U\in \mathcal{U}:U\cap A\neq \emptyset\}$.
We say that $\mathcal{U}$ \emph{star refines} $\mathcal{V}$, written $\mathcal{U}\leq \mathcal{V}$, if $\{st(U,\mathcal{U}):U\in \mathcal{U}\}$ refines $\mathcal{V}$. Now we recall the notion of uniform spaces, and a prototypical example is a metric space with covers of positive Lebesgue number.

\begin{Definition}
A \emph{uniform space} is a set $X$ with a collection of covers $\mathcal{C}$ of $X$, called the \emph{uniform structure} on $X$, that is closed under coarsening and such that if $\mathcal{U},\mathcal{V}\in \mathcal{C}$ then there exists $\mathcal{W}\in \mathcal{C}$ such that $\mathcal{W}$ star refines both $\mathcal{U}$ and $\mathcal{V}$. Elements in $\mathcal{C}$ are called \emph{uniform covers}. A function $f: X \to Y$ between uniform spaces is \emph{uniformly continuous} if the pre-image of uniform covers are uniform covers.
\end{Definition}

If $X$ is a set, we call a sequence of uniform covers $\mathcal{U}_0\ge \mathcal{U}_1\ge \mathcal{U}_2\ge \ldots$ a \emph{normal sequence of covers}. Note that every normal sequence of covers defines a uniform structure on $X$ by taking all coarsenings of covers from the sequence and, furthermore, every ``minimal'' uniform structure is obtained in this fashion from a normal sequence of covers. One nice feature of normal sequences is that they correspond exactly to the pseudo-metrisable uniform structures on $X$. We give an outline for reader's convenience.

For elements $x,y \in X$, let $n(x,y)$ denote the maximum integer $k$ such that $x$ and $y$ are both contained in an element of $\mathcal{U}_k$, and $\infty$ if no such maximum exists. Let $\rho :X\times X \to [0,1)$ be defined by $\rho(x,y) = 2^{-n(x,y)}$, with the convention that $2^{-\infty} = 0$. We observe that $\rho$ itself is not necessarily a pseudo-metric (because it may not satisfy the triangle inequality), but can be mofidied to a pseudo-metric $d$ via $d(x,y) = \inf \sum_{i=1}^n \rho(x_i,x_{i+1})$, where the infimum is taken over all chains $x = x_1, x_2, \ldots, x_n =y$ in $X$. Write $(X,\langle \{\mathcal{U}_n\}\rangle)$ to denote the resulting uniform/pseudo-metric structure on $X$, and $X_{\{\mathcal{U}_n\}}$ the resulting metric quotient.

\begin{Definition}\label{cofinalrefinement}
Let $\{\mathcal{U}_n\}_n$ and $\{\mathcal{V}_n\}_n$ be two normal sequences of covers. We say that $\{\mathcal{V}_n\}_n$ \emph{cofinally refines} $\{\mathcal{U}_n\}_n$, if for every $m\ge 0$ there exists $k(m)$ such that $\mathcal{V}_{k(m)}\leq \mathcal{U}_m$.
\end{Definition}

Notice that $\{\mathcal{U}_n\}$ cofinally refines $\{\mathcal{V}_n\}$ \emph{if and only if} the identity map $\mathrm{Id}:(X,\langle \{\mathcal{U}_n\}\rangle)\to (X,\langle \{\mathcal{V}_n\}\rangle)$ is uniformly continuous; moreover this implies that the canonical map $X_{\{\mathcal{U}_n\}} \to X_{\{\mathcal{V}_n\}}$ is uniformly continuous. It turns out that the collection of normal sequences of uniform covers forms a directed set under cofinal refinement, and we have the following:

\begin{Proposition}[\cite{isbell}]\label{Prop: inv app for uniform spaces}
Any uniform space is the inverse limit of metrisable uniform spaces; the inverse system being indexed by the collection of normal sequences with cofinal refinement being the partial ordering.
\end{Proposition}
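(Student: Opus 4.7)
The plan follows three main steps. First, I would verify that the collection of normal sequences of uniform covers on $X$, ordered by cofinal refinement (Definition \ref{cofinalrefinement}), is a directed set. Given two normal sequences $\{\mathcal{U}_n\}$ and $\{\mathcal{V}_n\}$, a common cofinal refinement $\{\mathcal{W}_n\}$ can be constructed by induction: invoking repeatedly the axiom that any finite collection of uniform covers admits a common star refinement, I would pick $\mathcal{W}_0$ star-refining both $\mathcal{U}_0$ and $\mathcal{V}_0$, and at each subsequent step choose $\mathcal{W}_n$ star-refining $\mathcal{W}_{n-1}$, $\mathcal{U}_n$, and $\mathcal{V}_n$ simultaneously.

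Second, I would assemble the inverse system. Each normal sequence $\{\mathcal{U}_n\}$ yields, via the $\rho$-and-chain construction recalled just before the proposition, a pseudo-metric $d$ on $X$ whose metric quotient is $X_{\{\mathcal{U}_n\}}$, together with a canonical uniformly continuous projection $\pi_{\{\mathcal{U}_n\}}: X \to X_{\{\mathcal{U}_n\}}$. If $\{\mathcal{V}_n\}$ cofinally refines $\{\mathcal{U}_n\}$, the paper's remark that the corresponding identity map on $X$ is uniformly continuous descends to a uniformly continuous bonding map $p_{\mathcal{V},\mathcal{U}}: X_{\{\mathcal{V}_n\}} \to X_{\{\mathcal{U}_n\}}$ commuting with the projections from $X$; transitivity of these bonding maps is immediate from construction, so we obtain a bona fide inverse system of pseudo-metrisable uniform spaces.

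Third, I would identify $X$ together with the projections $\{\pi_{\{\mathcal{U}_n\}}\}$ as the inverse limit of this system in the category of uniform spaces. The key observation is that the original uniform structure on $X$ is exactly the initial uniform structure for the family $\{\pi_{\{\mathcal{U}_n\}}\}$: any uniform cover $\mathcal{U}$ of $X$ extends to a normal sequence $\mathcal{U}_0 = \mathcal{U} \ge \mathcal{U}_1 \ge \cdots$ by iterated applications of the star-refinement axiom, which exhibits $\mathcal{U}$ as a coarsening of the pullback of a uniform cover of some $X_{\{\mathcal{U}_n\}}$. Combined with the universal property of pseudo-metric quotients, this yields the required bijection between uniformly continuous maps into $X$ and compatible cones into $\{X_{\{\mathcal{U}_n\}}\}$.

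The main technical obstacle lies in handling non-separated uniform spaces cleanly: the canonical map from $X$ into $\prod_{\{\mathcal{U}_n\}} X_{\{\mathcal{U}_n\}}$ need not be injective, and identifying its image with the inverse limit requires first passing to the Hausdorff quotient of $X$ (or, equivalently, interpreting the inverse limit in the appropriate subcategory). This is really a matter of formal setup; the substantive content of the proposition is the cofinality statement in step three, which is precisely what the subsequent $\sigma$-compact approximation arguments in the paper will exploit.
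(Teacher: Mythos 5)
The paper does not prove this proposition at all: it is quoted verbatim from Isbell's book as background, so there is no internal argument to compare against. On its own terms, your steps one and two are correct and standard (directedness via iterated common star refinements, bonding maps descending from the uniformly continuous identity maps), and the cofinality observation in step three --- every uniform cover occurs as $\mathcal{U}_0$ of some normal sequence and is therefore coarsened by the pullback of a ball cover from the corresponding metric quotient --- is indeed the substantive content that the paper's $\sigma$-compact approximation machinery later exploits.

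The genuine gap is in the final identification. What the cofinality argument gives you is that the uniformity on $X$ is the \emph{initial} uniformity for the projections $\pi_{\{\mathcal{U}_n\}}$, i.e.\ that the canonical map $X \to \varprojlim X_{\{\mathcal{U}_n\}}$ is a uniform embedding. It does \emph{not} give the existence half of your claimed bijection between maps into $X$ and compatible cones: a compatible thread $(x_\alpha)$ only produces a Cauchy filter on $X$ (generated by the sets $\pi_\alpha^{-1}(B(x_\alpha,\varepsilon))$), and a Cauchy filter need not converge. In the metrizable case one escapes because a single index is cofinal, but in general surjectivity can fail: for $\omega_1$ with its fine uniformity every uniformly continuous pseudometric vanishes on a tail, so every metric quotient collapses a tail to a point, the ``tail thread'' is a compatible element of the inverse limit not hit by any point of $\omega_1$, and the limit is $\omega_1+1$. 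So either ``inverse limit'' must be read as projective generation of the uniform structure (which is all the paper uses, since its groupoids are locally compact and $\sigma$-compact), or an additional completeness-type hypothesis or argument is needed; your proposal papers over exactly this point with the phrase ``the universal property of pseudo-metric quotients.'' By contrast, the non-injectivity issue you flag as the main obstacle is real but minor, and disappears under the separation hypothesis that is standing in Isbell and automatic in the Hausdorff setting of the paper.
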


\begin{Remark}[Remark on associated topologies]
Every uniform structure on a set $X$ induces a topology by saying that a set $A\subset X$ is a \emph{neighborhood} of a point $x \in X$ if there exists a uniform cover $\mathcal{U}$ such that $st(x,\mathcal{U})\subset A$. The uniform spaces we are interested in come from topology, and it is well known that the topologies which are induced by uniform structures are exactly the completely regular topologies. What is less well known is that there is a more canonical class of topological spaces which are intimately linked with uniform structures, namely the paracompact spaces. Indeed, one can define a space to be \emph{paracompact} if the collection of open covers forms a base for a uniform structure (one must restrict to only finite open covers for completely regular spaces). In fact, by identifying paracompact spaces with this particular uniform structure, one can easily see that continuous maps of paracompact spaces correspond exactly to uniformly continuous maps. The paracompact spaces we are interested in here are locally compact and $\sigma$-compact spaces. The strategy in \cite{austin-georgescu} and this paper is actually to approximate the underlying uniform structure for a locally compact and $\sigma$-compact groupoid, which consequently approximates its induced topological structure.
\end{Remark}

The following lemma will be used later:
\begin{Lemma}[\cite{austin-georgescu}]\label{lemma: paracompact}
Let $X$ be a locally compact, $\sigma$-compact topological space. Then $X$ is paracompact, hence every open cover admits an open start refinement.
\end{Lemma}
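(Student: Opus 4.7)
The plan splits into two parts: establishing paracompactness of $X$, and then deducing the existence of open star refinements.

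For paracompactness, I would first construct a standard compact exhaustion $K_1 \subset K_2 \subset \cdots$ with $K_n \subset \mathrm{int}(K_{n+1})$ and $\bigcup_n K_n = X$. This uses $\sigma$-compactness to write $X = \bigcup_n C_n$ with each $C_n$ compact, combined with local compactness (and the standing Hausdorff assumption recalled in the preliminaries) to thicken each $K_n \cup C_{n+1}$ into an open neighborhood with compact closure whose closure is then taken as $K_{n+1}$. Given an arbitrary open cover $\mathcal{U}$ of $X$, I would refine $\mathcal{U}$ ``annulus by annulus'': setting $K_0 = K_{-1} = \emptyset$, cover each compact set $A_n = K_{n+1} \setminus \mathrm{int}(K_n)$ by finitely many sets of the form $U \cap V_n$ with $U \in \mathcal{U}$ and $V_n = \mathrm{int}(K_{n+2}) \setminus K_{n-1}$, and then take the union across $n$. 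Since the $A_n$ cover $X$ this is an open refinement of $\mathcal{U}$, and local finiteness is immediate from the exhaustion: for any $x \in K_n$, the open neighborhood $\mathrm{int}(K_{n+2}) \setminus K_{n-2}$ meets $V_m$ only for $m$ in a bounded window around $n$, and each $V_m$ contributes only finitely many refinement members.

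For the second assertion, I would invoke A.H.\ Stone's classical theorem that every paracompact Hausdorff space is fully normal, which is exactly the statement that every open cover admits an open star refinement. Since $X$ has been shown paracompact and is Hausdorff by the standing assumption, this applies directly. The main obstacle is really this last step: Stone's theorem is a nontrivial classical result. If one wishes to avoid it, one may instead construct a star refinement explicitly from a partition of unity $\{\varphi_U\}$ subordinate to $\mathcal{U}$ (which exists on any paracompact Hausdorff space), for example by taking the open cover consisting of sets of the form $\{x : \varphi_U(x) > \tfrac{1}{2} \max_{U'} \varphi_{U'}(x)\}$ as $U$ ranges over $\mathcal{U}$.
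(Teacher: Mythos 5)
The paper offers no proof of this lemma; it is quoted from Austin--Georgescu, so there is nothing internal to compare against. Your main argument is the standard textbook one and is correct: the compact exhaustion exists by local compactness plus the standing Hausdorff assumption, the annulus-by-annulus refinement is a locally finite open refinement (one small repair: in the local-finiteness check you should take $n$ \emph{minimal} with $x\in K_n$, since otherwise $\mathrm{int}(K_{n+2})\setminus K_{n-2}$ need not contain $x$), and the passage from ``paracompact Hausdorff'' to ``every open cover has an open star refinement'' is a legitimate appeal to the classical (and easier) direction of A.~H.~Stone's theorem that paracompact Hausdorff spaces are fully normal.

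The one genuine flaw is the ``explicit'' alternative in your final sentence. The sets $W_U=\{x:\varphi_U(x)>\tfrac12\max_{U'}\varphi_{U'}(x)\}$ do form an open cover refining $\mathcal{U}$, but they do not form a star refinement: if $W_{U'}$ meets $W_U$ at some point $z$, all one learns is that $z\in U\cap U'$, so $\st(W_U,\{W_{U''}\})$ is only contained in the union of all $U'$ whose $W_{U'}$ meets $W_U$ --- that is, in $\st(U,\mathcal{U})$ --- and not in a single member of $\mathcal{U}$; it is not even clearly a barycentric refinement. The standard explicit route is different: use normality to shrink a locally finite open refinement, build from the shrinking an open \emph{barycentric} refinement, and then use the fact that a barycentric refinement of a barycentric refinement is a star refinement. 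Since your construction was offered only as an optional replacement for the citation to Stone's theorem, the main proof stands, but that last sentence should be deleted or replaced.
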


\subsection{Review of Austin-Georgescu's approximation result}

Now we focus on groupoids, and recall the explicit approximations constructed in \cite{austin-georgescu}. We start with the following notion:
\begin{Definition}[\cite{austin-georgescu}]\label{approximate}
Let $\G$ be a locally compact topological groupoid. An \emph{inverse approximation of $\G$} is an inverse system $\{\G_\alpha, q^\alpha_\beta:\G_\alpha \to \G_\beta\}_{\alpha \in A}$ where each $\G_\alpha$ is a locally compact groupoid and the index set $A$ is directed, satisfying:
\begin{enumerate}
\item for each $\alpha \ge \beta \in A$, the map $q^\alpha_\beta:\G_\alpha \to \G_\beta$ is a proper continuous and surjective groupoid morphism, and moreover, $q^\beta_\gamma \circ q^\alpha_\beta  = q^\alpha_\gamma$ whenever $\alpha \ge \beta \ge \gamma$;
\item $q^\alpha_\alpha = id_{\G_\alpha}$ for all $\alpha \in A$; and
\item $\varprojlim_\alpha \G_\alpha = \G$ in the category of topological groupoids with proper continuous morphisms.
\end{enumerate}
\end{Definition}

\begin{Remark}
We denote the canonical projections from $G$ to the inverse system by $q_\alpha:G\to G_\alpha$.
\end{Remark}

As indicated in Theorem \ref{thm:extention of Exel}, we are mostly interested in $\sigma$-compact spaces. In order to approximate them, the following concept is required and useful:

\begin{Definition}[\cite{austin-georgescu}]\label{groupoidexhuastion}
Let $X$ be a locally compact and $\sigma$-compact space. We say that a collection $\{K_n:n\in \NN\}$ is an \emph{exhaustion of $X$ by compact subsets}, or simply an \emph{exhaustion}, if each $K_n$ is a compact neighbourhood in $X$, $K_n\subset int (K_{n+1})$ and such that $\bigcup_n int(K_n) = X$.

In the case that $\G$ is a locally compact and $\sigma$-compact groupoid, for any exhaustion $\{K_n\}$ of $\G$, the sequence $\{K_n':= K_n\cup r(K_n)\cup s(K_n)\}$ is also an exhaustion of $\G$, and $\{K_n'|_{\Gz}\}$ is an exhaustion of the unit space $\Gz$. We call an exhaustion obtained in this manner a \emph{groupoid exhaustion}.
\end{Definition}

Also recall from \cite{austin-georgescu} that \emph{an open cover of a groupoid $\G$} is a pair $(\mathcal{W}^1,\mathcal{W}^0)$, where $\mathcal{W}^1$ is an open cover of $\G$ and $\mathcal{W}^0$ is an open cover of $\Gz$. The reason for having to take covers of both is that a very important part of a groupoid is its structure as a fibration over its unit space.

Now assume $\G$ is a locally compact and $\sigma$-compact groupoid with a fixed groupoid exhaustion $\{K_n\}$. A crucial technical part in \cite{austin-georgescu} is to construct normal sequences of open covers of $\G$ satisfying certain natural but delicately-designed conditions, such that each of the resulting metrisable quotients (see Section \ref{subsec:uniform spaces}) possesses a compatible second countable and locally compact groupoid structure, and the quotient maps are proper, continuous and surjective groupoid morphisms (\cite[Proposition 6.5, Theorem 6.8]{austin-georgescu}). Furthermore, these kind of normal sequences are cofinal in the collection of all normal sequences, thus Proposition \ref{Prop: inv app for uniform spaces} shows that $\G$ admits an inverse approximation by second countable and locally compact groupoids (\cite[Theorem 6.10]{austin-georgescu}). Additionally, if the given groupoid $\G$ possesses a Haar system, then the normal sequences above to be modified to ensure that each quotient also possess a Haar system, and the quotients are Haar system preserving (\cite[Theorem 6.9]{austin-georgescu}).

We would like to study this inverse approximations further, so we have to refer to those specific normal sequences of covers mentioned above. However, we decide \emph{not} to present all the precise details since they are very technical and require more notions which are only used to ensure that the resulting metrisable quotient possess a groupoid structure. Therefore, we choose the following neutral way to put them into a ``black box", and guide the interested readers to the original paper \cite{austin-georgescu} for details.

\begin{Definition}[\cite{austin-georgescu}, Definition 6.6]\label{defn:groupoid normal sequence}
A normal sequence of open covers which satisfies Properties (1), (3)-(7) in \cite[Proposition 6.5]{austin-georgescu} is called a \emph{groupoid normal sequence} for $\G$.
\end{Definition}

Now the above analysis can be converted into the following result:

\begin{Proposition}[\cite{austin-georgescu}, Theorem 6.8]
Let $\G$ be a locally compact and $\sigma$-compact groupoid. For any normal sequence $\{\mathcal{W}_n\}$ of open covers of $\G$, there exists a groupoid normal sequence $\alpha=\{\mathcal{U}_n\}$ of open covers of $\G$ cofinally refining $\{\mathcal{W}_n\}$, and the induced metrisable quotient $\G_{\alpha}$ is a locally compact and second countable groupoid. Moreover, the quotient map $q_\alpha: \G \to \G_{\alpha}$ is proper and continuous, and the pre-image of the cover of $\G_{\alpha}$ by balls of radius $\frac{1}{2^n}$ refines $\mathcal{W}_n$.
\end{Proposition}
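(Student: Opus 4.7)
The plan is to build $\{\mathcal{U}_n\}$ by induction on $n$, at each stage taking a common open star refinement of the previous cover and of $\mathcal{W}_n$ and then further refining so that the groupoid compatibility conditions of Definition~\ref{defn:groupoid normal sequence} hold on the compact piece $K_n$ of a fixed groupoid exhaustion. The key flexibility tool throughout is Lemma~\ref{lemma: paracompact}: both $\G$ and $\Gz$ are locally compact and $\sigma$-compact, hence paracompact, so every open cover admits an open star refinement. Since finitely many open star refinements can be amalgamated into a single one (by intersecting elements), I can incorporate finitely many constraints simultaneously at each induction step.

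For the inductive step, assume $\mathcal{U}_0 \geq \mathcal{U}_1 \geq \cdots \geq \mathcal{U}_n$ have been constructed, each satisfying the full list of groupoid normal sequence conditions (local finiteness relative to $K_n$, inverse-invariance, multiplication compatibility with $\mathcal{U}_{n-1}$, range/source compatibility with a fixed cover of $\Gz$) and with $\mathcal{U}_n$ star-refining $\mathcal{W}_{k(n)}$ for some increasing $k(n)$. To construct $\mathcal{U}_{n+1}$, I first form an open star refinement of both $\mathcal{U}_n$ and $\mathcal{W}_{k(n)+1}$, restricted to $K_{n+1}$ to preserve local finiteness. I then symmetrise by replacing each $U$ with $U \cap U^{-1}$ (open, since inversion is a homeomorphism), obtaining an inverse-invariant cover. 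Using continuity of multiplication on the compact set $\G^{(2)} \cap (K_{n+1} \times K_{n+1})$, I further refine so that whenever $\gamma_1, \gamma_2$ lie in matching members of the refinement and are composable, $\gamma_1\gamma_2$ lies in a member of $\mathcal{U}_n$. Pulling back a suitable refinement of the fixed $\Gz$-cover through $r$ and $s$ and amalgamating produces $\mathcal{U}_{n+1}$.

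From the sequence I recover the pseudo-metric $d$ of Section~\ref{subsec:uniform spaces} and pass to the metric quotient $\G_\alpha$. Second countability of $\G_\alpha$ is immediate: the countably many covers provide a countable basis for its metric topology. The multiplication and inversion on $\G$ descend to continuous operations on $\G_\alpha$ precisely because the inductive construction guaranteed compatibility at every scale. Properness (and hence local compactness of $\G_\alpha$) follows because a ball of radius $2^{-n}$ in $\G_\alpha$ pulls back into $\mathrm{st}(K_n,\mathcal{U}_n)$, which is relatively compact by local finiteness combined with properness of the exhaustion. The final claim—that the preimage of the cover of $\G_\alpha$ by balls of radius $2^{-n}$ refines $\mathcal{W}_n$—is a bookkeeping consequence of the explicit definition of $d$ from the normal sequence together with the cofinal refinement $\mathcal{U}_{n+c} \leq \mathcal{W}_n$ for a fixed shift $c$ absorbing the passage from $\rho$ to $d$.

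The main obstacle is the inductive step, specifically arranging multiplication compatibility: multiplication on a groupoid is only continuous (not uniformly so) and composable pairs form a subvariety $\G^{(2)}$ of $\G\times\G$ that is not open, so one cannot simply invoke uniform continuity globally. The groupoid exhaustion $\{K_n\}$ must be used carefully to localise the construction, and one must confirm that the resulting local refinements assemble into a genuine open cover of all of $\G$ (not just of $K_{n+1}$) without destroying the star-refinement relation established outside $K_{n+1}$. Making this compatible with the cofinal refinement of $\{\mathcal{W}_n\}$ while also maintaining inverse-invariance and $r/s$-compatibility is where the delicate combinatorial work of \cite{austin-georgescu} lives.
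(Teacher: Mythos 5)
The paper offers no proof of this statement: it is imported verbatim from Austin--Georgescu (their Theorem~6.8), and the paper deliberately black-boxes the relevant conditions by defining a ``groupoid normal sequence'' as one satisfying Properties (1), (3)--(7) of their Proposition~6.5 without restating them. So there is no in-paper argument to compare yours against; I can only judge your sketch on its own terms. Your overall strategy --- induct on $n$, amalgamate a star refinement of $\mathcal{U}_n$ and $\mathcal{W}_{k(n)+1}$ with finitely many groupoid-compatibility constraints localised to a groupoid exhaustion, then pass to the metric quotient via the chain pseudo-metric --- is indeed the strategy of the cited source, and your use of Lemma~\ref{lemma: paracompact} to generate star refinements is the correct enabling tool.

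However, the proposal has a genuine gap exactly where you place it, and it is the mathematical core of the statement rather than deferrable bookkeeping. First, the pseudo-metric $d$ is defined via chains, so $q_\alpha(\gamma)=q_\alpha(\gamma')$ does \emph{not} mean that $\gamma$ and $\gamma'$ lie in a common member of every $\mathcal{U}_n$; to get well-definedness of multiplication on $\G_\alpha$ you must convert the chain condition back into a single-member condition via the star-refinement property, and your one-line requirement (``products of composable pairs in matching members land in a member of $\mathcal{U}_n$'') does not interact with chains: the links of a chain joining $\gamma_1$ to $\gamma_1'$ need not be composable with anything near $\gamma_2$, since $\G^{(2)}$ is not open in $\G\times\G$. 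Second, for $\G_\alpha$ to be a groupoid one also needs a lifting statement: if $\bar{s}(\bar{\gamma}_1)=\bar{r}(\bar{\gamma}_2)$ in $\G_\alpha^{(0)}$, there must exist representatives that are genuinely composable in $\G$; nothing in your induction arranges this, and it is precisely what forces the interaction between the covers $\mathcal{U}_n^1$ of $\G$ and $\mathcal{U}_n^0$ of $\Gz$ through $r$ and $s$. Third, your properness argument is off as stated: the preimage of a ball of radius $2^{-n}$ centred at an arbitrary $q_\alpha(x)$ is controlled by $\mathrm{st}(x,\mathcal{U}_{n-1})$, not by $\mathrm{st}(K_n,\mathcal{U}_n)$, so you need relative compactness of the members of the covers (built into the induction) together with local finiteness, not the exhaustion alone. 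Finally, a minor point: second countability of $\G_\alpha$ follows from $\sigma$-compactness plus metrisability; ``the countably many covers provide a countable basis'' presumes the covers are countable, which itself requires the local finiteness you impose. None of these are repaired by the sketch, so as a standalone proof it is incomplete; as a faithful summary of the cited construction it is a reasonable outline.
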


Via a diagonal argument, we obtain the following version dealing with a countable family of normal sequences simultaneously. This result will be used several times later.

\begin{Proposition}\label{Prop: AG approximation 6.8}
Let $\G$ be a locally compact and $\sigma$-compact groupoid. For each $l \in \mathbb{N}$, suppose $\{\mathcal{W}_n^l\}_{n}$ is a normal sequence of open covers of $\G$. Then there exists a groupoid normal sequence $\alpha=\{\mathcal{U}_n\}$ of open covers of $\G$ cofinally refining $\{\mathcal{W}_n^l\}$ for every $l$, and the induced metrisable quotient $\G_{\alpha}$ is a locally compact and second countable groupoid. Moreover, the quotient map $q_\alpha: \G \to \G_{\alpha}$ is proper and continuous, and the sequence of open covers consisting of pre-image of the cover of $\G_{\alpha}$ by balls of radius $\frac{1}{2^n}$ cofinally refines $\{\mathcal{W}_n^l\}_n$ for each $l$.
\end{Proposition}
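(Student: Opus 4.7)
The plan is to reduce to the preceding single-sequence version by combining the countable family $\{\mathcal{W}_n^l\}_{n,l}$ into a single normal sequence that cofinally refines each $\{\mathcal{W}_n^l\}_n$ for every $l$. Once such a unified sequence is in hand, the conclusion follows from the previous proposition together with the transitivity of cofinal refinement.

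The construction proceeds by a diagonal induction using paracompactness. Since $\G$ is locally compact and $\sigma$-compact, Lemma \ref{lemma: paracompact} ensures that every open cover of $\G$ admits an open star-refinement; the same applies to $\Gz$, so we may work with the pair-of-covers formalism of \cite{austin-georgescu} without loss of generality. First I would set $\mathcal{V}_0 = \mathcal{W}_0^0$ and, having defined $\mathcal{V}_0 \geq \mathcal{V}_1 \geq \ldots \geq \mathcal{V}_n$ such that $\mathcal{V}_k$ star-refines $\mathcal{W}_k^l$ for every $l \leq k \leq n$, choose $\mathcal{V}_{n+1}$ to be an open star-refinement of a common open refinement of the finite family $\{\mathcal{V}_n, \mathcal{W}_{n+1}^0, \mathcal{W}_{n+1}^1, \ldots, \mathcal{W}_{n+1}^{n+1}\}$. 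A common refinement of finitely many open covers is obtained by taking pairwise intersections, and paracompactness then yields the desired star-refinement. Consequently $\mathcal{V}_{n+1}$ star-refines $\mathcal{V}_n$ (so $\{\mathcal{V}_n\}$ is a normal sequence) and simultaneously star-refines $\mathcal{W}_{n+1}^l$ for every $l \leq n+1$, preserving the inductive hypothesis.

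To verify cofinal refinement, fix $l \in \mathbb{N}$ and $m \geq 0$, and set $k = \max(m, l)$. By construction, $\mathcal{V}_k$ star-refines $\mathcal{W}_k^l$, and by the normality of $\{\mathcal{W}_n^l\}_n$ the cover $\mathcal{W}_k^l$ refines $\mathcal{W}_m^l$; composing a star-refinement with a refinement gives a star-refinement, so $\mathcal{V}_k \leq \mathcal{W}_m^l$. Hence $\{\mathcal{V}_n\}$ cofinally refines $\{\mathcal{W}_n^l\}_n$ for every $l$.

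Finally I would apply the preceding single-sequence proposition to $\{\mathcal{V}_n\}$, producing a groupoid normal sequence $\alpha = \{\mathcal{U}_n\}$ cofinally refining $\{\mathcal{V}_n\}$, a second countable and locally compact metrisable quotient $\G_\alpha$, a proper continuous quotient map $q_\alpha: \G \to \G_\alpha$, and pre-images of the covers of $\G_\alpha$ by balls of radius $\frac{1}{2^n}$ that cofinally refine $\{\mathcal{V}_n\}$. The transitivity of cofinal refinement (immediate from Definition \ref{cofinalrefinement}) then propagates both cofinal refinement properties from $\{\mathcal{V}_n\}$ to each $\{\mathcal{W}_n^l\}_n$, completing the proof. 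The only real obstacle is the careful bookkeeping in the diagonal step: one must ensure that the inductive hypothesis persists while only a finite sub-family of the doubly-indexed data is used at each stage. Once the index bound $l \leq n+1$ is in place, the rest is an immediate application of the single-sequence result.
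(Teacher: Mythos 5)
Your proposal is correct and follows exactly the route the paper intends: the paper states this proposition with only the remark ``via a diagonal argument,'' and your construction of a single normal sequence $\{\mathcal{V}_n\}$ cofinally refining every $\{\mathcal{W}_n^l\}_n$ (using paracompactness to star-refine finite common refinements at stage $n+1$ for indices $l \leq n+1$), followed by an application of the single-sequence version and transitivity of cofinal refinement, is precisely that diagonal argument spelled out. No gaps.
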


\subsection{Strengthened Approximation Theorem}

Now we would like to study the amenabilities of the inverse approximations given in \cite{austin-georgescu}, and use them to prove Theorem \ref{thm:extention of Exel}. Unfortunately, the groupoids $\G_\alpha$'s in the inverse approximation being amenable does not automatically follow from the fact that they are \emph{quotients} (i.e. topological quotients such that the quotient map is a groupoid morphism) of the amenable groupoid $\G$. The following example provides a counterexample in the general case.

\begin{Example}
Let $\Gamma$ be a countable discrete and exact group that is not amenable (a free group on two generators will do). Notice that the Stone-\v{C}ech compactification $\beta\Gamma$ equivariantly quotients to the Alexandroff one-point compactification $\Gamma^+$, and hence the transformation groupoid $\beta\Gamma\rtimes \Gamma$ quotients to $\Gamma^+ \rtimes\Gamma$. Note that $\Gamma^+ \rtimes\Gamma$ is not amenable while $\beta\Gamma\rtimes \Gamma$ is.
\end{Example}

However in the setting of Proposition \ref{Prop: AG approximation 6.8}, we may modify the groupoid normal sequences of covers to ensure that each quotient is still amenable. The idea is inspired by \cite[Theorem 6.9]{austin-georgescu} where it is proved that a Haar system of measures on $\G$ can be pushed forward to a Haar system on the metric quotient induced by the groupoid normal sequences. To do so, we generalise the result to a sequence of systems of measures which were introduced by Renault in \cite{Renault-Representations}.

\begin{Definition}
Let $X,Y$ be locally compact spaces, and $\pi:X\to Y$ be a continuous open surjective map. A \emph{$\pi$-system of measures} is a collection of positive Radon measures $\{m_y:y\in Y\}$ on $X$ such that $m_y$ is supported on $\pi^{-1}(y)$, and is continuous in the sense that for every $f\in C_0(X)$, the function $y\to \int_X f \d m_y$
is continuous on $Y$.
\end{Definition}

\begin{Definition}
Let $X_i,Y_i$ be locally compact spaces, $\pi_i: X_i \to Y_i$ be continuous open surjective maps for $i=1,2$, and $\{m_y: y\in Y_1\}$, $\{\overline{m}_{\bar{y}}: \bar{y} \in Y_2\}$ be $\pi_i$-system of measures, respectively. Suppose $f: X_1 \to X_2$, $g: Y_1 \to Y_2$ are proper and continuous functions satisfying $\pi_2 \circ f = g \circ \pi_1$. Then $f$ is said to be \emph{measure-preserving} if for any $y\in Y_1$, we have $f_\ast(m_y)=\overline{m}_{f(y)}$.
\end{Definition}

The systems we are interested in come from the source map $s$ of groupoids. Notice that a Haar system is an example of an $s$-system of measures. When the groupoid has a topological approximate invariant mean $\{m^{(k)}\}_{k \in \mathbb{N}}$ (Definition \ref{amenable}), then each $m^{(k)}$ is also an $s$-system of measures. Now we state the following technical lemma, dealing with a sequence of systems simultaneously. The proof follows almost the same as that of \cite[Theorem 6.5]{austin-georgescu} together with a diagonal argument, so we only provide the sketch here.
\begin{Lemma}\label{coverings}
Let $\G$ be a locally compact and $\sigma$-compact groupoid with a fixed groupoid exhaustion $\{K_n\}$. For each $l \in \mathbb{N}$, let $\{\mathcal{W}_n^l\}_{n}$ be a normal sequence of open covers of $\G$. For each $k \in \mathbb{N} \cup \{0\}$, suppose $m^{(k)} =\{m^{(k)}_x:x\in \Gz\}$ is a $\pi$-system of measures for some open surjective continuous map $\pi: \G \to \Gz$. Then the groupoid normal sequence $\{\mathcal{U}_n = (\mathcal{U}_n^0,\mathcal{U}^1_n)\}$ in Proposition \ref{Prop: AG approximation 6.8} can be modified to satisfy the following additional condition:

$\bullet$ Fix $\{f^n_j:j \in J_n\}$ a finite partition of unity of $K_n$ whose supports refine $\mathcal{U}^1_n$. Let $(\lambda_j)_j \subset \CC$ be any sequence with $|\lambda_j| < n$. Then for each open set $U\in \mathcal{U}^1_{n+1}$ and any $x,y\in s(U)$ and for all $k \leq n$, we have
\begin{equation*} \label{eqn.approximatespartition2}
\left|\int_{\G} \left(\sum_{j} \lambda_j f^n_j\right)\, \d m^{(k)}_x - \int_{\G} \left(\sum_{j} \lambda_j f^n_j\right)\, \d m^{(k)}_y\right| < \frac{1}{n}.
\end{equation*}

\ignore{; and for any $u,v \in U$ and $j \in J_n$, we have
$$|f^n_j(u)-f^n_j(v)| < \frac{1}{12n^2|J_n|M_n}=: \epsilon_n,$$
where $M_n:=\sup_{x\in \Gz} m^{(0)}_x(K_{n+1})$. Moreover, for any $U \in \mathcal{U}^1_{n+1}$ with $U \cap  \supp(f^n_j) \neq \emptyset$ for some $j \in J_N$, then $U \subseteq int(K_{n+1})$.}
\end{Lemma}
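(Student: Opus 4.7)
The plan is to build the groupoid normal sequence $\{\mathcal{U}_n\}$ together with the partitions of unity $\{f^n_j\}$ by simultaneous induction on $n$, incorporating the measure-system conditions at each stage as additional refinement constraints on top of those already required by Proposition \ref{Prop: AG approximation 6.8}. The point is that Proposition \ref{Prop: AG approximation 6.8} is flexible enough to let one refine further without disturbing the groupoid-theoretic properties defining a groupoid normal sequence in the sense of Definition \ref{defn:groupoid normal sequence}.

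At stage $n$, suppose that $\mathcal{U}_1,\ldots,\mathcal{U}_n$ and partitions $\{f^m_j\}_{j\in J_m}$ for $m<n$ have been chosen satisfying the conditions inherited from Proposition \ref{Prop: AG approximation 6.8} together with the additional condition under construction. Since $K_n$ is compact, I would then fix a finite partition of unity $\{f^n_j\}_{j\in J_n}$ of $K_n$ whose supports refine $\mathcal{U}^1_n$. The key observation is that for each $j\in J_n$ and each $k\in\{0,1,\ldots,n\}$, the function
$$\phi^{n,k}_j:\Gz\longrightarrow\CC,\qquad \phi^{n,k}_j(x):=\int_{\G}f^n_j\,\d m^{(k)}_x$$
is continuous, directly by the definition of a $\pi$-system of measures. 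There are only finitely many such functions for this $n$, so for every point $x\in \Gz$ continuity produces an open neighbourhood $V_x\subseteq \Gz$ on which all of them oscillate by less than $\frac{1}{2n^2|J_n|}$. The collection $\mathcal{V}_n:=\{V_x:x\in \Gz\}$ is then an open cover of $\Gz$.

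Next I would construct $\mathcal{U}_{n+1}$ by the same recipe as in the proof of Proposition \ref{Prop: AG approximation 6.8} (i.e., successive open star-refinements interacting well with the source and range maps), but additionally insisting that $\mathcal{U}^1_{n+1}$ refines $s^{-1}(\mathcal{V}_n):=\{s^{-1}(V):V\in \mathcal{V}_n\}$. This is achieved by intersecting the cover produced at that step with $s^{-1}(\mathcal{V}_n)$ before taking the star-refinement; since further refinement never invalidates a star-refinement property, all the defining conditions of a groupoid normal sequence are preserved. With this arranged, every $U\in\mathcal{U}^1_{n+1}$ satisfies $s(U)\subseteq V$ for some $V\in\mathcal{V}_n$, so for $x,y\in s(U)$, any $(\lambda_j)\subset\CC$ with $|\lambda_j|<n$, and any $k\leq n$, the triangle inequality gives
$$\left|\int_{\G}\sum_j\lambda_j f^n_j\,\d m^{(k)}_x-\int_{\G}\sum_j\lambda_j f^n_j\,\d m^{(k)}_y\right|\leq \sum_j|\lambda_j|\cdot\big|\phi^{n,k}_j(x)-\phi^{n,k}_j(y)\big|<n\cdot|J_n|\cdot\frac{1}{2n^2|J_n|}=\frac{1}{2n}<\frac{1}{n}.$$

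The main obstacle is purely bookkeeping: one must check that inserting the refinement by $s^{-1}(\mathcal{V}_n)$ at each inductive step does not violate any of the properties (1), (3)--(7) of Proposition 6.5 of \cite{austin-georgescu} used to define a groupoid normal sequence. This proceeds exactly in parallel with the modification carried out in the proof of Theorem 6.9 of \cite{austin-georgescu}, where a single Haar system was accommodated, the only difference being that here we track one continuous dependence per level $k\leq n$ of the $\pi$-systems, all of which are handled uniformly because only finitely many appear at each stage of the induction.
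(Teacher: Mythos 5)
Your proposal is correct and follows essentially the same route as the paper's own (sketched) proof: at stage $n$ one uses the continuity of $x\mapsto\int_{\G}f^n_j\,\d m^{(k)}_x$ (from the definition of a $\pi$-system) for the finitely many pairs $(j,k)$ with $k\leq n$ to produce an open cover $\mathcal{V}_n^0$ of $\Gz$ controlling the oscillation, pulls it back along $s$, and then builds $\mathcal{U}_{n+1}$ refining this cover while preserving the groupoid-normal-sequence conditions of \cite[Proposition 6.5]{austin-georgescu}. Your explicit triangle-inequality bookkeeping with the threshold $\tfrac{1}{2n^2|J_n|}$ is just a slightly more detailed version of the paper's direct choice of $\mathcal{V}_n^0$, so there is nothing to add.
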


\begin{proof}[Sketch of the proof:]
Given $n \in \mathbb{N}$, suppose $\mathcal{U}_1, \ldots, \mathcal{U}_n$ have been chosen to satisfy the requirements for a groupoid normal cover (Definition \ref{defn:groupoid normal sequence}) and the above condition such that for $k=1,\ldots, n$, each $\mathcal{U}_k$ refines $\{\mathcal{W}_k^l\}$ for $l \leq k$. Note that the maps $x \mapsto \int_{\G} f^n_j \d m^{(k)}_x$ is continuous for each $j$ and $k=1,\ldots, n$. Hence we can find a cover $\mathcal{V}_n^0$ of $\Gz$ such that for $x,y \in V \in \mathcal{V}_n^0$ and $|\lambda_j| < n$, we have
\begin{equation*}
    \left|\int_{\G} \left(\sum_{j} \lambda_j f^n_j\right)\, \d m^{(k)}_x - \int_{\G} \left(\sum_{j} \lambda_j f^n_j\right)\, \d m^{(k)}_y\right| < \frac{1}{n}
\end{equation*}
for all $k \leq n$. Define $\mathcal{V}_n^1:=s^{-1}(\mathcal{V}_n^0)$, and we obtain an open cover $\mathcal{V}_n=(\mathcal{V}_n^0, \mathcal{V}_n^1)$ of $\G$. Furthermore, after shrinking $\mathcal{V}_n$ if necessary, we may assume that $\mathcal{V}_n$ refines $\{\mathcal{W}_{n+1}^l\}$ for $l \leq n+1$. Now we can construct an open cover $\mathcal{U}_{n+1}$ of $\G$ such that it refines $\mathcal{V}_n$ and satisfies the condition for groupoid normal sequences, as done in \cite[Theorem 6.5]{austin-georgescu}. 
\ignore{Furthermore, since each $f^n_j \in C_c(\G)$ is uniformly continuous and $J_n$ is a finite index set, there exists an open cover $\mathcal{V}'_n$ of $\G$ such that for any $u,v \in V \in \mathcal{V}'_n$ and any $j \in J_n$, we have
$$|f^n_j(u)-f^n_j(v)| < \epsilon_n.$$
Also fix an open cover $\mathcal{V}_n''$ of $\G$ such that if $V'' \in \mathcal{V}_n''$ satisfying $V''\cap  \mathrm{supp}(f^n_j) \neq \emptyset$ for some $j\in J_n$, then $V'' \subseteq int(K_{n+1})$. Now define $\mathcal{V}_n^1$ to be a refinement of $s^{-1}(\mathcal{V}_n^0)$, $\mathcal{V}'_n$ and $\mathcal{V}''_n$, and we obtain an open cover $\mathcal{V}_n=(\mathcal{V}_n^0, \mathcal{V}_n^1)$ of $\G$. Furthermore, after shrinking $\mathcal{V}_n$ if necessary, we may assume that $\mathcal{V}_n$ refines $\{\mathcal{W}_{n+1}^l\}$ for $l \leq n+1$.} 
\end{proof}

Now we are ready to prove the following approximation result for systems of measures, which generalises \cite[Theorem 6.9]{austin-georgescu} where they studied the case of Haar system.
\begin{Proposition}\label{prop: amenability app}
Let $\G$ be a locally compact and $\sigma$-compact groupoid, and $\{\mathcal{W}_n^l\}_{n \in \mathbb{N}}$ be a normal sequence of open covers on $\G$ for each $l \in \mathbb{N}$. Suppose for each $k \in \mathbb{N}$, $m^{(k)} =\{m^{(k)}_x:x\in \Gz\}$ is an $s$-system of measures on $\G$. Then the metrisable quotient $\G_\alpha$ in Proposition \ref{Prop: AG approximation 6.8} can be arranged so that for each $k \in \mathbb{N}$, there is an $\bar{s}$-system of measures $\overline{m}^{(k)} =\{\overline{m}^{(k)}_{\bar{x}}:\bar{x}\in \G^{(0)}_\alpha\}$ (here $\bar{s}$ is the source map of $\G_\alpha$) and the quotient map $q_\alpha$ is measure preserving.

\ignore{for the map
$$(q_\alpha)^*: C_c(\G_\alpha) \longrightarrow C_c(\G), \quad f \mapsto f \circ q_\alpha$$
and each $x\in \Gz$, the restriction of $(q_\alpha)^*$ on the $x$-fibre extends to an isometry
$$(q_\alpha)^*_x: L^2((\G_\alpha)_{q_\alpha(x)}; \overline{m^{(0)}}_{q_\alpha(x)}) \longrightarrow L^2(\G_x; m^{(0)}_x).$$}
\end{Proposition}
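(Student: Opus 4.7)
The plan is to first apply Lemma \ref{coverings} to obtain a suitable groupoid normal sequence, then push the given systems of measures forward along the induced quotient map. The key technical point is a careful approximation of $C_c(\G)$-functions by the partitions of unity $\{f^n_j\}$, combined with the integral-control condition of that lemma.

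Apply Lemma \ref{coverings} to the family of normal sequences $\{\mathcal{W}_n^l\}_{n,l}$ together with the countable collection of $s$-systems $\{m^{(k)}\}_{k\in \NN}$. This produces a groupoid normal sequence $\alpha=\{\mathcal{U}_n=(\mathcal{U}_n^0,\mathcal{U}_n^1)\}_n$ cofinally refining every $\{\mathcal{W}_n^l\}$, giving rise via Proposition \ref{Prop: AG approximation 6.8} to a second countable locally compact quotient groupoid $\G_\alpha$ with proper continuous surjective quotient morphism $q_\alpha:\G\to\G_\alpha$. The crucial extra feature is the integral-control condition: for each $U\in\mathcal{U}_{n+1}^1$, any $x,y\in s(U)$, any $k\leq n$ and any scalars with $|\lambda_j|<n$, the integrals $\int_\G (\sum_j \lambda_j f^n_j)\,\d m^{(k)}_x$ and $\int_\G (\sum_j \lambda_j f^n_j)\,\d m^{(k)}_y$ differ by less than $1/n$.

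For each $\bar{x}\in\G_\alpha^{(0)}$ and each $k$, choose any lift $x\in q_\alpha^{-1}(\bar{x})$ and define
$$\overline{m}^{(k)}_{\bar{x}}:=(q_\alpha)_{\ast}\, m^{(k)}_x.$$
The main obstacle is to show that this is independent of the choice of lift. By the Riesz representation theorem, it suffices to check that for every $f\in C_c(\G_\alpha)$ and every pair $x,y\in \Gz$ with $q_\alpha(x)=q_\alpha(y)$, we have $\int_\G (f\circ q_\alpha)\,\d m^{(k)}_x=\int_\G (f\circ q_\alpha)\,\d m^{(k)}_y$. Since $q_\alpha$ is proper, $f\circ q_\alpha\in C_c(\G)$ has support in some $K_N$; for $n\geq N$ with $n>\|f\|_\infty$, approximate $f\circ q_\alpha$ uniformly on $K_n$ by $g_n:=\sum_j \lambda_j^n f^n_j$, where $\lambda_j^n:=(f\circ q_\alpha)(\gamma_j^n)$ for a chosen $\gamma_j^n\in\supp(f^n_j)$. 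Uniform continuity of $f\circ q_\alpha$ combined with the shrinking supports of the $f^n_j$'s forces $\|g_n-f\circ q_\alpha\|_\infty\to 0$, while $|\lambda_j^n|\leq \|f\|_\infty<n$ by the choice of $n$. The condition $q_\alpha(x)=q_\alpha(y)$ translates into pseudometric equivalence $d(x,y)=0$, and a standard star-refinement chain argument (together with the fact that units sit inside $\G$ with $s(z)=z$) provides, for each large $n$, some $U\in\mathcal{U}_{n+1}^1$ with $x,y\in s(U)$. The integral-control condition then yields $|\int_\G g_n\,\d m^{(k)}_x-\int_\G g_n\,\d m^{(k)}_y|<1/n$, and letting $n\to\infty$ produces the desired equality.

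With well-definedness in hand the remaining conclusions are straightforward. The support $\supp(\overline{m}^{(k)}_{\bar{x}})\subseteq \bar{s}^{-1}(\bar{x})$ follows from the compatibility $\bar{s}\circ q_\alpha=q_\alpha|_{\Gz}\circ s$ together with $\supp(m^{(k)}_x)\subseteq s^{-1}(x)$. Continuity of $\bar{x}\mapsto \int_{\G_\alpha} f\,\d\overline{m}^{(k)}_{\bar{x}}$ follows from the continuity of $x\mapsto \int_\G (f\circ q_\alpha)\,\d m^{(k)}_x$ on $\Gz$ (guaranteed by the hypothesis that each $m^{(k)}$ is an $s$-system) combined with the fact that this map descends through $q_\alpha$ by the well-definedness above. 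Finally, the measure-preserving identity $(q_\alpha)_{\ast}\, m^{(k)}_x=\overline{m}^{(k)}_{q_\alpha(x)}$ holds by the very definition of $\overline{m}^{(k)}$.
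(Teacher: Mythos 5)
Your proposal is correct and follows essentially the same route as the paper: invoke Lemma \ref{coverings} to get the controlled groupoid normal sequence, show that $q_\alpha(x)=q_\alpha(y)$ forces $\int (q_\alpha)^*(f)\,\d m^{(k)}_x=\int (q_\alpha)^*(f)\,\d m^{(k)}_y$ via partition-of-unity approximation and the integral-control condition, and then define $\overline{m}^{(k)}_{\bar{x}}$ as the resulting pushforward (the paper phrases this through the Riesz--Markov--Kakutani theorem, which gives the same measure). You simply spell out the approximation step that the paper delegates to the argument of \cite[Theorem 6.9]{austin-georgescu}.
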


\begin{proof}
Fix a groupoid exhaustion $\{K_n\}$ of $\G$, and assume $\{\mathcal{U}_n\}$ is the groupoid normal sequences obtained in Lemma \ref{coverings}. Given $f \in C_c(\G_\alpha)$ and following the argument in \cite[Theorem 6.9]{austin-georgescu}, we know that for any $k \in \mathbb{N}$ and $\varepsilon>0$, there exists an $n$ such that if $x,y \in U \in \mathcal{U}_n^0$, then
$$\Big|\int (q_\alpha)^*(f) \d m^{(k)}_x - \int (q_\alpha)^*(f) \d m^{(k)}_y \Big| < \varepsilon.$$
Hence $q_\alpha(x)=q_\alpha(y)$ implies $\int (q_\alpha)^*(f) \d m^{(k)}_x = \int (q_\alpha)^*(f) \d m^{(k)}_y$. For each $\bar{x} \in \G_\alpha$, take a pre-image $x\in \G$, then the positive linear functional on $C_c(\G_\alpha)$ by $f \mapsto \int_{\G} (q_\alpha)^*(f) \d m^{(k)}_x$ provides a unique positive Radon measure $\overline{m}^{(k)}_{\bar{x}}$ on $\G_\alpha$ supported on $(\G_\alpha)_{\bar{x}}$ for each $k \in \mathbb{N}$, by Riesz-Markov-Kakutani Theorem. From the same argument as in \cite{austin-georgescu}, it is routine to check that $\overline{m}^{(k)}$ satisfies the requirements.
\ignore{ 
Now we prove the ``furthermore" part. Fix an $x\in \Gz$. To simplify the notations, denote $\bar{x}:=q_\alpha(x)$ and $m:=m^{(0)}$. Since we already proved that $q_\alpha$ is surjective and measure preserving, it is clear that $(q_\alpha)^*_x$ extends to an isometric embedding. Hence it suffices to show that
$$(q_\alpha)^*_x: L^2((\G_\alpha)_{\bar{x}}; \overline{m}_{\bar{x}}) \longrightarrow L^2(\G_x; m_x)$$
is surjective. Given an $f \in C_c(\G_x)$ and an $\varepsilon>0$, we may use the same argument in the proof of \cite[Theorem 6.9]{austin-georgescu} to obtain an $N \in \mathbb{N}$ such that $\|f\|_\infty \leq N$, $1/N < \varepsilon$, and for the partition of unity $\{f^N_j: j\in J_N\}$ fixed in Lemma \ref{coverings}, there exists $\{\lambda_j:j\in J_N\}$ with $|\lambda_j| \leq N$ and satisfying
$$\big\|f-\sum_{j \in J_N}\lambda_j f^N_j\big\|_2=\big(\int_{\G_x} \big |f-\sum_{j \in J_N}\lambda_j f^N_j\big |^2 \d m_x \big)^{\frac{1}{2}} < \varepsilon.$$

For each $j \in J_N$, now we would like to approximate each $f^N_j$. To simplify notations again, denote $g=f^N_j$, and we follow the notation in Lemma \ref{coverings}:
$$\epsilon_N=\frac{1}{12N^2|J_N|M_N},$$
where $M_N=\sup_{x\in \Gz} m_x(K_{N+1})$. First we claim: for any $U \in \mathcal{U}^1_{N+1}$ and $u, v\in q_\alpha^{-1}(q_{\alpha}(U))$, we have
$$|g(u)-g(v)| < 3\epsilon_N.$$
In fact by assumption, there exists $w,z \in U$ such that $q_\alpha(u)=q_\alpha(w)$ and $q_\alpha(v)=q_\alpha(z)$. By definition of $q_\alpha$, there exist $V' \in \mathcal{U}^1_{N+1}$ containing $u, w$, and $V''\in \mathcal{U}^1_{N+1}$ containing $v, z$. Hence by the construction in Lemma \ref{coverings}, we have
$$|g(u)-g(w)| < \epsilon_N,\quad \mbox{and} \quad |g(v)-g(z)| < \epsilon_N.$$
On the other hand, since $w, z$ belong to $U \in \mathcal{U}^1_{N+1}$, we have $|g(w)-g(z)| < \epsilon_N$.
Combining them together, the claim holds. Now we take $W:=\{y\in \G: |g(y)|> 3\epsilon_N \}$. Let $\{f_1,\ldots,f_l\}$ be a finite collection of positive functions in $C_c(\G_\alpha)$ whose supports cover $q_\alpha(\overline{W})$, refine $q_\alpha(\mathcal{U}^1_{N+1})$ and are contained in $q_\alpha(\supp (g))$. Moreover, we may assume that $\sum_{i} f_i(y)=1$ for $y \in q_\alpha(\overline{W})$, and that $\sum_{i} f_i(y) \in [0,1]$ for other $y$'s. For each $i$, choose a $y_i \in q_\alpha^{-1}(\supp(f_i))$ and we consider the following function
$$h=\sum_i q_\alpha^*(f_i) g(y_i) \in (q_\alpha)^*(C_c(G_\alpha)).$$

Next, we claim that $|h(y)-g(y)|<12\epsilon_N$ for any $y \in \G$, and $\supp(h) \subseteq K_{N+1}$. In fact if $q_\alpha(y) \in q_\alpha(\overline{W})$, we have
$$|h(y)-g(y)| = \big|\sum_{i=1}^l f_i(q_\alpha(y))g(y_i) - \sum_{i=1}^l f_i(q_\alpha(y))g(y)\big| \leq \sum_{i=1}^l f_i(q_\alpha(y)) |g(y_i) - g(y)|.$$
If $f_i(q_\alpha(y)) \neq 0$, then both $q_\alpha(y)$ and $q_\alpha(y_i)$ belong to $\supp(f_i)$. Hence by the claim above, we have $|g(y)-g(y_i)| < 3\epsilon_N$, which implies $|h(y)-g(y)| < 12\epsilon_N$. On the other hand, if $q_\alpha(y) \notin q_\alpha(\overline{W})$, then $y \notin \overline{W}$, i.e., $|g(y)|\leq 3\epsilon_N$. By the same argument as above, if $f_i(q_\alpha(y)) \neq 0$ then $|g(y)-g(y_i)| < 3\epsilon_N$, which implies that $|g(y_i)| < 6\epsilon_N$. Hence we have
$$|h(y)| \leq \sum_{i=1}^l f_i(q_\alpha(y)) |g(y_i)| < 6\epsilon_N,$$
which implies that
$$|h(y)-g(y)| \leq |h(y)| + |g(y)| < 6\epsilon_N + 3\epsilon_N < 12\epsilon_N.$$
Now we focus on $\supp(h)$. If $h(y) \neq 0$ for some $y \in \G$, then there exists $i=1,\ldots, l$ such that $f_i(q_\alpha(y)) \neq 0$, i.e., $q_\alpha(y) \in \mathrm{supp}(f_i) \subseteq q_\alpha(\mathrm{supp}(g))$. Hence there exists $U'\in \mathcal{U}^1_{N+1}$ such that $y \in U'$ and $U' \cap \supp(g) \neq \emptyset$, which implies $y \in int(K_{N+1})$ by Lemma \ref{coverings}. Therefore, we obtain
\begin{eqnarray*}
\|g - h\|_2^2 &=& \int_{\G_x} |g(y) - h(y)|^2 \d m_x(y) \leq \int_{\G_x \cap K_{N+1}} (12\epsilon_N)^2 \d m_x(y) = \int_{\G_x \cap K_{N+1}} \big(\frac{1}{N^2|J_N|M_N}  \big)^2 \d m_x(y) \\
&\leq & \big(\frac{1}{N^2|J_N|M_N}  \big)^2 \cdot m_x(K_{N+1}) \leq  \frac{1}{N^4|J_N|^2}.
\end{eqnarray*}

Consequently, for any $j \in J_N$ we construct a function $h^N_j (=h) \in  (q_\alpha)^*(C_c(G_\alpha))$ such that
$$\|f^N_j - h^N_j\|_2 \leq \frac{1}{N^2|J_N|}.$$
Combining them together, we obtain
$$\big\|\sum_{j \in J_N}\lambda_j f^N_j - \sum_{j \in J_N}\lambda_j h^N_j\big\|_2 \leq \sum_{j \in J_N} \lambda_j \|f^N_j - h^N_j\|_2 \leq  \sum_{j \in J_N} \lambda_j \frac{1}{N^2|J_N|} \leq \frac{1}{N} < \varepsilon.$$
In conclusion, for the given function $f \in C_c(\G_x)$ and $\varepsilon>0$, we construct a function $h \in (q_\alpha)^*_x(C_c((\G_\alpha)_{\bar{x}}))$ such that $\|f-h\|_2 < 2\varepsilon$. Since $C_c(\G_x)$ is dense in the space $L^2(\G_x; m_x)$, so we finish the proof.}
\end{proof}

As a direct corollary, we obtain the following:
\begin{Corollary}\label{cor: amenability app}
Let $\G$ and $\{\mathcal{W}_n^l\}_{n \in \mathbb{N}}$ be as above. Suppose $\{m^{(k)}\}$ is a topological approximation invariant mean on $\G$, then $\{\overline{m}^{(k)}\}$ in the above proposition forms a topological approximation invariant mean on $\G_\alpha$. Consequently, when $\G$ is amenable, the approximation $\G_\alpha$ can be made amenable as well.
\end{Corollary}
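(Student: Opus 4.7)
The plan is to deduce the corollary directly from Proposition~\ref{prop: amenability app} by verifying one-by-one the four conditions in Definition~\ref{amenable} for the pushforward system $\{\overline{m}^{(k)}\}$ on $\G_\alpha$. Apply Proposition~\ref{prop: amenability app} to the countable family of $s$-systems $\{m^{(k)}\}_{k\in\NN}$ (each condition in the definition of a topological approximate invariant mean already forces $m^{(k)}$ to be an $s$-system). This produces a groupoid normal sequence $\alpha$, the metrisable second countable quotient $\G_\alpha$, and $\bar{s}$-systems $\overline{m}^{(k)}=\{\overline{m}^{(k)}_{\bar{x}}:\bar{x}\in \G_\alpha^{(0)}\}$ with $(q_\alpha)_\ast m^{(k)}_x=\overline{m}^{(k)}_{q_\alpha(x)}$ for every $x\in \Gz$.

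Now I would verify the four axioms. Condition (1) is immediate: since $\overline{m}^{(k)}_{\bar{x}}=(q_\alpha)_\ast m^{(k)}_x$ with $q_\alpha(x)=\bar{x}$, we have $\|\overline{m}^{(k)}_{\bar{x}}\|_1=\|m^{(k)}_x\|_1\leq 1$, and support inside $\bar{s}^{-1}(\bar{x})$ follows from $q_\alpha\circ s=\bar{s}\circ q_\alpha$. Condition (2) is precisely the defining property of an $\bar{s}$-system, already given. Condition (3) uses properness of $q_\alpha$: given a compact $\bar{K}\subseteq \G_\alpha^{(0)}$, the lift $K:=q_\alpha^{-1}(\bar{K})\subseteq \Gz$ is compact, hence $\sup_{\bar{x}\in\bar{K}}\big|1-\|\overline{m}^{(k)}_{\bar{x}}\|_1\big|=\sup_{x\in K}\big|1-\|m^{(k)}_x\|_1\big|\to 0$ as $k\to\infty$.

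The only condition that requires a small argument is (4). Here I would observe that because $q_\alpha$ is a groupoid morphism, left translation commutes with the quotient, i.e.\ $q_\alpha(\gamma\beta)=\bar{\gamma}\,q_\alpha(\beta)$ for $(\gamma,\beta)\in \G^{(2)}$. A short change-of-variables calculation then shows
\[
(q_\alpha)_\ast\bigl(\gamma\, m^{(k)}_{r(\gamma)}\bigr)=\bar{\gamma}\,\overline{m}^{(k)}_{\bar{r}(\bar{\gamma})},
\]
so by linearity of the pushforward,
\[
\overline{m}^{(k)}_{\bar{s}(\bar{\gamma})}-\bar{\gamma}\,\overline{m}^{(k)}_{\bar{r}(\bar{\gamma})}=(q_\alpha)_\ast\bigl(m^{(k)}_{s(\gamma)}-\gamma\, m^{(k)}_{r(\gamma)}\bigr).
\]
Since pushforward is contractive on the total variation norm,
\[
\bigl\|\overline{m}^{(k)}_{\bar{s}(\bar{\gamma})}-\bar{\gamma}\,\overline{m}^{(k)}_{\bar{r}(\bar{\gamma})}\bigr\|_1 \leq \bigl\|m^{(k)}_{s(\gamma)}-\gamma\, m^{(k)}_{r(\gamma)}\bigr\|_1,
\]
and properness of $q_\alpha$ again lets us lift any compact $\bar{K}\subseteq \G_\alpha$ to the compact $K:=q_\alpha^{-1}(\bar{K})\subseteq \G$, so uniform convergence to $0$ on $K$ descends to uniform convergence to $0$ on $\bar{K}$.

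Finally, for the last sentence of the corollary, if $\G$ is amenable pick any topological approximate invariant mean $\{m^{(k)}\}$ witnessing amenability, run the above construction, and conclude that the resulting $\G_\alpha$ admits $\{\overline{m}^{(k)}\}$ as a topological approximate invariant mean, hence is amenable. The only mildly delicate point, and the one I would check carefully, is the intertwining identity $(q_\alpha)_\ast(\gamma m^{(k)}_{r(\gamma)})=\bar{\gamma}\,\overline{m}^{(k)}_{\bar{r}(\bar{\gamma})}$; everything else is a direct transport of the hypotheses on $\{m^{(k)}\}$ through the properness and morphism properties of $q_\alpha$ already supplied by Proposition~\ref{prop: amenability app}.
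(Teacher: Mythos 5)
Your proposal is correct and follows exactly the route the paper intends: the paper states this as a ``direct corollary'' of Proposition \ref{prop: amenability app} with no further argument, and your verification simply spells out the implicit transport of the four axioms of Definition \ref{amenable} through the proper, measure-preserving, surjective groupoid quotient $q_\alpha$. The one point you flag as delicate, the intertwining identity $(q_\alpha)_\ast(\gamma\, m^{(k)}_{r(\gamma)})=\bar{\gamma}\,\overline{m}^{(k)}_{\bar{r}(\bar{\gamma})}$, is indeed fine: it is functoriality of pushforward applied to the commuting square $q_\alpha\circ R_\gamma=R_{\bar\gamma}\circ q_\alpha$, which holds because $q_\alpha$ is a groupoid morphism.
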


Also note that we recover \cite[Theorem 6.9]{austin-georgescu} as a special case when taking the Haar system. Furthermore in the \'{e}tale case, we may arrange the quotients to satisfy the following stronger condition. Note that it was already proved in \cite{austin-georgescu} that the approximation $\G_\alpha$ can be made \'{e}tale in this case.

\begin{Proposition}\label{prop: etale app}
Let $\G$ be a locally compact, $\sigma$-compact and \'{e}tale groupoid, and $\{\mathcal{W}_n^l\}_{n \in \mathbb{N}}$ be a normal sequence of open covers on $\G$ for each $l \in \mathbb{N}$. Then the metrisable quotient $\G_\alpha$ in Proposition \ref{Prop: AG approximation 6.8} can be arranged so that $\G_\alpha$ is \'{e}tale, and the quotient map $q_\alpha$ induces a bijection between fibres $\G_x$ and $(\G_\alpha)_{q_\alpha(x)}$ for any $x\in \Gz$.
\end{Proposition}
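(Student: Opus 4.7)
My plan is to refine the construction of the groupoid normal sequence in Proposition \ref{Prop: AG approximation 6.8} so that it incorporates the étale structure of $\G$. Since $\G$ is étale, open bisections form a base for its topology; since $\G$ is locally compact and $\sigma$-compact (hence paracompact by Lemma \ref{lemma: paracompact}), I would construct a normal sequence $\{\mathcal{W}^*_n\}$ of open covers of $\G$ consisting entirely of open bisections. Moreover, I would coordinate $\{\mathcal{W}^*_n\}$ with a normal sequence $\{\mathcal{V}_n\}$ of open covers of $\Gz$ in such a way that each $W \in \mathcal{W}^*_n$ has the form $B \cap s^{-1}(V)$ for an open bisection $B$ and some $V \in \mathcal{V}_n$ with $V \subseteq s(B)$, and such that every $\gamma \in \G$ whose source lies in an element $V \in \mathcal{V}_n$ lies in the image of a local section of $s$ defined on $V$. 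Including $\{\mathcal{W}^*_n\}$ in the family $\{\mathcal{W}_n^l\}_n$ and applying Proposition \ref{Prop: AG approximation 6.8} then yields the required groupoid normal sequence $\{\mathcal{U}_n\}$; the étaleness of the resulting quotient $\G_\alpha$ follows from the work of \cite{austin-georgescu}.

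For injectivity of $q_\alpha|_{\G_x}$: if $\gamma_1, \gamma_2 \in \G_x$ satisfy $q_\alpha(\gamma_1) = q_\alpha(\gamma_2)$, then for every $n$ they lie in a common element of $\mathcal{U}^1_n$, which is contained in an open bisection; the injectivity of $s$ on a bisection combined with $s(\gamma_1) = s(\gamma_2) = x$ forces $\gamma_1 = \gamma_2$.

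For surjectivity of $q_\alpha|_{\G_x}$, I would use that $\G_\alpha$ is étale to pick an open bisection $\bar B \subseteq \G_\alpha$ containing $\bar\gamma$; by our construction, $q_\alpha^{-1}(\bar B)$ is an open bisection of $\G$, because injectivity of $q_\alpha$ on source fibres together with $\bar B$ being a bisection prevents two distinct points with common source from living in $q_\alpha^{-1}(\bar B)$. The bisection-source compatibility is then used to establish the equality $s(q_\alpha^{-1}(\bar B)) = q_\alpha^{-1}(\bar s(\bar B))$: given $y \in q_\alpha^{-1}(\bar s(\bar B))$, any lift $\beta \in \G$ of the corresponding element $\bar\beta \in \bar B$ satisfies $s(\beta) \sim y$, and can be transported along a local section over a common element $V \in \mathcal{V}_n$ containing both $s(\beta)$ and $y$ to produce an element $\gamma$ of $q_\alpha^{-1}(\bar B)$ with source exactly $y$. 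Applying this with $y = x \in q_\alpha^{-1}(\bar s(\bar B))$ yields $\gamma \in q_\alpha^{-1}(\bar B) \cap \G_x$, and since $q_\alpha(\gamma)$ and $\bar\gamma$ are both elements of the bisection $\bar B$ with the same source $q_\alpha(x)$, we conclude $q_\alpha(\gamma) = \bar\gamma$.

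The main obstacle is the inductive construction of $\{\mathcal{W}^*_n\}$ with the required bisection-source compatibility while simultaneously preserving the star-refinement structure of a normal sequence. This demands careful coordination of extensions of local sections of $s$ with refinements of the cover on $\Gz$, exploiting both the local homeomorphism property of $s$ and the paracompactness of $\G$; in particular, at each inductive step one must refine $\mathcal{V}_n$ sufficiently so that every element $V$ is contained in the source of a local section through any prescribed point of $s^{-1}(V)$, and then thicken into $\G$ via the corresponding bisections to obtain $\mathcal{W}^*_n$.
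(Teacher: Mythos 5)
Your treatment of the \'{e}taleness of the quotient and of the injectivity of $q_\alpha|_{\G_x}$ is essentially the paper's: one refines all the covers so that every element of $\mathcal{U}_n^1$ sits inside an open bisection, and then two arrows with a common source that are identified must lie in a common bisection for every $n$, forcing them to be equal. The genuine problem is your surjectivity argument. The ``bisection--source compatibility'' you want to impose --- that every $\gamma$ with $s(\gamma)\in V$ lies in the image of a local section of $s$ defined on all of $V$ --- cannot be arranged for a general \'{e}tale groupoid: the sources of the maximal bisections through the various elements of a single fibre $s^{-1}(x)$ need not contain any common neighbourhood of $x$ (already for germ groupoids, or for the coarse groupoid $G(X)$, where the bisection through an arrow is the closure of the graph of a partial translation and its source is only the closure of that translation's domain). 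Consequently the transport step collapses: if $y\neq s(\beta)$ is a point of $\Gz$ that the quotient identifies with $s(\beta)$, there is in general no arrow in any bisection through $\beta$ whose source is $y$, and the inclusion $q_\alpha^{-1}(\bar{s}(\bar{B}))\subseteq s(q_\alpha^{-1}(\bar{B}))$ --- which is exactly the surjectivity you are trying to prove --- does not follow. You flag this construction as ``the main obstacle'' and defer it, but it is precisely the point at which the proof is incomplete.

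The paper avoids this entirely by a measure-theoretic argument: applying Proposition \ref{prop: amenability app} to the counting-measure Haar system, the quotient map $q_\alpha$ is measure preserving, so $q_\alpha(x)=q_\alpha(y)$ implies $\sum_{\gamma\in\G_x} f(q_\alpha(\gamma))=\sum_{\gamma\in\G_y} f(q_\alpha(\gamma))$ for every $f\in C_c(\G_\alpha)$, whence $q_\alpha(\G_x)=q_\alpha(\G_y)$; combined with $(\G_\alpha)_{q_\alpha(x)}=\bigcup_{y\in q_\alpha^{-1}(q_\alpha(x))} q_\alpha(\G_y)$, which only uses that $q_\alpha$ is a surjective groupoid morphism, this yields surjectivity of $q_\alpha|_{\G_x}$ without ever having to move an arrow from one identified base point to another. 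You should replace the transport argument with this one.
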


\begin{proof}
Following \cite[Section 6.1.3]{austin-georgescu}, recall that a groupoid $\G$ is \'{e}tale if and only if $\Gz$ is open in $\G$ and $\G$ admits a Haar system. Since $\G$ is \'{e}tale, there exists an open cover $\mathcal{V}$ of $\G$ consisting of bisections (recall that a \emph{bisection} is a subset $V$ that the source and range maps are homeomorphisms restricted on $V$). Since $\Gz$ is clopen, we may also assume that any element $V$ in $\mathcal{V}$ has the property that either $V \subseteq \Gz$ or $V \subseteq (\G \setminus \Gz)$. Hence by Lemma \ref{lemma: paracompact}, we may shrink each $\mathcal{W}_n^l$ to ensure that $\mathcal{W}_0^l$ refines $\mathcal{V}$.

Now applying Proposition \ref{prop: amenability app} to a Haar system, we obtain a groupoid normal sequence $\mathcal{U}_n$ of open covers of $\G$ satisfying the conditions therein. Consequently, $\G_\alpha$ has an induced Haar system. Note that each $\mathcal{U}_n$ refines $\mathcal{V}$, so $\G_\alpha^{(0)}$ is also open in $\G_\alpha$. Hence we obtian that $\G_\alpha$ is \'{e}tale as well. Moreover, since different points on each fibre cannot be identified under the quotient map (this is due to the fact that $\mathcal{U}_n$ consists of bisections for all $n$), so the restriction $q_\alpha|_{\G_x}: \G_x \to (\G_\alpha)_{q_\alpha(x)}$ is injective for any $x\in \Gz$.

As for surjection, note that $q_\alpha$ is measure preserving, hence $q_\alpha(x)=q_\alpha(y)$ implies $\sum_{\gamma \in \G_x} ((q_\alpha)_\ast f)(\gamma) = \sum_{\gamma \in \G_y} ((q_\alpha)_\ast f)(\gamma)$ for any $f \in C_c(\G_\alpha)$. Consequently, we obtain that $q_\alpha(\G_x) = q_\alpha(\G_y)$ provided $q_\alpha(x)=q_\alpha(y)$. Note that
$$(\G_\alpha)_{q_\alpha(x)}=\bigcup_{y \in q_\alpha^{-1}(q_\alpha(x))} q_\alpha(\G_y),$$
so the result holds.
\end{proof}

It is not hard to see from the above proofs that Corollary \ref{cor: amenability app}, Proposition \ref{prop: etale app} and \cite[Theorem 6.9]{austin-georgescu} can be dealt with simultaneously. In fact, we may apply Proposition \ref{prop: amenability app} for the sequence $\{m^{(k)}\}$ consisting of the Haar system and the topological approximation invariant mean at the same time. This is the reason for how we design Proposition \ref{prop: amenability app}. Consequently, we obtain the following:

\begin{Proposition}\label{prop: amenability and etale app}
Let $\G$ be a locally compact, $\sigma$-compact and amenable groupoid with a Haar system. Let $\{\mathcal{W}_n^l\}_{n \in \mathbb{N}}$ be a normal sequence of open covers of $\G$ for each $l$. Then the metrisable quotient $\G_\alpha$ obtained in Proposition \ref{Prop: AG approximation 6.8} can be arranged so that $\G_\alpha$ is amenable, and the Haar system on $\G$ can be pushforwarded to a Haar system on $\G_\alpha$ such that the quotient map $q_\alpha$ is measure preserving. Additionally if $\G$ is \'{e}tale, then $\G_\alpha$ can be arranged to be \'{e}tale as well and $q_\alpha$ induces bijections between the corresponding fibres.
\end{Proposition}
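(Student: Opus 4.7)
The plan is to consolidate the constructions of Corollary \ref{cor: amenability app}, Proposition \ref{prop: etale app}, and the Haar system preservation of \cite[Theorem 6.9]{austin-georgescu} into a single application of Proposition \ref{prop: amenability app}. The key observation, already highlighted in the paragraph preceding the statement, is that Proposition \ref{prop: amenability app} was designed to handle a countable family of $s$-systems of measures simultaneously, which is precisely what is needed to approximate multiple structures at once.

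First I would assemble the data. Since $\G$ is amenable, fix a topological approximate invariant mean $\{m^{(k)}\}_{k\geq 1}$ as in Definition \ref{amenable}, and set $m^{(0)}$ to be the given Haar system $\{\mu_x\}$. Each $m^{(k)}$ is an $s$-system of measures, so we obtain a countable family to which Proposition \ref{prop: amenability app} applies. In the \'{e}tale case, additionally enlarge the family $\{\mathcal{W}_n^l\}$ by adjoining one more normal sequence whose zeroth cover refines a fixed open cover of $\G$ by bisections, each of which lies entirely in $\Gz$ or in $\G\setminus\Gz$; such a normal sequence exists by Lemma \ref{lemma: paracompact} together with the étaleness of $\G$.

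Now apply Proposition \ref{prop: amenability app} to this combined family. This yields a groupoid normal sequence $\alpha=\{\mathcal{U}_n\}$ cofinally refining every $\{\mathcal{W}_n^l\}$, a locally compact, second countable metrisable quotient $\G_\alpha$, and for each $k \in \mathbb{N}\cup\{0\}$ an $\bar{s}$-system $\overline{m}^{(k)}$ on $\G_\alpha$ with respect to which $q_\alpha$ is measure preserving. The case $k=0$ produces a Haar system on $\G_\alpha$ for which $q_\alpha$ is Haar system preserving, recovering \cite[Theorem 6.9]{austin-georgescu}. For $k\geq 1$, Corollary \ref{cor: amenability app} ensures that $\{\overline{m}^{(k)}\}_{k\geq 1}$ is a topological approximate invariant mean, so $\G_\alpha$ is amenable. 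In the \'{e}tale case, because $\mathcal{U}_0$ refines a bisection cover with the clopen property above, the unit space $\G_\alpha^{(0)}$ is open in $\G_\alpha$; combined with the induced Haar system this yields étaleness of $\G_\alpha$, and the fibre bijection $q_\alpha\colon \G_x \to (\G_\alpha)_{q_\alpha(x)}$ follows exactly as in the proof of Proposition \ref{prop: etale app}, using injectivity from the bisection refinement and surjectivity from the measure preserving property.

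The main subtlety, and essentially the only real work, is verifying that running all these constructions in parallel imposes no conflicting conditions on the groupoid normal sequence. This is handled cleanly because Lemma \ref{coverings} was already formulated with a countable family of systems of measures in mind: a single diagonal argument controls the integrals against every $m^{(k)}$ simultaneously at each inductive stage. Once this is in place, the conclusions of the earlier propositions transfer verbatim to the joint setting, so the proof is essentially a bookkeeping exercise amalgamating the previous results.
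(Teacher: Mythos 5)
Your proposal is correct and follows exactly the route the paper takes: the paper's (implicit) proof is precisely to feed the Haar system and the topological approximate invariant mean as a single countable family of $s$-systems into Proposition \ref{prop: amenability app}, then invoke Corollary \ref{cor: amenability app} for amenability and the argument of Proposition \ref{prop: etale app} (bisection refinements plus measure preservation) for the \'{e}tale case. Nothing is missing.
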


Finally we focus on the associated groupoid $C^*$-algebras. Roughly speaking, as the given groupoid $\G$ is the inverse limit of the metrisable quotients $\G_\alpha$, the groupoid $C^*$-algebra $C^*(\G)$ is the inductive limit of $C^*(\G_\alpha)$. However, in order to prove Theorem \ref{thm:extention of Exel}, we need to show that $C^*(\G)$ is exactly the union of the images of $C^*(\G_\alpha)$, which might be anti-intuitive at first glance. The following proposition (which is a minor adaptation to \cite[Proposition 5.11]{austin-georgescu}) will give us the necessary equipment for this to happen.

\begin{Lemma}\label{strongcondition}
Let $X$ be a locally compact and $\sigma$-compact space, and $\{X_\alpha,q_\alpha^\beta,A\}$ be an inverse approximation of $X$ by metric spaces. For every $\alpha$, denote $q_\alpha$ the canonical projection map $X\to X_\alpha$, and for each $n$, let $\mathcal{V}^\alpha_n$ be the cover of $X$ given by the pre-images of the cover of $X_\alpha$ by $\frac{1}{2^n}$-balls. Suppose for every countable family of normal sequences $\{\{\mathcal{W}^l_n\}_{n \in \mathbb{N}}: l \in \mathbb{N} \}$ in $X$, there exists an $\alpha$ such that the normal sequence $\{\mathcal{V}^\alpha_n\}_{n\in \mathbb{N}}$ cofinally refines $\{\mathcal{W}^l_n\}_{n \in \mathbb{N}}$ for every $l \in \mathbb{N}$. Then for any countable family of continuous functions $f_l:X\to Y_l$ where $Y_l$ is a second countable metric space, there exists an $\alpha$ such that each $f_l$ is the pullback of a uniformly continuous function $f_{l,\alpha}: X_\alpha\to Y_l$, i.e. $f_l = f_{l,\alpha}\circ q_\alpha$.
\end{Lemma}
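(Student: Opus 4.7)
My proof plan for this lemma is to convert each continuous function $f_l\colon X\to Y_l$ into a normal sequence of open covers of $X$ that faithfully encodes the modulus of continuity of $f_l$, feed the resulting countable family of normal sequences into the hypothesis, and then read off the required factorisation from the cofinal refinement.

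The first step is the construction. Fix $l$. For each $n$, cover $Y_l$ by balls of radius $\frac{1}{2^{n}}$; since $Y_l$ is second countable metric, this cover is countable. Pull it back via $f_l$ to a countable open cover $\mathcal{P}^l_n$ of $X$. Because $X$ is locally compact and $\sigma$-compact, it is paracompact by Lemma \ref{lemma: paracompact}, so open covers admit open star refinements. Iteratively choose open covers $\mathcal{W}^l_n$ of $X$ with $\mathcal{W}^l_n\le \mathcal{W}^l_{n-1}$ (star refinement) and $\mathcal{W}^l_n\prec \mathcal{P}^l_n$. This produces a normal sequence $\{\mathcal{W}^l_n\}_n$ with the key property that any element of $\mathcal{W}^l_n$ is mapped by $f_l$ into a single ball of radius $\frac{1}{2^{n}}$ in $Y_l$.

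Next, apply the hypothesis to the countable family $\{\{\mathcal{W}^l_n\}_n : l\in\NN\}$ to obtain an index $\alpha\in A$ such that $\{\mathcal{V}^\alpha_n\}_n$ cofinally refines each $\{\mathcal{W}^l_n\}_n$, i.e., for every $l$ and every $n$ there exists $k(l,n)$ with $\mathcal{V}^\alpha_{k(l,n)}\le \mathcal{W}^l_n$. I will then show $f_l$ factors through $q_\alpha$: if $q_\alpha(x)=q_\alpha(y)$, then the preimage of any metric ball of $X_\alpha$ containing $q_\alpha(x)$ is an element of some $\mathcal{V}^\alpha_k$ that contains both $x$ and $y$; by the star-refinement relation, $x$ and $y$ lie in a common element of $\mathcal{W}^l_n$ for every $n$, so $d_{Y_l}(f_l(x),f_l(y))<\frac{1}{2^{n-1}}$ for all $n$ (one unit is lost passing from the star-refinement element to the enclosing $\mathcal{W}^l_n$-element, which is then contained in a pulled-back ball from $\mathcal{P}^l_n$). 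Letting $n\to\infty$ forces $f_l(x)=f_l(y)$, hence $f_l=f_{l,\alpha}\circ q_\alpha$ for a well-defined function $f_{l,\alpha}\colon X_\alpha\to Y_l$.

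Finally, I will verify that $f_{l,\alpha}$ is uniformly continuous. Given $\epsilon>0$, pick $n$ with $\frac{1}{2^{n-1}}<\epsilon$ and let $k=k(l,n)$. If $\bar x,\bar y\in X_\alpha$ satisfy $d_\alpha(\bar x,\bar y)<\frac{1}{2^{k+1}}$, the open ball of radius $\frac{1}{2^{k}}$ around $\bar x$ contains both; its preimage in $X$ is a single element of $\mathcal{V}^\alpha_k$ containing all lifts of $\bar x$ and $\bar y$. The star-refinement assumption then places these lifts inside a single element of $\mathcal{W}^l_n$, so $d_{Y_l}(f_{l,\alpha}(\bar x),f_{l,\alpha}(\bar y))<\frac{1}{2^{n-1}}<\epsilon$, giving uniform continuity. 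I expect the main subtlety to lie in tracking the small index shifts arising from the star-refinement condition versus the pointwise-refinement condition, and in making sure that the first step produces genuine open covers with the required countability so that the construction of normal sequences via paracompactness is legitimate; once this bookkeeping is settled, the remaining arguments are formal consequences of the definitions.
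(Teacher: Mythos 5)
Your proposal is correct and follows essentially the same route as the paper: pull back the $\tfrac{1}{2^n}$-ball covers of $Y_l$ to get a countable family of normal sequences, invoke the hypothesis to obtain $\alpha$, and use cofinal refinement to see that $q_\alpha(x)=q_\alpha(y)$ forces $f_l(x)=f_l(y)$. The paper takes the pullbacks directly as the normal sequence (ball covers of a metric space already star-refine one another) and merely asserts continuity of $f_{l,\alpha}$ as routine, so your extra star-refinement step via paracompactness is harmless and your explicit uniform-continuity estimate is a welcome addition rather than a deviation.
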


\begin{proof}
Notice that each $f_l$ induces a normal sequence $\{\mathcal{W}_n^l\}_{n\in \mathbb{N}}$ of open covers on $X$ by taking the pre-image of the normal sequence of open covers of $Y_l$ given by $\frac{1}{2^n}$-balls. Let $\alpha\in A$ be provided from the hypothesis with respect to the family $\{\mathcal{W}_n^l\}_{n \in \mathbb{N}}$, such that the normal sequence $\{\mathcal{V}^\alpha_n\}_{n\in \mathbb{N}}$ cofinally refines all $\{\mathcal{W}_n^l\}_{n \in \mathbb{N}}$. It follows that if $q_\alpha(x) = q_\alpha(y)$ for $x,y \in X$, then $f_l(x) = f_l(y)$ for all $l\in \mathbb{N}$. Thus we may define $f_{l,\alpha}:X_\alpha\to Y_l$ by $f_{l,\alpha}(q_\alpha(x)) = f_l(x)$. It is routine to check that $f_{l,\alpha}$ is continuous for each $l\in \mathbb{N}$ by the choice of $\alpha$.
\end{proof}

The above lemma suggests the following notion:
\begin{Definition}
An inverse approximation $\{X_\alpha,q_\alpha^\beta,A\}$ of a uniform space $X$ by metric spaces is called a \emph{strong inverse approximation}, if for every countable family of normal sequences $\{\{\mathcal{W}^l_n\}_{n \in \mathbb{N}}: l \in \mathbb{N} \}$ of uniform covers, there exists an $\alpha$ such that the normal sequence $\{\mathcal{V}^\alpha_n\}_{n\in \mathbb{N}}$ obtained as in Lemma \ref{strongcondition} cofinally refines $\{\mathcal{W}^l_n\}_{n \in \mathbb{N}}$ for every $l \in \mathbb{N}$.
\end{Definition}

Consequently, we are ready to prove the following strengthened version of the approximation result:

\begin{Theorem}\label{sexyapproximationtheorem}
Let $\G$ be a locally compact, $\sigma$-compact and amenable groupoid with a Haar system. Then there exists a strong inverse approximation $\{\G_\alpha,q_\alpha^\beta,A\}$ of $\G$ by locally compact, second countable, amenable and metrisable groupoids with Haar systems such that the quotient $q_\alpha: \G \to \G_\alpha$ is Haar measure preserving. Moreover, for the induced direct system of topological star algebras $\{C_c(\G_\alpha),(q_\alpha^\beta)^*,A\}$, we have that $C_c(\G) = \bigcup_{\alpha}(q_\alpha)^*C_c(\G_\alpha)$ and $C_{\max}^*(\G) = \bigcup_\alpha (q_\alpha)^*C_{\max}^*(\G_\alpha)$.
\end{Theorem}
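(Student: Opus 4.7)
The plan is to index the inverse system over the directed set $A$ of countable families $\mathcal{F} = \{\{\mathcal{W}^l_n\}_{n \in \mathbb{N}}: l \in \mathbb{N}\}$ of normal sequences of open covers of $\G$, with $\mathcal{F}' \geq \mathcal{F}$ whenever $\mathcal{F} \subseteq \mathcal{F}'$. For each such $\mathcal{F}$, Proposition~\ref{prop: amenability and etale app} produces a groupoid normal sequence $\alpha = \alpha(\mathcal{F})$ cofinally refining every member of $\mathcal{F}$, whose metrisable quotient $\G_\alpha$ is locally compact, second countable, amenable, and carries the pushforward Haar system making $q_\alpha : \G \to \G_\alpha$ Haar measure preserving. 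By construction, the resulting inverse system $\{\G_\alpha, q_\alpha^\beta, A\}$ is a strong inverse approximation of $\G$, since any countable family of normal sequences is handled by a single index.

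Next I would establish $C_c(\G) = \bigcup_\alpha (q_\alpha)^* C_c(\G_\alpha)$. The inclusion $\supseteq$ is immediate since each $q_\alpha$ is proper. For the reverse inclusion, given $f \in C_c(\G)$, I would apply Lemma~\ref{strongcondition} to the singleton family consisting of $f$ viewed as a continuous map to $\mathbb{C}$: this yields $\alpha \in A$ and a uniformly continuous $f_\alpha : \G_\alpha \to \mathbb{C}$ with $f = f_\alpha \circ q_\alpha$, and since $\supp(f_\alpha) \subseteq q_\alpha(\supp f)$ is compact by properness of $q_\alpha$, we have $f_\alpha \in C_c(\G_\alpha)$.

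The corresponding identification of the maximal $C^*$-algebras is the main business. Haar-preservation makes $(q_\alpha)^*: C_c(\G_\alpha) \to C_c(\G)$ a $*$-homomorphism, so by the universal property of $C^*_{\max}$ it extends to a $*$-homomorphism $(q_\alpha)^*: C^*_{\max}(\G_\alpha) \to C^*_{\max}(\G)$, giving the inclusion $\supseteq$. For the reverse, fix $a \in C^*_{\max}(\G)$ and a Cauchy sequence $\{f_n\} \subset C_c(\G)$ with $f_n \to a$. Each $f_n$ induces a normal sequence on $\G$ via the pre-images of the standard normal sequence on $\mathbb{C}$ by $2^{-k}$-balls; applying the strong approximation property to this countable family of normal sequences produces a single $\alpha$ and $g_n \in C_c(\G_\alpha)$ with $f_n = (q_\alpha)^* g_n$.

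The hard part, which I expect to be the main obstacle, is showing that $\{g_n\}$ is Cauchy in $C^*_{\max}(\G_\alpha)$; equivalently, that $(q_\alpha)^*: C^*_{\max}(\G_\alpha) \to C^*_{\max}(\G)$ is isometric. The inequality $\|(q_\alpha)^*(g)\|_{\max,\G} \leq \|g\|_{\max,\G_\alpha}$ is routine because any $*$-representation of $C_c(\G)$ restricts through $(q_\alpha)^*$ to a $*$-representation of $C_c(\G_\alpha)$. For the reverse direction I plan to take any $*$-representation $\sigma$ of $C_c(\G_\alpha)$, disintegrate it over a quasi-invariant measure on $\G_\alpha^{(0)}$, pull the measure and the associated Hilbert bundle back along $q_\alpha$ (using Haar-preservation to match the convolutions), and thereby build a $*$-representation $\pi$ of $C_c(\G)$ satisfying $\pi \circ (q_\alpha)^* = \sigma$. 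Granted the isometry, $\{g_n\}$ converges to some $b \in C^*_{\max}(\G_\alpha)$ with $(q_\alpha)^*(b) = a$, completing the proof.
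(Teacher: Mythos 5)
Your proposal is correct and follows essentially the same route as the paper: index the system by (families of) groupoid normal sequences, use Proposition~\ref{prop: amenability and etale app} for the amenable, second countable, Haar-preserving quotients, capture a whole Cauchy sequence $\{a_n\}\subset C_c(\G)$ under a single index $\alpha$ via Lemma~\ref{strongcondition}, and transport Cauchyness back through the isometry of $(q_\alpha)^*$ on maximal $C^*$-algebras. The one step you flag as the main obstacle --- that $(q_\alpha)^*:C^*_{\max}(\G_\alpha)\to C^*_{\max}(\G)$ is norm-preserving --- is in the paper simply a citation to \cite[Proposition 3.2]{austin-georgescu}, so your disintegration sketch is an (adequate, if unnecessary) reproof of an imported fact rather than a divergence in strategy.
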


\begin{proof}
We follow the proof of \cite[Theorem 6.10]{austin-georgescu}, and let $A$ denote all groupoid normal sequences that satisfy the condition in Lemma \ref{coverings}. By Proposition \ref{prop: amenability and etale app}, we only need to prove the last statement.

Let $a\in C_{\max}^*(\G)$ and $a_n\in C_c(\G)$ such that $a$ is the $\max$-norm limit of the $a_n$'s. Since the approximation is a strong groupoid approximation, the conclusion of Lemma \ref{strongcondition} holds. Thus there exists $\alpha \in A$ such that $a_n\in (q_\alpha)^*\big(C_c(\G_\alpha)\big)$. As the map $(q_\alpha)^*: C_{\max}^*(\G_\alpha) \to C_{\max}^*(\G)$ is a norm-preserving from \cite[Proposition 3.2]{austin-georgescu}, we may take the pre-image $\widetilde{a_n}$ of $a_n$ under $(q_\alpha)^*$, and $\{\widetilde{a_n}\}$ is a Cauchy sequence in $C_{\max}^*(\G_\alpha)$. Assume that $\{\widetilde{a_n}\}$ converges to some element $\tilde{a}$ in $C_{\max}^*(\G_\alpha)$. Then we have
$$a= \lim_{n \to \infty} a_n = \lim_{n \to \infty} (q_\alpha)^*(\widetilde{a_n}) = (q_\alpha)^*(\tilde{a}).$$
It follows that $C_{\max}^*(\G) = \bigcup_\alpha (q_\alpha)^*C_{\max}^*(\G_\alpha)$, so we finish the proof.
\end{proof}

\begin{proof}[Proof of Theorem \ref{thm:extention of Exel}]
Suppose $\{\mu_x:x \in \Gz\}$ is the given Haar system on $\G$. By Theorem \ref{sexyapproximationtheorem}, we may take a strong inverse approximation $\{\G_\alpha,q_\alpha^\beta,A\}$ of $\G$ by locally compact, second countable and amenable groupoids with Haar systems $\{\mu_{\bar{x}}:\bar{x} \in \G_\alpha^{(0)}\}$ such that the quotient map $q_\alpha: \G \to \G_\alpha$ is measure preserving and induces an isometric embedding
$$U_x: L^2((\G_\alpha)_{q_\alpha(x)};\mu_{q_\alpha(x)}) \longrightarrow L^2(\G_x; \mu_x).$$
Furthermore, for the given $a \in C^*_{\max}(\G)$, there exists $\alpha \in A$ such that $a=(q_\alpha)^*(\tilde{a})$ for some $\tilde{a} \in C^*_{\max}(\G_\alpha)$. Now fix $x\in \Gz$ and denote $\bar{x}:=q_\alpha(x) \in \G^{(0)}_\alpha$. By assumption, we know that $1+\lambda_x(a)$ is invertible in $\B(L^2(\G_x; \mu_x))$, and we would like to show that $1+\lambda_{\bar{x}}(\tilde{a})$ is invertible in $\B(L^2((\G_\alpha)_{\bar{x}};\mu_{\bar{x}}))$. In order to do so, we make the following claim:

$\underline{Claim}:$ The subspace $\mathrm{Im} U_x$ is reduced with respect to $\lambda_x(a)$, i.e., both $\mathrm{Im} U_x$ and its orthogonal complement are invariant under $\lambda_x(a)$. Furthermore, $\lambda_x(a)$ coincides with $U_x \lambda_{\bar{x}}(\tilde{a}) U_x^*$ on $\mathrm{Im} U_x$.

Before we prove the claim, first let us use it to finish the proof of the theorem. It is implied by the claim that $1+\lambda_x(a)$ has a block diagonal form with respect to the following decomposition: 
$$L^2(\G_x; \mu_x) = \mathrm{Im} U_x \oplus (\mathrm{Im} U_x)^{\perp}.$$
And the block corresponding to $\mathrm{Im} U_x$ is unitary equivalent to $1+\lambda_{\bar{x}}(\tilde{a})$ via the unitary $U_x$ onto its image. Hence $1+\lambda_{\bar{x}}(\tilde{a})$ is invertible in $\B(L^2((\G_\alpha)_{\bar{x}};\mu_{\bar{x}}))$, and we may apply Proposition \ref{thm:NP extension of Exel} to $\G_\alpha$ directly since it is a locally compact, second countable and amenable groupoid with a Haar system of measures. Therefore, we we obtain that $1+\tilde{a}$ is invertible in $C^*_{\max}(\G_\alpha)$. Since $(q_\alpha)^*: C^*_{\max}(\G_\alpha) \to C^*_{\max}(\G)$ is an isometric embedding, we obtain that $1+a=(q_\alpha)^*(1+\tilde{a})$ is invertible in $C^*_{\max}(\G)^+ = C^*_r(\G)^+$ as well. This finishes the proof, so long as the claim is correct.

To prove the claim: first consider $a=(q_\alpha)^*(\tilde{a})\in C_c(\G)$ where $\tilde{a} \in C_c(\G_\alpha)$. For any $\xi = U_x(\tilde{\xi})$ with $\tilde{\xi} \in L^2((\G_\alpha)_{\bar{x}};\mu_{\bar{x}})$ and any $\gamma \in \G_x$, we calculate:
\begin{eqnarray*}
\big(\lambda_x (a)\big)(\xi)(\gamma) & = & \int_{\G_x} a(\gamma \delta^{-1}) \xi(\delta)d\mu_x(\delta) \\
&= &\int_{\G_x} \tilde{a}(q_\alpha(\gamma)q_\alpha(\delta)^{-1})\tilde{\xi}(q_\alpha(\delta))d\mu_x(\delta) \\
&=& \int_{(\G_\alpha)_{\bar{x}}} \tilde{a}(q_\alpha(\gamma)\bar{\delta}^{-1})\tilde{\xi}(\bar{\delta})d\mu_{\bar{x}}(\bar{\delta})\\
&=& U_x \left( \lambda_{\bar{x}}(\tilde{a})(\tilde{\xi})\right)(\gamma),
\end{eqnarray*}
where we use the fact the $q_\alpha$ is measure preserving in the third inequality. This implies that 
$$\lambda_x (a) \circ U_x = U_x \circ \lambda_{\bar{x}}(\tilde{a}).$$
By extension, the above holds for $a=(q_\alpha)^*(\tilde{a}) \in q_{\alpha}^*(C^*_{\max}(\G_\alpha))$ where $\tilde{a} \in C^*_{\max}(\G)$. Note that the algebra $(q_{\alpha})^*(C^*_{\max}(\G_\alpha))$ is a $C^*$-algebra, hence the claim holds and we finish the proof.
\end{proof}

\begin{Remark}
	Notice that by Proposition \ref{prop: etale app}, the above proof simplifies considerably when the groupoid $\G$ is \'{e}tale since $U_x: L^2((\G_\alpha)_{q_\alpha(x)};\mu_{q_\alpha(x)}) \longrightarrow L^2(\G_x; \mu_x)$ is an isometric isomorphism and, furthermore, $U_x$ establishes a unitary equivalence between the representations $\lambda_x$ and $\lambda_{q_\alpha(x)}$ on $C^*_r(\G_\alpha)$.
\end{Remark}

\ignore{\begin{proof}[Proof of Theorem \ref{thm:extention of Exel}]
Suppose $\{\mu_x:x \in \Gz\}$ is the given Haar system on $G$. By Theorem \ref{sexyapproximationtheorem}, we may take a strong inverse approximation $\{\G_\alpha,q_\alpha^\beta,A\}$ of $\G$ by locally compact, second countable and amenable  groupoids with Haar systems $\{\mu_{q_\alpha(x)}:x \in \Gz\}$ such that the quotient map $q_\alpha: \G \to \G_\alpha$ is measure preserving and induces an isometric embedding
$$(q_\alpha)^*_x : L^2((\G_\alpha)_{q_\alpha(x)};\mu_{q_\alpha(x)}) \longrightarrow L^2(\G_x; \mu_x).$$
Therefore for each $x\in \Gz$ and $\alpha \in A$, we have the following diagram:
\begin{displaymath}
    \xymatrix@=4em{
        C^*_{\max}(\G_\alpha) \ar[r]^-{\textstyle \lambda_{q_\alpha(x)}} \ar[d]_-{\textstyle q_\alpha^*} & \B(L^2((\G_\alpha)_{q_\alpha(x)};\mu_{q_\alpha(x)}))  \\
        C^*_{\max}(\G) \ar[r]^-{\textstyle \lambda_x} & \B(L^2(\G_x; \mu_x)).}
\end{displaymath}

In order to create a link between the two items on the right, we make the following claim:

$\underline{Claim}:$ The embedding of $L^2((\G_\alpha)_{q_\alpha(x)};\mu_{q_\alpha(x)})$ into $L^2(\G_x; \mu_x)$ is reduced with respect to $q_\alpha^*(C_{max}(\G_\alpha))$; i.e. that
$L^2((\G_\alpha)_{q_\alpha(x)};\mu_{q_\alpha(x)})$ is an invariant subspace.

This will tell us that operators in $\lambda_x(q_\alpha^*(C_{max}(\G_\alpha)))$ split into block diagonal form with respect to the decomposition $L^2(\G_x; \mu_x) = q_\alpha^*(L^2((\G_\alpha)_{q_\alpha(x)};\mu_{q_\alpha(x)})) \oplus q_\alpha^*(L^2((\G_\alpha)_{q_\alpha(x)};\mu_{q_\alpha(x)}))^{\perp}$. Furthermore, it is clear that $\lambda_x$ is unitarily equivalent to $\lambda_{q_\alpha(x)}$ on $q_\alpha^*(L^2((\G_\alpha)_{q_\alpha(x)};\mu_{q_\alpha(x)}))$; via the unitary $q_\alpha^*$. Once we have done this, we can conclude that, for $\tilde{a}\in C^*_{max}(\G_\alpha)$, $a= q_\alpha^*(\tilde{a})$, and $x\in \Gz$, if $1+ \lambda_x(a)$ is invertible then $1+ \lambda_{q_\alpha(x)}(\tilde{a})$ is invertible for each $x\in \Gz$ also. Let us first show how to complete the proof assuming this claim. 

Given $a \in C^*_r(\G)=C^*_{\max}(\G)$, by Theorem \ref{sexyapproximationtheorem} again, there exists an $\alpha \in A$ such that $a=(q_\alpha)^*(\tilde{a})$ for some $\tilde{a} \in C^*_{\max}(\G_\alpha)$. Since $1+\lambda_x(a)$ is invertible for each $x \in \Gz$, we obtain that $1+\lambda_{\bar{x}}(\tilde{a})$ is invertible for each $\bar{x} \in \G_\alpha^{(0)}$ by the above claim. Note that $\G_\alpha$ is a locally compact, second countable and amenable groupoid with a Haar system of measures, so we can apply Proposition \ref{thm:NP extension of Exel} to $\G_\alpha$ directly. Hence we obtain that $1+\tilde{a}$ is invertible in $C^*_{\max}(\G_\alpha)$. Since $(q_\alpha)^*: C^*_{\max}(\G_\alpha) \to C^*_{\max}(\G)$ is an isometric embedding that sends approximate units to approximate units, we obtain that $1+a=(q_\alpha)^*(1+\tilde{a})$ is invertible in $C^*_{\max}(\G)^+ = C^*_r(\G)^+$ as well. This finishes the proof, so long as the claim is correct.

To prove the claim: Suppose that $a\in C_c(\G)$ and $\tilde{a} \in C_c(\G_\alpha)$ as above. Let $\xi = q_\alpha^*(\tilde{\xi})$ for $\tilde{\xi} \in L^2((\G_\alpha)_{q_\alpha(x)};\mu_{q_\alpha(x)})$ and observe that the claim follows from the following calculations.

\begin{eqnarray*}
\lambda_x\left(a\right)(\xi)(\gamma) & = & \int_{G} a(\gamma y^{-1}) \xi(y)d\mu_x(y) \\
&= &\int_{\G} \tilde{a}(q_\alpha(\gamma)q_\alpha(y^{-1})))\tilde{\xi}(q_\alpha(y))d\mu_x(y) \\
&=& \int_{\G_\alpha} \tilde{a}(q_\alpha(\gamma)y^{-1})\tilde{\xi}(y)d\mu_{q_\alpha(x)}(y)\\
&=& q_\alpha^*\left( \lambda_{q_\alpha(x)}\left(\tilde{a}\right)(\tilde{\xi})\right)(\gamma)
\end{eqnarray*}
\end{proof}}

\section{Applications}\label{application section}

In Section \ref{main theorem section}, we established the limit operator theory for groupoids (Theorem \ref{main theorem}). Now we provide various applications to different groupoids, recovering the classic limit operator theory in the Hilbert space case for exact groups \cite{roe-band-dominated, rabinovich2012limit}, for spaces with Property A \cite{Spakula-Willett--limitoperators}, and also establishing new limit operator theories for amenable group actions, uniform Roe algebras for certain groupoids, and amenable groupoid actions.

\subsection{Discrete group case}\label{group case}
Our first application is to recover the limit operator theory for exact groups studied by Roe \cite{roe-band-dominated}, which includes the classic limit operator theory of Rabinovich, Roch and Silbermann \cite{rabinovich2012limit} in the Hilbert space case.

Let $G$ be a finitely generated exact discrete group, and $\beta G$ be its Stone-\v{C}ech compactification. Clearly, the left multiplication of $G$ on itself extends naturally to an action of $G$ on $\beta G$ by homeomorphisms. We consider the associated transformation groupoid $\G:=\beta G \rtimes G$. Its unit space $\Gz=\beta G$ can be decomposed into
\begin{equation}\label{dec for beta G}
\beta G=G \sqcup \partial G,
\end{equation}
where $\partial G$ is the Stone-\v{C}ech boundary. Clearly, $G$ is an open dense invariant subset in $\beta G$ and we have
$$\G(G)=G \rtimes G, \quad \mathrm{and} \quad \G(\partial G) = \partial G \rtimes G.$$

Since $G$ is exact, the groupoid $\beta G \rtimes G$ is topologically amenable (see Example \ref{group ex prlm}). Therefore we may apply Theorem \ref{main theorem} to the groupoid $\G=\beta G \rtimes G$ together with the decomposition (\ref{dec for beta G}), and obtain the associated limit operator theory for $\G$. Now we would like to provide a more detailed picture.

First we focus on the groupoid $C^*$-algebra $C^*_r (\G)=C^*_r(\beta G \rtimes G)$. Equipping $G$ with a proper right-invariant metric, we write $|G|$ to denote the associated metric space. For any $\gamma \in G$, let $L_\gamma$ and $R_\gamma$ be the unitary operators on $\ell^2(G)$ induced by left and right multiplication by $\gamma$. Denote $\mathbb{C}[|G|]$ the $\ast$-subalgebra in $\B(\ell^2(G))$ generated by the unitaries $L_\gamma$ and the diagonal matrices $\ell^\infty(G)$, and $C^*_u(|G|)$ its norm closure in $\B(\ell^2(G))$. For coarse geometers, elements in $\mathbb{C}[|G|]$ are said to have \emph{finite propagation}, and $C^*_u(|G|)$ is called \emph{the uniform Roe algebra of $G$}.

It is not hard to see that the groupoid $C^*$-algebra $C^*_r (\G)=C^*_r(\beta G \rtimes G)$ is isomorphic to the uniform Roe algebra $C^*_u(|G|)$. More precisely, we have the following algebraic $\ast$-isomorphism $\theta: C_c(\beta G \rtimes G) \to \mathbb{C}[|G|]$ defined by
$$\theta(f)(\gamma,\gamma_1):=f(\gamma,\gamma\gamma_1^{-1}).$$
One can verify that $\theta$ is an isometry with respect to the reduced norm on $C_c(\beta G \rtimes G)$ and the operator norm on $\mathbb{C}[|G|]$ (see for example \cite{RoeLectures}). Hence $\theta$ can be extended to an isomorphism
\begin{equation}\label{iso group case}
\Theta: C^*_r(\beta G \rtimes G) \to C^*_u(G).
\end{equation}
Furthermore, it is not hard to see that $\Theta(C^*_r(G \rtimes G))= \K(\ell^2(G))$. Consequently for $T \in C^*_r(\beta G \rtimes G)$, $T$ is invertible modulo $C^*_r(G \rtimes G)$ \emph{if and only if} $\Theta(T)$ is Fredholm.

Now we move on to the discussion of limit operators. Let us first recall the classic definition of limit operators introduced by Roe. Following \cite{roe-band-dominated}, given $T \in C^*_u(|G|)$, the map
$$G \to C^*_u(|G|), \quad \gamma \mapsto R_\gamma^*TR_\gamma$$
has range in a WOT-compact set and hence extends uniquely to a WOT-continuous map
$$\sigma(T): \partial G \to C^*_u(|G|).$$
So we obtain a $C^*$-homomorphism $\sigma: C^*_u(G) \to C_{\mathrm{WOT}}(\partial G, C^*_u(|G|))$. Furthermore, as Willett pointed out in his thesis \cite{willett2009band}, the image of $\sigma$ sits in the set of $G$-equivariant functions $C_{\mathrm{WOT}}(\partial G, C^*_u(|G|))^G$ in the following sense: a function $F \in C_{\mathrm{WOT}}(\partial G, C^*_u(|G|))$ is called \emph{$G$-equivariant} if $F(\gamma \omega) = R_{\gamma}^* F(\omega) R_{\gamma}$ for any $\gamma\in G$ and $\omega \in \partial G$. Therefore, we obtain a $C^*$-homomorphism, also called the \emph{symbol morphism}\footnote{As we will see in Proposition \ref{prop: group case}, this coincides with the symbol morphism we defined in Definition \ref{limit morphism defn} up to isomorphisms. The same happens in the next four subsections and we will not explain anymore},
$$\sigma: C^*_u(G) \to C_{\mathrm{WOT}}(\partial G, C^*_u(|G|))^G.$$
For any $\omega \in \partial G$, the \emph{limit operator of $T$ at $\omega$} is defined to be $\sigma_\omega(T):=\sigma(T)(\omega)$\footnote{As we will see in Lemma \ref{lem3 grp case}, this coincides with the limit operator we defined in Definition \ref{defn: limit op} up to isomorphisms. The same happens in the next four subsections and we will not explain anymore}. It is obvious that the map $T \mapsto \sigma_\omega(T)$ is an endomorphism of $C^*_u(G)$.

On the other hand, we have the symbol morphism associated to $\G=\beta G \rtimes G$ from Definition \ref{limit morphism defn}:
$$\varsigma: C^*_r(\G) \longrightarrow \Gamma_b(E_u^\partial)^{\partial G \rtimes G}.$$
Now we would like to compare the above two morphisms $\sigma$ and $\varsigma$, and show that they coincide under the isomorphism $\Theta$. Fix $\omega \in \partial G \subset \Gz$, and note that
$$\G_\omega=\{(g\omega,g)\in \G: g \in G\}$$
is bijective to $G$ via the map $(g\omega,g) \mapsto g$. This induces a unitary $U_\omega: \ell^2(\G_\omega) \cong \ell^2(G)$, and hence an isomorphism $\mathrm{Ad}_{U_\omega}: \B(\ell^2(\G_\omega)) \cong \B(\ell^2(G))$. These isomorphisms can be put together and provide a map between the operator fibre spaces as follows:
$$\mathrm{Ad}_U=\bigsqcup_{\omega \in \partial G}\mathrm{Ad}_{U_\omega}: E^\partial=\bigsqcup_{\omega \in \partial G}\B(\ell^2(\G_\omega)) \longrightarrow \partial G \times \B(\ell^2(G)),$$
where the final item is regarded as a trivial fibre space over $\partial G$, and the topology coincides with  the product topology. It is not hard to check that $\mathrm{Ad}_U$ is a homeomorphism, hence an isomorphism between operator fibre spaces over $\partial G$. Note that continuous sections of $\partial G \times \B(\ell^2(G))$ coincides with $C_{\mathrm{WOT}}(\partial G, \B(\ell^2(G)))$, hence $\mathrm{Ad}_U$ induces the following $C^*$-isomorphism:
$$(\mathrm{Ad}_U)_*: \Gamma(E^\partial) \longrightarrow \Gamma(\partial G \times \B(\ell^2(G))) = C_{\mathrm{WOT}}(\partial G, \B(\ell^2(G))).$$
Note that elements in $C_{\mathrm{WOT}}(\partial G, \B(\ell^2(G)))$ have uniformly bounded norms, hence $\Gamma(E^\partial) = \Gamma_b(E^\partial)$.

\begin{Lemma}\label{lem 1 grp case}
$(\mathrm{Ad}_U)_* \big(\Gamma(E^\partial)^{\partial G \rtimes G} \big)= C_{\mathrm{WOT}}(\partial G, \B(\ell^2(G)))^G$.
\end{Lemma}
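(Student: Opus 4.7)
The plan is to chase the two equivariance conditions through the unitary identifications $U_\omega:\ell^2(\G_\omega)\to \ell^2(G)$, $\delta_{(h\omega,h)}\mapsto \delta_h$, and verify that they coincide literally. The central computation is, for an arrow $\gamma=(\omega,g)\in\partial G\rtimes G$ (for which $r(\gamma)=\omega$ and $s(\gamma)=g^{-1}\omega$), to express the groupoid operator $R_\gamma:\ell^2(\G_\omega)\to\ell^2(\G_{g^{-1}\omega})$ of Definition \ref{defn: equivariant section} in terms of the group-level right-multiplication unitary $R_g$ on $\ell^2(G)$.

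This computation is immediate from the groupoid product: for $\alpha=(h\omega,h)\in\G_\omega$ one has $\alpha\gamma=(h\omega,hg)\in\G_{g^{-1}\omega}$, so the formula $R_\gamma\delta_\alpha=\delta_{\alpha\gamma}$ translates, under the identifications $U_\omega$ and $U_{g^{-1}\omega}$, into $\delta_h\mapsto\delta_{hg}$. In other words, $U_{g^{-1}\omega}\,R_\gamma\,U_\omega^{*}=R_g$ on $\ell^2(G)$, \emph{independently of $\omega$}. Setting $F:=(\mathrm{Ad}_U)_{*}(\varphi)$, so that $F(\omega)=U_\omega\varphi(\omega)U_\omega^{*}$, I would then conjugate the $\G$-equivariance identity $\varphi(\omega)=R_\gamma^{*}\,\varphi(g^{-1}\omega)\,R_\gamma$ on both sides by $U_\omega$; the identity above turns it into $F(\omega)=R_g^{*}\,F(g^{-1}\omega)\,R_g$. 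Substituting $\omega=g\omega'$ yields exactly Roe's $G$-equivariance $F(g\omega')=R_g^{*}F(\omega')R_g$.

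Both inclusions then follow from the same calculation. If $\varphi\in\Gamma(E^\partial)^{\partial G\rtimes G}$, the above shows $F=(\mathrm{Ad}_U)_{*}(\varphi)$ is $G$-equivariant, giving $\subseteq$. Conversely, given a $G$-equivariant $F\in C_{\mathrm{WOT}}(\partial G,\B(\ell^2(G)))$, one sets $\varphi:=(\mathrm{Ad}_U)_{*}^{-1}(F)\in\Gamma(E^\partial)$ (this inverse exists since $(\mathrm{Ad}_U)_{*}$ is a $C^{*}$-isomorphism of section spaces) and reverses the conjugation to obtain the $\G$-equivariance of $\varphi$, giving $\supseteq$. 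There is no substantial obstacle beyond careful bookkeeping of source/range conventions and the identification of fibres with $G$; once the key identity $U_{g^{-1}\omega}R_\gamma U_\omega^{*}=R_g$ is established, the equivalence of the two equivariance conditions is purely formal.
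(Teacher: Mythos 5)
Your proposal is correct and follows essentially the same route as the paper: both hinge on the single identity that, under the fibre identifications $U_\omega$, the groupoid right-translation $R_{(g\omega,g)}$ (equivalently your $R_{(\omega,g)}$ with the source/range roles swapped) intertwines with the group-level right shift $R_g$ on $\ell^2(G)$, after which conjugating the two equivariance identities shows they are literally the same condition. The only difference is your choice of parametrising the arrow as $(\omega,g)$ rather than $(g\omega,g)$, which is immaterial.
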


\begin{proof}
Given $\varphi \in \Gamma(E^\partial)$, by definition $\varphi$ is $\partial G \rtimes G$-equivariant if and only if for any $(g\omega,g) \in \partial G \rtimes G$, we have
$$\varphi(g\omega) = R_{(g\omega,g)}^* \varphi(\omega) R_{(g\omega,g)}.$$
Note that here $R_{(g\omega,g)}: \ell^2(\G_{g\omega}) \to \ell^2(\G_{\omega})$ is defined by $\delta_{(hg\omega,h)} \mapsto \delta_{(hg\omega,hg)}$, which induces the following commutative diagram:
\begin{displaymath}
    \xymatrix@=4em{
        \ell^2(\G_{g\omega}) \ar[r]^-{\textstyle R_{(g\omega, g)}} \ar[d]_-{\textstyle U_{g\omega}}^-{\textstyle\cong} & \ell^2(\G_{\omega}) \ar[d]^-{\textstyle U_{\omega}}_-{\textstyle\cong} \\
        \ell^2(G) \ar[r]^-{\textstyle R_g} & \ell^2(G)}
\end{displaymath}
Therefore, we have $\big((\mathrm{Ad}_U)_*\varphi\big)(g\omega) = R_g^* \big((\mathrm{Ad}_U)_*\varphi\big)(\omega) R_g$. So the lemma holds.
\end{proof}

\begin{Lemma}\label{lem2 grp case}
For each $\omega \in \partial G$ and $g \in G$, we have $\mathrm{Ad}_{U_\omega}\big(C^*_u(\G_{\omega})\big) = C^*_u(|G|)$. Hence we have $(\mathrm{Ad}_U)_* \big(\Gamma(E^\partial_u)^{\partial G \rtimes G} \big)= C_{\mathrm{WOT}}(\partial G, C^*_u(|G|))^G$.
\end{Lemma}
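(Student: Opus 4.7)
My plan is to translate the defining condition for $C^*_u(\G_\omega)$ (that matrix coefficients $T_{\gamma',\gamma''}$ vanish unless $\gamma''\gamma'^{-1}$ lies in some fixed compact subset of $\G$) into the finite propagation condition on $\ell^2(G)$ under the bijection $U_\omega$, and then take closures. First I would compute, for $\gamma' = (g\omega, g)$ and $\gamma'' = (h\omega, h)$ in $\G_\omega$, the product $\gamma''\gamma'^{-1} = (h\omega, hg^{-1})$ in $\beta G \rtimes G$. The crucial observation is that since $G$ is discrete and $\beta G$ is compact, every compact subset $K \subseteq \beta G \rtimes G$ is contained in $\beta G \times F$ for some finite $F \subseteq G$ (the second-coordinate projection being continuous into a discrete space). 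Therefore the condition $\gamma''\gamma'^{-1} \in K$ becomes the condition $hg^{-1} \in F$, which (with the right-invariant metric) is exactly the finite propagation condition defining $\mathbb{C}[|G|]$.

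Next I would verify that under the unitary $U_\omega : \ell^2(\G_\omega) \to \ell^2(G)$ sending $\delta_{(g\omega,g)}$ to $\delta_g$, the matrix coefficient $\langle T \delta_{(g\omega,g)}, \delta_{(h\omega,h)}\rangle$ corresponds to $\langle \mathrm{Ad}_{U_\omega}(T)\delta_g, \delta_h\rangle$, so the kernel support condition on $\G_\omega \times \G_\omega$ matches the propagation condition on $G \times G$. This shows $\mathrm{Ad}_{U_\omega}$ restricts to a $*$-isomorphism $\mathbb{C}[\G_\omega] \to \mathbb{C}[|G|]$. Since $\mathrm{Ad}_{U_\omega}$ is an isometric $*$-isomorphism of $\B(\ell^2(\G_\omega))$ onto $\B(\ell^2(G))$, taking norm closures yields $\mathrm{Ad}_{U_\omega}(C^*_u(\G_\omega)) = C^*_u(|G|)$, proving the first assertion.

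For the ``hence'' part, I would work fibrewise. By Lemma \ref{lem 1 grp case}, the isomorphism $(\mathrm{Ad}_U)_*$ identifies $\Gamma(E^\partial)^{\partial G \rtimes G}$ with $C_{\mathrm{WOT}}(\partial G, \B(\ell^2(G)))^G$. An element $\varphi \in \Gamma(E^\partial)^{\partial G \rtimes G}$ belongs to $\Gamma(E^\partial_u)^{\partial G \rtimes G}$ precisely when $\varphi(\omega) \in C^*_u(\G_\omega)$ for every $\omega \in \partial G$; by the first part, this is equivalent to $((\mathrm{Ad}_U)_*\varphi)(\omega) \in C^*_u(|G|)$ for all $\omega$, which is exactly the condition that $(\mathrm{Ad}_U)_*\varphi$ takes values in $C^*_u(|G|)$. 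Combined with the $G$-equivariance already established, this gives the desired equality $(\mathrm{Ad}_U)_*(\Gamma(E^\partial_u)^{\partial G \rtimes G}) = C_{\mathrm{WOT}}(\partial G, C^*_u(|G|))^G$.

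The main obstacle is conceptual rather than technical: being careful about how the abstract kernel-support condition in the groupoid uniform Roe algebra (formulated in terms of compact subsets of $\G$) concretely reduces to finite propagation on $G$. Once one exploits the discreteness of $G$ to unpack compactness in $\beta G \rtimes G$, the rest is bookkeeping with the explicit unitary $U_\omega$ and invoking Lemma \ref{lem 1 grp case} pointwise.
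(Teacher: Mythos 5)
Your proposal is correct and follows essentially the same route as the paper: compute $(h\omega,h)(g\omega,g)^{-1}=(h\omega,hg^{-1})$, use compactness of $\beta G$ and discreteness of $G$ to identify the kernel-support condition with finite propagation, so that $\mathrm{Ad}_{U_\omega}(\mathbb{C}[\G_\omega])=\mathbb{C}[|G|]$, and then take norm closures. The paper's proof leaves the ``hence'' part implicit, and your fibrewise argument via Lemma \ref{lem 1 grp case} is exactly the intended justification.
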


\begin{proof}
Recall that by definition, $T=(T_{(g'\omega,g'), (g''\omega, g'')})_{g',g'' \in G} \in \B(\ell^2(\G_x))$ belongs to $\mathbb{C}[\G_{\omega}]$ if and only if there exists a compact set $K \subseteq \beta G \rtimes G$ such that $T_{(g'\omega,g'), (g''\omega, g'')} \neq 0$ implies that $(g''\omega, g'')(g'\omega, g')^{-1} \in K$. Since $\beta G$ is compact and $(g''\omega, g'')(g'\omega, g')^{-1}=(g''\omega, g''g'^{-1})$, the above is equivalent to the condition that there exists a finite set $F \subseteq G$ such that $\mathrm{Ad}_{U_\omega}(T)_{g',g''} \neq 0$ implies that $g''g'^{-1} \in F$. In other words, we have $\mathrm{Ad}_{U_\omega}\big(\mathbb{C}[\G_{\omega}]\big) = \mathbb{C}[|G|]$. Taking completions on both sides, the lemma holds.
\end{proof}

\begin{Lemma}\label{lem3 grp case}
For $\omega \in \partial G$, the following diagram commutes.
\begin{displaymath}
    \xymatrix@=3em{
        C^*_r(\beta G \rtimes G) \ar[r]^-{\textstyle \lambda_\omega} \ar[d]_-{\textstyle\Theta}^-{\textstyle\cong} & C^*_u(\G_\omega) \ar[d]^-{\textstyle \mathrm{Ad}_{U_\omega}}_-{\textstyle\cong} \\
        C^*_u(|G|) \ar[r]^-{\textstyle \sigma_\omega} & C^*_u(|G|)}
\end{displaymath}
\end{Lemma}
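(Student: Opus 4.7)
The plan is to reduce to the dense $*$-subalgebra $C_c(\beta G \rtimes G)$ and then verify the identity $\mathrm{Ad}_{U_\omega} \circ \lambda_\omega = \sigma_\omega \circ \Theta$ by a matrix coefficient computation with respect to the canonical basis $\{\delta_h : h \in G\}$ of $\ell^2(G)$. Since all four maps in the diagram are contractive $*$-homomorphisms and $C_c(\beta G \rtimes G)$ is norm-dense in $C^*_r(\beta G \rtimes G)$, this suffices.

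For the left-hand side, fix $f \in C_c(\beta G \rtimes G)$ and $h, h' \in G$. Using the groupoid operations from Section \ref{groupoid action preliminary subsection}, one checks that $\G_\omega = \{(h\omega, h) : h \in G\}$ and $(h'\omega, h')(h\omega, h)^{-1} = (h'\omega, h'h^{-1})$, so formula~(\ref{reduced algebra defn}) gives
\begin{equation*}
\bigl\langle \mathrm{Ad}_{U_\omega}(\lambda_\omega(f))\, \delta_h,\, \delta_{h'}\bigr\rangle = \bigl\langle \lambda_\omega(f)\, \delta_{(h\omega, h)},\, \delta_{(h'\omega, h')}\bigr\rangle = f(h'\omega,\, h'h^{-1}).
\end{equation*}

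For the right-hand side, pick a net $\{g_\alpha\} \subseteq G$ with $g_\alpha \to \omega$ in $\beta G$. Since left multiplication by $h'$ on $\beta G$ is continuous, $h' g_\alpha \to h'\omega$. By the definition of the symbol $\sigma$ as a WOT-limit of conjugations by $R_{g_\alpha}$, combined with $\Theta(f)(\gamma,\gamma_1) = f(\gamma,\, \gamma\gamma_1^{-1})$, the $g_\alpha$-dependence in the second coordinate cancels and one computes
\begin{equation*}
\bigl\langle \sigma_\omega(\Theta(f))\, \delta_h,\, \delta_{h'}\bigr\rangle = \lim_\alpha f(h' g_\alpha,\, h'h^{-1}) = f(h'\omega,\, h'h^{-1})
\end{equation*}
by continuity of $f$. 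Matching the two coefficients establishes commutativity on the dense subalgebra, and hence on all of $C^*_r(\beta G \rtimes G)$. The main potential pitfall is bookkeeping of conventions (left vs.\ right actions, source vs.\ range maps, the direction of $R_g$, and the matrix-coefficient convention $\langle T\delta_y,\delta_x\rangle$); once these are pinned down consistently, the computation is mechanical.
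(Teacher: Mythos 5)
Your proposal is correct and follows essentially the same route as the paper: both reduce to the dense subalgebra $C_c(\beta G \rtimes G)$ and match matrix coefficients, computing $\langle \lambda_\omega(f)\delta_{(h\omega,h)},\delta_{(g\omega,g)}\rangle = f(g\omega,gh^{-1})$ on one side and $\lim_{\alpha\to\omega}\langle \Theta(f)\delta_{h\alpha},\delta_{g\alpha}\rangle = \lim_{\alpha\to\omega} f(g\alpha,gh^{-1}) = f(g\omega,gh^{-1})$ on the other. The bookkeeping of conventions you flag is handled exactly as in the paper's computation.
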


\begin{proof}
It suffices to show that $f\in C_c(\beta G \rtimes G)$, we have $\mathrm{Ad}_{U_\omega} \circ \lambda_\omega (f)=\sigma_\omega\circ\Theta (f)$. For any $g,h\in G$, we have
$$\big(\lambda_\omega (f)\delta_{(h\omega,h)}\big)(g\omega,g)=\sum_{\gamma\in G}f((g\omega,g)(\gamma\omega,\gamma)^{-1})\delta_{(h\omega,h)}(\gamma\omega,\gamma)=f(g\omega,gh^{-1}).$$
Hence,
\begin{eqnarray*}
\langle \mathrm{Ad}_{U_\omega} \circ \lambda_\omega (f)\delta_h, \delta_g\rangle &=& \langle \lambda_\omega (f)\delta_{(h\omega,h)}, \delta_{(g\omega,g)}\rangle = f(g\omega,gh^{-1})=\lim_{\alpha \to \omega}f(g\alpha,gh^{-1}), \\
\langle \sigma_\omega\circ\Theta (f)\delta_h, \delta_g\rangle &=& \lim_{\alpha \to \omega} \langle R_\alpha^*\Theta (f)R_\alpha\delta_h, \delta_g\rangle = \lim_{\alpha \to \omega}\langle \Theta (f)\delta_{h\alpha}, \delta_{g\alpha}\rangle=\lim_{\alpha \to \omega}f(g\alpha,gh^{-1}).
\end{eqnarray*}
So we finish the proof.
\end{proof}

Combining Lemma \ref{lem 1 grp case}, \ref{lem2 grp case} and \ref{lem3 grp case} together, we obtain the following:
\begin{Proposition}\label{prop: group case}
Notations as above. The following digram commutes:
\begin{displaymath}
    \xymatrix{
        C^*_{r}(\beta G \rtimes G) \ar[r]^-{\textstyle \varsigma} \ar[d]_-{\textstyle \Theta}^-{\textstyle \cong} & \Gamma_b(E_u^\partial)^{\partial G \rtimes G} \ar[d]^-{\textstyle (\mathrm{Ad}_U)_* }_-{\textstyle \cong} \\
        C^*_{u}(G) \ar[r]^-{\textstyle \sigma} & C_{\mathrm{WOT}}(\partial G, C^*_u(|G|))^G}
\end{displaymath}
\end{Proposition}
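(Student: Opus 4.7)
The plan is to reduce the commutativity of the outer square to the pointwise identity already captured by Lemma \ref{lem3 grp case}, using Lemmas \ref{lem 1 grp case} and \ref{lem2 grp case} to match up the ambient $C^*$-algebras. First I would check that both composites $\sigma \circ \Theta$ and $(\mathrm{Ad}_U)_* \circ \varsigma$ are $\ast$-homomorphisms from $C^*_r(\beta G \rtimes G)$ into the \emph{same} target $C^*$-algebra: by Lemma \ref{lem2 grp case}, the map $(\mathrm{Ad}_U)_*$ restricts to a $\ast$-isomorphism from $\Gamma_b(E_u^\partial)^{\partial G \rtimes G}$ onto $C_{\mathrm{WOT}}(\partial G, C^*_u(|G|))^G$, while $\Theta$ from (\ref{iso group case}) identifies $C^*_r(\beta G \rtimes G)$ with $C^*_u(|G|)$, the domain on which $\sigma$ is defined.

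Next, since both composites are continuous $\ast$-homomorphisms, it suffices to verify that they agree on the dense $\ast$-subalgebra $C_c(\beta G \rtimes G)$. I would then fix $f \in C_c(\beta G \rtimes G)$ and compute the two sides fibre-wise at an arbitrary $\omega \in \partial G$. By Definition \ref{limit morphism defn}, $\varsigma(f)(\omega) = \lambda_\omega(f)$, so
\[
\big((\mathrm{Ad}_U)_* \circ \varsigma(f)\big)(\omega) \;=\; \mathrm{Ad}_{U_\omega}\!\big(\lambda_\omega(f)\big),
\]
whereas $\big(\sigma \circ \Theta(f)\big)(\omega) = \sigma_\omega(\Theta(f))$. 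Lemma \ref{lem3 grp case} states exactly that these two operators in $C^*_u(|G|)$ coincide, which means the two sections of the operator fibre space agree at every $\omega \in \partial G$, and hence as global elements of $C_{\mathrm{WOT}}(\partial G, C^*_u(|G|))^G$.

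No genuine obstacle should arise here, since the real computational content, namely the matching of matrix coefficients $\langle \mathrm{Ad}_{U_\omega} \lambda_\omega(f)\delta_h,\delta_g\rangle = \lim_{\alpha \to \omega} f(g\alpha, gh^{-1}) = \langle \sigma_\omega \Theta(f)\delta_h, \delta_g\rangle$, has already been carried out in Lemma \ref{lem3 grp case}. The only thing left for this proposition is the formal bookkeeping of assembling the three lemmas: Lemma \ref{lem 1 grp case} guarantees equivariance is preserved by $(\mathrm{Ad}_U)_*$, Lemma \ref{lem2 grp case} identifies the uniform Roe sub-bundle with the $G$-equivariant continuous functions valued in $C^*_u(|G|)$, and Lemma \ref{lem3 grp case} gives the pointwise identification. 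After that, continuity and density finish the proof.
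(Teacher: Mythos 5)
Your proposal is correct and follows essentially the same route as the paper, which likewise derives the proposition by simply combining Lemmas \ref{lem 1 grp case}, \ref{lem2 grp case} and \ref{lem3 grp case}: the first two identify the target algebras and the third gives the pointwise (fibrewise) agreement $\mathrm{Ad}_{U_\omega}\circ\lambda_\omega = \sigma_\omega\circ\Theta$ at each $\omega\in\partial G$. The only cosmetic difference is that you add an explicit density/continuity reduction to $C_c(\beta G\rtimes G)$, which is harmless but not strictly needed since Lemma \ref{lem3 grp case} is already stated on all of $C^*_r(\beta G\rtimes G)$.
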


Consequently, we recover the classic limit operator theory for exact groups \cite{roe-band-dominated} as a corollary of Theorem \ref{main theorem}:
\begin{Corollary}\label{group case cor}
Let $G$ be a finitely generated exact discrete group. For any operator $T \in C^*_u(|G|)$, the following are equivalent:
\begin{enumerate}
  \item $T$ is Fredholm.
  \item $\sigma(T)$ is invertible in $C_{\mathrm{WOT}}(\partial G, C^*_u(|G|))^G$.
  \item For each $\omega \in \partial G$, the limit operator $\sigma_\omega(T)$ is invertible, and
  $$\sup_{\omega \in \partial G} \|\sigma_\omega(T)^{-1}\| < \infty.$$
  \item For each $\omega \in \partial G$, the limit operator $\sigma_\omega(T)$ is invertible.
\end{enumerate}
\end{Corollary}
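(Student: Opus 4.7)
The plan is to obtain Corollary \ref{group case cor} as a direct translation of Theorem \ref{main theorem} applied to the groupoid $\G=\beta G\rtimes G$ with $X=G$ and $\partial X=\partial G$, using the commutative diagram in Proposition \ref{prop: group case}. First I would verify that the hypotheses of Theorem \ref{main theorem} are satisfied in this setting: the unit space $\beta G$ is compact, $G$ is open, dense, and invariant in $\beta G$, the groupoid $\beta G\rtimes G$ is \'etale and $\sigma$-compact (since $G$ is countable), and the reduction $\G(\partial G)=\partial G\rtimes G$ is topologically amenable because it is the restriction of the amenable groupoid $\beta G\rtimes G$ to a closed invariant subspace of the unit space (exactness of $G$ being equivalent to amenability of $\beta G\rtimes G$, as recalled in Example \ref{group ex prlm}).

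Next I would translate each of the four conditions in Theorem \ref{main theorem} across the isomorphism $\Theta:C_r^*(\beta G\rtimes G)\to C_u^*(|G|)$ from (\ref{iso group case}). Condition (1) says $T$ is invertible modulo $C_r^*(G\rtimes G)$; since $\Theta$ sends $C_r^*(G\rtimes G)$ onto $\K(\ell^2(G))$, this is Atkinson's criterion for Fredholmness of $\Theta(T)$. For condition (2), the commutative square of Proposition \ref{prop: group case} shows that $\varsigma(T)$ is invertible in $\Gamma_b(E_u^\partial)^{\partial G\rtimes G}$ if and only if its image $\sigma(\Theta(T))$ is invertible in $C_{\mathrm{WOT}}(\partial G,C_u^*(|G|))^G$, since $(\mathrm{Ad}_U)_*$ is a $C^*$-isomorphism. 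For conditions (3) and (4), Lemma \ref{lem3 grp case} gives $\mathrm{Ad}_{U_\omega}\circ\lambda_\omega=\sigma_\omega\circ\Theta$, so $\lambda_\omega(T)$ is invertible (with norm of inverse uniformly bounded in $\omega$) precisely when $\sigma_\omega(\Theta(T))$ is invertible (uniformly, respectively).

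Putting these identifications together, the four conditions of Theorem \ref{main theorem} applied to $T'=\Theta^{-1}(T)\in C_r^*(\beta G\rtimes G)$ become exactly the four conditions of the corollary for $T\in C_u^*(|G|)$, and Theorem \ref{main theorem} delivers their equivalence. The main (indeed the only) conceptual step is recognising that the pairing of Atkinson's theorem with the ideal identification $\Theta(C_r^*(G\rtimes G))=\K(\ell^2(G))$ is what converts abstract invertibility modulo an ideal into the classical Fredholm condition; everything else is bookkeeping via the already-established diagram. No step here presents a genuine obstacle, because all the analytic content (the removal of the uniform boundedness hypothesis, the exactness of the short sequence (\ref{short seq of red})) has already been absorbed into Theorem \ref{main theorem}.
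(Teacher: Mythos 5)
Your proposal is correct and follows essentially the same route as the paper: the corollary is obtained by applying Theorem \ref{main theorem} to $\beta G\rtimes G$ (whose hypotheses hold exactly as you verify, with amenability of $\partial G\rtimes G$ following from exactness of $G$ and Remark \ref{rk:main theorem}) and then transporting the four conditions across the isomorphism $\Theta$ via the commutative diagram of Proposition \ref{prop: group case} and Lemma \ref{lem3 grp case}, with $\Theta(C^*_r(G\rtimes G))=\K(\ell^2(G))$ converting invertibility modulo the ideal into Fredholmness. The paper leaves this translation implicit after establishing Proposition \ref{prop: group case}; your write-up simply makes the same bookkeeping explicit.
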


\subsection{General compactifications of group case}\label{grp cptf ex}
In the previous subsection, we recover the classic limit operator theory for exact groups which characterises Fredholmness for operators in the uniform Roe algebra. And as we observed, the uniform Roe algebra of a group corresponds to the action on its Stone-\v{C}ech compactification. Now we study the limit operator theory associated to general compactifications recovering the results from \cite[Chapter 3.2]{willett2009band}.

Let $G$ be a finitely generated exact discrete group, and $Y$ be an \emph{equivariant compactification} of $G$ in the sense that $Y$ is a compact space containing $G$ as an open and dense subset, and the action of the left multiplication of $G$ on itself extends to an action on $Y$. As before, we consider the transformation groupoid $\G=Y \rtimes G$, whose unit space is $Y$ with a natural decomposition:
$$Y = G \sqcup \partial_Y G.$$
And we have
$$\G(G)=G \rtimes G, \quad \mathrm{and} \quad \G(\partial_Y G) = \partial_Y G \rtimes G.$$
Note that although $G$ is exact, the action on $Y$ is in general \emph{not} amenable, i.e., the groupoid $\G=Y \rtimes G$ might \emph{not} be amenable. In fact, when $Y$ is the Alexandroff one-point compactification, the amenability of the action is equivalent to the amenability of the group itself, and there exist non-amenable but exact groups (e.g. free groups). However we still have the following:
\begin{Lemma}\label{exactness lemma}
Let $G$ be a finitely generated exact discrete group and $Y$ be an equivariant compactification of $G$. Then the following short sequence is exact:
$$0 \longrightarrow C^*_r(G \rtimes G) \longrightarrow C^*_r(Y \rtimes G) \longrightarrow C^*_r(\partial_Y G \rtimes G) \longrightarrow 0.$$
\end{Lemma}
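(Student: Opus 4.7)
The plan is to reduce the lemma to the standard characterisation of exactness for discrete groups at the level of $C^*$-algebras: namely, a discrete group $G$ is exact if and only if the reduced crossed product functor $(\,\cdot\,) \rtimes_r G$ sends short exact sequences of $G$-equivariant $C^*$-algebras to short exact sequences (see e.g.\ \cite[Theorem 5.1.10]{brown-ozawa}). This bypasses having to prove amenability of $Y \rtimes G$ directly, which in general fails (for instance when $Y$ is the one-point compactification and $G$ is exact but non-amenable).

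First, I will recall the well-known identification, for a discrete group acting on a locally compact Hausdorff space $Z$, of the reduced groupoid $C^*$-algebra $C^*_r(Z \rtimes G)$ with the reduced crossed product $C_0(Z) \rtimes_r G$ (cf.\ Section \ref{groupoid C algebra prlm}). Applying this to $Z = G, Y, \partial_Y G$ respectively, it suffices to establish the exactness of
$$0 \longrightarrow C_0(G) \rtimes_r G \longrightarrow C(Y) \rtimes_r G \longrightarrow C(\partial_Y G) \rtimes_r G \longrightarrow 0.$$

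Next, since $G$ is an open invariant subset of the compact $G$-space $Y$ with closed invariant complement $\partial_Y G$, one has the $G$-equivariant short exact sequence of $C^*$-algebras
$$0 \longrightarrow C_0(G) \longrightarrow C(Y) \longrightarrow C(\partial_Y G) \longrightarrow 0,$$
where the first map is the ideal inclusion and the second is restriction. Applying $(\,\cdot\,) \rtimes_r G$ to this sequence and invoking exactness of $G$ gives the desired conclusion.

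As an alternative (should one wish to avoid citing the Kirchberg--Wassermann theorem), one could instead chase the commutative diagram induced by the canonical equivariant surjection $\pi : \beta G \to Y$ obtained from the universal property of $\beta G$: the inclusions $C(Y) \hookrightarrow C(\beta G)$ and $C(\partial_Y G) \hookrightarrow C(\partial G)$ are $G$-equivariant, and (again by $G$-exactness) yield injections $C^*_r(Y \rtimes G) \hookrightarrow C^*_r(\beta G \rtimes G)$ and $C^*_r(\partial_Y G \rtimes G) \hookrightarrow C^*_r(\partial G \rtimes G)$; then exactness of the bottom row (which follows from amenability of $\beta G \rtimes G$ as in Section \ref{group case}) transfers to the top row by a direct diagram chase using the fact that the kernel at the middle of the bottom row is the common subalgebra $C^*_r(G \rtimes G) = \mathcal{K}(\ell^2 G)$. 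The main (and only real) obstacle is avoiding the temptation to prove amenability of $Y \rtimes G$, which can fail; the correct input is rather the $C^*$-algebraic exactness of $G$.
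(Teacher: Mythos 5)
Your argument is correct and is essentially identical to the paper's own proof: the paper likewise identifies each $C^*_r(Z\rtimes G)$ with the reduced crossed product $C_0(Z)\rtimes_r G$ via a commutative diagram of isomorphisms and then invokes \cite[Theorem 5.1.10]{brown-ozawa} (exactness of the reduced crossed product functor for exact $G$) applied to the equivariant sequence $0\to C_0(G)\to C(Y)\to C(\partial_Y G)\to 0$. Your alternative diagram-chase via $\beta G$ is a harmless extra, but the main line of reasoning coincides with the paper's.
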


\begin{proof}
We have the following commutative diagram:
\begin{displaymath}
    \xymatrix{
        0 \ar[r] & C^*_r(G \rtimes G) \ar[r] \ar[d] & C^*_r(Y \rtimes G) \ar[r] \ar[d] & C^*_r(\partial_Y G \rtimes G) \ar[r] \ar[d] & 0\\
        0 \ar[r] & C_0(G)\rtimes_r G \ar[r] & C(Y) \rtimes_r G \ar[r] & C(\partial_Y G) \rtimes_r G \ar[r] & 0,}
\end{displaymath}
where all vertical maps are $C^*$-isomorphisms (this is well-known, and the readers can refer to Section \ref{group action ex} for details where we deal with general group actions). By \cite[Theorem 5.1.10]{brown-ozawa}, we know the bottom line is exact, hence so is the top one.
\end{proof}

Therefore by Remark \ref{main theorem remark}, we know that part of our main result: ``(1) $\Leftrightarrow$ (2) $\Leftrightarrow$ (3)" in Theorem \ref{main theorem} holds directly for the transformation groupoid $\G=Y \rtimes G$, which establishes part of the limit operator theory in this case. Now we would like to provide a more detailed picture and compare it with the limit operator theory studied in Section \ref{group case}.

First we focus on the groupoid $C^*$-algebra $C^*_r (\G)=C^*_r(Y \rtimes G)$. Equipping $G$ with a proper right-invariant metric, and denote the uniform Roe algebra by $C^*_u(|G|)$ as in Section \ref{group case}. From the universal property of the Stone-\v{C}ech compactification, there is a $G$-equivariant surjection $\phi:\beta G \to Y$, which induces an embedding $C_c(Y \rtimes G) \hookrightarrow C_c(\beta G \rtimes G)$ and hence a $C^*$-monomorphism
$$\phi^*: C^*_r(Y \rtimes G) \longrightarrow C^*_r(\beta G\rtimes G).$$
Recall that in (\ref{iso group case}) we provide an isomorphism $\Theta: C^*_r(\beta G \rtimes G) \to C^*_u(|G|)$. Combining them together, we have an embedding
$$\Theta \circ \phi^*: C^*_r(Y \rtimes G) \longrightarrow C^*_u(|G|),$$
whose image is denoted by $\mathcal{A}_Y$. Consequently, we obtain the following commutative diagram:
\begin{displaymath}
    \xymatrix@=3em{
        C^*_r(G \rtimes G) \ar[r] \ar[d]^-{\textstyle\cong} & C^*_r(Y \rtimes G) \ar[r]^-{\textstyle \phi^*} \ar[d]^-{\textstyle\cong} & C^*_r(\beta G \rtimes G) \ar[d]^-{\textstyle\Theta}_-{\textstyle\cong} \\
        \mathfrak{K}(\ell^2(G)) \ar@{^{(}->}[r] & \mathcal{A}_Y \ar@{^{(}->}[r] & C^*_u(|G|).}
\end{displaymath}

Note that for $T \in \mathcal{A}_Y \subseteq C^*_u(|G|)$, we already established the limit operator theory in Corollary \ref{group case cor}. Here we provide another intrinsic viewpoint to gain a more precise description of the limit operator theory. Recall that we have the symbol morphism associated to $\G=Y \rtimes G$ from Definition \ref{limit morphism defn}:
$$\varsigma_Y: C^*_r(Y \rtimes G) \longrightarrow \Gamma(E^\partial_{Y,u})^{\partial_Y G \rtimes G},$$
where $E^\partial_Y$ is the operator fibre space associated to the reduction groupoid $\partial_Y G \rtimes G$. Now we would like to compare $\varsigma_Y$ with $\varsigma, \sigma$ in Section \ref{group case}.

Fix $\omega \in \partial_Y G$ and note that $(Y \rtimes G)_\omega=\{(g\omega,g)\in Y \rtimes G: g \in G\}$ is bijective to $G$ via the map $(g\omega,g) \mapsto g$. This induces a unitary $U_{Y,\omega}: \ell^2((Y \rtimes G)_\omega) \cong \ell^2(G)$, and hence an isomorphism $\mathrm{Ad}_{U_{Y,\omega}}: \B(\ell^2((Y \rtimes G)_\omega)) \cong \B(\ell^2(G))$. These isomorphisms can be put together and provide a map between operator fibre spaces:
$$\mathrm{Ad}_{U_Y}=\bigsqcup_{\omega \in \partial_Y G}\mathrm{Ad}_{U_{Y,\omega}}: E^\partial_Y=\bigsqcup_{\omega \in \partial_Y G}\B(\ell^2((Y \rtimes G)_\omega)) \longrightarrow \partial_Y G \times \B(\ell^2(G)).$$
It is easy to check that $\mathrm{Ad}_{U_Y}$ is a homeomorphism, hence an isomorphism between operator fibre spaces over $\partial_Y G$. So $\mathrm{Ad}_{U_Y}$ induces the following $C^*$-isomorphism:
$$(\mathrm{Ad}_{U_Y})_*: \Gamma(E^\partial_Y) \longrightarrow C_{\mathrm{WOT}}(\partial_Y G, \B(\ell^2(G))).$$
Note that elements in $C_{\mathrm{WOT}}(\partial_Y G, \B(\ell^2(G)))$ have uniform bounded norms, hence $\Gamma(E^\partial_Y) = \Gamma_b(E^\partial_Y)$. As we did in Section \ref{group case}, we have
$$(\mathrm{Ad}_{U_Y})_*: \Gamma(E^\partial_{Y,u})^{\partial_Y G \rtimes G} \stackrel{\textstyle \cong}{\longrightarrow} C_{\mathrm{WOT}}(\partial_Y G, C^*_u(|G|))^G.$$
On the other hand, from Lemma \ref{lem2 grp case} we have the following isomorphism for $\beta G \rtimes G$:
$$(\mathrm{Ad}_U)_*: \Gamma(E^\partial_u)^{\partial G \rtimes G} \stackrel{\textstyle \cong}{\longrightarrow} C_{\mathrm{WOT}}(\partial G, C^*_u(|G|))^G.$$
Furthermore, the map $\phi: \beta G \to Y$ induces an injective $C^*$-morphism:
$$\phi_{WOT}^*: C_{\mathrm{WOT}}(\partial_Y G, C^*_u(|G|))^G \to C_{\mathrm{WOT}}(\partial G, C^*_u(|G|))^G.$$
\begin{Lemma}\label{commut diagram}
The following diagram commutes:
\begin{displaymath}
    \xymatrix@=4em{
        C^*_r(\beta G \rtimes G) \ar[r]^-{\textstyle \varsigma}  & \Gamma(E^\partial_u)^{\partial G \rtimes G} \ar[r]^-{(\mathrm{Ad}_U)_*}_-{\textstyle\cong}& C_{\mathrm{WOT}}(\partial G, C^*_u(|G|))^G  \\
        C^*_r(Y \rtimes G) \ar[r]^-{\textstyle \varsigma_Y}\ar[u]^-{\textstyle \phi^*} & \Gamma(E^\partial_{Y,u})^{\partial_Y G \rtimes G} \ar[r]^-{(\mathrm{Ad}_{U_Y})_*}_-{\textstyle\cong} &  C_{\mathrm{WOT}}(\partial_Y G, C^*_u(|G|))^G  \ar[u]^{\textstyle\phi_{WOT}^*}.}
\end{displaymath}
\end{Lemma}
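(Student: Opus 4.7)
The plan is to reduce the statement to a direct matrix-entry computation on a dense subalgebra. Since both $\varsigma$ and $\varsigma_Y$ are $C^*$-homomorphisms, $(\mathrm{Ad}_U)_*$ and $(\mathrm{Ad}_{U_Y})_*$ are $C^*$-isomorphisms, and $\phi^*, \phi^*_{\mathrm{WOT}}$ are continuous, both composite paths
\[
(\mathrm{Ad}_U)_* \circ \varsigma \circ \phi^* \quad \text{and} \quad \phi^*_{\mathrm{WOT}} \circ (\mathrm{Ad}_{U_Y})_* \circ \varsigma_Y
\]
from $C^*_r(Y \rtimes G)$ to $C_{\mathrm{WOT}}(\partial G, C^*_u(|G|))^G$ are norm-continuous. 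Since $C_c(Y \rtimes G)$ is dense in $C^*_r(Y \rtimes G)$, it will suffice to show that these two composites agree on $f \in C_c(Y \rtimes G)$.

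Fix such an $f$ and $\omega \in \partial G$. Recall that at the $C_c$-level, $\phi^*$ is the pullback, so $\phi^*(f)(y, g) = f(\phi(y), g)$ for $(y, g) \in \beta G \rtimes G$. I would then repeat verbatim the matrix-entry computation from the proof of Lemma \ref{lem3 grp case}: the clockwise path evaluated at $\omega$ produces the operator on $\ell^2(G)$ with $(g, h)$-matrix entry
\[
\langle \mathrm{Ad}_{U_\omega} \lambda_\omega(\phi^* f) \delta_h, \delta_g \rangle = \phi^*(f)(h\omega, h g^{-1}) = f(h\phi(\omega), h g^{-1}).
\]
The counter-clockwise path, which first evaluates $\varsigma_Y(f)$ at $\phi(\omega)$ (this being precisely what the precomposition $\phi^*_{\mathrm{WOT}}$ does at $\omega$), yields
\[
\langle \mathrm{Ad}_{U_{Y, \phi(\omega)}} \lambda^Y_{\phi(\omega)}(f) \delta_h, \delta_g \rangle = f\bigl((h\phi(\omega), h)(g\phi(\omega), g)^{-1}\bigr) = f(h\phi(\omega), h g^{-1}).
\]
These entries agree for all $g, h \in G$, so the two operators on $\ell^2(G)$ coincide. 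Hence commutativity of the outer rectangle holds on $C_c(Y \rtimes G)$, and by the density argument above it extends to all of $C^*_r(Y \rtimes G)$.

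The only mild obstacle is keeping track of the two unitary identifications $U_\omega \colon \ell^2(\G_\omega) \to \ell^2(G)$ and $U_{Y, \phi(\omega)} \colon \ell^2((Y \rtimes G)_{\phi(\omega)}) \to \ell^2(G)$, and checking that under them the convolution formulas for the regular representations $\lambda_\omega$ and $\lambda^Y_{\phi(\omega)}$ yield the same expression once $\phi^*$ is applied. No ingredient beyond the computations already performed in Lemmas \ref{lem2 grp case} and \ref{lem3 grp case} is needed.
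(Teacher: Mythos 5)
Your proof is correct and follows essentially the same route as the paper: the paper likewise reduces the claim to the commutativity, for each boundary point, of the fibrewise square involving the regular representations $\lambda_{\widetilde{\omega}}$ and $\lambda_\omega$ conjugated by $U_{\widetilde{\omega}}$ and $U_{Y,\omega}$, which is exactly the matrix-entry computation of Lemma \ref{lem3 grp case} that you reproduce on the dense subalgebra $C_c(Y\rtimes G)$. (Your indices $g,h$ are transposed relative to the convention of Lemma \ref{lem3 grp case}, but since you apply the same convention to both composites the comparison, and hence the conclusion, is unaffected.)
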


\begin{proof}
Note that for $\omega \in \partial_Y G$ and any of its inverse $\widetilde{\omega}$ under $\phi$, the following diagram commutes:
\begin{displaymath}
    \xymatrix@=4em{
        C^*_r(\beta G \rtimes G) \ar[r]^-{\textstyle \lambda_{\widetilde{\omega}}}  & C^*_u((\beta G \rtimes G)_{\widetilde{\omega}}) \ar[r]^-{\textstyle \mathrm{Ad}_{U_{\widetilde{\omega}}}}_-{\textstyle\cong}& C^*_u(|G|) \ar@{=}[d] \\
        C^*_r(Y \rtimes G) \ar[r]^-{\textstyle \lambda_\omega}\ar[u]^-{\textstyle \phi^*} & C^*_u((Y \rtimes G)_\omega) \ar[r]^-{\textstyle \mathrm{Ad}_{U_{Y,\omega}}}_-{\textstyle\cong} &  C^*_u(|G|)}
\end{displaymath}
So the lemma holds.
\end{proof}

\begin{Proposition}\label{comm cube diag}
There exists a unique morphism $\sigma_Y: \mathcal{A}_Y \to C_{\mathrm{WOT}}(\partial_Y G, C^*_u(|G|))^G$ (also called the \emph{symbol morphism}) such that the following diagram commutes:
\begin{displaymath}
\xymatrix{
    C^*_r(\beta G \rtimes G)\ar[rr]^-{\textstyle \varsigma}\ar[dr]^-{\textstyle \Theta}_-{\textstyle\cong} & & \Gamma(E^\partial_u)^{\partial G \rtimes G}\ar[dr]^-{\textstyle (\mathrm{Ad}_U)_*}_-{\textstyle\cong}\\
    & C^*_u(|G|) \ar[rr]^-{\textstyle \sigma} & & C_{\mathrm{WOT}}(\partial G, C^*_u(|G|))^G\\
     C^*_r(Y \rtimes G) \ar[dr]_-{\textstyle\cong} \ar[uu]^-{\textstyle \phi^*}\ar[rr]^</1.3cm/{\textstyle \varsigma_Y}|</1.65cm/{\hole} & & \Gamma(E^\partial_{Y,u})^{\partial_Y G \rtimes G}\ar[dr]^-{\textstyle (\mathrm{Ad}_{U_Y})_*}_-{\textstyle\cong}\\
    & \mathcal{A}_Y \ar@{-->}[rr]^-{\textstyle \sigma_Y}\ar@{_{(}->}[uu] & & C_{\mathrm{WOT}}(\partial_Y G, C^*_u(|G|))^G  \ar[uu]_{\textstyle\phi_{WOT}^*}.}
\end{displaymath}
\end{Proposition}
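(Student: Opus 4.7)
The strategy is to \emph{define} $\sigma_Y$ by declaring the bottom square of the cube to commute, and then verify that the remaining faces commute automatically, so that the front face (the defining property of $\sigma_Y$) follows by a straightforward diagram chase. First, since $\Theta$ is an isomorphism and $\phi^*$ is injective, the map $\Theta\circ\phi^*\colon C^*_r(Y\rtimes G)\to C^*_u(|G|)$ is an injective $C^*$-homomorphism with image $\mathcal{A}_Y$ by construction, so its co-restriction $\iota\colon C^*_r(Y\rtimes G)\xrightarrow{\cong}\mathcal{A}_Y$ is a $C^*$-isomorphism. I would then set
\[
\sigma_Y\;:=\;(\mathrm{Ad}_{U_Y})_*\circ\varsigma_Y\circ\iota^{-1}\colon\mathcal{A}_Y\longrightarrow C_{\mathrm{WOT}}(\partial_Y G,C^*_u(|G|))^G,
\]
which immediately makes the bottom face commute, and is a $C^*$-homomorphism since it is a composition of such.

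Next I would record that the top face commutes by Proposition \ref{prop: group case}, the back face (with vertical maps $\phi^*$ and $\phi^*_{WOT}$) commutes by Lemma \ref{commut diagram}, the left face commutes by the definition of $\iota$ (and hence of $\mathcal{A}_Y$), and the right face is just the identity on $\phi^*_{WOT}$, hence trivially commutes. Given that all five other faces of the cube commute, I would deduce that the front face also commutes: for any $T\in\mathcal{A}_Y$, setting $\widetilde T=\iota^{-1}(T)\in C^*_r(Y\rtimes G)$, one computes
\[
\phi^*_{WOT}\bigl(\sigma_Y(T)\bigr)=\phi^*_{WOT}(\mathrm{Ad}_{U_Y})_*\varsigma_Y(\widetilde T)=(\mathrm{Ad}_U)_*\varsigma\bigl(\phi^*(\widetilde T)\bigr)=\sigma\Theta\bigl(\phi^*(\widetilde T)\bigr)=\sigma(T),
\]
where the second equality uses Lemma \ref{commut diagram}, the third uses Proposition \ref{prop: group case}, and the final equality uses the definition of $\iota$.

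For uniqueness, the key observation is that $\phi^*_{WOT}$ is injective, since it is induced by the $G$-equivariant surjection $\phi\colon\beta G\to Y$ which has dense image $\partial_Y G$ inside the image after restriction (indeed, for any $G$-equivariant WOT-continuous $F\colon\partial_Y G\to C^*_u(|G|)$, the values of $\phi^*_{WOT}(F)$ on $\phi^{-1}(\partial_Y G)\cap\partial G$ determine $F$ by continuity and surjectivity of $\phi$). Therefore if $\sigma_Y'$ is any other morphism making the cube commute, then commutativity of the front face forces $\phi^*_{WOT}\sigma_Y'(T)=\sigma(T)=\phi^*_{WOT}\sigma_Y(T)$ for every $T\in\mathcal{A}_Y$, so $\sigma_Y'=\sigma_Y$.

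The only step that requires any care is the argument that $\phi^*_{WOT}$ is injective; once that is established, the rest is a formal consequence of the two previously proved commutative diagrams (Proposition \ref{prop: group case} and Lemma \ref{commut diagram}). No amenability input beyond what is already available via Lemma \ref{exactness lemma} is needed, since the construction only uses existence of the symbol morphisms $\varsigma,\varsigma_Y$ and the identification isomorphisms.
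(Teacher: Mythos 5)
Your proposal is correct and follows essentially the same route as the paper: both define $\sigma_Y=(\mathrm{Ad}_{U_Y})_*\circ\varsigma_Y\circ(\Theta\circ\phi^*)^{-1}$ and then deduce commutativity of the remaining face from Lemma \ref{commut diagram} together with Proposition \ref{prop: group case} (the paper records this fibrewise at each $\omega$, you do the equivalent global chase). The only cosmetic difference is your uniqueness argument via injectivity of $\phi^*_{WOT}$, which is valid but more than needed, since the bottom face alone already pins down $\sigma_Y$ because its left edge $C^*_r(Y\rtimes G)\to\mathcal{A}_Y$ is an isomorphism.
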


\begin{proof}
Define $\sigma_Y = (\mathrm{Ad}_{U_Y})_* \circ \varsigma_Y \circ (\Theta \circ \phi^*)^{-1}: \mathcal{A}_Y \to C_{\mathrm{WOT}}(\partial_Y G, C^*_u(|G|))^G$. For any $\omega \in \partial_Y G$, take one of its inverses $\widetilde{\omega}$ under $\phi$. Then from Lemma \ref{commut diagram}, we have the following commutative diagram:

\begin{displaymath}
\xymatrix{
    C^*_r(\beta G \rtimes G)\ar[rr]^-{\textstyle \lambda_{\widetilde{\omega}}}\ar[dr]^-{\textstyle \Theta}_-{\textstyle\cong} & & C^*_u((\beta G \rtimes G)_{\widetilde{\omega}}) \ar[dr]^-{\textstyle \mathrm{Ad}_{U_{\widetilde{\omega}}}}_-{\textstyle\cong}\\
    & C^*_u(|G|) \ar[rr]^-{\textstyle \sigma_{\widetilde{\omega}}} & & C^*_u(|G|)\ar@{=}[dd]\\
     C^*_r(Y \rtimes G) \ar[dr]_-{\textstyle\cong} \ar[uu]^-{\textstyle \phi^*}\ar[rr]^</1.3cm/{\textstyle \lambda_\omega}|-{\hole} & & C^*_u((Y \rtimes G)_\omega) \ar[dr]^-{\textstyle \mathrm{Ad}_{U_{Y,\omega}}}_-{\textstyle\cong}\\
    & \mathcal{A}_Y \ar[rr]^-{\textstyle\sigma_{Y,\omega}}\ar@{_{(}->}[uu] & & C^*_u(|G|),
}
\end{displaymath}
where the map $\sigma_{Y,\omega}:\mathcal{A}_Y \to C^*_u(|G|)$ is defined by $\sigma_{Y,\omega}(T)= \sigma_Y(T)(\omega)$. So the result holds.
\end{proof}

\begin{Remark}
Note that the symbol morphism $\sigma_Y$ defined above is the same as the morphism $\sigma$ in \cite[Proposition 3.2.1]{willett2009band}.
\end{Remark}

Consequently, we obtain the limit operator theory for general equivariant compactifications:
\begin{Corollary}
Let $G$ be a finitely generated exact discrete group, and $Y$ be a $G$-equivariant compactification. Let $\phi: \beta G \to Y$ be the induced surjection, and $Z$ be a subset in $\partial G$ such that $\phi(Z)=\partial_Y G$. Suppose $\mathcal{A}_Y$ is the associated $C^*$-subalgebra of $C^*_u(|G|)$ defined above. Then for any $T \in \mathcal{A}_Y$, the following are equivalent:
\begin{enumerate}
  \item $T$ is Fredholm.
  \item $\sigma_Y(T)$ is invertible in $C_{\mathrm{WOT}}(\partial_Y G, C^*_u(|G|))^G$.
  \item For each $\widetilde{\omega} \in Z$, the limit operator $\sigma_{\widetilde{\omega}}(T)$ is invertible, and
  $$\sup_{\widetilde{\omega} \in Z} \|\sigma_{\widetilde{\omega}}(T)^{-1}\| < \infty.$$
  \item For each $\widetilde{\omega} \in Z$, the limit operator $\sigma_{\widetilde{\omega}}(T)$ is invertible.
\end{enumerate}
\end{Corollary}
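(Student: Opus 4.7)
The plan is to reduce this corollary to two results already in hand — Theorem \ref{main theorem} applied to the groupoid $Y \rtimes G$ (via Remark \ref{main theorem remark}), and Corollary \ref{group case cor} applied to the ambient exact group $G$ — and to glue them using the cube diagram of Proposition \ref{comm cube diag}.

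First, I would verify the hypothesis needed to invoke Remark \ref{main theorem remark} for $\mathcal{G} := Y \rtimes G$ with invariant open dense subset $X = G$: namely, exactness of
\[
0 \longrightarrow C^*_r(G \rtimes G) \longrightarrow C^*_r(Y \rtimes G) \longrightarrow C^*_r(\partial_Y G \rtimes G) \longrightarrow 0.
\]
This is precisely the content of Lemma \ref{exactness lemma}. Hence the chain ``(1) $\Leftrightarrow$ (2) $\Leftrightarrow$ (3)'' of Theorem \ref{main theorem} applies to $Y \rtimes G$. Setting $\widetilde{T} := (\Theta \circ \phi^*)^{-1}(T) \in C^*_r(Y \rtimes G)$, the left face of the cube in Proposition \ref{comm cube diag} identifies $C^*_u(|G|)/\mathfrak{K}(\ell^2(G))$ with $C^*_r(Y \rtimes G)/C^*_r(G \rtimes G)$, so Fredholmness of $T$ (our condition (1)) is equivalent to invertibility of $\widetilde{T}$ modulo $C^*_r(G \rtimes G)$. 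The front face, via the isomorphism $(\mathrm{Ad}_{U_Y})_*$, turns invertibility of $\varsigma_Y(\widetilde{T})$ into invertibility of $\sigma_Y(T)$, which is condition (2). Taken together this yields (1) $\Leftrightarrow$ (2), together with the equivalence of both with the intermediate statement (3'): each $\sigma_{Y,\omega}(T)$ for $\omega \in \partial_Y G$ is invertible and the inverses are uniformly bounded.

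The remaining step is to pass between indexing over $Z \subseteq \partial G$ and indexing over $\partial_Y G$. The top-to-bottom portion of Proposition \ref{comm cube diag} says that for every $\widetilde{\omega} \in \partial G$ and every $T \in \mathcal{A}_Y$, the Roe limit operator $\sigma_{\widetilde{\omega}}(T)$ equals $\sigma_{Y,\phi(\widetilde{\omega})}(T)$, so it depends only on $\phi(\widetilde{\omega})$. Since $\phi(Z) = \partial_Y G$, the family $\{\sigma_{\widetilde{\omega}}(T) : \widetilde{\omega} \in Z\}$ coincides as a subset of $C^*_u(|G|)$ with $\{\sigma_{Y,\omega}(T) : \omega \in \partial_Y G\}$, and the suprema of inverse norms agree whenever defined. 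Hence condition (3) of the corollary matches (3') verbatim, closing the loop (1) $\Leftrightarrow$ (2) $\Leftrightarrow$ (3). For (4), observe further that $\phi:\partial G \to \partial_Y G$ being surjective gives $\{\sigma_{\widetilde{\omega}}(T) : \widetilde{\omega} \in Z\} = \{\sigma_\omega(T) : \omega \in \partial G\}$, so condition (4) of our corollary is identical to the corresponding condition of Corollary \ref{group case cor}. Since $G$ is exact, that corollary already supplies (4) $\Leftrightarrow$ (1), completing the equivalence of all four conditions.

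The only real subtlety is the compatibility of the two symbol morphisms $\sigma$ and $\sigma_Y$ via the surjection $\phi$, together with the fact that the uniform-boundedness clause of (3') need not be assumed for free in the non-amenable setting — but both points are already packaged in Proposition \ref{comm cube diag} and in Corollary \ref{group case cor}, respectively. Once those are invoked, the proof is a matter of chasing the cube.
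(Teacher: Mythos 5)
Your proposal is correct and follows essentially the same route as the paper: the chain (1) $\Leftrightarrow$ (2) $\Leftrightarrow$ (3) comes from Lemma \ref{exactness lemma} together with Remark \ref{main theorem remark} and the identifications in Proposition \ref{comm cube diag}, and the equivalence with (4) is obtained by observing that $\sigma_{\widetilde{\omega}}(T)$ depends only on $\phi(\widetilde{\omega})$, so that invertibility over $Z$ forces invertibility over all of $\partial G$ and Corollary \ref{group case cor} (for the amenable groupoid $\beta G \rtimes G$) supplies the missing uniform bound. The only cosmetic difference is that the paper closes the loop via the implication ``(4) $\Rightarrow$ (3)'' of Corollary \ref{group case cor} rather than via ``(4) $\Leftrightarrow$ (1)'', which amounts to the same thing.
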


\begin{proof}
From the above analysis, we know ``(1) $\Leftrightarrow$ (2) $\Leftrightarrow$ (3)" holds, so it suffices to prove ``(4) $\Rightarrow$ (3)". Unfortunately since the action of $G$ on $Y$ is not amenable in general, we cannot refer to Theorem \ref{main theorem} directly. However, from the proof of Proposition \ref{comm cube diag}, we know that for any $\widetilde{w}_1, \widetilde{w}_2 \in \beta G$ with $\phi(\widetilde{w}_1) = \phi(\widetilde{w}_2)$, we have that $\sigma_{\widetilde{\omega}_1}(T) = \sigma_{\widetilde{\omega}_2}(T)$. So condition (4) implies that for any $\widetilde{\omega} \in \beta G$, the limit operator $\sigma_{\widetilde{\omega}}(T)$ is invertible. Hence from ``(4) $\Rightarrow$ (3)" in Corollary \ref{group case cor}, we know that $\|\sigma_{\widetilde{\omega}}(T)^{-1}\|$ is uniformly bounded for all $\widetilde{\omega} \in \beta G$. As a special case, condition (3) holds.
\end{proof}

\begin{Remark}
Note that the key difference between Corollary \ref{group case cor} and the above is that in order to obtain the Fredholmness for operators in the subalgebra $\mathcal{A}_Y$, we do not need to check the invertibility of limit operators for all limit points in $\partial G$, but only those related to the given compactification ($Z$, in the previous corollary).
\end{Remark}

\begin{Example}
Suppose $G=\mathbb{Z}$, and $Y=\mathbb{Z} \cup \{\pm \infty\}$ be the two-point compactification of $\mathbb{Z}$ (more explicitly, the topology on $\mathbb{Z}$ is discrete, $\lim_{n \to +\infty} n=+\infty$ and $\lim_{n \to -\infty} n=-\infty$). In this case, the above algebra $\mathcal{A}_Y$ can be determined as follows:
$$\mathcal{A}_Y=\{T\in C^*_u(|\mathbb{Z}|): \lim_{n \to +\infty} T_{n,m+n} \mbox{~and~}\lim_{n \to -\infty} T_{n,m+n}\mbox{~exist~,~for~all~}m\in \mathbb{Z}\}.$$
And the limit operator of $T$ over $+\infty$ is the operator whose entries on the $m$-th diagonal
$$\{(m+n,n): n\in \mathbb{Z}\}$$
are all equal to $\lim_{n \to +\infty} T_{n,m+n}$. Similar formula also holds for the limit operator over $-\infty$. Under the Fourier transformation $\ell^2(\mathbb{Z})\cong L^2(S^1)$, these two limit operators correspond to multiplication operators by two continuous functions, denoted by $f^{+\infty}$ and $f^{-\infty}$ respectively. Hence the limit operator theory in the case says that $T$ is Fredholm \emph{if and only if} $f^{+\infty}, f^{-\infty}$ are invertible (i.e., point-wise nonzero). We remark that the Gohberg-Krein index theorem says that in this case, when $T$ is Fredholm, then
$$\mathrm{Index}(T)=-(\mbox{winding~number})(f^{-\infty}) - (\mbox{winding~number})(f^{+\infty}).$$
\end{Example}

\subsection{Discrete metric space case}\label{metric space ex}
The next application recovers (in the Hilbert space case) the limit operator theory for discrete metric spaces introduced by \v{S}pakula and Willett in \cite{Spakula-Willett--limitoperators}. In fact, part of the proof (excluding the omission of uniform bounded condition) using the groupoid language was already mentioned in \cite[Appendix C]{Spakula-Willett--limitoperators}. For the convenience to the readers, we recall the details again.

Let $(X,d)$ be a uniformly discrete metric space with bounded geometry and propert A. The \emph{coarse groupoid} $G(X)$ on $X$ was introduced by Skandalis, Tu and Yu in \cite{Skandalis-Tu-Yu} (also see \cite[Chapter 10]{RoeLectures}). As a set,
$$G(X):=\bigcup_{r>0}{\overline{E_r}}^{\beta X \times \beta X}$$
where $E_r:=\{(x,y) \in X\times X: d(x,y) \leq r\}$. The groupoid structure is just the restriction of the pair groupoid $\beta X \times \beta X$. On each $\overline{E_r}$, the topology of $G(X)$ agrees with the subspace topology of $\beta X \times \beta X$; and generally, a subset $U$ in $G(X)$ is open \emph{if and only if} the intersection $U \cap \overline{E_r}$ is open in $\overline{E_r}$ for each $r>0$. Equipped with this topology, $G(X)$ is a locally compact, $\sigma$-compact, principal and \'{e}tale groupoid. Clearly, the unit space of $G(X)$ is $\beta X$, and we consider the following decomposition:
\begin{equation}\label{dec for coarse groupoid}
\beta X=X \sqcup \partial X,
\end{equation}
where $\partial X$ is the Stone-\v{C}ech boundary. Obviously $X$ is an open invariant dense subset in $\beta X$, hence the above decomposition induces the following:
$$G(X) = (X \times X) \sqcup G_\infty(X),$$
where $G_\infty(X)$ is the reduction of $G(X)$ by $\partial X$.

Recall from \cite{Skandalis-Tu-Yu} that the space $X$ has Property A \emph{if and only if} the coarse groupoid $G(X)$ is topoligically amenable. Hence we may apply Theorem \ref{main theorem} to $\G=G(X)$ together with the decomposition (\ref{dec for coarse groupoid}), and obtain the associated limit operator theory. Now we would like to follow \cite[Appendix C]{Spakula-Willett--limitoperators} to provide a more detailed picture.

As before, we start with the reduced groupoid $C^*$-algebra $C^*_r(G(X))$. For any $f \in C_c(G(X))$, by definition $f$ is a continuous function supported in $\overline{E_r}$ for some $r>0$; equivalently, we may interpret $f$ as a bounded continuous function on $E_r$. Hence we can define an operator $\theta(f)$ on $\ell^2(X)$ by setting its matrix coefficients to be $\theta(f)_{x,y}:=f(x,y)$, where $x,y\in X$. And we have the following lemma:

\begin{Lemma}[\cite{RoeLectures}, Proposition 10.29]
The map $\theta$ provides a $\ast$-isomorphism from $C_c(G(X))$ to $\mathbb{C}[X]$, and extends to a $C^*$-isomorphism $\Theta$ from $C^*_r(G(X))$ to the uniform Roe algebra $C^*_u(X)$, which maps the $C^*$-subalgebra $C^*_r(X \times X)$ onto the compact operators $\mathfrak{K}(\ell^2(X))$. Consequently $T \in C^*_r(G(X))$ is invertible modulo $C^*_r(G \rtimes G)$ \emph{if and only if} $\Theta(T)$ is Fredholm.
\end{Lemma}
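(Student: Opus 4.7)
The plan is to verify the four assertions in turn, relying on the structure of the coarse groupoid and on standard facts about Stone-\v{C}ech compactifications.

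First I would establish that $\theta$ is a well-defined bijection $C_c(G(X)) \to \mathbb{C}[X]$. Any $f \in C_c(G(X))$ is supported in some $\overline{E_r}$ because the $\overline{E_r}$ form an open exhaustion of $G(X)$, so its restriction to $E_r \subseteq X \times X$ defines the matrix of a finite-propagation operator $\theta(f)$ on $\ell^2(X)$, which is bounded: $f$ is bounded on the compact set $\overline{E_r}$ and, by bounded geometry, each row and column of the matrix has at most $N_r$ non-zero entries, so a Schur-test bound applies. Conversely, any $T \in \mathbb{C}[X]$ of propagation $\leq r$ is determined by a bounded function on $E_r$, which extends uniquely and continuously to $\overline{E_r}$ by the universal property of the Stone-\v{C}ech closure of $E_r$ inside $\beta X \times \beta X$, yielding the pre-image in $C_c(G(X))$. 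That $\theta$ is a $\ast$-homomorphism follows from the \'{e}tale convolution formula: for $\gamma = (x,y)$ in the pair groupoid $X \times X$ we have $G(X)_y = X \times \{y\}$, so $(f \ast g)(x,y) = \sum_{z \in X} f(x,z) g(z,y)$ matches matrix multiplication, and $f^*(x,y) = \overline{f(y,x)}$ matches the matrix adjoint.

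Next I would check that $\theta$ is an isometry from $(C_c(G(X)), \|\cdot\|_r)$ to $(\mathbb{C}[X], \|\cdot\|_{B(\ell^2(X))})$. For each $x \in X$ the bijection $\delta_{(z,x)} \leftrightarrow \delta_z$ gives a unitary $\ell^2(G(X)_x) \cong \ell^2(X)$ under which the regular representation $\lambda_x(f)$ becomes precisely $\theta(f)$, so $\|\lambda_x(f)\| = \|\theta(f)\|$ for every $x \in X$. Since the reduced norm is $\sup_{x \in \beta X} \|\lambda_x(f)\|$ this already gives $\|\theta(f)\| \leq \|f\|_r$. For the reverse direction, at a boundary point $x \in \partial X$ one approximates by a net $\{x_i\} \subseteq X$ converging to $x$, and the continuity of $y \mapsto \lambda_y(f)$ in the operator fibre space introduced in Section \ref{op fibre sp} (cf.\ the local-basis description in Lemma \ref{nbhd basis}) yields convergence $\lambda_{x_i}(f) \to \lambda_x(f)$ on each matrix entry; combined with the uniform bound $\|\lambda_{x_i}(f)\| = \|\theta(f)\|$ this forces $\|\lambda_x(f)\| \leq \|\theta(f)\|$ via a finite-rank truncation argument. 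Taking the supremum gives $\|f\|_r = \|\theta(f)\|$, so $\theta$ extends to the claimed $C^*$-isomorphism $\Theta : C^*_r(G(X)) \to C^*_u(X)$.

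Finally, the reduction of $G(X)$ by the invariant open subset $X$ is the pair groupoid $X \times X$, and under $\theta$ each $\delta_{(x,y)}$ corresponds to the rank-one operator $|\delta_x\rangle\langle \delta_y|$; the norm closure of the span of these is $\mathfrak{K}(\ell^2(X))$, so $\Theta$ restricts to an isomorphism $C^*_r(X \times X) \to \mathfrak{K}(\ell^2(X))$. The consequence then follows from Atkinson's theorem: $\Theta(T)$ is Fredholm iff invertible modulo $\mathfrak{K}(\ell^2(X))$ iff $T$ is invertible modulo $C^*_r(X \times X)$. The chief technical subtlety of this program is the boundary isometry $\|\lambda_x(f)\| \leq \|\theta(f)\|$ for $x \in \partial X$, where one must pass carefully from entry-wise convergence to a norm inequality on an infinite-dimensional Hilbert space — bounded geometry of $X$ together with the \'{e}tale structure of $G(X)$ are what make this tractable.
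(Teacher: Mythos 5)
Your proof is correct and follows the standard argument for this result; the paper itself gives no proof, simply citing \cite{RoeLectures}, Proposition 10.29, where essentially this argument appears. The one genuinely delicate step --- the boundary estimate $\|\lambda_\omega(f)\|\le\|\theta(f)\|$ for $\omega\in\partial X$, which amounts to lower semicontinuity of the fibrewise norms along the dense orbit $X\subseteq\beta X$ --- is handled correctly by your combination of entry-wise convergence in the operator fibre space with the uniform bound $\|\lambda_{x_i}(f)\|=\|\theta(f)\|$ and a finite-rank truncation.
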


Now we move on to the discussion of limit operators. Let us first recall the notion of limit operators for metric spaces introduced by \v{S}pakula and Willett \cite{Spakula-Willett--limitoperators}. A function $t: D \to R$ with $D,R \subseteq X$ is called a \emph{partial translation} if $t$ is a bijection from $D$ to $R$, and $\sup_{x\in X}d(x,t(x))$ is finite.

\begin{Definition}[\cite{Spakula-Willett--limitoperators}]
Fix an ultrafilter $\omega \in \beta X$. A partial translation $t:D \to R$ on $X$ is \emph{compatible with $\omega$} if $\omega(D)=1$. In this case, regarding $t$ as a function from $D$ to $\beta X$, we define
$$t(\omega):=\lim_\omega t \in \beta X.$$
An ultrafilter $\alpha \in \beta X$ is \emph{compatible with $\omega$} if there exists a partial translation $t$ compatible with $\omega$ and $t(\omega)=\alpha$. Denote $X(\omega)$ the collection of all ultrafilters on $X$ compatible with $\omega$. A \emph{compatible family for $\omega$} is a collection of partial translations $\{t_\alpha\}_{\alpha \in X(\omega)}$ such that each $t_\alpha$ is compatible with $\omega$ and $t_\alpha(\omega)=\alpha$.
\end{Definition}

Fix an ultrafilter $\omega$ on $X$, and a compatible family $\{t_\alpha\}_{\alpha \in X(\omega)}$. Define a function $d_{\omega}: X(\omega) \times X(\omega) \to [0,\infty)$ by
$$d_{\omega}(\alpha, \beta):=\lim_{x\to \omega}d(t_\alpha(x),t_\beta(x)).$$
It is shown in \cite[Proposition 3.7]{Spakula-Willett--limitoperators} that $d_\omega$ is a uniformly discrete metric of bounded geometry on $X(\omega)$ which does not depend on the choice of $\{t_{\alpha}\}$.

\begin{Definition}[\cite{Spakula-Willett--limitoperators}]\label{limit operator defn for metric space}
For each non-principal ultrafilter $\omega$ on $X$, the metric space $(X(\omega),d_{\omega})$ is called the \emph{limit space} of $X$ at $\omega$. Fix a compatible family $\{t_\alpha\}_{\alpha \in X(\omega)}$ for $\omega$, and suppose $A \in C^*_u(X)$. The \emph{limit operator of $A$ at $\omega$}, denoted by $\Phi_\omega(A)$, is an $X(\omega)$-by-$X(\omega)$ indexed matrix defined by
$$\Phi_\omega(A)_{\alpha\beta}:=\lim_{x \to \omega}A_{t_{\alpha}(x)t_{\beta}(x)}.$$
\end{Definition}

It was studied in \cite[Chapter 4]{Spakula-Willett--limitoperators} that the above definition does not depend on the choice of compatible family $\{t_\alpha\}_{\alpha \in X(\omega)}$ for $\omega$. Furthermore, the limit operator $\Phi_\omega(A)$ is indeed a bounded operator on $\ell^2(X(\omega))$, and belongs to the uniform Roe algebra $C^*_u(X(\omega))$.

On the other hand, we have the symbol morphism associated to $\G=G(X)$ from Definition \ref{limit morphism defn}:
$$\varsigma: C^*_r(G(X)) \longrightarrow \Gamma_b(E_u^\partial)^{G_\infty(X)}.$$
Now we would like to compare $\varsigma$ with limit operators $\Phi_\omega(A)$ in Definition \ref{limit operator defn for metric space}, and show that they coincide under the isomorphism $\Theta$. In the process, we will comprise all limit operators into a single homomorphism in terms of the language of operator fibre spaces established in Section \ref{op fibre sp}.

\begin{Lemma}[Lemma C.3, \cite{Spakula-Willett--limitoperators}]\label{metric limit op}
Given $ \omega \in \partial X$, we define
$$F : X(\omega) \rightarrow G(X)_\omega, \alpha \mapsto (\alpha, \omega).$$
Then $F$ is a bijection. Let $W_\omega: \ell^2(G(X)_\omega) \to \ell^2(X(\omega))$ be the unitary operator induced by $F$. Then we have the following commutative diagram:
\begin{displaymath}
    \xymatrix@=3em{
        C^*_r(G(X)) \ar[r]^-{\textstyle \lambda_\omega} \ar[d]_-{\textstyle\Theta}^-{\textstyle\cong} & \B(\ell^2(G(X)_\omega)) \ar[d]^-{\textstyle \mathrm{Ad}_{W_\omega}}_-{\textstyle\cong} \\
        C^*_u(X) \ar[r]^-{\textstyle \Phi_\omega} & \B(\ell^2(X(\omega))).}
\end{displaymath}
\end{Lemma}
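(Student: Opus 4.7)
The plan is to proceed in three stages. First, I will verify that $F : X(\omega) \to G(X)_\omega$ is a well-defined bijection. Given $\alpha \in X(\omega)$ with witnessing partial translation $t : D \to R$ of propagation $r := \sup_{x \in D} d(x, t(x))$, the graph $\{(t(x), x) : x \in D\}$ sits inside $E_r$, and as $x \to \omega$ along $D$ it converges in $\beta X \times \beta X$ to $(\alpha, \omega)$ because $\omega(D) = 1$ and $\lim_\omega t = \alpha$. Hence $(\alpha, \omega) \in \overline{E_r} \subseteq G(X)$ with source $\omega$, so $F$ is well defined, and injectivity is immediate. For surjectivity, I will invoke the standard fact (used throughout the construction of $G(X)$, see \cite{Skandalis-Tu-Yu}) that each $E_r$ decomposes as a finite union of graphs of partial translations $t_1, \ldots, t_N$ with domains $D_1, \ldots, D_N$; given $(\alpha, \omega) \in \overline{E_r} \cap G(X)_\omega$, the ultrafilter property forces $\omega(D_i) = 1$ for some $i$, and continuity of the canonical extension of $t_i$ to $\overline{D_i} \subseteq \beta X$ then yields $\alpha = \lim_\omega t_i$, showing $\alpha \in X(\omega)$ with $F(\alpha) = (\alpha, \omega)$.

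Second, since $\Theta$, $\lambda_\omega$, $\mathrm{Ad}_{W_\omega}$ and $\Phi_\omega$ are all bounded $\ast$-homomorphisms and $C_c(G(X))$ is norm-dense in $C^*_r(G(X))$, it suffices to verify $\mathrm{Ad}_{W_\omega} \circ \lambda_\omega = \Phi_\omega \circ \Theta$ on an arbitrary $f \in C_c(G(X))$ by comparing matrix coefficients at each pair $(\alpha, \beta) \in X(\omega) \times X(\omega)$. Using $W_\omega^* \delta_\alpha = \delta_{(\alpha, \omega)}$ together with the regular representation formula and the pair-groupoid composition $(\alpha,\omega)(\omega,\beta) = (\alpha, \beta)$, a direct calculation gives
\[
\langle \mathrm{Ad}_{W_\omega}(\lambda_\omega(f)) \delta_\beta, \delta_\alpha \rangle = \langle \lambda_\omega(f) \delta_{(\beta, \omega)}, \delta_{(\alpha, \omega)} \rangle = f(\alpha, \beta).
\]
On the other side, by Definition \ref{limit operator defn for metric space} together with the definition of $\Theta$,
\[
\langle \Phi_\omega(\Theta(f)) \delta_\beta, \delta_\alpha \rangle = \lim_{x \to \omega} f(t_\alpha(x), t_\beta(x)),
\]
where $t_\alpha, t_\beta$ are any compatible partial translations. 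The net $(t_\alpha(x), t_\beta(x))$ lies in $E_{r_\alpha + r_\beta}$ via the triangle inequality, and by continuity of the extensions of $t_\alpha$ and $t_\beta$ to $\beta X$ it converges to $(\alpha, \beta)$ in $\beta X \times \beta X$, hence in the subspace topology on $\overline{E_{r_\alpha + r_\beta}} \subseteq G(X)$. Continuity of $f$ on $G(X)$ then delivers the limit value $f(\alpha, \beta)$, matching the first computation.

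The main obstacle I anticipate is the surjectivity of $F$, since it rests on the partition of $E_r$ into graphs of finitely many partial translations — a result that in turn requires bounded geometry and is most naturally proved via a colouring argument on the propagation-$r$ relation. Once that ingredient is in place, the commutativity of the diagram reduces to a routine matrix-entry calculation, and the extension from $C_c(G(X))$ to all of $C^*_r(G(X))$ follows by norm-continuity of the four morphisms involved.
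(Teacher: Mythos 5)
Your proof is correct. The paper itself gives no argument for this lemma --- it is quoted verbatim as Lemma~C.3 of \cite{Spakula-Willett--limitoperators} --- and your treatment is exactly the standard one from that reference: bijectivity of $F$ via the decomposition of each $E_r$ into finitely many graphs of partial translations (so that $(\alpha,\omega)\in\overline{E_r}$ forces $\omega$ to concentrate on one domain $D_i$ and $\alpha=\lim_\omega t_i$), followed by a density reduction to $f\in C_c(G(X))$ and the matrix-coefficient computation $\langle\lambda_\omega(f)\delta_{(\beta,\omega)},\delta_{(\alpha,\omega)}\rangle=f\bigl((\alpha,\omega)(\omega,\beta)\bigr)=f(\alpha,\beta)=\lim_{x\to\omega}f(t_\alpha(x),t_\beta(x))$, the last equality using that $(t_\alpha(x),t_\beta(x))$ stays in $\overline{E_{r_\alpha+r_\beta}}$ and converges there to $(\alpha,\beta)$.
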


The above lemma suggests us to consider the following operator fibre space over $\partial X$:
$$\widehat{E}^\partial:=\bigsqcup_{\omega \in \partial X}\B(\ell^2(X(\omega))),$$
with the topology defined as follows: a net $\{T_{\omega_i}\}_{i \in I}$ converges to $T_\omega$ \emph{if and only if} $\omega_i \to \omega$, and for any $\alpha'_i \to \alpha'$, $\alpha''_i \to \alpha''$ with $\alpha'_i,\alpha''_i \in X(\omega_i)$ and $\alpha', \alpha'' \in X(\omega)$, we have
$$\langle T_{\omega_i}\delta_{\alpha'_i}, \delta_{\alpha''_i} \rangle \to \langle T_\omega\delta_{\alpha'}, \delta_{\alpha''} \rangle.$$
Applying the unitary operators $W_\omega$ in Lemma \ref{metric limit op} together, we obtain an isomorphism between the operator fibre spaces as follows:
$$\mathrm{Ad}_{W}=\bigsqcup_{\omega \in \partial X}\mathrm{Ad}_{W_{\omega}}: E^{\partial}=\bigsqcup_{\omega \in \partial X}\B(\ell^2(G(X)_\omega)) \longrightarrow \widehat{E}^\partial =\bigsqcup_{\omega \in \partial X}\B(\ell^2(X(\omega))).$$
Denote the subspace
$$\widehat{E}^\partial_u:=\bigsqcup_{\omega \in \partial X}C^*_u(X(\omega)).$$
A continuous section $\varphi$ of $\widehat{E}^\partial$ is called \emph{coarsely constant} (c.c.) if for any $\omega \in \partial X$ and $\alpha \in X(\omega)$, we have $\varphi(\alpha)=\varphi(\omega)$. Denote the set of all the bounded coarsely constant sections of $\widehat{E}^\partial_u$ by $\Gamma_b(\widehat{E}^\partial_u)^{\mathrm{c.c.}}$.

\begin{Lemma}
The isomorphism $\mathrm{Ad}_{W}$ induces a $C^*$-isomorphism:
$$(\mathrm{Ad}_{W})_*: \Gamma_b(E^{\partial}_u)^{G_\infty(X)} \stackrel{\textstyle \cong}{\longrightarrow} \Gamma_b(\widehat{E}^\partial_u)^{\mathrm{c.c.}}.$$
\end{Lemma}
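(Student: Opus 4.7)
My plan is to verify that $\mathrm{Ad}_W$ is a fibrewise $C^*$-isomorphism between $E^\partial_u$ and $\widehat{E}^\partial_u$ that is additionally a homeomorphism of fibre spaces, and then identify the $G_\infty(X)$-equivariance condition with the coarsely constant condition. I would carry this out in three steps.

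First, for each $\omega \in \partial X$, Lemma \ref{metric limit op} gives a unitary $W_\omega: \ell^2(G(X)_\omega) \to \ell^2(X(\omega))$ induced by $F:\alpha\mapsto(\alpha,\omega)$. I would verify that $\mathrm{Ad}_{W_\omega}$ sends the finite-propagation $*$-algebra $\mathbb{C}[G(X)_\omega]$ onto $\mathbb{C}[X(\omega)]$, and hence $C^*_u(G(X)_\omega)$ onto $C^*_u(X(\omega))$. This reduces to noting that a matrix coefficient indexed by $((\alpha,\omega),(\alpha',\omega))$ of an operator in $\mathbb{C}[G(X)_\omega]$ can only be nonzero when $(\alpha,\omega)(\alpha',\omega)^{-1}=(\alpha,\alpha')$ lies in a fixed compact subset of $G(X)$, which by definition of the coarse groupoid means $(\alpha,\alpha') \in \overline{E_r}$ for some $r>0$, and this is equivalent by construction of $d_\omega$ to finite propagation on $(X(\omega),d_\omega)$.

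Second, I would check that $\mathrm{Ad}_W$ is a homeomorphism $E^\partial \to \widehat{E}^\partial$ (equivalently, that it induces a bijection on continuous bounded sections). Unraveling the definition of both topologies via matrix coefficients, the issue reduces to showing: if $\omega_i \to \omega$ in $\partial X$ and $\alpha'_i \to \alpha'$, $\alpha''_i\to\alpha''$ in $\beta X$ with $\alpha'_i,\alpha''_i \in X(\omega_i)$ and $\alpha',\alpha'' \in X(\omega)$, then $(\alpha'_i,\omega_i)\to(\alpha',\omega)$ and $(\alpha''_i,\omega_i)\to(\alpha'',\omega)$ in $G(X)$. Since working inside $\Gamma_b(\widehat{E}^\partial_{u,k})$ only requires testing against pairs sitting inside some fixed $\overline{E_r}$, and the topology on $G(X)$ agrees with that of $\beta X\times\beta X$ on each $\overline{E_r}$, this matching goes through; restricting domains to compatible pairs then gives the correspondence in both directions. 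Combined with Step 1 this produces an isometric $C^*$-isomorphism $(\mathrm{Ad}_W)_*: \Gamma_b(E^\partial_u) \to \Gamma_b(\widehat{E}^\partial_u)$.

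Third, for the equivariance/coarse-constancy correspondence, I would unpack the action of $R_\gamma$ for $\gamma=(b,a)\in G_\infty(X)$. Writing $\alpha=(c,b) \in G(X)_b$, the product $\alpha\gamma=(c,a)$, so $R_\gamma:\ell^2(G(X)_b)\to\ell^2(G(X)_a)$ acts by $\delta_{(c,b)}\mapsto\delta_{(c,a)}$. Since $a$ and $b$ are compatible ultrafilters (in the same $G_\infty(X)$-orbit), one has the set equality $X(a)=X(b)$, and under the unitaries $W_a, W_b$ the operator $R_\gamma$ becomes the identity $\ell^2(X(b))=\ell^2(X(a))$. Hence the equivariance relation $\varphi(b)=R_\gamma^*\varphi(a)R_\gamma$ translates, upon setting $\psi=(\mathrm{Ad}_W)_*\varphi$, to $\psi(b)=\psi(a)$ for every $b\in X(a)$, which is exactly the coarsely constant condition. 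The converse direction is identical, giving the desired restriction to $\Gamma_b(E^\partial_u)^{G_\infty(X)}\xrightarrow{\cong}\Gamma_b(\widehat{E}^\partial_u)^{\mathrm{c.c.}}$.

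The main obstacle is Step 2: the topology of $G(X)$ is strictly finer than the subspace topology from $\beta X\times\beta X$ (it retains the $\{\overline{E_r}\}_{r>0}$ stratification), while the topology on $\widehat{E}^\partial$ only refers to convergence of ultrafilters in $\beta X$. Confining attention to the uniform Roe subbundles (where sections take values in a fixed $k$-ball and matrix coefficients decay off a controlled family of ``diagonals'') is what makes the two topologies compatible after applying $\mathrm{Ad}_W$.
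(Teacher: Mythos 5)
Your proposal is correct and its core (Steps 1 and 3) is essentially the paper's own proof: the paper likewise shows that $\mathrm{Ad}_{W_\omega}$ carries $\mathbb{C}[G(X)_\omega]$ onto $\mathbb{C}[X(\omega)]$ by identifying compact subsets of $G(X)$ with the sets $\overline{E_r}$ and hence with bounded $d_\omega$-propagation, and it translates $G_\infty(X)$-equivariance into coarse constancy by observing that $X(\alpha)=X(\omega)$ and that $R_{(\alpha,\omega)}$ becomes the identity under the unitaries $W_\alpha$, $W_\omega$. Your Step 2 concerns the claim that $\mathrm{Ad}_W$ is a homeomorphism of operator fibre spaces, which the paper asserts in the text preceding the lemma rather than proving inside it; you correctly identify the tension between the inductive-limit topology on $G(X)$ and the subspace topology from $\beta X\times\beta X$ as the delicate point, though your sketch of why restriction to the uniform Roe subbundles resolves it would need to be fleshed out to be a complete argument.
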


\begin{proof}
For a bounded section $\varphi$ of $E^\partial$, it is $G_\infty(X)$-equivariant if and only if for any $(\alpha,\omega) \in G_\infty(X)$, we have
$$\varphi(\alpha)=R_{(\alpha,\omega)}^* \varphi(\omega) R_{(\alpha,\omega)}.$$
This is equivalent to the condition that for any $\omega \in \partial X$ and $\alpha \in X(\omega)$, we have
\begin{equation}\label{loc const sec}
\mathrm{Ad}_{W_\alpha}(\varphi(\alpha)) = \mathrm{Ad}_{W_\alpha}\big( R_{(\alpha,\omega)}^* \varphi(\omega) R_{(\alpha,\omega)} \big).
\end{equation}
Note that $X(\alpha) = X(\omega)$ and the following diagram commutes:
\begin{displaymath}
    \xymatrix@=4em{
        \ell^2(G(X)_\alpha) \ar[r]^-{R_{(\alpha,\omega)}} \ar[d]_-{W_\alpha}^-{\textstyle\cong} & \ell^2(G(X)_\omega)) \ar[d]^-{W_\omega}_-{\textstyle\cong} \\
        \ell^2(X(\alpha)) \ar[r]^-{\mathrm{Id}} & \ell^2(X(\omega)).}
\end{displaymath}
Hence (\ref{loc const sec}) is equivalent to $\big((\mathrm{Ad}_{W})_*\varphi\big)(\alpha) = \big((\mathrm{Ad}_{W})_*\varphi\big)(\omega)$, i.e., $(\mathrm{Ad}_{W})_*\varphi$ is coarsely constant.

On the other hand, given a point $\omega \in \partial X$, we claim $\mathrm{Ad}_{W_\omega}(\mathbb{C}[G(X)_\omega]) = \mathbb{C}[X(\omega)]$. By definition, an operator $T=(T_{(\alpha',\omega),(\alpha'',\omega)}) \in \ell^2(G(X)_\omega)$ belongs to $\mathbb{C}[G(X)_\omega]$ if and only if there exists a compact set $K \subseteq G(X)$ such that $T_{(\alpha',\omega),(\alpha'',\omega)} \neq 0$ implies that $(\alpha',\alpha'') \in K$. By definition, there exists some $r>0$ such that $K \subseteq \overline{E_r}$. Therefore, the above is equivalent to the condition that there exists some $r>0$ such that $(\alpha',\alpha'') \in \overline{E_r}$, which is equivalent to $d_\omega(\alpha',\alpha'') \leq r$. In other words, this is equivalent to $\mathrm{Ad}_{W_\omega}(T) \in \mathbb{C}[X(\omega)]$. Taking completion, we have $\mathrm{Ad}_{W_\omega}(C^*_u(G(X)_\omega)) = C^*_u(X(\omega))$.
\end{proof}

Now we define the following morphism (also called the \emph{symbol morphism})
$$\Phi:=(\mathrm{Ad}_{W})_* \circ \varsigma \circ \Theta^{-1}: C^*_u(X) \longrightarrow \Gamma_b(\widehat{E}^\partial_u)^{\mathrm{c.c.}}.$$
In other words, the following diagram commutes:
\begin{displaymath}
     \xymatrix@=4em{
        C^*_r(G(X)) \ar[r]^-{\textstyle \varsigma} \ar[d]_-{\textstyle \Theta}^-{\textstyle \cong} & \Gamma_b(E^{\partial}_u)^{G(X)} \ar[d]^-{\textstyle (\mathrm{Ad}_W)_* }_-{\textstyle \cong} \\
        C^*_u(X) \ar[r]^-{\textstyle \Phi} & \Gamma_b(\widehat{E}^\partial_u)^{\mathrm{c.c.}}.}
\end{displaymath}
And Lemma \ref{metric limit op} implies that $\Phi(T)(\omega)=\Phi_\omega(T)$ for any $T \in C^*_u(X)$ and $\omega \in \partial X$. Consequently, combining the above analysis we recover the limit operator theory for metric spaces in the Hilbert space case with complex number values as a corollary of Theorem \ref{main theorem}:
\begin{Corollary}
Let $X$ be a uniformly discrete metric space with bounded geometry and Property A. For any operator $T \in C^*_u(X)$, the following are equivalent:
\begin{enumerate}
  \item $T$ is Fredholm.
  \item $\Phi(T)$ is invertible in $\Gamma_b(\widehat{E}^\partial_u)^{\mathrm{c.c.}}$.
  \item For each $\omega \in \partial X$, the limit operator $\Phi_\omega(T)$ is invertible, and
  $$\sup_{\omega \in \partial X} \|\Phi_\omega(T)^{-1}\| < \infty.$$
  \item For each $\omega \in \partial X$, the limit operator $\Phi_\omega(T)$ is invertible.
\end{enumerate}
\end{Corollary}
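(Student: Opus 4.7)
The plan is to apply Theorem \ref{main theorem} directly to the coarse groupoid $\G = G(X)$ equipped with the invariant open dense subset $X \subseteq \beta X$ and its complement $\partial X$. All standing hypotheses are met: $G(X)$ is locally compact, $\sigma$-compact and \'etale, and the unit space $\beta X$ is compact. Property A of $X$ is equivalent to topological amenability of $G(X)$, which in particular forces the reduction $G_\infty(X) = \G(\partial X)$ to be topologically amenable. Therefore Theorem \ref{main theorem} yields the equivalence of its four conditions for every $S \in C^*_r(G(X))$.

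Next, I would translate each of the four conditions across the already-established $C^*$-isomorphism $\Theta : C^*_r(G(X)) \xrightarrow{\cong} C^*_u(X)$ by setting $S = \Theta^{-1}(T)$. Since $\Theta$ carries the ideal $C^*_r(X\times X)$ onto $\mathfrak{K}(\ell^2(X))$, condition (1) of Theorem \ref{main theorem} (invertibility of $S$ modulo $C^*_r(X\times X)$) is exactly Fredholmness of $T$, matching condition (1) of the corollary. For condition (2), the definition
\[
\Phi \;=\; (\mathrm{Ad}_W)_* \circ \varsigma \circ \Theta^{-1}
\]
combined with the fact that $(\mathrm{Ad}_W)_*$ is a $C^*$-isomorphism onto $\Gamma_b(\widehat{E}^\partial_u)^{\mathrm{c.c.}}$, implies that invertibility of $\varsigma(S)$ is equivalent to invertibility of $\Phi(T)$, matching (2) of the corollary.

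For conditions (3) and (4), I would invoke Lemma \ref{metric limit op}: for each $\omega \in \partial X$, the unitary $W_\omega : \ell^2(G(X)_\omega) \to \ell^2(X(\omega))$ intertwines the groupoid regular representation $\lambda_\omega$ on $C^*_r(G(X))$ with the metric limit operator map $\Phi_\omega$ on $C^*_u(X)$ via the commutative square displayed there. Consequently $\lambda_\omega(S)$ is invertible if and only if $\Phi_\omega(T)$ is, and the two operators have equal norms (and equal inverse norms), so the uniform boundedness condition transports verbatim. This yields conditions (3) and (4) of the corollary exactly.

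No genuine obstacle appears: the delicate part, namely dispensing with the uniform boundedness hypothesis, is already absorbed into Theorem \ref{main theorem} via Theorem \ref{thm:extention of Exel}, and the Gelfand-style identifications $\Theta$ and $(\mathrm{Ad}_W)_*$ have been set up in the preceding lemmas precisely so that this reduction is mechanical. The proof therefore amounts to assembling these four translations and invoking Theorem \ref{main theorem}.
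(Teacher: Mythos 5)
Your proposal is correct and follows essentially the same route as the paper: apply Theorem \ref{main theorem} to the coarse groupoid $G(X)$ (whose amenability is equivalent to Property A), then transport the four conditions through the isomorphism $\Theta$, the identification $(\mathrm{Ad}_W)_*$, and the fibrewise intertwining of $\lambda_\omega$ with $\Phi_\omega$ from Lemma \ref{metric limit op}. Nothing is missing.
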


\begin{Example}
Let us recall an interesting example from \cite[Example 4.13]{Spakula-Willett--limitoperators}. Take $X=\mathbb{N}$ with the usual metric, and identify the Hilbert space $\ell^2(\mathbb{N})$ with the Hardy space $H^2$ of the disk. All limit spaces are isometric to $\mathbb{Z}$ with the usual metric. Let $T_f$ be the Toeplitz operator on $H^2$ with continuous symbol $f: S^1 \to \mathbb{C}$. Then all limit operators of $T_f$ correspond to the symbol $f$, acting as the multiplication operator on $L^2(S^1) \cong \ell^2(\mathbb{Z})$ via Fourier transformation. Hence we recover the classical theory in Toeplitz operators that $T_f$ is Fredholm \emph{if and only if} $f$ is an invertible function on $S^1$.
\end{Example}

\subsection{Group action case}\label{group action ex}
In Section \ref{grp cptf ex}, we studied the limit operator theory for groups acting on their equivariant compactifications. Now we move on to general group actions and establish the associated limit operator theory.

Let $G$ be a countable exact discrete group acting on a compact space $X$ by homeomorphisms. Let $Y$ be a $G$-invariant open dense subset in $X$, and denote $\partial Y:=X \setminus Y$ its boundary. Clearly, both $Y$ and $\partial Y$ are locally compact and $G$ acts on them by homeomorphisms. We consider the associated transformation groupoid $\G=X \rtimes G$. 

As we pointed out in Section \ref{grp cptf ex}, although $G$ is exact, the action on $X$ is in general \emph{not} amenable. However we still have the following, and proof is the same as Lemma \ref{exactness lemma} hence omitted.
\begin{Lemma}
Let $G$ be a countable exact discrete  group acting on a compact space $X$, and $Y$ be an invariant open dense subset in $X$. Then the following short sequence is exact:
$$0 \longrightarrow C^*_r(Y \rtimes G) \longrightarrow C^*_r(X \rtimes G) \longrightarrow C^*_r(\partial Y \rtimes G) \longrightarrow 0.$$
\end{Lemma}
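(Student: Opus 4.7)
The plan is to reduce this to the analogous statement for reduced crossed products and invoke exactness of the group $G$, which is exactly the strategy sketched for Lemma \ref{exactness lemma}.

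First, I would set up the following commutative diagram:
\begin{displaymath}
\xymatrix{
0 \ar[r] & C^*_r(Y \rtimes G) \ar[r] \ar[d] & C^*_r(X \rtimes G) \ar[r] \ar[d] & C^*_r(\partial Y \rtimes G) \ar[r] \ar[d] & 0 \\
0 \ar[r] & C_0(Y)\rtimes_r G \ar[r] & C(X) \rtimes_r G \ar[r] & C(\partial Y) \rtimes_r G \ar[r] & 0,
}
\end{displaymath}
where the vertical maps are the standard $C^*$-isomorphisms identifying the reduced $C^*$-algebra of a transformation groupoid $Z \rtimes G$ with the reduced crossed product $C_0(Z) \rtimes_r G$. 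These isomorphisms are natural with respect to $G$-equivariant inclusions and restrictions, so the diagram commutes. On the top row, the inclusion $Y \hookrightarrow X$ of an invariant open subset realises $C^*_r(Y \rtimes G)$ as an ideal in $C^*_r(X \rtimes G)$, with restriction giving the quotient map, while on the bottom row the analogous assertion for $C_0(Y) \triangleleft C(X)$ is standard.

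Next, since $G$ is exact, Theorem 5.1.10 in \cite{brown-ozawa} says that the reduced crossed product functor $- \rtimes_r G$ preserves short exact sequences of $G$-$C^*$-algebras. Applied to the short exact sequence
\[
0 \longrightarrow C_0(Y) \longrightarrow C(X) \longrightarrow C(\partial Y) \longrightarrow 0,
\]
this gives exactness of the bottom row in the diagram above.

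Finally, since all three vertical arrows are $C^*$-isomorphisms and the bottom row is exact, the top row is exact as well. The only point needing a moment's care is exactness at the middle term (surjectivity of the quotient and injectivity of the inclusion being automatic from the construction of $C^*_r$ in this open-invariant setting), but this is transferred verbatim from the bottom row via the isomorphisms. Since the identification of transformation groupoid reduced $C^*$-algebras with reduced crossed products is entirely classical, there is no substantive obstacle; the proof is genuinely just a naturality argument plus Brown--Ozawa's characterisation of exactness.
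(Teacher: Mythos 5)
Your proposal is correct and is essentially identical to the paper's own argument: the paper omits the proof of this lemma, referring back to Lemma \ref{exactness lemma}, whose proof is exactly your diagram — identify each $C^*_r(Z \rtimes G)$ with $C_0(Z) \rtimes_r G$ via the standard isomorphisms, apply \cite[Theorem 5.1.10]{brown-ozawa} to get exactness of the crossed-product row, and transfer exactness across the vertical isomorphisms.
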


Therefore by Remark \ref{main theorem remark}, part of our main result: ``(1) $\Leftrightarrow$ (2) $\Leftrightarrow$ (3)" in Theorem \ref{main theorem} holds directly for the transformation groupoid $\G=X \rtimes G$, and they are also equivalent to condition (4) if the action is additionally assumed to be amenable. This establishes the limit operator theory for group actions. Now we would like to provide a more detailed and practical picture.

First we focus on the groupoid $C^*$-algebra $C^*_r(X \rtimes G)$, which is isomorphic to the reduced cross product $C(X)\rtimes_r G$. To make it more precise, let us recall the definition of reduced cross product (see \cite[Chapter 4.1]{brown-ozawa}). Denote $C_c(G,C(X))$ the linear space of finitely supported functions on $G$ with values in $C(X)$, equipped with the twisted convolution product and the $\ast$-operation. The reduced cross product $C(X)\rtimes_r G$ is the norm closure with respect to a regular representation of $C_c(G,C(X))$ (see \cite[Definition 4.1.4]{brown-ozawa}). We have the following algebraic $\ast$-isomorphism:
$$\theta: C_c(X \rtimes G) \longrightarrow C_c(G,C(X))$$
given by
$$F \mapsto \sum_{\gamma \in G} f_\gamma \gamma, \quad \mbox{where~} f_\gamma \in C(X) \mbox{~is~defined~by~} f_\gamma(x)=F(x,\gamma).$$
It is known that $\theta$ can be extended to be a $C^*$-isomorphism:
\begin{equation}\label{iso reduced product}
\Theta: C^*_r(X \rtimes G) \longrightarrow C(X) \rtimes_r G,
\end{equation}
which maps $C^*_r(Y \rtimes G)$ to $C_0(Y) \rtimes_r G$.

Now we move on to the discussion of limit operators. Recall that we have the symbol morphism associated to $\G=X \rtimes G$ from Definition \ref{limit morphism defn}:
$$\varsigma: C^*_r(X \rtimes G) \longrightarrow \Gamma_b(E^\partial_{u})^{\partial Y \rtimes G},$$
where $E^\partial_u$ is the Roe fibre space associated to the reduction groupoid $\G(\partial Y)=\partial Y \rtimes G$. Now we would like to offer a more detailed description for the symbol morphism. For each $\omega \in \partial Y$, the fibre
$$(X \rtimes G)_\omega=\{(\gamma \omega, \gamma): \gamma \in G\}$$
is bijective to $G$ via the map $(\gamma \omega, \gamma) \mapsto \gamma$, and induces a unitary:
$$V_\omega: \ell^2((X \rtimes G)_\omega) \cong \ell^2(G).$$
These isomorphisms can be put together to provide a map between operator fibre spaces as follows:
$$\mathrm{Ad}_{V}=\bigsqcup_{\omega \in \partial Y}\mathrm{Ad}_{V_{\omega}}: E^{\partial}=\bigsqcup_{\omega \in \partial Y}\B(\ell^2((X \rtimes G)_\omega)) \longrightarrow \partial Y \times \B(\ell^2(G)),$$
where the final item can be regarded as a trivial fibre space over $\partial Y$, and the topology coincides with the product topology. It is easy to check that $\mathrm{Ad}_{V}$ is a homeomorphism, hence an isomorphism between operator fibre spaces over $\partial Y$. Note that continuous sections of $\partial Y\times \B(\ell^2(G))$ coincides with $C_{\mathrm{WOT}}(\partial Y, \B(\ell^2(G)))$, hence $\mathrm{Ad}_{V}$ induces the following $C^*$-isomorphism:
$$(\mathrm{Ad}_{V})_*: \Gamma(E^\partial) \longrightarrow C_{\mathrm{WOT}}(\partial Y, \B(\ell^2(G))).$$
Note that elements in $C_{\mathrm{WOT}}(\partial Y, \B(\ell^2(G)))$ have uniform bounded norms, hence $\Gamma(E^\partial) = \Gamma_b(E^\partial)$. As in Section \ref{grp cptf ex}, we obtain an isomorphism:
$$(\mathrm{Ad}_{V})_*: \Gamma_b(E^\partial_{u})^{\partial Y \rtimes G} \stackrel{\textstyle \cong}{\longrightarrow} C_{\mathrm{WOT}}(\partial Y, C^*_u(G))^G.$$
Therefore, we define the following morphism (also called the \emph{symbol morphism}):
$$\sigma:=(\mathrm{Ad}_{V})_* \circ \varsigma \circ \Theta^{-1}: C(X) \rtimes_r G \longrightarrow C_{\mathrm{WOT}}(\partial Y, C^*_u(G))^G.$$
In other words, the following diagram commutes:
\begin{displaymath}
    \xymatrix{
        C^*_{r}(X \rtimes G) \ar[r]^-{\textstyle \varsigma} \ar[d]_-{\textstyle \Theta}^-{\textstyle \cong} & \Gamma_b(E^\partial_{u})^{Y \rtimes G} \ar[d]^-{\textstyle (\mathrm{Ad}_V)_* }_-{\textstyle \cong} \\
        C(X) \rtimes_r G \ar[r]^-{\textstyle \sigma} & C_{\mathrm{WOT}}(\partial Y, C^*_u(G))^G.}
\end{displaymath}

Now we would like to provide a more concrete formula for the above symbol morphism $\sigma$. Fix a boundary point $\omega \in \partial Y$ and consider the following representations:
$$M_\omega: C(X) \longrightarrow \B(\ell^2(G)) \quad \mbox{by}\quad M_\omega(f) \delta_\gamma:=f(\gamma\omega)\delta_\gamma,$$
and
$$\rho: G \longrightarrow \B(\ell^2(G)) \quad \mbox{by}\quad \rho(\gamma') \delta_\gamma:=\delta_{\gamma'\gamma}.$$
It is routine to check that $\rho(\gamma)^*M_\omega(f)\rho(\gamma)=M_\omega(\gamma^{-1}.f)$. Hence they induce a $\ast$-representation $$M_\omega \times \rho: C_c(G, C(X)) \longrightarrow \B(\ell^2(G)) \quad \mbox{by}\quad \sum_{\gamma \in G}f_\gamma \gamma \mapsto \sum_{\gamma \in G}M_\omega(f_\gamma) \rho(\gamma).$$

\begin{Lemma}
$M_\omega \times \rho$ induces a $C^*$-representation: $C(X) \rtimes_r G \longrightarrow \B(\ell^2(G))$ whose image is contained in the uniform Roe algebra $C^*_u(|G|)$, still denoted by $M_\omega \times \rho$.
\end{Lemma}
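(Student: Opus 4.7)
The plan is to identify the representation $M_\omega \times \rho$ with the regular representation $\lambda_\omega$ of the transformation groupoid $X \rtimes G$ (transported via the isomorphism $\Theta$ from \eqref{iso reduced product} and the unitary $V_\omega$), from which both claims follow immediately.

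First, I would verify by a direct calculation that for any $F = \sum_{\gamma \in G} f_\gamma \gamma \in C_c(G,C(X))$ and any $\gamma_0, \gamma_1 \in G$, the matrix coefficient of $(M_\omega \times \rho)(F)$ at $(\gamma_1, \gamma_0)$ equals $f_{\gamma_1 \gamma_0^{-1}}(\gamma_1 \omega)$. On the other side, starting from $\Theta^{-1}(F)(x,\gamma) = f_\gamma(x)$ in $C_c(X \rtimes G)$, and using the formula for the regular representation together with the groupoid operations $(x,\gamma)^{-1} = (\gamma^{-1}x, \gamma^{-1})$ and $(\gamma_1 \omega, \gamma_1)(\omega, \gamma_0^{-1}) = (\gamma_1 \omega, \gamma_1 \gamma_0^{-1})$, I would compute
\[
\bigl(\lambda_\omega(\Theta^{-1}F)\,\delta_{(\gamma_0\omega,\gamma_0)}\bigr)(\gamma_1 \omega, \gamma_1) = f_{\gamma_1\gamma_0^{-1}}(\gamma_1 \omega).
\]
Conjugating by $V_\omega$, which sends $\delta_{(\gamma\omega,\gamma)}$ to $\delta_\gamma$, yields exactly the matrix coefficient computed for $M_\omega \times \rho$. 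Thus on the dense $\ast$-subalgebra $C_c(G,C(X))$ we have the identity
\[
M_\omega \times \rho \;=\; \mathrm{Ad}_{V_\omega} \circ \lambda_\omega \circ \Theta^{-1}.
\]

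From this identification, the extension to a $C^*$-representation of $C(X) \rtimes_r G$ is automatic: $\lambda_\omega$ extends to $C^*_r(X \rtimes G)$ by construction, $\mathrm{Ad}_{V_\omega}$ is a $\ast$-isomorphism of $\B$-algebras, and $\Theta$ is a $C^*$-isomorphism onto $C(X) \rtimes_r G$. So $M_\omega \times \rho$ is bounded in the reduced norm and extends uniquely to a $\ast$-homomorphism $C(X) \rtimes_r G \to \B(\ell^2(G))$.

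For the image being contained in $C^*_u(|G|)$, I would observe that on the generators the operators $M_\omega(f)$ and $\rho(\gamma)$ are of the required form: $M_\omega(f)$ is the diagonal multiplication operator on $\ell^2(G)$ by the bounded function $\gamma \mapsto f(\gamma\omega)$, hence an element of $\ell^\infty(G) \subset C^*_u(|G|)$, while $\rho(\gamma)$ is precisely the left shift unitary $L_\gamma$. Therefore $(M_\omega \times \rho)(C_c(G,C(X))) \subseteq \mathbb{C}[|G|]$, and taking norm closure gives $(M_\omega \times \rho)(C(X) \rtimes_r G) \subseteq C^*_u(|G|)$. No serious obstacle is expected; the only point requiring care is matching the conventions for the groupoid inverse/product and the covariance action, which is handled by the explicit coefficient computation above.
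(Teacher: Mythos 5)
Your proof is correct, but it takes a different route from the paper's. You prove the lemma by first establishing the identity $M_\omega \times \rho = \mathrm{Ad}_{V_\omega} \circ \lambda_\omega \circ \Theta^{-1}$ on the dense subalgebra $C_c(G,C(X))$ --- which is precisely the content of the \emph{next} lemma in the paper --- and then deducing boundedness in the reduced norm from the fact that the groupoid regular representation $\lambda_\omega$ is by construction dominated by $\|\cdot\|_r = \sup_x\|\lambda_x(\cdot)\|$; your matrix-coefficient computation checks out (both sides give $f_{\gamma_1\gamma_0^{-1}}(\gamma_1\omega)$ at the entry $(\gamma_1,\gamma_0)$), and the containment of the image in $C^*_u(|G|)$ is immediate since $M_\omega(f) \in \ell^\infty(G)$ and $\rho(\gamma) = L_\gamma$ generate $\mathbb{C}[|G|]$. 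The paper instead stays entirely inside crossed-product language: it first disposes of the amenable case via $C(X)\rtimes_{\max}G = C(X)\rtimes_r G$, and in general factors $M_\omega\times\rho$ through the restriction $\varrho_\omega: C(X)\rtimes_r G \to C(\overline{G\omega})\rtimes_r G$ to the orbit closure, then uses a Fell-absorption-type unitary on $\ell^2(G)\otimes\ell^2(G)$ to exhibit $M'_\omega\times\rho$ as (an amplification of) a faithful realisation of $C(\overline{G\omega})\rtimes_r G$ inside $C^*_u(|G|)$. Your argument is shorter and leverages the groupoid machinery already in place (effectively merging this lemma with the one that follows it), at the cost of presupposing the isomorphism $\Theta$ and the fibre identification $V_\omega$; the paper's argument is self-contained in crossed-product terms and yields the slightly stronger byproduct that $M'_\omega\times\rho$ is isometric on $C(\overline{G\omega})\rtimes_r G$. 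Both are valid proofs of the stated lemma.
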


\begin{proof}
First note that the $\ast$-representation $M_\omega \times \rho$ extends naturally to the maximal crossed product $C(X) \rtimes_{\mathrm{max}} G$. So, when the action is amenable, we have $C(X) \rtimes_{\mathrm{max}} G = C(X) \rtimes_r G$ and the lemma holds.

To deal with the general case, note that the restriction map $C(X) \to C(\overline{G\omega})$ is $G$-equivariant and contractively completely positive (c.c.p), hence induces a c.c.p map
$$\varrho_\omega: C(X) \rtimes_r G \to C(\overline{G\omega}) \rtimes_r G.$$
Consider the faithful representation $M'_\omega: C(\overline{G\omega}) \longrightarrow \B(\ell^2(G))$ defined by $M'_\omega(f) \delta_\gamma:=f(\gamma\omega)\delta_\gamma$, which induces another representation
$$\pi_\omega: C(\overline{G\omega}) \longrightarrow \B(\ell^2(G) \otimes \ell^2(G))$$
defined by
$$\big(\pi_\omega(f)\big)(\delta_\alpha \otimes \delta_\gamma):=M'_\omega(\gamma^{-1}.f)(\delta_\alpha) \otimes \delta_\gamma=f(\gamma\alpha\omega)\delta_\alpha \otimes \delta_\gamma.$$
Hence we obtain a faithful covariant representation
$$\pi_\omega \times (1 \otimes \rho): C_c(G,C(\overline{G\omega})) \longrightarrow \B(\ell^2(G) \otimes \ell^2(G)),$$
and the reduced cross product $C(\overline{G\omega}) \rtimes_r G$ is the completion of $C_c(G,C(\overline{G\omega}))$ with respect to the norm of its image.

Now consider the unitary $U: \ell^2(G) \otimes \ell^2(G) \to \ell^2(G) \otimes \ell^2(G)$ defined by $U(\delta_\alpha \otimes \delta_\gamma) = \delta_\alpha \otimes \delta_{\gamma\alpha}$. It is routine to check that $\mathrm{Ad}_U \circ \pi_{\omega}(f)=\mathrm{Id} \otimes M'_\omega(f)$ for any $f \in C(\overline{G\omega})$, and $\mathrm{Ad}_U \circ (\mathrm{Id}\otimes \rho)(\gamma)=(\mathrm{Id}\otimes \rho)(\gamma)$ for any $\gamma \in G$. Hence, we have
$$\mathrm{Ad}_U \circ \big(\pi_\omega \times (1 \otimes \rho)\big): C(\overline{G\omega}) \rtimes_r G \longrightarrow \mathbb{C}\cdot\mathrm{Id} \otimes C^*(\{M'_\omega(f), \rho_\gamma: f \in C(\overline{G\omega}), \gamma \in G\}) \subseteq \mathbb{C}\cdot\mathrm{Id} \otimes C^*_u(|G|) \cong C^*_u(|G|).$$
In conclusion, we obtain a $C^*$-representation:
$$\mathrm{Ad}_U \circ \big(\pi_\omega \times (1 \otimes \rho)\big) \circ \varrho_\omega: C(X) \rtimes_r G \longrightarrow C^*_u(|G|)$$
satisfying the requirements.
\end{proof}

\begin{Definition}
Notations as above. For each $\omega \in \partial Y$ and $T \in C(X) \rtimes_r G$, we define the \emph{limit operator of $T$ at $\omega$} to be $\sigma_\omega(T):=(M_\omega \times \rho)(T) \in C^*_u(|G|)$.
\end{Definition}

\begin{Lemma}
For $\omega \in \partial Y$, we have $\sigma(T)(\omega)=\sigma_\omega(T)$, i.e., the following diagram commutes:
\begin{displaymath}
    \xymatrix@=3em{
        C^*_r(X \rtimes G) \ar[r]^-{\textstyle \lambda_\omega} \ar[d]_-{\textstyle\Theta}^-{\textstyle\cong} & C^*_u((X \rtimes G)_\omega) \ar[d]^-{\textstyle \mathrm{Ad}_{V_\omega}}_-{\textstyle\cong} \\
        C(X) \rtimes_r G \ar[r]^-{\textstyle \sigma_\omega} & C^*_u(|G|).}
\end{displaymath}
\end{Lemma}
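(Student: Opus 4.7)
The plan is to verify the diagram commutes by reducing to a dense subalgebra and then comparing matrix coefficients on both sides. Since all four morphisms are continuous $*$-homomorphisms (the preceding lemma makes $\sigma_\omega$ well-defined on $C(X) \rtimes_r G$), it will suffice to check agreement on $\Theta(C_c(X \rtimes G)) = C_c(G, C(X))$. By linearity, I would then reduce further to monomials $f\gamma$ for $f \in C(X)$ and $\gamma \in G$, which correspond under $\Theta^{-1}$ to the compactly supported function $F \in C_c(X \rtimes G)$ supported on $X \times \{\gamma\}$ with $F(\cdot, \gamma) = f$.

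Next I would compute the $(\mu, \eta)$-matrix coefficient of each side with respect to the canonical basis of $\ell^2(G)$. For the bottom-then-right composition, a direct unpacking gives
\begin{equation*}
\sigma_\omega(f\gamma)\delta_\eta = M_\omega(f)\rho(\gamma)\delta_\eta = M_\omega(f)\delta_{\gamma\eta} = f(\gamma\eta\omega)\,\delta_{\gamma\eta},
\end{equation*}
so $\langle \sigma_\omega(f\gamma)\delta_\eta, \delta_\mu\rangle$ equals $f(\gamma\eta\omega)$ if $\mu = \gamma\eta$ and $0$ otherwise.

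For the top-then-right composition, I would first record the relevant transformation groupoid arithmetic: $(\eta\omega, \eta)^{-1} = (\omega, \eta^{-1})$ and hence $(\mu\omega, \mu)(\eta\omega, \eta)^{-1} = (\mu\omega, \mu\eta^{-1})$. Applying the \'etale formula (\ref{reduced algebra defn}) for the regular representation at $\omega \in \partial Y$ then yields
\begin{equation*}
\big(\lambda_\omega(F)\delta_{(\eta\omega, \eta)}\big)(\mu\omega, \mu) = F(\mu\omega, \mu\eta^{-1}),
\end{equation*}
and since $V_\omega$ sends $\delta_{(\alpha\omega,\alpha)}$ to $\delta_\alpha$, this coincides with $\langle \mathrm{Ad}_{V_\omega}\lambda_\omega(F)\delta_\eta, \delta_\mu\rangle$. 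Because $F$ is supported on $X \times \{\gamma\}$, this vanishes unless $\mu\eta^{-1} = \gamma$, in which case it equals $f(\mu\omega) = f(\gamma\eta\omega)$, matching the formula above.

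There is essentially no analytical obstacle here; the only point needing care is bookkeeping of the groupoid inversion and product in $X \rtimes G$, and the compatibility of the unitary $V_\omega$ with the canonical bases of the two Hilbert spaces. Once those conventions are unpacked, the calculation reduces to the single identity $F(\mu\omega, \mu\eta^{-1}) = f(\gamma\eta\omega)\,\mathbf{1}_{\mu = \gamma\eta}$, which is immediate from the support assumption on $F$.
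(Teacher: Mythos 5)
Your proposal is correct and follows essentially the same route as the paper's own proof: reduce by density and linearity to (sums of) monomials $f\gamma$ in $C_c(G,C(X))$, then compare matrix coefficients of $\mathrm{Ad}_{V_\omega}\circ\lambda_\omega$ and $\sigma_\omega\circ\Theta$ against the canonical bases, using the groupoid arithmetic $(\mu\omega,\mu)(\eta\omega,\eta)^{-1}=(\mu\omega,\mu\eta^{-1})$. Your computation $F(\mu\omega,\mu\eta^{-1})=f(\gamma\eta\omega)\,\mathbf{1}_{\mu=\gamma\eta}$ matches the paper's identity $f(g\omega,gh^{-1})=f_{gh^{-1}}(g\omega)$ under the substitution $\mu=g$, $\eta=h$.
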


\begin{proof}
It suffices to show that for $f\in C_c(X \rtimes G)$, we have $\mathrm{Ad}_{U_\omega} \circ \lambda_\omega (f)=\sigma_\omega\circ\Theta (f)$. By definition, we may assume that $\Theta(f)=\sum_{\gamma \in G}f_\gamma \gamma$. For any $g,h\in G$, we have
$$\big(\lambda_\omega (f)\delta_{(h\omega,h)}\big)(g\omega,g)=\sum_{\gamma\in G}f((g\omega,g)(\gamma\omega,\gamma)^{-1})\delta_{(h\omega,h)}(\gamma\omega,\gamma)=f(g\omega,gh^{-1})=f_{gh^{-1}}(g\omega).$$
Hence,
\begin{eqnarray*}
\langle \mathrm{Ad}_{V_\omega} \circ \lambda_\omega (f)\delta_h, \delta_g\rangle &=& \langle \lambda_\omega (f)\delta_{(h\omega,h)}, \delta_{(g\omega,g)}\rangle = f(g\omega,gh^{-1})=\lim_{\alpha \to \omega}f(g\alpha,gh^{-1})=f_{gh^{-1}}(g\omega),\\
\langle \sigma_\omega\circ\Theta (f)\delta_h, \delta_g\rangle &=& \langle \sigma_\omega(\sum_{\gamma \in G}f_\gamma \gamma)\delta_h, \delta_g\rangle = \sum_{\gamma \in G} \langle M_\omega(f_{\gamma})\rho(\gamma)\delta_h, \delta_g\rangle = \sum_{\gamma \in G} \langle f_\gamma(\gamma h \omega) \delta_{\gamma h}, \delta_g\rangle\\
&=& f_{gh^{-1}}(g\omega).
\end{eqnarray*}
So we finish the proof.
\end{proof}

Combining the above analysis, we establish the limit operator theory for group actions as a corollary of Theorem \ref{main theorem}:
\begin{Corollary}\label{cor: group action}
Let $G$ be a countable exact discrete  group acting on a compact space $X$, $Y$ be a $G$-invariant open dense subset in $X$ and $\partial Y=X \setminus Y$. For any $T \in C(X) \rtimes_r G$, the following are equivalent:
\begin{enumerate}
  \item $T$ is invertible modulo $C_0(Y) \rtimes_r G$.
  \item $\sigma(T)$ is invertible in $C_{\mathrm{WOT}}(\partial Y, C^*_u(G))^G$.
  \item For each $\omega \in \partial Y$, the limit operator $\sigma_\omega(T)$ is invertible, and
  $$\sup_{\omega \in \partial Y} \|\sigma_\omega(T)^{-1}\| < \infty.$$
\end{enumerate}
Furthermore if the action of $G$ on $X$ is amenable, then the above are also equivalent to the following:
\begin{enumerate}
  \item[(4)] For each $\omega \in \partial Y$, the limit operator $\sigma_\omega(T)$ is invertible.
\end{enumerate}
\end{Corollary}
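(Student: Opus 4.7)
The plan is to deduce this corollary directly from Theorem \ref{main theorem} applied to the transformation groupoid $\G := X \rtimes G$, using the identifications between its reduced $C^*$-algebra and the reduced crossed product that were set up earlier in the section. First I would verify that $\G$ satisfies the standing hypotheses of Theorem \ref{main theorem}: it is \'{e}tale because $G$ is discrete, $\sigma$-compact because $G$ is countable and $X$ is compact (so $X \rtimes G = \bigsqcup_{\gamma \in G} X \times \{\gamma\}$ is a countable union of compacts), and its unit space $X$ is compact by assumption. Moreover, $Y$ is an invariant open dense subset of $\Gz = X$ with complement $\partial Y$, and the reduction satisfies $\G(\partial Y) = \partial Y \rtimes G$.

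Next I would transport the four conditions of Theorem \ref{main theorem} across the isomorphism $\Theta : C^*_r(X \rtimes G) \xrightarrow{\cong} C(X) \rtimes_r G$ of (\ref{iso reduced product}). Since $\Theta$ sends $C^*_r(Y \rtimes G)$ onto $C_0(Y) \rtimes_r G$, condition (1) of the theorem translates to condition (1) of the corollary. The commutative diagram just before the corollary, together with the fibrewise identification $\mathrm{Ad}_{V_\omega}$ expressing $\lambda_\omega$ as $\sigma_\omega$, matches condition (2) with invertibility of $\sigma(T)$ in $C_{\mathrm{WOT}}(\partial Y, C^*_u(G))^G$ and matches conditions (3) and (4) with their $\sigma_\omega$-counterparts.

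For the equivalence (1) $\Leftrightarrow$ (2) $\Leftrightarrow$ (3), I would invoke Remark \ref{main theorem remark}: the only place amenability of $\G(\partial Y)$ is used in the proof of this portion of Theorem \ref{main theorem} is to guarantee exactness of the short sequence
\[
0 \longrightarrow C^*_r(Y \rtimes G) \longrightarrow C^*_r(X \rtimes G) \longrightarrow C^*_r(\partial Y \rtimes G) \longrightarrow 0.
\]
Exactness of this sequence was established by the Lemma immediately preceding the discussion (using exactness of $G$ via the classical result \cite[Theorem 5.1.10]{brown-ozawa}), so Remark \ref{main theorem remark} yields the three-way equivalence without needing amenability of the action.

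Finally, under the additional assumption that $G \acts X$ is amenable, the reduction $\partial Y \rtimes G$ is topologically amenable (since the closed invariant subset $\partial Y$ inherits amenability from $X$, a standard consequence of Definition \ref{amenable}). All hypotheses of Theorem \ref{main theorem} are then satisfied for $\G = X \rtimes G$, and in particular (3) $\Leftrightarrow$ (4) follows. The main obstacle, I expect, is not conceptual but bookkeeping: one must check that the various unitaries $V_\omega$ assemble into an honest isomorphism of operator fibre spaces so that $\sigma$ really is the image of $\varsigma$ under the identifications, and that the exactness lemma applies to $(X, Y)$ as stated (rather than only to $(\beta G, G)$ as in Section \ref{grp cptf ex}); both are already handled in the preceding discussion, so in the end the corollary is a matter of reading off Theorem \ref{main theorem} through the established commutative diagrams.
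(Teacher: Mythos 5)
Your proposal is correct and follows essentially the same route as the paper: apply Theorem \ref{main theorem} to $\G = X \rtimes G$, transport the conditions through $\Theta$ and the fibrewise unitaries $V_\omega$, use the exactness lemma together with Remark \ref{main theorem remark} to get the three-way equivalence without amenability of the action, and add amenability of $\partial Y \rtimes G$ (inherited by the closed invariant subset) only for the equivalence with (4). You correctly isolate the one subtle point --- that amenability is needed solely for ``(3) $\Leftrightarrow$ (4)'' --- which is exactly how the paper structures the argument.
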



\subsection{Groupoid case}\label{groupoid case ex}
Recall that in Section \ref{group case}, we studied the classic limit operator theory for exact discrete groups, characterising the Fredholmness of operators in the uniform Roe algebra of the group in terms of invertibilities of their limit operators. Now we would like to study the uniform Roe algebra of a general groupoid, and establish the associated limit operator theory.

Let $\G$ be a locally compact, second countable and \'{e}tale groupoid with unit space $\Gz$. Suppose $\G$ is strongly amenable at infinity (see Definition \ref{strongly amenable at infinity}), or weakly inner amenable and $C^*$-exact (see Definition \ref{defn:C*-exact} and \ref{defn:weakly inner amenability}). Recall from Section \ref{groupoid action preliminary subsection}, the left action of $\G$ on itself can be extended to its fibrewise Stone-\v{C}ech compactification $\beta_r \G$. Hence we may form the semi-direct product groupoid $\beta_r \G \rtimes \G$, whose unit space $\beta_r \G$ has a natural decomposition:
\begin{equation}\label{dec for groupoid case}
\beta_r \G=\G \sqcup \partial_r \G,
\end{equation}
where $\G$ is open and dense in $\beta_r \G$, and $\partial_r \G:= \beta_r \G \setminus \G$ is the fibrewise Stone-\v{C}ech boundary of $\G$. 

Since $\G$ is strongly amenable at infinity, or weakly inner amenable and $C^*$-exact, the groupoid $\beta_r \G \rtimes \G$ is topologically amenable by Proposition \ref{exact prop 1} or \ref{exact prop 2}. Hence we may apply Theorem \ref{main theorem} to the groupoid $\beta_r \G \rtimes \G$ together with the decomposition (\ref{dec for groupoid case}), and obtain the associated limit operator theory for $\beta_r \G \rtimes \G$. Now we would like to provide a more detailed picture.

First we focus on the groupoid $C^*$-algebra $C^*_r(\beta_r \G \rtimes \G)$, which is isomorphic to the uniform Roe algebra $C^*_u(\G)$ of $\G$ introduced in \cite{Ananth-Delaroche--ExactGroupoids}. To recall the definition and the isomorphism, denote
$$\G \ast_s \G:=\{(\gamma,\gamma_1) \in \G \times \G: s(\gamma)=s(\gamma_1)\}.$$
A \emph{tube} is a subset of $\G \ast_s \G$ whose image by the map $(\gamma,\gamma_1) \mapsto \gamma\gamma^{-1}_1$ is relatively compact in $\G$. We denote by $C_t(\G \ast_s \G)$ the space of continuous bounded functions on $\G \ast_s \G$ with support in a tube. Recall that $L^2(\G)$ is the Hilbert $C_0(\Gz)$-module (see Section \ref{groupoid C algebra prlm}), and we consider the map $\Phi: C_t(\G \ast_s \G) \to \B(L^2(\G))$ defined by
$$((\Phi f)\xi)(\gamma):=\sum_{\alpha\in \G_{s(\gamma)}} f(\gamma, \alpha) \xi(\alpha), \quad \mbox{where~} \xi \in L^2(\G) \mbox{~and~}\gamma \in \G.$$
Clearly $\Phi$ is injective so we may pull back the $\ast$-algebraic structure of $\B(L^2(\G))$ to $C_t(\G \ast_s \G)$. Intuitively, this is nothing but the ``$(\G \times \G)$-matrix structure". Restricted on each fibre: for any $x\in \Gz$, we have the induced $\ast$-representation $\Phi_x:C_t(\G \ast_s \G) \to \B(\ell^2(\G_x))$ by
$$((\Phi_x f)\eta)(\gamma):=\sum_{\alpha\in \G_x} f(\gamma, \alpha) \eta(\alpha), \quad \mbox{where~} \eta \in \ell^2(\G_x) \mbox{~and~}\gamma \in \G_x.$$

\begin{Definition}[Definition 5.1, \cite{Ananth-Delaroche--ExactGroupoids}]
The \emph{uniform Roe algebra of a groupoid $\G$} is the $C^*$-completion of $C_t(\G \ast_s \G)$ with respect to the operator norm of $\Phi(C_t(\G \ast_s \G))$ in $\B(L^2(\G))$. Hence $\Phi$ extends to a faithful representation from $C^*_u(\G)$ to $\B(L^2(\G))$, still denoted by $\Phi$. For each $x\in \Gz$, $\Phi_x$ extends to a $\ast$-homomorphism from $C^*_u(\G)$ to the classic uniform Roe algebra $C^*_u(\G_x)$, still denoted by $\Phi_x: C^*_u(\G) \to C^*_u(\G_x)$.
\end{Definition}

Now we consider the following algebraic $\ast$-isomorphism $\theta: C_c(\beta_r \G \rtimes \G) \to C_t(\G \ast_s \G)$ defined by
$$\theta(f)(\gamma,\gamma_1):=f(\gamma,\gamma\gamma_1^{-1})$$
for $f \in C_c(\beta_r \G \rtimes \G)$.
It is proved in \cite[Theorem 5.3]{Ananth-Delaroche--ExactGroupoids} that $\theta$ extends to a $C^*$-isomorphism
$$\Theta: C^*_r(\beta_r \G \rtimes \G) \stackrel{\displaystyle \cong}{\longrightarrow} C^*_u(\G).$$
What does the image $\Theta(C^*_r(\G \rtimes \G))$ look like in the uniform Roe algebra $C^*_u(\G)$? Denote $\K(L^2(\G))$ the algebra of compact operators on $L^2(\G)$ (recall that an adjointable operator is said to be \emph{compact} if it can approximated by finite-rank operators), and we have the following lemma:

\begin{Lemma}
Notations as above. We have that $\Phi\circ \Theta(C^*_r(\G \rtimes \G))= \K(L^2(\G))$. Consequently, $T \in C^*_r(\beta_r\G \rtimes \G)$ is invertible modulo $C^*_r(\G \rtimes \G)$ \emph{if and only if} $\Phi\circ \Theta (T)$ is invertible modulo $\K(L^2(\G))$.
\end{Lemma}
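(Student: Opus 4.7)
The plan is to identify $\Phi\circ\Theta(C^*_r(\G \rtimes \G))$ with $\K(L^2(\G))$ by first analysing the image on compactly supported functions. For $f \in C_c(\G \rtimes \G)$, the support lies in a compact subset of $\G \rtimes \G$, hence in $K_1 \times K_2$ for some compacts $K_1, K_2 \subseteq \G$. Consequently $\theta(f)(\gamma, \gamma_1) = f(\gamma, \gamma\gamma_1^{-1})$ vanishes unless $\gamma \in K_1$ and $\gamma_1 \in K_2^{-1}K_1$, so $\theta(C_c(\G \rtimes \G))$ consists exactly of functions in $C_t(\G *_s \G)$ whose support is compact in both coordinates. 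Setting $f(\gamma, \beta) := g(\gamma, \beta^{-1}\gamma)$ shows this correspondence is surjective onto that class.

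Next, I would show that $\Phi(g) \in \K(L^2(\G))$ for every such doubly compactly supported $g$. For $\xi, \eta \in C_c(\G)$, the rank-one Hilbert-module operator $\theta_{\xi, \eta}(\zeta) := \xi \cdot \langle \eta, \zeta \rangle$ is exactly $\Phi(g_{\xi, \eta})$ with kernel $g_{\xi, \eta}(\gamma, \alpha) := \xi(\gamma)\overline{\eta(\alpha)}$, which lies in our class. By Stone--Weierstrass applied on the compact product $L_1 \times L_2$ containing the support of a general $g$ (first extending $g$ continuously from $(L_1 \times L_2) \cap (\G *_s \G)$ via Tietze), $g$ is uniformly approximated by finite sums $\sum_i \xi_i(\gamma)\overline{\eta_i(\alpha)}$ with $\xi_i, \eta_i \in C_c(\G)$. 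Using that $\G$ is \'etale and fibres of compact subsets have uniformly bounded cardinality, uniform convergence of kernels on the supports translates to operator-norm convergence $\Phi(g) = \lim_n \sum_i \theta_{\xi_i, \eta_i}$, making $\Phi(g)$ a norm limit of finite-rank Hilbert-module operators, hence compact.

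The reverse inclusion is then immediate: $C_c(\G)$ is dense in $L^2(\G)$ as a Hilbert $C_0(\Gz)$-module, so the rank-one operators $\theta_{\xi, \eta}$ with $\xi, \eta \in C_c(\G)$ span a norm-dense subspace of $\K(L^2(\G))$; each such operator lies in $\Phi\circ\theta(C_c(\G \rtimes \G))$, giving $\K(L^2(\G)) \subseteq \Phi \circ \Theta(C^*_r(\G \rtimes \G))$. Combined with the previous step this yields the desired equality. The final consequence is then formal: $\Phi \circ \Theta$ is an injective $*$-homomorphism sending the closed ideal $C^*_r(\G \rtimes \G)$ onto the closed ideal $\K(L^2(\G))$, so it descends to an injective $*$-homomorphism of the quotient $C^*$-algebras. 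Since $C^*$-subalgebras are inverse-closed in their ambient algebras, $T \in C^*_r(\beta_r\G \rtimes \G)$ is invertible modulo $C^*_r(\G \rtimes \G)$ if and only if $\Phi \circ \Theta(T)$ is invertible modulo $\K(L^2(\G))$.

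The main obstacle will be justifying that uniform kernel approximation yields operator-norm approximation on the Hilbert module $L^2(\G)$. This reduces to a Schur-type estimate $\|\Phi(g)\| \leq C_{L_1, L_2} \|g\|_\infty$ whenever $\mathrm{supp}(g) \subseteq L_1 \times L_2$, which in turn uses the uniform-bounded-geometry property that $\sup_{x \in \Gz} |L_i \cap \G_x| < \infty$ for compact $L_i \subseteq \G$ in an \'etale groupoid. Once this estimate is in place, the rest of the argument is routine manipulation of ideals in $C^*$-algebras.
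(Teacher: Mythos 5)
Your proposal is correct and follows essentially the same route as the paper: identify $\theta(C_c(\G\rtimes\G))$ with the compactly supported kernels $C_c(\G\ast_s\G)$, realise finite-rank Hilbert-module operators as $\Phi$ of elementary tensor kernels, and pass from uniform kernel approximation (via $C_0(L_1)\otimes C_0(L_2)$, i.e.\ Stone--Weierstrass) to operator-norm approximation using the uniform bound $M$ on fibre cardinalities of compact sets in an \'etale groupoid. Your write-up is in fact slightly more careful than the paper's on the Schur-type estimate and on the formal deduction of the ``invertible modulo'' equivalence, but the substance is identical.
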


\begin{proof}
The proof is almost the same as the group case, but we need to modify slightly since we are working in Hilbert modules. By definition, $\Theta(C^*_r(\G \rtimes \G))$ is the norm closure of $\theta(C_c(\G \rtimes \G))=C_c(\G \ast_s \G)$, where the norm of $f\in C_c(\G \ast_s \G)$ is the operator norm of $\Phi f$ acting on $L^2(\G)$ defined by
$$((\Phi f)\zeta)(\gamma):=\sum_{\alpha\in \G_{s(\gamma)}} f(\gamma, \alpha) \zeta(\alpha)$$

On the other hand, for any $\xi,\eta \in L^2(\G)$, denote by $T_{\xi,\eta}$ the rank one operator $\zeta \mapsto \xi \langle \eta,\zeta \rangle$. Hence every finite rank operator has the following form:
$$\big( \sum_{i=1}^N T_{\xi_i,\eta_i} \big) (\zeta) (\gamma) = \sum_{\alpha \in \G_{s(\gamma)}} \big( \sum_{i=1}^N \xi_i(\gamma) \overline{\eta_i(\alpha)} \big) \zeta(\alpha),$$
for some $\xi_i,\eta_i \in L^2(\G); i=1,\ldots,N$. Therefore, $\sum_{i=1}^N T_{\xi_i,\eta_i} = \Phi(g)$ for $g \in C_c(\G \ast_s \G)$ defined by $g(\gamma,\alpha):=\sum_{i=1}^N \xi_i(\gamma) \overline{\eta_i(\alpha)}$.

Now we only need to show that for a given $f \in C_c(\G \ast_s \G)$, the operator $\Phi(f)$ can be approximated by a sequence of finite rank operators. Since $f$ has compact support, there exists $M \in \mathbb{N}$ such that
$$\sup_{x\in \Gz} |\{\gamma\in \G_x: \exists \alpha \in \G_x \mbox{~such~that~}f(\gamma,\alpha) \neq 0\}| \leq M,$$
and
$$\sup_{x\in \Gz} |\{\alpha\in \G_x: \exists \gamma \in \G_x \mbox{~such~that~}f(\gamma,\alpha) \neq 0\}| \leq M.$$
Recall that for two locally compact Hausdorff topological spaces $X$ and $Y$, we have that $C_0(X \times Y) \cong C_0(X) \otimes C_0(Y)$. The same argument can be used to show that for any $\varepsilon>0$, there exist $\xi_1,\ldots, \xi_N$ and $\eta_1, \ldots, \eta_N$ in $C_c(\G)$ such that
$$\sup_{(\gamma,\alpha) \in \G \ast_s \G} |f(\gamma,\alpha) - \sum_{i=1}^N \xi_i(\gamma) \overline{\eta_i(\alpha)}| \leq \frac{\varepsilon}{M \|f\|_\infty}.$$
Consequently, we have that
$$\big\|\Phi f - \sum_{i=1}^N T_{\xi_i,\eta_i}\big\| \leq \varepsilon.$$
Therefore, $\Phi f \in \K(L^2(\G))$ and we finish the proof.
\end{proof}

Now we move on to the discussion of limit operators. Recall that we have the symbol morphism associated to $\beta_r \G \rtimes \G$ from Definition \ref{limit morphism defn}:
$$\varsigma: C^*_r(\beta_r \G \rtimes \G) \longrightarrow \Gamma_b(E^\partial_{u})^{\partial_r \G \rtimes \G},$$
where $E^\partial_u$ is the Roe fibre space associated to the reduction $\partial_r \G \rtimes \G$. We would like to offer a more detailed description for $\varsigma$. Given a boundary point $\omega \in \partial_r \G= \beta_r \G \setminus \G$, note that
$$(\beta_r \G \rtimes \G)_\omega=\{(\gamma \omega,\gamma): s(\gamma) = r_\beta(\omega)\}$$
is bijective to $\G_{r_\beta(\omega)}$ via the map $\phi_\omega:(\gamma \omega,\gamma) \mapsto \gamma$, and induces a unitary
$$U_\omega: \ell^2((\beta_r \G \rtimes \G)_\omega) \cong \ell^2(\G_{r_\beta(\omega)}).$$
Hence we obtain an isomorphism:
\begin{equation}\label{Eq: Ad groupoid case}
\mathrm{Ad}_{U_\omega}: \B(\ell^2((\beta_r \G \rtimes \G)_\omega)) \cong \B(\ell^2(\G_{r_\beta(\omega)})).
\end{equation}
This isomorphism suggests us to consider the following operator fibre space
$$E^{\partial}_\beta:= \bigsqcup_{\omega \in \partial_r \G} \B(\ell^2(\G_{r_\beta(\omega)})),$$
with the topology defined as follows: a net $\{T_{\omega_i}\}_{i \in I}$ converges to $T_\omega$ \emph{if and only if} $\omega_i \to \omega$, and for any $\gamma'_i \to \gamma'$, $\gamma''_i \to \gamma''$ in $\G$ with $s(\gamma'_i)=r_\beta(\omega_i)=s(\gamma''_i)$ (which implies that $s(\gamma')=r_\beta(\omega)=s(\gamma'')$), we have
$$\langle T_{\omega_i}\delta_{\gamma'_i}, \delta_{\gamma''_i} \rangle \to \langle T_\omega\delta_{\gamma'}, \delta_{\gamma''} \rangle.$$
Combining the unitary operators $\mathrm{Ad}_{U_\omega}$ in (\ref{Eq: Ad groupoid case}) together, we obtain an isomorphism between the operator fibre spaces as follows:
$$\mathrm{Ad}_{U}=\bigsqcup_{\omega \in \partial_r \G}\mathrm{Ad}_{U_{\omega}}: E^{\partial}=\bigsqcup_{\omega \in \partial_r \G}\B(\ell^2((\beta_r \G \rtimes \G)_\omega)) \longrightarrow E^{\partial}_\beta= \bigsqcup_{\omega \in \partial_r \G} \B(\ell^2(\G_{r_\beta(\omega)})).$$
Denote the sub-fibre space
$$E^\partial_{\beta,u}:=\bigsqcup_{\omega \in \partial_r \G} C^*_u(\G_{r_\beta(\omega)}),$$
where $C^*_u(\G_{r_\beta(\omega)})$ is the uniform Roe algebra defined in Definition \ref{defn: uniform roe algebra fibre}. A section $\widetilde{\varphi}$ of $E^\partial_\beta$ is called \emph{$\G$-equivariant} if for any $\omega \in \partial_r \G$ and $\gamma \in \G$ with $s(\gamma)=r_\beta(\omega)$, we have $\widetilde{\varphi}(\gamma\omega) = R_\gamma^* \widetilde{\varphi}(\omega) R_\gamma$ where $R_\gamma: \ell^2(\G_{r(\gamma)}) \to \ell^2(\G_{s(\gamma)})$ is the right multiplication as in Definition \ref{defn: equivariant section}. Denote the $C^*$-algebra of bounded $\G$-equivariant sections of $E^\partial_{\beta,u}$ by $\Gamma_b(E^\partial_{\beta,u})^{\G}$. Then we have the following result, whose proof is similar to that of Lemma \ref{lem2 grp case}, hence omitted.

\begin{Lemma}
The isomorphism $\mathrm{Ad}_{U}$ induces a $C^*$-isomorphism:
$$(\mathrm{Ad}_{U})_*: \Gamma_b(E^{\partial}_u)^{\beta_r \G \rtimes \G} \stackrel{\textstyle \cong}{\longrightarrow} \Gamma_b(E^\partial_{\beta,u})^{\G}.$$
\end{Lemma}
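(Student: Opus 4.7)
The plan is to verify that $\mathrm{Ad}_U$ restricts well to the distinguished sub-fibre spaces, and then to check that the two equivariance conditions match. First, I would verify that $\mathrm{Ad}_U$ is itself a homeomorphism of operator fibre spaces over $\partial_r \G$. This is a routine check on the definition of the topology on $E^{\partial}$ and $E^{\partial}_\beta$: a net $T_{\omega_i} \to T_\omega$ in $E^{\partial}$ is characterized by convergence of matrix coefficients $\langle T_{\omega_i}\delta_{\alpha'_i},\delta_{\alpha''_i}\rangle$ along nets $\alpha'_i, \alpha''_i$ in $(\beta_r\G\rtimes\G)_{\omega_i}$, and these matrix coefficients are carried by $\phi_\omega$ exactly to matrix coefficients $\langle \mathrm{Ad}_{U_{\omega_i}}(T_{\omega_i})\delta_{\gamma'_i},\delta_{\gamma''_i}\rangle$ with $\gamma'_i,\gamma''_i\in\G_{r_\beta(\omega_i)}$, which is the defining notion of convergence in $E^{\partial}_\beta$. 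Thus $\mathrm{Ad}_U$ induces a $C^*$-isomorphism $(\mathrm{Ad}_U)_\ast : \Gamma_b(E^{\partial}) \to \Gamma_b(E^{\partial}_\beta)$.

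Next, I would check that $(\mathrm{Ad}_U)_\ast$ sends the uniform Roe sub-bundle into the uniform Roe sub-bundle, fibrewise. Fix $\omega\in\partial_r\G$ and let $x=r_\beta(\omega)$. For $T=(T_{(\alpha'\omega,\alpha'),(\alpha''\omega,\alpha'')}) \in \B(\ell^2((\beta_r\G\rtimes\G)_\omega))$, the finite-propagation condition defining $\mathbb{C}[(\beta_r\G\rtimes\G)_\omega]$ requires a compact $K\subseteq\beta_r\G\rtimes\G$ such that non-zero entries force $(\alpha''\omega,\alpha'')(\alpha'\omega,\alpha')^{-1}=(\alpha''\omega,\alpha''\alpha'^{-1})$ to lie in $K$. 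Since compact subsets of the semi-direct product project onto compact subsets of $\G$, this is equivalent to requiring $\alpha''\alpha'^{-1}$ to lie in a fixed compact subset of $\G$, which is exactly the definition of $\mathbb{C}[\G_x]$ for the image $\mathrm{Ad}_{U_\omega}(T)$. Taking closures yields $\mathrm{Ad}_{U_\omega}(C^*_u((\beta_r\G\rtimes\G)_\omega)) = C^*_u(\G_x)$, so $(\mathrm{Ad}_U)_\ast$ restricts to a $C^*$-isomorphism $\Gamma_b(E^{\partial}_u)\to\Gamma_b(E^{\partial}_{\beta,u})$.

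Finally, I would match the two equivariance conditions. A section $\varphi$ of $E^{\partial}$ is $(\beta_r\G\rtimes\G)$-equivariant if for every composable pair $((\eta,\gamma),\omega)$ one has $\varphi(\eta)=R_{(\eta,\gamma)}^\ast \varphi(\omega) R_{(\eta,\gamma)}$, where $\eta=\gamma\omega$. Writing out $R_{(\eta,\gamma)}:\ell^2((\beta_r\G\rtimes\G)_{\gamma\omega}) \to \ell^2((\beta_r\G\rtimes\G)_{\omega})$ via $\delta_{(\alpha\gamma\omega,\alpha)}\mapsto \delta_{(\alpha\gamma\omega,\alpha\gamma)}$, and conjugating by the unitaries $U_{\gamma\omega}$ and $U_{\omega}$, one checks that the commutative square
\[
\begin{tikzcd}
\ell^2((\beta_r\G\rtimes\G)_{\gamma\omega}) \arrow[r, "R_{(\eta,\gamma)}"] \arrow[d, "U_{\gamma\omega}"'] & \ell^2((\beta_r\G\rtimes\G)_{\omega}) \arrow[d, "U_{\omega}"] \\
\ell^2(\G_{r(\gamma)}) \arrow[r, "R_{\gamma}"'] & \ell^2(\G_{s(\gamma)})
\end{tikzcd}
\]
holds. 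Thus $(\mathrm{Ad}_U)_\ast \varphi(\gamma\omega) = R_\gamma^\ast \bigl((\mathrm{Ad}_U)_\ast\varphi(\omega)\bigr) R_\gamma$, showing that $(\beta_r\G\rtimes\G)$-equivariance of $\varphi$ is equivalent to $\G$-equivariance of $(\mathrm{Ad}_U)_\ast\varphi$ in the sense defined above. Combining all three steps gives the desired $C^*$-isomorphism. The only subtle point I expect is the bookkeeping in the last step: identifying the $R$ operators correctly on both sides, since the sources and ranges on the semi-direct product side involve the groupoid element twisted by $\omega$, whereas on the $\G$-side they are plain right translations. Once that identification is in place, the argument mirrors Lemma \ref{lem 1 grp case}.
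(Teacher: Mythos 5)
Your proof is correct and takes essentially the route the paper intends: the paper omits this proof, saying only that it is ``similar to that of Lemma \ref{lem2 grp case}'', and your three steps reproduce the arguments of Lemma \ref{lem 1 grp case} (matching the equivariance conditions via the commuting square for the $R$-operators) and Lemma \ref{lem2 grp case} (identifying the finite-propagation subalgebras fibrewise) in the fibrewise setting. The one detail worth making explicit in your propagation step is that the reverse implication --- passing from $\alpha''\alpha'^{-1}$ lying in a compact subset $F\subseteq\G$ back to a compact subset of $\beta_r\G\rtimes\G$ --- uses properness of $r_\beta$ (fibrewise compactness of $\beta_r\G$, so that $r_\beta^{-1}(r(F))\times F$ is compact) in place of the compactness of $\beta G$ invoked in the group case, but this is immediate.
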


Now we define the following morphism (also called the \emph{symbol morphism})
$$\sigma:=(\mathrm{Ad}_{U})_* \circ \varsigma \circ \Theta^{-1}: C^*_u(\G) \longrightarrow \Gamma_b(E^\partial_{\beta,u})^{\G}.$$
In other words, the following diagram commutes:
\begin{displaymath}
    \xymatrix{
        C^*_{r}(\beta_r \G \rtimes \G) \ar[r]^-{\textstyle \varsigma} \ar[d]_-{\textstyle \Theta}^-{\textstyle \cong} & \Gamma_b(E^\partial_{u})^{\beta_r \G \rtimes \G} \ar[d]^-{\textstyle (\mathrm{Ad}_U)_* }_-{\textstyle \cong} \\
        C^*_u(\G) \ar[r]^-{\textstyle \sigma} & \Gamma_b(E^\partial_{\beta,u})^{\G}.}
\end{displaymath}
We are looking for a more practical formula of $\sigma$. Fix an element $T \in C^*_u(\G)$ with $\|T\| \leq k$ for some $k \in \mathbb{N}$. Recall from Section \ref{op fibre sp}, the $k$-bounded operator fibre space associated to $\G$ is
$$E_k=\bigsqcup_{x\in \Gz} \B(\ell^2(\G_x))_k.$$
Consider the following map $\tau_T$ between the fibre space $(\G,r)$ and $(E_k,p)$ over $\Gz$:
$$\tau_T: \G \longrightarrow E_k,\quad \gamma \mapsto R_\gamma^* \Phi_{s(\gamma)}(T) R_\gamma$$
where
$$R_\gamma: \ell^2(\G_{r(\gamma)}) \longrightarrow \ell^2(\G_{s(\gamma)}), \quad \delta_\alpha \mapsto \delta_{\alpha\gamma}.$$
It is obvious that $r=p\circ \tau_T$ and $\tau_T$ is continuous, hence $\tau_T$ is a morphism of fibre spaces over $\Gz$. Since $\G$ is second countable, we know that $(E_k,p)$ is fibrewise compact by Lemma \ref{lemma: first countable implies fibrewise compact}. Hence by the universal property of $\beta_r \G$ (Proposition \ref{prop: universal property}), $\tau_T$ can be extended to a morphism:
$$\tau_T: \beta_r \G \longrightarrow E_k.$$

\begin{Definition}
Let $\G$ be as above, and $\omega\in \partial_\beta G$. We define \emph{the limit operator of $T$ at $\omega$} to be the operator $\tau_T(\omega)$ in $\B(\ell^2(\G_{r_\beta(\omega)}))$. Also define $\tau_\omega: C^*_u(\G) \longrightarrow \B(\ell^2(\G_{r_\beta(\omega)}))$ by $\tau_\omega(T):=\tau_T(\omega)$.
\end{Definition}

\begin{Lemma}\label{lem: groupoid limit op}
For $T \in C^*_u(\G)$ and $\omega \in \partial_\beta \G$, we have $\sigma(T)(\omega)=\tau_\omega(T)$.
\end{Lemma}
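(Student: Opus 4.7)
Fix $T \in C^*_u(\G)$ and an integer $k \geq \|T\|$, and set $\widetilde T := \Theta^{-1}(T) \in C^*_r(\beta_r \G \rtimes \G)$. My strategy is to introduce the single map
\[
\psi_T \colon \beta_r \G \longrightarrow E_k, \qquad \omega \longmapsto \mathrm{Ad}_{U_\omega}\bigl(\lambda_\omega(\widetilde T)\bigr),
\]
where $U_\omega \colon \ell^2((\beta_r\G \rtimes \G)_\omega) \to \ell^2(\G_{r_\beta(\omega)})$ is defined for every $\omega \in \beta_r\G$ by the same formula $\delta_{(\gamma\omega,\gamma)} \mapsto \delta_\gamma$ as in \eqref{Eq: Ad groupoid case}. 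I will show that $\psi_T$ is a continuous morphism of fibre spaces over $\Gz$ which coincides with $\tau_T$ on the dense subset $\G \subset \beta_r \G$, and then use uniqueness of continuous extension to conclude $\psi_T = \tau_T$ everywhere. The desired equality $\sigma(T)(\omega) = \tau_\omega(T)$ for $\omega \in \partial_r \G$ will then be immediate, since by construction $\sigma(T)(\omega) = \psi_T(\omega)$ for every $\omega \in \partial_r \G$.

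Continuity of $\psi_T$ is the first step. The section $\omega \mapsto \lambda_\omega(\widetilde T)$ of the operator fibre space associated to $\beta_r\G \rtimes \G$ is continuous by Proposition \ref{op fibre space fw cpt}. The fibrewise unitary identifications $\mathrm{Ad}_{U_\omega}$ assemble into a homeomorphism of operator fibre spaces over $\beta_r\G$: using the local basis of Lemma \ref{nbhd basis}, this reduces to the joint continuity of the groupoid action $(\omega,\gamma) \mapsto \gamma\omega$, which is built into the semi-direct product. Composing these yields continuity of $\psi_T$, and $p \circ \psi_T = r_\beta$ is automatic.

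For the agreement $\psi_T|_\G = \tau_T|_\G$, I fix $\gamma \in \G$ and note that both $T \mapsto \psi_T(\gamma) = \mathrm{Ad}_{U_\gamma}(\lambda_\gamma(\Theta^{-1}(T)))$ and $T \mapsto R_\gamma^* \Phi_{s(\gamma)}(T) R_\gamma$ are bounded $*$-homomorphisms $C^*_u(\G) \to \B(\ell^2(\G_{r(\gamma)}))$. Hence it suffices to check they agree on the dense $*$-subalgebra $\theta(C_c(\beta_r\G \rtimes \G)) = C_t(\G \ast_s \G)$. Taking $T = \theta(f)$ with $f \in C_c(\beta_r\G \rtimes \G)$ and unravelling the semi-direct product operations, both matrix coefficients $\langle \mathrm{Ad}_{U_\gamma}(\lambda_\gamma(f))\delta_{g_2}, \delta_{g_1}\rangle$ and $\langle R_\gamma^* \Phi_{s(\gamma)}(\theta(f)) R_\gamma \delta_{g_2}, \delta_{g_1}\rangle$ reduce to $f(g_1\gamma, g_1 g_2^{-1})$ for $g_1, g_2 \in \G_{r(\gamma)}$.

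For the conclusion, note that $\G$ is second countable, so $\Gz$ is first countable, and Lemma \ref{lemma: first countable implies fibrewise compact} shows $(E_k, p)$ is fibrewise compact. By Proposition \ref{prop: universal property}, the morphism $\tau_T \colon \G \to E_k$ has a unique continuous extension to $\beta_r\G \to E_k$. Since $E_k$ is Hausdorff (points in different fibres are separated by the base projection, and Lemma \ref{nbhd basis} separates points within a single fibre), continuity together with agreement on the dense subset $\G$ forces $\psi_T = \tau_T$ everywhere, completing the argument. The main technical obstacle is assembling the unitaries $\mathrm{Ad}_{U_\omega}$ into a fibre-space homeomorphism; the remaining ingredients are the matrix computation and a standard density argument.
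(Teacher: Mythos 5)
Your proof is correct and follows essentially the same route as the paper: both arguments rest on the matrix-coefficient identity $f(g\gamma,gh^{-1})$ for $T=\theta(f)$ with $f\in C_c(\beta_r\G\rtimes\G)$, a density argument in $C^*_u(\G)$, and the continuity supplied by the universal-property extension of $\tau_T$. The only organizational difference is that the paper evaluates both sides at a boundary point $\omega$ by passing to a net $\gamma_i\to\omega$ in $\G$, whereas you verify the identity on the dense interior $\G$ and conclude by uniqueness of continuous extensions into the Hausdorff fibre space $E_k$ --- a cosmetic repackaging of the same limit.
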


\begin{proof}
It suffices to prove the following diagram commutes:
\begin{displaymath}
    \xymatrix@=3em{
        C^*_r(\beta_r \G \rtimes \G) \ar[r]^-{\textstyle \lambda_\omega} \ar[d]_-{\textstyle\Theta}^-{\textstyle\cong} & \B(\ell^2((\beta_r \G \rtimes \G)_\omega))\ar[d]^-{\textstyle \mathrm{Ad}_{U_\omega}}_-{\textstyle\cong} \\
        C^*_u(\G) \ar[r]^-{\textstyle \tau_\omega} & \B(\ell^2(\G_{r_\beta(\omega)})) }
\end{displaymath}
By density, we only need to prove the lemma for $T\in C_t(\G \ast_s \G)$. Choose a net $\{\gamma_i\}_{i \in I}$ in $\G$ converging to $\omega$. Let $x=r_\beta(\omega)$ and $x_i=r(\gamma_i)$. Recall that $\theta: C_c(\beta_r \G \rtimes \G) \to C_t(\G \ast_s \G)$ is an isomorphism, so we may assume that $T=\theta(f)$ for some $f \in C_c(\beta_r \G \rtimes \G)$. For any $\alpha', \alpha'' \in \G_x$, by \'{e}taleness, we may choose $\{\alpha_i'\}_{i \in I}$ and $\{\alpha_i''\}_{i\in I}$ converging to $\alpha'$ and $\alpha''$, respectively, with $s(\alpha_i')=s(\alpha_i'')=x_i$. We have:
\begin{eqnarray*}
\langle \sigma(T)(\omega) \delta_{\alpha'}, \delta_{\alpha''} \rangle  &=& \big\langle \mathrm{Ad}_{U_\omega} \circ \lambda_\omega \circ \Theta^{-1} (\theta(f)) \delta_{\alpha'}, \delta_{\alpha''} \big \rangle = \big\langle \mathrm{Ad}_{U_\omega} \circ \lambda_\omega (f) \delta_{\alpha'}, \delta_{\alpha''} \big \rangle\\
&=& f(\alpha'' \omega, \alpha''{\alpha'}^{-1}) = \lim_i f(\alpha_i'' \gamma_i, \alpha_i''{\alpha_i'}^{-1}).
\end{eqnarray*}
On the other hand, we have
\begin{eqnarray*}
\langle\tau_\omega(T) \delta_{\alpha'}, \delta_{\alpha''} \rangle &=& \langle\tau_T(\omega) \delta_{\alpha'}, \delta_{\alpha''} \rangle = \lim_i \langle\tau_T(\gamma_i) \delta_{\alpha_i'}, \delta_{\alpha_i''} \rangle\\
&=&\lim_{i}  \langle R_{\gamma_i}^* \Phi_{s(\gamma_i)}(T) R_{\gamma_i} \delta_{\alpha_i'}, \delta_{\alpha_i''}\rangle = \lim_{i}  \langle \Phi_{s(\gamma_i)}(T)  \delta_{\alpha_i'\gamma_i}, \delta_{\alpha_i''\gamma_i}\rangle\\
&=&\lim_{i} (\theta f)(\alpha_i''\gamma_i, \alpha_i'\gamma_i) = \lim_i f(\alpha_i'' \gamma_i, \alpha_i''{\alpha_i'}^{-1}).
\end{eqnarray*}
Hence we finish the proof.
\end{proof}

Consequently, we obtain the associated limit operator theory as a corollary of Theorem \ref{main theorem}:
\begin{Corollary}\label{cor: groupoid case}
Let $\G$ be a locally compact, second countable and \'{e}tale groupoid. Suppose $\G$ is either strongly amenable at infinity, or weakly inner amenable and $C^*$-exact. Then for any $T$ in the uniform Roe algebra $C^*_u(\G)$, the following are equivalent:
\begin{enumerate}
  \item $\Phi(T)$ is invertible modulo $\K(L^2(\G))$.
  \item $\sigma(T)$ is invertible in $\Gamma_b(E^\partial_{\beta,u})^{\G}$.
  \item For each $\omega \in \partial_r \G$, the limit operator $\tau_\omega(T)$ is invertible, and
  $$\sup_{\omega \in \partial_r \G} \|\tau_\omega(T)^{-1}\| < \infty.$$
  \item For each $\omega \in \partial_r \G$, the limit operator $\tau_\omega(T)$ is invertible.
\end{enumerate}
\end{Corollary}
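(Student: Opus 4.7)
The strategy is to apply Theorem \ref{main theorem} to the semi-direct product groupoid $\mathcal{H} := \beta_r\G \rtimes \G$ together with its canonical invariant decomposition $\beta_r\G = \G \sqcup \partial_r\G$, and then transport the resulting equivalence across the isomorphisms already set up in Section \ref{groupoid case ex} to the uniform Roe algebra side.

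First I would verify the hypotheses of Theorem \ref{main theorem} for $\mathcal{H}$. Since $\G$ is \'etale, so is $\mathcal{H}$ by \cite[Proposition 1.5]{Ananth-Delaroche--ExactGroupoids}. Second countability of $\G$ yields $\sigma$-compactness of $\G$, and since $r_\beta\colon \beta_r\G \to \Gz$ is proper, $\beta_r\G$ is $\sigma$-compact; hence $\mathcal{H}$, as a closed subspace of $\beta_r\G \times \G$, is $\sigma$-compact as well. The subset $\G \subset \beta_r\G$ is $\G$-invariant, open, and dense, so $\mathcal{H}(\G) = \G \rtimes \G$ and $\mathcal{H}(\partial_r\G) = \partial_r\G \rtimes \G$. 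Under either of the two standing hypotheses on $\G$, Propositions \ref{exact prop 1} and \ref{exact prop 2} imply that $\mathcal{H}$ is topologically amenable, hence so is its reduction $\mathcal{H}(\partial_r\G)$, which is exactly what Theorem \ref{main theorem} requires.

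Next, I would match the four conditions of the corollary with the corresponding clauses of Theorem \ref{main theorem} using the identifications assembled in Section \ref{groupoid case ex}. For (1), the lemma giving $\Phi\circ\Theta(C^*_r(\G \rtimes \G)) = \K(L^2(\G))$ together with faithfulness of $\Phi$ shows that $\Phi(T)$ is invertible modulo $\K(L^2(\G))$ iff $\Theta^{-1}(T)$ is invertible modulo $C^*_r(\mathcal{H}(\G))$, i.e.\ condition (1) of the main theorem. For (2), the commuting square $\sigma = (\mathrm{Ad}_U)_* \circ \varsigma \circ \Theta^{-1}$, combined with the fact that $(\mathrm{Ad}_U)_*$ is a $C^*$-isomorphism, transports invertibility of $\sigma(T)$ in $\Gamma_b(E^\partial_{\beta,u})^\G$ to invertibility of $\varsigma(\Theta^{-1}(T))$ in $\Gamma_b(E^\partial_u)^{\mathcal{H}(\partial_r\G)}$. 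For (3) and (4), Lemma \ref{lem: groupoid limit op} identifies $\tau_\omega(T)$ with the unitary conjugate $\mathrm{Ad}_{U_\omega}\bigl(\lambda_\omega(\Theta^{-1}(T))\bigr)$ for each $\omega \in \partial_r\G$, so both invertibility and the norm of the inverse are preserved fibrewise.

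The only point requiring care is that the unit space $\beta_r\G$ of $\mathcal{H}$ is non-compact whenever $\Gz$ is non-compact, so Theorem \ref{main theorem} does not apply literally under the hypotheses of the corollary. I would deal with this via the remark immediately following Theorem \ref{main theorem}: compactness of the object space there is used only to make the reduced groupoid $C^*$-algebra unital, and the proof extends to the non-unital setting by passing to unitizations and interpreting invertibility modulo an ideal in the usual sense. The crucial ingredients, namely exactness of the short sequence (\ref{short seq of red}), faithfulness of $\lambda^\partial$ on the equivariant bounded uniform Roe sections, and Theorem \ref{thm:extention of Exel} (whose hypotheses are only $\sigma$-compactness and amenability, both of which hold for $\mathcal{H}(\partial_r\G)$), remain valid verbatim. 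Once this mild extension of the main theorem is in hand, the chain of identifications in the previous paragraph delivers the four equivalences claimed in Corollary \ref{cor: groupoid case}.
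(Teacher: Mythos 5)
Your proposal follows the paper's own proof essentially verbatim: apply Theorem \ref{main theorem} to $\beta_r\G\rtimes\G$ with the invariant decomposition $\beta_r\G=\G\sqcup\partial_r\G$ (topological amenability of $\beta_r\G\rtimes\G$ coming from Propositions \ref{exact prop 1} and \ref{exact prop 2}), and then transport the four conditions through $\Theta$, the lemma identifying $\Phi\circ\Theta(C^*_r(\G\rtimes\G))$ with $\K(L^2(\G))$, the isomorphism $(\mathrm{Ad}_U)_*$, and Lemma \ref{lem: groupoid limit op}. Your closing caveat about $\beta_r\G$ being non-compact when $\Gz$ is non-compact is in fact more scrupulous than the paper itself, which asserts that all examples in this section have compact unit spaces and applies the theorem directly.
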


The equivalence between the first three conditions also holds for groupoids with a weaker property: \emph{KW-exactness} (for abbreviation of Kirchberg-Wassermann exactness). This notion is introduced in \cite{Ananth-Delaroche--ExactGroupoids}, and roughly speaking it means that the cross product of the groupoid preserves short equivariant exact sequences (see \cite[Definition 6.6]{Ananth-Delaroche--ExactGroupoids}). It is also shown in \cite{Ananth-Delaroche--ExactGroupoids} that strong amenability at infinity implies KW-exactness, and KW-exactness implies $C^*$-exactness (Definition \ref{defn:C*-exact}) for second countable locally compact groupoids with Haar system. Furthermore, KW-exactness and $C^*$-exactness coincide when the groupoid is further assumed to be weakly inner amenable and \'{e}tale, hence are also equivalent to strong amenability at infinity by Proposition \ref{exact prop 2}. The proof for the equivalence of the first three conditions in the case of KW-exactness follows similarly as we did in Section \ref{grp cptf ex}, based on Remark \ref{main theorem remark}. Consequently, we have an alternative version of Corollary \ref{cor: groupoid case} as follows:

\begin{subcor}
Let $\G$ be a locally compact,  second countable, \'{e}tale and KW-exact groupoid. Then for any $T$ in the uniform Roe algebra $C^*_u(\G)$, the following are equivalent:
\begin{enumerate}
  \item $\Phi(T)$ is invertible modulo $\K(L^2(\G))$.
  \item $\sigma(T)$ is invertible in $\Gamma_b(E^\partial_{\beta,u})^{\G}$.
  \item For each $\omega \in \partial_r \G$, the limit operator $\tau_\omega(T)$ is invertible, and
  $$\sup_{\omega \in \partial_r \G} \|\tau_\omega(T)^{-1}\| < \infty.$$
\end{enumerate}
Furthermore, if in addition $\G$ is weakly inner amenable, then the above are also equivalent to
\begin{enumerate}
  \item[(4)] For each $\omega \in \partial_r \G$, the limit operator $\tau_\omega(T)$ is invertible.
\end{enumerate}
\end{subcor}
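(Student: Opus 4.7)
The plan is to adapt the argument of Corollary \ref{cor: groupoid case} by weakening the topological amenability hypothesis via Remark \ref{main theorem remark}. All the algebraic identifications developed earlier in Section \ref{groupoid case ex} --- the isomorphism $\Theta : C^*_r(\beta_r \G \rtimes \G) \to C^*_u(\G)$ of \cite{Ananth-Delaroche--ExactGroupoids}, the fibrewise isomorphism $(\mathrm{Ad}_U)_* : \Gamma_b(E^\partial_u)^{\beta_r \G \rtimes \G} \to \Gamma_b(E^\partial_{\beta,u})^{\G}$, and Lemma \ref{lem: groupoid limit op} identifying $\sigma(T)(\omega)$ with $\tau_\omega(T)$ --- are purely structural and do not depend on amenability, so they carry over verbatim. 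Consequently, the equivalences among (1), (2), (3), and (under the extra hypothesis) (4) will follow from the corresponding conclusions of Theorem \ref{main theorem} applied to $\beta_r \G \rtimes \G$ together with the invariant decomposition $\beta_r \G = \G \sqcup \partial_r \G$.

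For the equivalence ``(1) $\Leftrightarrow$ (2) $\Leftrightarrow$ (3)'', I would invoke Remark \ref{main theorem remark}: it suffices to show that the short sequence
\begin{equation*}
0 \longrightarrow C^*_r(\G \rtimes \G) \longrightarrow C^*_r(\beta_r \G \rtimes \G) \longrightarrow C^*_r(\partial_r \G \rtimes \G) \longrightarrow 0
\end{equation*}
is exact. The left-injectivity and right-surjectivity are automatic from the decomposition, so only exactness at the middle term is non-trivial, and this is precisely what KW-exactness of $\G$ supplies: by definition in \cite{Ananth-Delaroche--ExactGroupoids}, KW-exactness means that the reduced cross-product construction by $\G$ sends an equivariant short exact sequence of $\G$-$C^*$-algebras to a short exact sequence of $C^*$-algebras. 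Applying this to the $\G$-equivariant short exact sequence $0 \to C_0(\G) \to C(\beta_r \G) \to C(\partial_r \G) \to 0$, and identifying each reduced cross product with the reduced $C^*$-algebra of the corresponding semi-direct product groupoid, yields the required middle-term exactness.

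For the additional equivalence with (4) under weak inner amenability, the route is Proposition \ref{exact prop 2}: for a second countable weakly inner amenable \'{e}tale groupoid, $C^*$-exactness coincides with KW-exactness and with strong amenability at infinity, and the latter means by Definition \ref{strongly amenable at infinity} exactly that $\beta_r \G \rtimes \G$ is topologically amenable. Consequently, the full hypotheses of Theorem \ref{main theorem} are satisfied, and condition (4) automatically joins the equivalence class.

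The main obstacle I anticipate is the bookkeeping required to translate the abstract definition of KW-exactness (phrased for $\G$-$C^*$-algebras and their reduced cross products) into the concrete short exact sequence of reduced groupoid $C^*$-algebras feeding into Remark \ref{main theorem remark}. Concretely, I must verify that the standard identification $C^*_r(Y \rtimes \G) \cong C_0(Y) \rtimes_r \G$ for a $\G$-space $Y$ is functorial with respect to equivariant restrictions to invariant open and closed subspaces, so that restriction on the $\G$-space side corresponds to the quotient/ideal inclusion on the cross-product side. In the \'{e}tale setting this is largely formal, but it needs to be set out carefully to ensure that the exact sequence obtained from KW-exactness of $\G$ is genuinely the one required.
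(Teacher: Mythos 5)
Your proposal is correct and follows essentially the same route as the paper: the equivalence of (1)--(3) is obtained exactly as you describe, by translating KW-exactness (through the identification of $C^*_r(Y\rtimes \G)$ with the reduced crossed product, as in Lemma \ref{exactness lemma}) into exactness of the reduced sequence for the decomposition $\beta_r\G=\G\sqcup\partial_r\G$ and then invoking Remark \ref{main theorem remark}, while (4) is adjoined via Proposition \ref{exact prop 2}, which under weak inner amenability upgrades KW-exactness to topological amenability of $\beta_r\G\rtimes\G$ so that the full Theorem \ref{main theorem} applies. The only cosmetic point is that $C(\beta_r\G)$ and $C(\partial_r\G)$ should read $C_0(\beta_r\G)$ and $C_0(\partial_r\G)$ unless $\Gz$ is compact.
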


Finally, we mention briefly some other equivariant fibrewise compactifications of groupoids and study the associated limit operator theory, as we did in Section \ref{grp cptf ex}. We only provide the sketch and omit the details, since the proofs are similar.

Let $\G$ be a locally compact, second countable and \'{e}tale groupoid. Let $(Y,p)$ be a $\G$-equivariant fibrewise compactification of the $\G$-space $(\G,r)$, and $\partial_Y \G:=Y \setminus \G$. By the universal property of $\beta_r \G$ (Proposition \ref{prop: universal property}), there exists a $\G$-equivariant surjective morphism $\phi: \beta_r \G \to Y$. Hence, we obtain an embedding $C_c(Y \rtimes \G) \to C_c(\beta_r \G \rtimes \G)$, which induces an injective $\ast$-homomorphism:
$$\phi^*: C^*_r(Y \rtimes \G) \to C^*_r(\beta_r \G \rtimes \G).$$
Denote $\mathcal{A}_Y$ the subalgebra $\Theta \circ \phi^* (C^*_r(Y \rtimes \G))$ in $C^*_u(\G)$. We may also define the associated fibre space $E^{\partial}_Y:= \bigsqcup_{\omega \in \partial_Y \G} \B(\ell^2(\G_{p(\omega)}))$, and the symbol morphism $\sigma_Y: \mathcal{A}_Y \to \Gamma_b(E^\partial_{Y,u})^{\G}$. Then we have:

\begin{Corollary}\label{groupoid action on fiber space}
Let $\G$ be a locally compact, second countable and \'{e}tale groupoid which is either strongly amenable at infinity, or weakly inner amenable and $C^*$-exact. Let $(Y,p)$ be a $\G$-equivariant fibrewise compactification of $(\G,r)$, $\phi: \beta_r \G \to Y$ be the induced surjection, and $Z$ be a subset in $\partial_r \G$ such that $\phi(Z)=\partial_Y \G:=Y \setminus \G$. Suppose $\mathcal{A}_Y$ is the associated $C^*$-subalgebra of $C^*_u(\G)$ as defined above. Then for any $T \in \mathcal{A}_Y$, the following are equivalent:
\begin{enumerate}
  \item $\Phi(T)$ is invertible modulo $\K(L^2(\G))$.
  \item $\sigma_Y(T)$ is invertible in $\Gamma_b(E^\partial_{Y,u})^{\G}$.
  \item For each $\omega \in Z$, the limit operator $\tau_\omega(T)$ is invertible, and
  $$\sup_{\omega \in Z} \|\tau_\omega(T)^{-1}\| < \infty.$$
  \item For each $\omega \in Z$, the limit operator $\tau_\omega(T)$ is invertible.
\end{enumerate}
\end{Corollary}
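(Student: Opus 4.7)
The plan is to adapt the argument from the group-compactification case treated in Section \ref{grp cptf ex}, with the Stone-\v{C}ech compactification of a group replaced by the fibrewise Stone-\v{C}ech compactification $\beta_r\G$. The $\G$-equivariant surjection $\phi: \beta_r\G \to Y$ (guaranteed by the universal property, Proposition \ref{prop: universal property}) induces an injective $\ast$-homomorphism $\phi^*:C^*_r(Y\rtimes \G)\hookrightarrow C^*_r(\beta_r \G \rtimes \G)$, and by definition $\mathcal{A}_Y = \Theta \circ \phi^*(C^*_r(Y \rtimes \G))$. Since $r_\beta = p\circ \phi$, the restriction of $\phi$ to $\partial_r\G$ lands in $\partial_Y\G$ and the fibres $\G_{r_\beta(\omega)}=\G_{p(\phi(\omega))}$ agree, so pullback of sections along $\phi|_{\partial_r\G}$ yields an injective $C^*$-homomorphism $\phi^*_E:\Gamma_b(E^\partial_{Y,u})^{\G} \to \Gamma_b(E^\partial_{\beta,u})^{\G}$, fitting into a commutative cube diagram relating the symbol morphisms $\sigma$ for $\beta_r\G$ and $\sigma_Y$ for $Y$, in exact analogy with Proposition \ref{comm cube diag}.

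For the equivalence ``(1) $\Leftrightarrow$ (2) $\Leftrightarrow$ (3)'', I would first establish the exactness of the short sequence
$$0 \longrightarrow C^*_r(\G \rtimes \G) \longrightarrow C^*_r(Y \rtimes \G) \longrightarrow C^*_r(\partial_Y \G \rtimes \G) \longrightarrow 0,$$
which is the groupoid analogue of Lemma \ref{exactness lemma}. Under either hypothesis on $\G$, namely strongly amenable at infinity or weakly inner amenable and $C^*$-exact, $\G$ is KW-exact, so the reduced crossed product functor $-\rtimes_r \G$ preserves the $\G$-equivariant short exact sequence $0 \to C_0(\G) \to C(Y) \to C(\partial_Y\G) \to 0$ of commutative $\G$-$C^*$-algebras; combined with the identification $C^*_r(Z\rtimes \G) \cong C_0(Z)\rtimes_r\G$ for a $\G$-space $Z$, this yields the desired exactness. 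Then Remark \ref{main theorem remark}, applied to the groupoid $Y\rtimes\G$ with the decomposition $Y = \G \sqcup \partial_Y\G$, gives the equivalences (1)-(3) for $C^*_r(Y\rtimes\G)$, which translate via $\Theta\circ\phi^*$ and $\phi^*_E$ to the stated equivalences for $T\in \mathcal{A}_Y$.

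For ``(4) $\Rightarrow$ (3)'', which cannot be obtained directly from Theorem \ref{main theorem} because $Y\rtimes \G$ may fail to be amenable, I would exploit that the limit operators of elements of $\mathcal{A}_Y$ depend only on the image under $\phi$. Concretely, writing $T = \Theta\circ \phi^*(\widetilde T)$ with $\widetilde T \in C^*_r(Y\rtimes \G)$, a direct computation on $C_c(Y\rtimes \G)$ (similar to the proof of Lemma \ref{lem3 grp case}) shows $\lambda_\omega \circ \phi^* = \lambda^Y_{\phi(\omega)}$ up to the canonical unitary identifications of fibres; hence by Lemma \ref{lem: groupoid limit op} we have $\tau_{\omega_1}(T) = \tau_{\omega_2}(T)$ whenever $\phi(\omega_1) = \phi(\omega_2)$. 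Since $\phi(Z) = \partial_Y\G = \phi(\partial_r\G)$, assumption (4) upgrades to invertibility of $\tau_{\omega'}(T)$ for every $\omega' \in \partial_r\G$. Applying Corollary \ref{cor: groupoid case}, ``(4) $\Rightarrow$ (3)'', to $T$ viewed as an element of $C^*_u(\G)$, one obtains $\sup_{\omega' \in \partial_r\G}\|\tau_{\omega'}(T)^{-1}\| < \infty$, a fortiori $\sup_{\omega\in Z}\|\tau_\omega(T)^{-1}\| < \infty$.

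The main obstacle will be verifying the short exact sequence above: one must make the identification $C^*_r(Y\rtimes \G) \cong C_0(Y)\rtimes_r\G$ precise in the \'{e}tale setting and confirm that KW-exactness of $\G$ applies to this equivariant sequence of commutative $\G$-$C^*$-algebras. Once this is granted, everything else is a formal consequence of Theorem \ref{main theorem}, Remark \ref{main theorem remark}, Corollary \ref{cor: groupoid case}, and the bookkeeping furnished by the cube diagram.
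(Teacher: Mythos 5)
Your proposal is correct and follows essentially the same route as the paper, which gives no detailed proof here but explicitly defers to the method of Section \ref{grp cptf ex}: exactness of the sequence $0 \to C^*_r(\G\rtimes\G)\to C^*_r(Y\rtimes\G)\to C^*_r(\partial_Y\G\rtimes\G)\to 0$ (via KW-exactness, which both hypotheses guarantee) combined with Remark \ref{main theorem remark} for (1)--(3), and the factorisation of limit operators through $\phi$ together with Corollary \ref{cor: groupoid case} for ``(4) $\Rightarrow$ (3)''. Your cube diagram and the upgrade of invertibility from $Z$ to all of $\partial_r\G$ are exactly the bookkeeping the authors have in mind.
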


As noted before, the equivalence between the first three conditions also holds when $\G$ is KW-exact. Consequently, we have the following alternative version:

\begin{subcor}
Let $\G$ be a locally compact, second countable, \'{e}tale and KW-exact groupoid. Let $(Y,p)$ be a $\G$-equivariant fibrewise compactification of $(\G,r)$, $\phi: \beta_r \G \to Y$ be the induced surjection, and $Z$ be a subset in $\partial_r \G$ such that $\phi(Z)=\partial_Y \G:=Y \setminus \G$. Suppose $\mathcal{A}_Y$ is the associated $C^*$-subalgebra of $C^*_u(\G)$ as defined above. Then for any $T \in \mathcal{A}_Y$, the following are equivalent:
\begin{enumerate}
  \item $\Phi(T)$ is invertible modulo $\K(L^2(\G))$.
  \item $\sigma_Y(T)$ is invertible in $\Gamma_b(E^\partial_{Y,u})^{\G}$.
  \item For each $\omega \in Z$, the limit operator $\tau_\omega(T)$ is invertible, and
  $$\sup_{\omega \in Z} \|\tau_\omega(T)^{-1}\| < \infty.$$
\end{enumerate}
Furthermore, if in addition $\G$ is weakly inner amenable, then the above are also equivalent to:
\begin{enumerate}
  \item[(4)] For each $\omega \in Z$, the limit operator $\tau_\omega(T)$ is invertible.
\end{enumerate}
\end{subcor}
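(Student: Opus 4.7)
The strategy is to imitate the proof of Corollary \ref{groupoid action on fiber space}, replacing its amenability assumption by KW-exactness in accordance with Remark \ref{main theorem remark}. Recall that in the proof of Theorem \ref{main theorem}, amenability of the boundary reduction was used \emph{only} to obtain exactness of the canonical short sequence of reduced groupoid $C^*$-algebras; once that is guaranteed, the rest of the argument linking conditions (1)--(3) goes through verbatim. So the main task is to deduce the exactness of
\begin{equation*}
0 \longrightarrow C^*_r(\G \rtimes \G) \longrightarrow C^*_r(Y \rtimes \G) \stackrel{q}{\longrightarrow} C^*_r(\partial_Y \G \rtimes \G) \longrightarrow 0
\end{equation*}
directly from KW-exactness of $\G$.

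This exactness should follow analogously to Lemma \ref{exactness lemma}. The open inclusion $\G \hookrightarrow Y$ with closed complement $\partial_Y \G$ yields a $\G$-equivariant short exact sequence $0 \to C_0(\G) \to C_0(Y) \to C_0(\partial_Y \G) \to 0$; KW-exactness of $\G$ (in the sense of \cite[Definition 6.6]{Ananth-Delaroche--ExactGroupoids}) is precisely the statement that this sequence remains exact after taking reduced crossed products with $\G$. Translating between crossed products and reduced $C^*$-algebras of semi-direct product groupoids gives the desired exactness. Once this step is complete, Remark \ref{main theorem remark} applied to the groupoid $Y \rtimes \G$ with invariant open dense subset $\G$ yields ``(1) $\Leftrightarrow$ (2) $\Leftrightarrow$ (3)''. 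The restriction of (3) to $\omega \in Z$ rather than all of $\partial_r \G$ is justified, exactly as in Proposition \ref{comm cube diag}, by the observation that for $T \in \mathcal{A}_Y$ the limit operator $\tau_\omega(T)$ depends only on $\phi(\omega) \in \partial_Y \G$: it factors through evaluation of $\sigma_Y(T) \in \Gamma_b(E^\partial_{Y,u})^\G$ at $\phi(\omega)$, so since $\phi(Z) = \partial_Y \G$, conditions over $Z$ and conditions over $\partial_r \G$ carry the same invertibility data.

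For the additional equivalence with (4) under weak inner amenability: under this extra hypothesis, KW-exactness coincides with $C^*$-exactness and hence, by Proposition \ref{exact prop 2}, with strong amenability at infinity; thus $\beta_r \G \rtimes \G$ is topologically amenable, and the amenability passes to its reduction $\partial_r \G \rtimes \G$ and then, via the $\G$-equivariant continuous surjection $\partial_r \G \to \partial_Y \G$ induced by $\phi$, to the semi-direct product $\partial_Y \G \rtimes \G$ (by pushforward of a topological approximate invariant mean). Now Theorem \ref{thm:extention of Exel} applies to $\partial_Y \G \rtimes \G$ and gives invertibility of $q(T)$ from pointwise invertibility of the regular representations at every $\bar{\omega} \in \partial_Y \G$; by the factorisation mentioned above these regular representations are nothing but the limit operators $\tau_\omega(T)$ for $\omega$ a preimage of $\bar{\omega}$, so $\phi(Z) = \partial_Y \G$ ensures that condition (4) suffices. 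The main technical subtlety is the non-compactness of the unit space $Y$ of $Y \rtimes \G$, which falls slightly outside the unital framework of Theorem \ref{main theorem}; this is handled by unitization, as already implicit in Corollary \ref{groupoid action on fiber space}.
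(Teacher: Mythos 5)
Your treatment of ``(1) $\Leftrightarrow$ (2) $\Leftrightarrow$ (3)'' matches the paper: KW-exactness gives exactness of the reduced sequence for the decomposition $Y = \G \sqcup \partial_Y\G$, Remark \ref{main theorem remark} then applies, and the passage from $\partial_Y\G$ to $Z$ is handled by the factorisation of limit operators through $\phi$, exactly as in Section \ref{grp cptf ex}. The problem is your proof of the equivalence with (4). You claim that topological amenability of $\partial_r\G \rtimes \G$ passes to $\partial_Y\G \rtimes \G$ ``by pushforward of a topological approximate invariant mean'' along the $\G$-equivariant surjection induced by $\phi$, and then you apply Theorem \ref{thm:extention of Exel} to $\partial_Y\G \rtimes \G$. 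This transfer of amenability to equivariant quotients is false in general, and the paper itself records the counterexample: for a non-amenable exact group $\Gamma$, the amenable groupoid $\beta\Gamma \rtimes \Gamma$ quotients equivariantly onto $\Gamma^+ \rtimes \Gamma$, which is not amenable; taking $Y = \Gamma^+$ gives $\partial_Y\Gamma \rtimes \Gamma \cong \Gamma$, which is not amenable even though $\partial\Gamma \rtimes \Gamma$ is. The obstruction is that a topological approximate invariant mean $\{m^{(n)}_x\}$ need not be constant on the fibres of $\phi$, so it does not descend; averaging destroys approximate invariance. Hence Theorem \ref{thm:extention of Exel} cannot be invoked for $\partial_Y\G \rtimes \G$, and your ``(4) $\Rightarrow$ (3)'' step collapses.

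The repair is the one used in the group-compactification prototype (the corollary following Proposition \ref{comm cube diag}): for $T \in \mathcal{A}_Y$ the limit operator $\tau_\omega(T)$ depends only on $\phi(\omega)$, so condition (4) over $Z$ already forces $\tau_\omega(T)$ to be invertible for \emph{every} $\omega \in \partial_r\G$. One then applies the implication ``(4) $\Rightarrow$ (3)'' of Corollary \ref{cor: groupoid case} for the full fibrewise Stone--\v{C}ech compactification --- where $\beta_r\G \rtimes \G$ \emph{is} topologically amenable, by weak inner amenability together with Proposition \ref{exact prop 2} --- to obtain the uniform bound $\sup_{\omega \in \partial_r\G}\|\tau_\omega(T)^{-1}\| < \infty$, of which the bound over $Z$ is a special case. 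In other words, the extension of Exel--Nistor--Prudhon is applied only to the reduction $\partial_r\G \rtimes \G$ of the Stone--\v{C}ech groupoid, never to $\partial_Y\G \rtimes \G$.
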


\begin{Remark}
Finally we remark that one may also study the limit operator theory for groupoid actions, which is analogous to the one studied in Section \ref{group action ex}. However, here we choose not to mention it any further since more preliminary notions would be required, and this would definitely make the current paper more complicated.
\end{Remark}

\bibliographystyle{amsalpha}
\bibliography{eagbib2}

\end{document}